\documentclass{amsart}
\calclayout
 \usepackage{amssymb,amsmath,amsfonts,epsfig,latexsym,tikz}
   \usepackage[alphabetic]{amsrefs}
 \usepackage{tikz-cd}
\usepackage{hyperref}
\usepackage{enumerate}
\usepackage{mathtools}
\usepackage{verbatim}
\usepackage[capitalize]{cleveref}
\usepackage[shortlabels]{enumitem}
\usepackage{subcaption}
\usepackage{thmtools}

\usetikzlibrary{positioning}
\usetikzlibrary{matrix}
\usetikzlibrary{decorations}
\usetikzlibrary{decorations.pathreplacing, decorations.pathmorphing, angles,quotes}
 
 \newtheorem{theorem}{Theorem}[section]
\newtheorem{definition}[theorem]{Definition}
\newtheorem{proposition}[theorem]{Proposition}
\newtheorem{lemma}[theorem]{Lemma}
\newtheorem{corollary}[theorem]{Corollary}

\newcommand{\Sym}{\operatorname{Sym}}
\newcommand{\F}{\mathbb{C}}
\newcommand{\C}{\mathbb{C}}
\renewcommand{\P}{\mathbb{P}}
\newcommand{\Z}{\mathbb{Z}}
\newcommand{\ord}{\operatorname{ord}}
\newcommand{\R}{\mathbb{R}}
\newcommand{\OO}{\mathcal{O}}
\newcommand{\Hom}{\operatorname{Hom}}
\newcommand{\Hilb}{\operatorname{Hilb}}

\theoremstyle{definition}
\newtheorem{remark}[theorem]{Remark}
\newtheorem{example}[theorem]{Example}

\begin{document}

\title{Cohomological aspects of power ideals}          
    
\author{Colin Crowley and Matt Larson}
\begin{abstract}
We show that the space of sections of any line bundle on the augmented wonderful variety of a hyperplane arrangement has the structure of a coalgebra. These coalgebras correspond to the hyperplane arrangement power ideals of Ardila and Postnikov, which include zonotopal algebras as a special case. By proving cohomology vanishing results on augmented wonderful varieties, we recover many results about zonotopal algebras. We also interpret the ``superspace'' zonotopal algebras of Rhoades, Tewari, and Wilson in terms of the sections of vector bundles on the augmented wonderful variety, and we use this interpretation to prove a formula that they conjectured for the Hilbert series of the superspace version of the central zonotopal algebra. 
\end{abstract}
 
\maketitle

\vspace{-20 pt}

\section{Introduction}

Let $L$ be a complex vector space of dimension $r$, and let $H_1, \dotsc, H_n$ be an essential hyperplane arrangement in $L$, i.e., the $H_i$ are the vanishing loci of linear forms on $L$ with $H_1 \cap \dotsb \cap H_n = 0$. The \emph{augmented wonderful variety} $W_L$ of this hyperplane arrangement is a certain compactification of $L$ which was introduced in \cite{BHMPW20a}. It is defined as an iterated blow-up of the projective completion $\mathbb{P}(L \oplus \mathbb{C})$ of $L$. A \emph{flat} is a subset $F$ of $\{1, \dotsc, n\}$ such that if $\cap_{i \in F} H_i$ is contained in $H_j$, then $j \in F$. The augmented wonderful variety is constructed by blowing up  the strict transforms of the strata corresponding to proper flats in the hyperplane at infinity $\mathbb{P}L$ of $\mathbb{P}(L \oplus \mathbb{C})$, in increasing order of dimension. 

The augmented wonderful variety is smooth and projective, and the complement of $L$ in $W_L$ is a simple normal crossings divisor whose components are indexed by proper flats. 
The action of the group $\mathbb{G}_m \ltimes L$ on $L$ extends to $W_L$. Here the $\mathbb{G}_m$ factor acts by scaling and the $L$ factor acts by translation.

Any line bundle on $W_L$ has a unique $L$-linearization: each divisor on $W_L$ is linearly equivalent to a unique Weil divisor $D$ which is disjoint from $L$. Indeed, let $D_F$ be the strict transform of the stratum in $\mathbb{P}L$ corresponding to a proper flat $F$, so the $D_F$ are the irreducible components of $W_L \setminus L$. Then the classes of the $D_F$ form a basis for the Picard group of $W_L$. 

We can view the space of sections $H^0(W_L, \mathcal{O}(D))$ as a subspace of $\operatorname{Sym} L^*$, the space of polynomials on $L$, with restrictions on their poles along the boundary. 
The $\mathbb{G}_m$ action on $W_L$ preserves each boundary divisor, so the subspace $H^0(W_L, \mathcal{O}(D))$ of $\operatorname{Sym} L^*$ is generated by homogeneous polynomials. 

Furthermore, the existence of an $L$ linearization implies that the subspace $H^0(W_L, \mathcal{O}(D))$ is closed under translation: for any polynomial $f(x) \in H^0(W_L, \mathcal{O}(D))$ and $v \in L$, $f(x + v)$ lies in $H^0(W_L, \mathcal{O}(D))$. This gives $H^0(W_L, \mathcal{O}(D))$ the structure of a coalgebra, as we now explain.

There is an action $\odot$ of $\operatorname{Sym} L$ on 
$\operatorname{Sym} L^*$, where $v \in L$ acts by its directional 
derivative $D_v$, and the symmetric product $v_1  \dotsb  
v_k$ acts by the composition $D_{v_1} \circ \cdots \circ D_{v_k}$. Given 
a graded subspace $S$ of 
$\operatorname{Sym} L^*$, we set
$$S^{\perp} = \{f \in \operatorname{Sym}L : f \odot g(0) = 0 \text{ for all }g \in S\}.$$
Then $S^{\perp}$ is a homogeneous ideal if and only if $S$ is closed 
under translation. Every homogeneous ideal $I$ is of the form 
$S^{\perp}$ for a unique graded subspace $S$ of $\operatorname{Sym} L^*$, 
called the \emph{Macaulay inverse system} of $I$. The inverse system of 
$I$ is identified with the dual of $\operatorname{Sym} L/I$. In 
particular, the dual of $H^0(W_L, \mathcal{O}(D))$ is identified with an 
algebra. See \cref{rmk:coalgebra-structure} for a more general setting where the 
sections of a line bundle have a coalgebra structure.

We will show that the graded rings obtained in this way are defined by 
certain ideals generated by powers of linear forms, generalizing the 
family of ideals studied by \cite{ArdilaPostnikov}.
Let $L_F$ be the subspace of $L$ corresponding to a flat $F$. Given an integer $a_F$ for each proper flat, the corresponding power ideal is the ideal 
$$(v^{a_F + 1} : v \in L_F \setminus \{0\}, \, F \text{ proper flat}) \subset \operatorname{Sym} L.$$
If $a_F < 0$ for some flat $F$, then the power ideal is defined to be the unit ideal. 

\begin{theorem}\label{thm:powerideal}
The inverse system of the power ideal $(v^{a_F + 1} : v \in L_F \setminus \{0\},\, F \text{ proper flat})$ of $\operatorname{Sym} L$ is the subspace $H^0(W_L, \mathcal{O}(\sum_F a_F D_F))$ of $\operatorname{Sym} L^*$. 
\end{theorem}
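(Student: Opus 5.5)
The plan is to describe $H^0(W_L, \mathcal{O}(\sum_F a_F D_F))$ concretely as a space of polynomials, and then match it with the inverse system of the power ideal one flat at a time. A polynomial $f \in \operatorname{Sym} L^*$ is a rational section of $\mathcal{O}(D)$, where $D=\sum a_F D_F$, and it is regular on $L$. So $f$ is a global section if and only if $\operatorname{ord}_{D_F}(f) \ge -a_F$ for every proper flat $F$. If some $a_F<0$, this forces $f$ to vanish along $D_F$ even though $f$ is a polynomial. I expect this to kill everything, including constants, which would match the unit ideal; I will check this case separately using homogeneity. The space is spanned by homogeneous polynomials, so it suffices to compute $\operatorname{ord}_{D_F}(f)$ for homogeneous $f$ of degree $d$.

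\textbf{Key computation.} The stratum at infinity for $F$ is $\mathbb{P}(L_F)\subset \mathbb{P}L$, where $L_F=\cap_{i\in F}H_i$. I will compute the valuation of $D_F$ on a chart. Choose a complement $L = L_F \oplus M$ and coordinates $x$ on $L_F$, $y$ on $M$. The exceptional divisor over $\mathbb{P}L_F$ in the blow-up of $\mathbb{P}(L\oplus\mathbb{C})$ corresponds to the monomial valuation with weight $-1$ on the $x$-coordinates and weight $0$ on the $y$-coordinates, in affine coordinates near infinity. Equivalently, the ray $(t,0)$ with $t\to\infty$ moves along $L_F$, with transverse $M$-directions bounded.

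Expanding $f(x,y)=\sum_k f_k(x,y)$ with $f_k$ homogeneous of degree $k$ in $x$, we get $\operatorname{ord}_{D_F} f = -\max\{k : f_k \ne 0\}$. Later blow-ups of smaller flats do not change this divisorial valuation, since $D_F$ is the strict transform. Hence the condition is that $f$ has degree at most $a_F$ in the $L_F$-directions. This should be exactly the translation-invariant condition: for all $v \in L_F$ and all $w\in L$, the function $t\mapsto f(w+tv)$ has degree at most $a_F$.

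\textbf{Matching with the inverse system.} Now $f$ is annihilated by $D_v^{a_F+1}$ for all $v\in L_F$ if and only if the degree of $f$ along every line in direction $v$ is at most $a_F$. By polarization, the powers $v^{a_F+1}$ for $v\in L_F$ span $\operatorname{Sym}^{a_F+1}L_F$. So this condition is equivalent to $f$ having total degree at most $a_F$ in the $L_F$-variables, which is exactly the valuation condition above.

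The inverse system of an ideal generated by elements $g_j$ is the set of polynomials $f$ with $g_j\odot f=0$ for all $j$. This uses translation-closedness and the standard fact that $h\odot f(0)=0$ for all $h\in I$ is equivalent to $g_j\odot f=0$, via Taylor expansion. Intersecting over all flats then gives the theorem.

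\textbf{Main obstacle.} The main obstacle is justifying that the divisorial valuation of $D_F$ is the stated monomial valuation on the iterated blow-up. The cleanest route is to note that $D_F$ is the exceptional divisor of blowing up the smooth centre $\mathbb{P}L_F$ (or its strict transform), and that its valuation is determined at the generic point of $D_F$. The generic point lies away from the earlier blow-up loci, because those loci correspond to flats not containing $F$ or to strata of smaller dimension. So locally the computation reduces to a single blow-up of $\mathbb{P}(L\oplus\mathbb{C})$ along $\mathbb{P}L_F$. In the coordinates $z=1/x_1$, $x_i/x_1$, $y/x_1$, this blow-up is the order of vanishing along $z = y/x_1 = 0$, and a direct computation gives the stated order.
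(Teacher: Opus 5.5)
Your proposal is correct and follows the paper's argument: you reduce $\operatorname{ord}_{D_F}(f)$ to the order of vanishing along $\mathbb{P}L_F$ in $\mathbb{P}(L\oplus\mathbb{C})$ near its generic point, compute that this order equals minus the degree of $f$ in the $L_F$-directions, and match the result with the power ideal. This is the content of the paper's Lemma~\ref{pole-degree} and Theorem~\ref{thm:direction}. The only difference is that you verify the inverse-system side directly (the powers $v^{a_F+1}$ span $\operatorname{Sym}^{a_F+1}L_F$, so annihilation means total $L_F$-degree at most $a_F$), whereas the paper cites \cite[Proposition 2.6]{ArdilaPostnikov} for this step.
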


We now describe two particularly important divisors on $W_L$. 
The augmented wonderful variety is equipped with a map to the projective completion $\mathbb{P}(L \oplus \mathbb{C})$. Let $\alpha$ be the divisor class obtained by pulling back the hyperplane class along this map. By choosing linear forms defining each hyperplane, we obtain a map from $L$ to $\mathbb{C}^n$. Compactifying $\mathbb{C}^n$ by $(\mathbb{P}^1)^n$, there is a map from $L$ to $(\mathbb{P}^1)^n$ which extends to a map $W_L \to (\mathbb{P}^1)^n$. Let $\mathcal{O}(a_1, \dotsc, a_n)$ be the pullback of the corresponding line bundle from $(\mathbb{P}^1)^n$. 
\begin{center}
\begin{tikzcd}
& W_L \arrow[dr, "{\mathcal{O}(1, \dotsc, 1)}"] \arrow[dl, "\mathcal{O}(\alpha)", swap] \\
\mathbb{P}(L \oplus \mathbb{C}) & 
& (\mathbb{P}^1)^n
\end{tikzcd}
\end{center}

 For a vector $v \in L$, let $\rho(v)$ be the number of hyperplanes not containing $v$. If $v$ is contained in $L_F$ and not contained in any smaller stratum, then $\rho(v) = n - |F|$. 
 For $k \in \mathbb{Z}$, set 
$$Z_{L, k} = \frac{\operatorname{Sym} L}{(v^{\rho(v) + k + 1} : v \in L \setminus \{0\})}.$$
If there is a nonzero vector $v$ with $\rho(v) + k + 1 \le 0$, then 
$Z_{L, k} = 0$ by definition. This algebra is called the \emph{$k$th 
zonotopal algebra} of the hyperplane arrangement. This family of 
algebras, which was introduced by Holtz and Ron \cite{HoltzRon} for $k = 
0,-1,-2$, arises 
in a wide variety of contexts. See, for example, 
\cites{DM,PSS,SturmfelsXu, 
ArdilaPostnikov,BergetTutte,Lenz,RRT,GeoZonotopeI,GeoZonotopeII}. 
Let $C_{L, k}$ denote the inverse system of the $k$th zonotopal algebra, so $C_{L,k}$ is a subspace of $\operatorname{Sym} L^*$ which is identified with the dual vector space $Z_{L,k}^*$ to $Z_{L,k}$. Note that $C_{L,k} = \oplus_i C_{L,k}^i$ inherits a grading from the grading on $\operatorname{Sym} L^*$. 
We identify each $C_{L,k}$ with the space of sections of a line bundle on the augmented wonderful variety. We have $\mathcal{O}(\alpha) = \mathcal{O}(\sum_F D_F)$ and $\mathcal{O}(1, \dotsc, 1) = \mathcal{O}(\sum_F (n - |F|) D_F)$, so the following result is a consequence of Theorem~\ref{thm:powerideal}.

\begin{corollary}\label{cor:seczonotope}
As subspaces of $\operatorname{Sym}L^*$, $C_{L,k} = H^0(W_L, \mathcal{O}(1, \dotsc, 1)(k\alpha))$. 
\end{corollary}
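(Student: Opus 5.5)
The plan is to deduce the corollary directly from \cref{thm:powerideal}, using the two divisor identities recorded just before the statement. These are $\mathcal{O}(\alpha) = \mathcal{O}(\sum_F D_F)$ and $\mathcal{O}(1,\dotsc,1) = \mathcal{O}(\sum_F (n-|F|)D_F)$. I take both as given, since the paper states them; if needed one could justify them. For $\alpha$, one pulls back the hyperplane at infinity $\mathbb{P}L$, whose total transform has multiplicity one along each $D_F$, because every stratum blown up lies in $\mathbb{P}L$ and is not contained in other blown-up loci to higher order in the relevant way. For $\mathcal{O}(1,\dotsc,1)$, one pulls back $\sum_i \{\infty\}$ on $(\mathbb{P}^1)^n$. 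The coordinate $x_i$ has a pole along $D_F$ exactly when $i \notin F$, and that pole has order one, giving coefficient $n-|F|$.

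Since these divisors are supported on $W_L\setminus L$, they are the unique boundary representatives. Hence
$$H^0(W_L, \mathcal{O}(1,\dotsc,1)(k\alpha)) = H^0\Big(W_L, \mathcal{O}\big(\textstyle\sum_F (n-|F|+k) D_F\big)\Big)$$
as subspaces of $\operatorname{Sym} L^*$. Setting $a_F = n-|F|+k$, \cref{thm:powerideal} identifies this space with the inverse system of
$$I = (v^{n-|F|+k+1} : v \in L_F\setminus\{0\},\ F \text{ proper flat}).$$
It then remains to show that $I$ equals the zonotopal ideal $J = (v^{\rho(v)+k+1} : v\in L\setminus\{0\})$. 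Inverse systems determine ideals uniquely, so this finishes the proof.

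\emph{The case where some exponent is nonpositive.} If $a_F<0$ for some $F$, then $I$ is the unit ideal by convention. Choose $v$ generic in $L_F$; then $\rho(v) = n-|F|$, so $\rho(v)+k+1\le 0$ and $J$ is also the unit ideal (equivalently $Z_{L,k}=0$). Conversely, if some $v\neq 0$ has $\rho(v)+k+1\le 0$, let $F$ be the flat of hyperplanes containing $v$. It is proper because $v\neq 0$ and the arrangement is essential, and $a_F = \rho(v)+k <0$. So we may assume every $a_F\ge 0$.

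\emph{Showing $I \subseteq J$.} Take $v \in L_F\setminus\{0\}$. Then $v$ lies in at least the hyperplanes of $F$, so $\rho(v)\le n-|F|$. Thus $v^{n-|F|+k+1}$ is a multiple of $v^{\rho(v)+k+1}$ and lies in $J$.

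\emph{Showing $J \subseteq I$.} Given $v\neq 0$, let $F=\{i : v\in H_i\}$. This is a proper flat with $v\in L_F$ and $\rho(v) = n-|F|$, so $v^{\rho(v)+k+1}$ is itself a generator of $I$.

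The only real content lies in the two divisor identities, which the paper asserts before the corollary. Given those, the main care point is the bookkeeping of the unit-ideal conventions and the convention that $L_F = \cap_{i\in F} H_i$.
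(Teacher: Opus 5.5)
Your proposal is correct and follows the paper's route. The paper deduces the corollary from Theorem~\ref{thm:powerideal} via the identities $\mathcal{O}(\alpha)=\mathcal{O}(\sum_F D_F)$ and $\mathcal{O}(1,\dotsc,1)=\mathcal{O}(\sum_F (n-|F|)D_F)$, leaving implicit the check that the power ideal with $a_F=n-|F|+k$ equals the zonotopal ideal, and you carry out that check correctly, including the unit-ideal conventions. Your heuristic for the multiplicity-one claim about $\alpha$ is vague, but the paper also takes these divisor identities as given, so this is not a gap.
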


A similar result appears in \cite{GeoZonotopeII}, but the interpretation there differs except when $k=0$. See Remark~\ref{rem:CP25}. We also show that, in many cases, the inverse systems of \emph{hierarchical zonotopal algebras}, a class of algebras introduced in \cite{HoltzRonXu} and studied in \cite{LenzHierarchical}, arise as the sections of line bundles on augmented wonderful varieties. See Proposition~\ref{prop:hierarchical}. 

Corollary~\ref{cor:seczonotope} recovers and illuminates several results about zonotopal algebras. For example, let $L/i$ be the contraction of $L$, the hyperplane arrangement obtained by restricting each hyperplane to $H_i$, and let $L \setminus i$ be the deletion, the hyperplane arrangement obtained by deleting $H_i$. 
If the linear form defining $H_i$ is nonzero, i.e., if $i$ is not a loop, then the augmented wonderful variety $W_{L/i}$ is naturally identified with a Cartier divisor on $W_L$ whose associated line bundle is $\mathcal{O}(0, \dotsc, 1, \dotsc, 0)$, where the $1$ is in the $i$th spot. If $\cap_{j \not= i} H_j = 0$, i.e., if $i$ is not a coloop, then there is an identification 
\begin{equation}\label{eq:deletionidentification}
H^0(W_L, \mathcal{O}(1, \dotsc, 1)(k\alpha)(-W_{L/i})) = H^0(W_{L \setminus i}, \mathcal{O}(1, \dotsc, 1)(k\alpha)) = C_{L \setminus i, k}.
\end{equation}
See Lemma~\ref{O-deletion}. Twisting the short exact sequence corresponding to the Cartier divisor $W_{L/i}$ by $\mathcal{O}(1, \dotsc, 1)(k\alpha)$, we obtain the following exact sequence if $i$ is not a loop or coloop:
\begin{equation}\label{eq:deletion}
0 \to C_{L \setminus i, k}^{\bullet - 1} \to C_{L, k}^{\bullet} \to C_{L/i, k}^{\bullet},
\end{equation}
where the first map is multiplication by a linear form whose vanishing locus is $H_i$ and the second map is restriction to $L/i$. 
This sequence is right exact if $H^1(W_{L \setminus i}, \mathcal{O}(1, \dotsc, 1)(k\alpha)) = 0$. 
If $k \ge -1$, then the higher cohomology of $\mathcal{O}(1, \dotsc, 1)(k\alpha)$ vanishes for any $L$, so this is a short exact sequence. See Corollary~\ref{cor:line}. This right exactness was first proved in \cite[Proposition 4.4]{ArdilaPostnikov}. When $k \le -2$, $H^1(W_L, \mathcal{O}(1, \dotsc, 1)(k\alpha))$ is usually nonzero. Nevertheless, the sequence \eqref{eq:deletion} is right exact when $k= -2$ as well \cite[Section 5.1]{HoltzRon}. We give an explanation for this phenomenon below.

This right exactness gives a recursion to compute the Hilbert series of $Z_{L, k}$ for any $k \ge -2$. Let $\operatorname{Hilb}(Z_{L, k}) = \sum \dim Z_{L,k}^i q^i$, where $Z_{L,k}^i$ is the $i$th graded piece of $Z_{L,k}$. Then, taking into account the degree shift, the exactness of \eqref{eq:deletion} when $k \ge -2$ implies that, if $i$ is not a loop or coloop, then
$$\operatorname{Hilb}(Z_{L, k}) = \operatorname{Hilb}(Z_{L/i, k}) + q \operatorname{Hilb}(Z_{L \setminus i, k}),$$
i.e., $\operatorname{Hilb}(Z_{L, k})$ satisfies a \emph{deletion-contraction recursion}.
This allows one to compute $\operatorname{Hilb}(Z_{L,k})$ from the case when the hyperplane arrangement is Boolean.
In particular, it implies that the Hilbert series of $Z_{L,k}$ is combinatorially determined, i.e., it only depends on the matroid of $L$.

The most heavily studied zonotopal algebras are $Z_{L, 0}, Z_{L,-1}$, 
and $Z_{L,-2}$, known as the external, central, and internal zonotopal 
algebras. 
The algebra $Z_{L,0}$ appeared independently in 
the work of Postnikov, Shapiro, and Shapiro \cite{PSS} and 
Wagner \cite{Wagner, wagnerAlgebrasRelatedMatroids1999}, 
and it is also known as the Postnikov-Shapiro 
algebra or the circulation algebra. The 
inverse system of the 
defining ideal of $Z_{L,-1}$ first appeared in approximation theory in 
the work of de Boor and H\"ollig \cite{deboorBsplinesParallelepipeds1982}, and the ideal itself was later 
introduced by de Boor, Dyn, and Ron \cite{deboorTwoPolynomialSpaces1991}. A 
theory containing the three cases $k=0,-1,-2$ was developed by Holtz and Ron 
\cite{HoltzRon}, where the term ``zonotopal algebra,'' as well as the 
adjectives ``external,'' ``central,'' and ``internal'' were coined. The family 
of algebras for arbitrary $k$ was studied by Ardila and Postnikov 
\cite{ArdilaPostnikov} concurrently with 
\cite{HoltzRon}. 

In the external, central, and internal cases, the Hilbert series of the 
zonotopal algebra can be described in terms of the \emph{Tutte 
polynomial} $T_L(x,y)$ of the hyperplane arrangement. The Tutte 
polynomial is a combinatorial invariant of a hyperplane arrangement 
which satisfies a deletion-contraction recursion. The Hilbert series of 
$Z_{L, 0}$, $Z_{L, -1}$, and $Z_{L, -2}$ are $q^{n-r} T_L(1 + q, 
q^{-1})$, $q^{n-r}T_L(1, q^{-1})$, and $q^{n-r}T_L(0, q^{-1})$, 
respectively \cite{ArdilaPostnikov, HoltzRon}\footnote{All three 
    formulas are claimed in {\cite[Corollary 4.13, Corollary 
    4.14, and Corollary 4.15]{ArdilaPostnikov}}. However, the 
proof is faulty in the internal case. It is pointed out in 
the correction {\cite{APCorrection}} that 
the formula still holds by the results of 
{\cite{HoltzRon}}.}.

The case of the internal zonotopal algebra, when $k=-2$, is the most 
subtle. For example, Holtz and Ron conjectured a spanning set for its 
inverse system in \cite[Conjecture 6.1]{HoltzRon}. This conjecture was 
disproved in \cite{APCorrection}. An explicit basis for the inverse system of $Z_{L, -2}$ was eventually found in \cite[Section 4.5]{GillespieThesis}. Even though $H^1(W_L, \mathcal{O}(1, \dotsc, 1)(-2\alpha))$ is usually nonzero, \eqref{eq:deletion} is exact when $k=-2$, as we now explain. 

\medskip

Let $\mathcal{L}_M$ be the line bundle $\omega_{W_L} \otimes \mathcal{O}(1, \dotsc, 1)(\alpha)$, where $\omega_{W_L}$ is the canonical bundle. There is an embedding of $W_L$ into a certain toric variety called the stellahedral toric variety, and $\mathcal{L}_M$ is the restriction of the line bundle corresponding to the independence polytope of the matroid dual of the matroid of $L$. We can identify $\mathcal{L}_M$ with a subsheaf of $\mathcal{O}(1, \dotsc, 1)(-\alpha)$, see Proposition~\ref{prop:canonical}. This gives a second interpretation of the inverse system of $Z_{L, k}$ as the sections of a line bundle, for $k \le -1$. 

\begin{proposition}\label{prop:LM}
For $k \le -1$, the inclusion of $\mathcal{L}_M((k+1)\alpha)$ into $\mathcal{O}(1, \dotsc, 1)(k\alpha)$ induces an isomorphism on global sections, so $C_{L, k} = H^0(W_L, \mathcal{L}_M((k+1)\alpha))$ as subspaces of $\operatorname{Sym} L^*$.
\end{proposition}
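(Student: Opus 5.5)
The plan is to reduce Proposition~\ref{prop:LM} to Theorem~\ref{thm:powerideal} by comparing two power ideals.

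\textbf{Step 1: the coefficients of $\mathcal{L}_M((k+1)\alpha)$.} I will write $\mathcal{L}_M((k+1)\alpha)$ in the basis $\{D_F\}$. The variety $W_L$ is an iterated blow-up of $\mathbb{P}(L\oplus\mathbb{C})$, whose canonical class is $-(r+1)\alpha$. The stratum attached to a nonempty proper flat $F$ is $\mathbb{P}L_F \subset \mathbb{P}L$, which has codimension $\operatorname{rk}F+1$ in $\mathbb{P}(L\oplus\mathbb{C})$, so blowing it up contributes $\operatorname{rk}F\cdot D_F$ to $K$. Using $\alpha=\sum_F D_F$, this gives
$$K_{W_L}=\sum_F (\operatorname{rk}F-r-1)D_F .$$
Since $\mathcal{O}(1,\dots,1)=\mathcal{O}(\sum_F(n-|F|)D_F)$, I obtain
$$\mathcal{L}_M((k+1)\alpha)=\mathcal{O}\Bigl(\sum_F \bigl(n-|F|+k-(r-\operatorname{rk}F-1)\bigr)D_F\Bigr).$$
Every proper flat has $r-\operatorname{rk}F-1\ge 0$, so the coefficient of each $D_F$ is at most that in $\mathcal{O}(1,\dots,1)(k\alpha)$. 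This recovers the inclusion of Proposition~\ref{prop:canonical} as an inclusion of subspaces of $\operatorname{Sym}L^*$.

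\textbf{Step 2: comparison of ideals.} By Theorem~\ref{thm:powerideal}, $H^0(\mathcal{O}(1,\dots,1)(k\alpha))=C_{L,k}$ is the inverse system of $I=(v^{n-|F|+k+1}: v\in L_F)$. Likewise $H^0(\mathcal{L}_M((k+1)\alpha))$ is the inverse system of
$$J=(v^{n-|F|+k+2-c_F}: v\in L_F),\qquad c_F=\dim L_F=r-\operatorname{rk}F .$$
Clearly $I\subseteq J$. Inverse systems reverse inclusions, so it suffices to prove $J\subseteq I$. Concretely, for each proper flat $F$ with $c_F\ge 2$, every $v^{m}$ with $v\in L_F$ and $m=n-|F|+k+2-c_F$ must lie in $I$.

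\textbf{Step 3: reduction to a degree bound.} The restriction of the arrangement to $L_F$ has $n'=n-|F|$ nonloop hyperplanes, is essential of rank $c=c_F$, and for $w\in L_F$ the value $\rho(w)$ is the same computed in $L$ or in $L_F$. Hence the generators of $Z_{L_F,k}$ already lie in $I$. It therefore suffices to prove the degree bound
$$Z_{L_F,k}^d=0 \quad\text{for } d\ge n'-c+k+2,\ k\le -1.$$
For $k=-1$ this bound follows from the Hilbert series $q^{n'-c}T(1,q^{-1})$, which has top degree $n'-c$. For $k<-1$ I would induct on $k$ and on the number of hyperplanes using the deletion sequence \eqref{eq:deletion}. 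The left exact part suffices: if $C_{L/i,k}$ and $C_{L\setminus i,k}$ vanish in the relevant degrees, then $C_{L,k}$ does too. The base cases are handled separately. In the Boolean case the zonotopal algebra is $\bigotimes\mathbb{C}[x]/(x^{k+2})$, which has top degree $c(k+1)\le n'-c+k+1$, with equality at $k=-1$ and the case $k\le-2$ vanishing outright. The coloop case reduces to a smaller rank.

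\textbf{Main obstacle.} The delicate point is the induction for $k\le -2$. Here \eqref{eq:deletion} need not be right exact, but only its left exact part is used, and the degree shift by one in $C_{L\setminus i,k}^{\bullet-1}$ matches the drop by one in both $n'$ and the bound. Loops and coloops need care: a coloop lowers $c$ and $n'$ simultaneously. If this combinatorial induction becomes messy, I would try a geometric route instead. That route restricts a putative section in $C_{L_F,k}$ of degree $d$ to $W_{L_F}$ and bounds its pole order along the boundary divisors of the point-like strata, using the coefficients computed in Step 1.
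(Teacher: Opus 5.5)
Your argument is correct in substance, and it takes a genuinely different route from the paper.

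\textbf{The paper's route.} The paper stays on $W_L$. For $k=-1$ it places $\mathcal{L}_M$ and $\mathcal{O}(1,\dots,1)(-\alpha)$ in the deletion--contraction diagram \eqref{section-diagram}. It uses the vanishing of $H^1$ for both (Corollaries~\ref{cor:line} and~\ref{cor:LMvanish}) to make the rows right exact, and then inducts down to the Boolean case, where the two section spaces visibly agree. For $k<-1$ it replaces $\alpha$ by the closure of a generic hyperplane. Both section spaces then become the elements of the $k=-1$ spaces that vanish to order $-k-1$ along that hyperplane, so they coincide.

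\textbf{Your route.} You translate both section spaces into power ideals via Theorem~\ref{thm:powerideal}. You then show that the extra generators $v^{n-|F|+k+2-c_F}$, for $v\in L_F$ and $\dim L_F\ge 2$, already lie in $I$. This follows from a top-degree bound for the zonotopal algebra of the restricted arrangement on $L_F$, whose generators lie in $I$.

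\textbf{Comparison.} Your route avoids the vanishing theorems entirely (Theorem~\ref{thm:pushforward}, Corollary~\ref{cor:LMvanish}). It needs only Theorem~\ref{thm:powerideal}, Proposition~\ref{prop:canonical}, the left-exact half of \eqref{eq:deletion}, and the top degree $n-r$ of the central zonotopal algebra. That last fact can itself be obtained by the same left-exact deletion induction. Your route also explains concretely why the stricter pole conditions along $D_F$ with $\dim L_F\ge 2$ are automatic: they are forced by the conditions at the flats $G\supseteq F$. The paper's route is shorter given the machinery already developed. Its $k=-1$ step also uses the same vanishing that drives the $k=-2$ exactness, so it fits the cohomological narrative.

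\textbf{A slip in your base case.} The Boolean claim is false in rank $0$. There the empty tensor product is $\mathbb{C}$ in degree $0$, so $c(k+1)\le n'-c+k+1$ fails for $c=0$ and $k\le -2$. Your deletion--contraction induction does reach rank $0$: contract one element of a rank-one arrangement with at least two parallel hyperplanes. There are two easy fixes.
\begin{itemize}
\item Treat rank $1$ directly, where $Z_{L,k}=\mathbb{C}[v]/(v^{n+k+1})$.
\item Replace the induction for $k\le -2$ by the observation that multiplication by a generic linear form injects $C_{L,k-1}^{d-1}$ into $C_{L,k}^{d}$. This is the left map in \eqref{truncation-diagram}, or follows directly from Theorem~\ref{thm:direction}, since a generic form has degree $1$ in every direction $v\in L_F\setminus\{0\}$. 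The bound for all $k\le -1$ then follows from the $k=-1$ bound in rank $\ge 1$, which is all you need because $L_F\neq 0$ for every proper flat $F$.
\end{itemize}
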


In Corollary~\ref{cor:LMvanish} below, we show that
$$H^i(W_L, \mathcal{L}_M(k\alpha)) = 0 \text{ for }i >0 \text{ and }k \ge -1.$$
Using this vanishing, we show that the sequence \eqref{eq:deletion} is 
exact when $k=-2$, recovering the fact that  $\Hilb(Z_{L,-2}) = q^{n-r} T_L(0, q^{-1})$. 

We can also use Proposition~\ref{prop:LM} to recover a result of Brion and Vergne: that the top graded piece  of the central zonotopal algebra $Z_{L, -1}^{n-r}$ is identified with the dual of the $r$th graded piece of the singular cohomology of $L \setminus \cup_{i} H_i$ \cite{BrionVerge}. See Remark~\ref{rem:BV}. 

\medskip

If $k \le -3$, then \eqref{eq:deletion} can fail to be exact, see Example~\ref{ex:MK4}. 
While the $\mathbb{G}_m$-equivariant Euler characteristic of 
$\mathcal{O}(1, \dotsc, 1)(k\alpha)$ is combinatorially determined 
(i.e., determined by the matroid of $L$), the Hilbert series of 
$H^0(W_L, \mathcal{O}(1, \dotsc, 1)(k\alpha)) = C_{L, k}$ is not 
combinatorial, at least for $k \le -6$, see \cite[Proposition 4]{APCorrection}. 

In \cite{HoltzRonXu}, the authors gave formulas for the dimensions of a generalization of zonotopal algebras called hierarchical zonotopal algebras. Using a result of Lenz \cite{LenzHierarchical} that identifies hierarchical zonotopal algebras with hyperplane arrangement power ideals, we can recover those formulas in many cases. See Section~\ref{ss:hierarchical}. 

\medskip
The interpretation in Corollary~\ref{cor:seczonotope} also gives new results about zonotopal algebras. 
Choose a general form in $L^*$, and let $TL$ denote the hyperplane in 
$L$ defined by the vanishing locus of that form. We have an induced hyperplane arrangement on $TL$, and $W_{TL}$ can be identified with a Cartier divisor on $W_L$ whose corresponding line bundle is $\mathcal{O}(\alpha)$. The restriction of $\mathcal{O}(1, \dotsc, 1)$ from $W_L$ to $W_{TL}$ is identified with the line bundle $\mathcal{O}(1, \dotsc, 1)$ on $W_{TL}$, and so we obtain an exact sequence
\begin{equation}
0 \to C_{L, k}^{\bullet - 1} \to C_{L, k+1}^{\bullet} \to C_{TL, k+1}^{\bullet} \to H^1(W_L, \mathcal{O}(1, \dotsc, 1)(k\alpha)),
\end{equation}
where the first map is multiplication by a linear form defining $TL$, and the second map is restriction to $TL$. 
If $k \ge -1$, then $H^1(W_L, \mathcal{O}(1, \dotsc, 1)(k\alpha)) = 0$ and we obtain a short exact sequence. This sequence, which can be used to compute the Hilbert series of $Z_{L, k}$ for $k \ge -1$ from the easy case of the external zonotopal algebra $Z_{L,0}$, does not appear to have been noticed before.

\medskip

Let $\Psi_L = \operatorname{Sym} L \otimes \wedge^{\bullet} L$ be the ring of polynomial differential forms on $L^*$. This ring, which is known as superspace, is bigraded, with a commutative (``bosonic'') and anticommutative (``fermionic'') part. There is a linear derivation  $d \colon \Psi_L \to \Psi_L$ which is determined by the properties that $d(v \otimes 1) = 1 \otimes v$ and $d(1 \otimes v) = 0$ for any $v \in L$. This map has bidegree $(-1, 1)$, i.e., it decreases the commutative degree by $1$ and increases the anticommutative degree by $1$. 

Quotients of superspace rings by \emph{differentially closed} ideals, i.e., ideals in $\Psi_L$ which are closed under $d$, have attracted significant attention in recent years, especially in the symmetric function literature, see \cites{KR,RW2,RW,Superspace,ACKMR}. In particular, given a homogeneous ideal $I = (f_1, \dotsc, f_r)$ in $\operatorname{Sym} L$, one often obtains an interesting ideal in $\Psi_L$ by considering its \emph{differential closure}, the smallest ideal in $\Psi_L$ which is closed under the action of $d$ and contains $I$. 

There is an action of $\Psi_L$ on $\Psi_{L^*}$ via differential operators. This gives a version of inverse systems of ideals in $\Psi_L$, see Section~\ref{sec:superspace}. In particular, for each ideal $I$ in $\Psi_L$, we obtain a subspace of $\Psi_{L^*}$ which is identified with the dual of $\Psi_L/I$, which we call the inverse system of $I$. We identify $\Psi_{L^*}$ with the space of polynomial differential forms on $L$. 

As $W_L$ is a simple normal crossings compactification of $L$, it has a log cotangent bundle $\Omega_L$ whose sections are differential forms on $W_L$ with logarithmic poles along the boundary of $W_L$. In particular, for any divisor $D$ which is disjoint from $L$, $H^0(W_L, \mathcal{O}(D) \otimes \wedge^j \Omega_L)$ can be identified with the polynomial $j$-forms on $L$ which satisfy certain vanishing conditions along the boundary. In this way, for any line bundle $\mathcal{L}$ on $W_L$, $H^0(W_L, \mathcal{L} \otimes \Omega_L)$ can be identified with a subspace of $\Psi_{L^*}$. 

\begin{theorem}\label{thm:diffclosed}
For any line bundle $\mathcal{L}$ on $W_L$, $\bigoplus_j H^0(W_L, \mathcal{L} \otimes \wedge^j \Omega_L)$ is the inverse system to a homogeneous differentially closed ideal in $\Psi_L$. 
\end{theorem}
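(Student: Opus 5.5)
We first have to say what it means for a subspace of $\Psi_{L^*}$ to be an inverse system of a homogeneous differentially closed ideal, and then check the relevant properties for $S = \bigoplus_j H^0(W_L, \mathcal{L} \otimes \wedge^j \Omega_L)$. We write $\mathcal{L} = \mathcal{O}(D)$ with $D$ supported on the boundary, using the unique $L$-linearization. By analogy with the commutative case, we expect the following characterization of the inverse system $I^\perp$ of an ideal $I \subset \Psi_L$. First, $I^\perp$ is closed under contraction by elements of $L$ (acting on forms as derivative operators on the polynomial part and interior products on the fermionic part). Equivalently, $I^\perp$ is closed under translation $\tau_v$ in the bosonic variables together with the interior products $\iota_v$, $v \in L$. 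Second, $I$ is homogeneous for the bigrading exactly when $I^\perp$ is bigraded. Third, $I$ is $d$-closed exactly when $I^\perp$ is closed under the adjoint of $d$. Since $d$ pairs $v \otimes 1$ with $1 \otimes v$, its adjoint with respect to the differential-operator pairing should be the Koszul/de Rham-type operator on $\Psi_{L^*}$: up to sign and scaling, it is the exterior derivative $d_{dR}$ on polynomial forms on $L$. We would therefore first prove a lemma in Section~\ref{sec:superspace} recording this dictionary. This is routine linear algebra on the pairing, checked on monomials.

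Next we verify the three properties for $S$ geometrically. Bigrading: the $\mathbb{G}_m$ action on $W_L$ preserves the boundary, so it acts on $\Omega_L$ and on $\mathcal{O}(D)$, hence on each $H^0$. The form degree $j$ is already a grading. The $\mathbb{G}_m$-weight decomposition splits each space into homogeneous polynomial forms, exactly as for functions.

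Translation: the translation action of $L$ on $W_L$ preserves the boundary divisors, so it preserves log forms and $\mathcal{O}(D)$. Hence $\tau_v$ preserves $S$.

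Contraction: a constant vector field $\partial_v$ on $L$ extends to a global vector field on $W_L$, namely the infinitesimal generator of the translation action. Because the action preserves each boundary divisor, this vector field is tangent to the boundary, i.e.\ it is a section of the log tangent bundle $T_{W_L}(-\log \partial)$. Contraction with a log vector field maps $\wedge^j\Omega_L \otimes \mathcal{O}(D)$ to $\wedge^{j-1}\Omega_L \otimes \mathcal{O}(D)$, so $\iota_v$ preserves $S$.

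Closure under the exterior derivative: since $D$ is supported on the boundary, a form in $H^0(\mathcal{O}(D)\otimes\wedge^j\Omega_L)$ locally looks like $f^{-1}\omega$ with $\omega$ log and $f = \prod x_i^{a_i}$ a local boundary equation. Then $d(f^{-1}\omega) = f^{-1}(d\omega - \tfrac{df}{f}\wedge\omega)$, and $df/f = \sum a_i\, dx_i/x_i$ is a log form. So $d$ preserves $\mathcal{O}(D)\otimes\wedge^\bullet\Omega_L$ as a sheaf of $\mathbb{C}$-vector spaces, and therefore preserves $S$. Equivalently, $d$ is the de Rham differential of the log complex twisted by the canonical connection on $\mathcal{O}(D)$ with log poles. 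This local computation is easy.

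The main obstacle is the dictionary step: identifying precisely the adjoint of $d$ and the correct notion of closure under contraction, including the signs, so that ideal, homogeneous and $d$-closed correspond exactly to closure under $\partial_v$, $\iota_v$, the bigrading and $d_{dR}$. One also needs a nondegeneracy/finite-dimensionality statement so that $S = (S^\perp)^\perp$. This holds because each bigraded piece of $\Psi$ is finite dimensional and the pairing is perfect degreewise. We would compute the adjoint on generators $x^a \otimes \xi_J$ and check the Leibniz signs. If the adjoint turns out to be a Koszul-type operator $\sum_i x_i\,\iota_{e_i}$ instead of $d_{dR}$, we would instead verify closure under that operator. That operator is contraction with the Euler vector field, which is also a log vector field, generating the $\mathbb{G}_m$-scaling. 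So either way the geometric input is the same.
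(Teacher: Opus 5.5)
Your fallback branch is exactly the paper's proof. The paper records the dictionary as Corollary~\ref{d-closed-inverse-char}: a bigraded $S \subseteq \Psi_{L^*}$ is the inverse system of a bigraded differentially closed ideal if and only if it is closed under $\tau_v^*$ and $\iota_v$ for all $v \in L$, and under $\iota_{\mathcal{E}}$. It gets the bigrading and translation-invariance from the canonical $\mathbb{G}_m \ltimes L$-linearizations, and closure under $\iota_v$ from the logarithmic vector fields generating the translation action. Closure under $\iota_{\mathcal{E}}$ comes from Lemma~\ref{lem:eulertangent} together with the $\mathcal{O}$-linearity of contraction. This is just as in your last paragraph.

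You should not leave the de Rham differential as your leading candidate, because bidegree alone rules it out. The map $d$ has bidegree $(-1,1)$ on $\Psi_L$, so its adjoint has bidegree $(1,-1)$ on $\Psi_{L^*}$, while $d_{dR}$ has bidegree $(-1,1)$. Concretely, for $r=1$ we have $\langle d(e_1), dx_1\rangle = \langle de_1, dx_1\rangle = 1 = \langle e_1, x_1\rangle$. So the adjoint sends $dx_1$ to $x_1 = \iota_{\mathcal{E}}(dx_1)$, not to $d_{dR}(dx_1) = 0$.

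Moreover, the $d_{dR}$ branch would not prove the theorem. Closure under the bigrading, $\tau_v^*$, $\iota_v$ and $d_{dR}$ does not force the ideal to be differentially closed. For $r=1$ and $I = (e_1)$, the inverse system is $\operatorname{span}\{1, dx_1\}$. It has all of those closure properties, yet $d(e_1) = de_1 \notin I$; correspondingly, $\iota_{\mathcal{E}}(dx_1) = x_1 \notin I^\perp$.

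So your twisted log de Rham computation, although correct, is irrelevant. The remark that ``either way the geometric input is the same'' is also inaccurate: what is actually needed is that the Euler field is a logarithmic vector field. Once you commit to $\iota_{\mathcal{E}}$, your argument is complete. Signs do not matter here, since only closure of a bigraded subspace is at stake.
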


For each $k$, let $\mathcal{Z}_{L, k}$ denote the quotient of $\Psi_L$ by the differential closure of the ideal of the $k$th zonotopal algebra. Then $\mathcal{Z}_{L, k}$ is equipped with a bigrading
$$\mathcal{Z}_{L, k} = \bigoplus_{i,j} \mathcal{Z}_{L, k}^{i,j},$$
where $i$ tracks the commutative grading and $j$ tracks the anticommutative grading. 
In \cite{Superspace}, Rhoades, Tewari, and Wilson studied $\mathcal{Z}_{L, 0}$. They computed its bigraded Hilbert series and gave a basis for its inverse system. For $k \le 0$, we show that $\mathcal{Z}_{L, k}$ can be described using the geometry of the augmented wonderful variety. Let $\mathcal{C}_{L, k}$ denote the inverse system of $\mathcal{Z}_{L, k}$, which is similarly bigraded. 

\begin{theorem}\label{thm:supersections}
For each $k \le 0$, $\mathcal{C}_{L,k} = \bigoplus_j H^0(W_L, \mathcal{O}(1, \dotsc, 1)(k\alpha) \otimes \wedge^j \Omega_L)$ as subspaces of $\Psi_{L^*}$. 
\end{theorem}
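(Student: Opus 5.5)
We need to prove that for $k\le 0$, the inverse system $\mathcal{C}_{L,k}$ equals $\mathcal{S}_k := \bigoplus_j H^0(W_L,\mathcal{O}(1,\dotsc,1)(k\alpha)\otimes\wedge^j\Omega_L)$. The plan is to prove two inclusions, using Theorem~\ref{thm:diffclosed} for one of them.

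\emph{First inclusion.} By Theorem~\ref{thm:diffclosed}, $\mathcal{S}_k$ is the inverse system of a differentially closed homogeneous ideal $J\subset\Psi_L$. Its $j=0$ part is $H^0(W_L,\mathcal{O}(1,\dotsc,1)(k\alpha))=C_{L,k}$ by Corollary~\ref{cor:seczonotope}. Hence $J\cap \operatorname{Sym}L$ is the zonotopal ideal $I_k$; more precisely, the fermionic-degree-$0$ part of $J$ equals $I_k$, since pairing respects bidegree. Because $J$ is differentially closed and contains $I_k$, it contains the differential closure of $I_k$. Therefore $\mathcal{S}_k\subseteq\mathcal{C}_{L,k}$.

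\emph{Reverse inclusion.} We must show that every element of $\mathcal{C}_{L,k}$ is a form that extends to a log form twisted by $\mathcal{O}(1,\dotsc,1)(k\alpha)$. First make the inverse system of a differential closure explicit. A polynomial $j$-form $\omega=\sum_{I} g_I\,dx_I$ on $L$ lies in $\mathcal{C}_{L,k}$ iff it is annihilated by $I_k$ and by all $d^m(I_k)$ together with their $\Psi_L$-multiples. Under the pairing, $d$ is adjoint to a contraction-type operator on $\Psi_{L^*}$: on polynomial forms it should be the Koszul/interior operator $\iota_E$ with the Euler field, or equivalently something like $\omega\mapsto\sum_i \partial_{x_i}\lrcorner$. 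Working this out, the condition becomes the following. For every $j$-vector $w=w_1\wedge\dotsb\wedge w_j\in\wedge^jL$, the coefficient function $\langle\omega,w\rangle$ lies in $C_{L,k}$. In addition, differentiating along the fermionic derivations kills it appropriately, i.e., for each $v\in L_F$, $v^{a_F+1-m}$ paired against $m$ fermionic directions in $L_F$ vanishes.

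Given this description, the main step is local. In charts of $W_L$ near a generic point of a boundary divisor $D_F$, we compare the pole-order condition for log forms against these vanishing conditions. Near $D_F$, the log cotangent bundle is spanned by $dt/t$ (with $t$ the defining equation of $D_F$, which behaves like the inverse radial coordinate along $L_F$ directions) together with forms pulled back from $L/L_F$ and from $\mathbb{P}L_F$. Contracting $\omega$ with vectors in $L_F$ reduces the pole order along $D_F$ relative to the plain-function bound. I would verify that the allowed pole order $a_F=n-|F|+k$ of a log $j$-form matches exactly the condition that the $m$-fold contraction with $L_F$ vectors has degree at most $a_F-m$ in the $L_F$-directions. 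This mirrors the proof of Theorem~\ref{thm:powerideal}, applied to each graded component. Since $W_L\setminus\bigcup$(codim $\ge2$ strata) has complement of codimension $2$ and the bundles are locally free, checking at generic points of each $D_F$ suffices.

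\emph{Main obstacle and the role of $k\le0$.} The hard step is showing that the differential closure imposes no conditions beyond these divisorial ones. A priori, $\mathcal{C}_{L,k}$ is cut out only by generators of the form $d^m(v^{a+1})\cdot\psi$, and we need those to produce exactly the contraction conditions. I expect $k\le0$ to be what makes this work: then $a_F\le n-|F|$, and the log-form pole bound coincides with the bound obtained by applying $d$ to $v^{a_F+1}$, which gives $v^{a_F}\otimes v$. For $k>0$ there would be a mismatch. My first attempt is to reduce to the case where $L_F$ is a line, using that $d(v^{a+1})=(a+1)v^a\otimes v$ and that the ideal is generated by such powers. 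I would then check the rank-one and Boolean cases directly, and extend to the general case by the flat-by-flat local description above.
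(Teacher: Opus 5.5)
Your first inclusion is correct and differs from the paper's. By Corollary~\ref{cor:seczonotope}, the $j=0$ part of the geometric space is $C_{L,k}$, so the differentially closed ideal from Theorem~\ref{thm:diffclosed} meets $\operatorname{Sym}L$ in the zonotopal ideal and therefore contains its differential closure. The paper uses exactly this argument for $k\ge1$ in Section~\ref{ssec:positivek}. For $k\le0$ it instead proves this inclusion (Proposition~\ref{prop:containment}) by a local computation along $D_F$ for flats of rank $r-1$, using Proposition~\ref{prop:APcocircuit}.

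\textbf{The gap is the reverse inclusion.} Your local description of the sections is right: they are the forms $\omega$ with $D_v^{a_F+1-m}\iota_{w_1}\cdots\iota_{w_m}\omega=0$ for every proper flat $F$ and all $v,w_1,\dotsc,w_m\in L_F$. But the claim that every element of $\mathcal{C}_{L,k}$ satisfies these conditions cannot be ``worked out'' from the adjoint of $d$; it \emph{is} the reverse inclusion.

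The differential closure is generated by the $v^{a+1}$ and $d(v^{a+1})=(a+1)v^a\,dv$. So it directly yields only contraction conditions in which the fermionic direction equals the bosonic one. Take $\dim L_F\ge2$ and $a_F\ge1$, and let $u$ range over $L_F$. The bidegree $(a_F,1)$ part of the ideal generated by $u^{a_F+1}$ and $u^{a_F}du$ is $d(\operatorname{Sym}^{a_F+1}L_F)$. This consists of $d$-closed elements, so it misses $v^{a_F}\,dw$ for independent $v,w$. Its bidegree $(a_F-1,2)$ part is zero.

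The needed conditions must therefore come from the interaction with other flats, through a superspace analogue of the Ardila--Postnikov cocircuit lemma. Such a statement genuinely requires $k\le0$. For the Boolean arrangement $L=\mathbb{C}^2$ and $k=1$, the form $x_1x_2\,dx_1\wedge dx_2$ is annihilated by the differential closure of $(e_1^3,e_2^3,\operatorname{Sym}^4L)$, so it lies in $\mathcal{C}_{L,1}$. Yet it has a pole of order $4>a_\emptyset=3$ along $D_\emptyset$. Your plan to check rank-one and Boolean cases and then ``extend flat by flat'' gives no mechanism for this deduction. A computation near a single divisor cannot see how the conditions at different flats interact, and the heuristic $a_F\le n-|F|$ does not address it.

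The paper avoids needing any such lemma.
\begin{itemize}
\item For $k=0$, it compares the easy, left-exact deletion--contraction sequence of Rhoades--Tewari--Wilson for $\mathcal{C}_{L,0}$ with the geometric sequence of Proposition~\ref{prop:exactsuperspace}. The latter is right exact by Theorem~\ref{thm:vanishing}. It then concludes by induction from the Boolean case via a diagram chase.
\item For $k\le-1$, it inducts on $-k$. Coloops are handled by Lemma~\ref{lem:negkcoloop}, plus a Serre duality and K\"unneth computation when $k=-1$.
\item Otherwise, $\rho_L(v)\le\rho_{L\setminus i}(v)+1$ gives $\mathcal{C}_{L,k}\subseteq\mathcal{C}_{L\setminus i,k+1}$. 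By induction, Theorem~\ref{thm:pushforward} and the projection formula, this space is $H^0(W_L,\mathcal{O}(1,\dotsc,1)((k+1)\alpha)(-y_i)\otimes\wedge^j\Omega_L)$. Intersecting over all $i$ and using $\bigcap_i\mathcal{O}(-y_i)=\mathcal{O}(-\alpha)$ finishes the proof.
\end{itemize}
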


As $\wedge^r \Omega_L$ is a line bundle, which is isomorphic to $\mathcal{L}_M \otimes \mathcal{O}(-1, \dotsc, -1)$, the top anticommutative piece of a ring like the one in Theorem~\ref{thm:diffclosed} is identified with the inverse system of a graded ring. Concretely, the restriction of each section of $\mathcal{O}(1, \dotsc, 1)(k\alpha) \otimes \wedge^r \Omega_L$ to $L$ is equal to a polynomial on $L$ times the standard volume form on $L$, and the set of polynomials that occur in this way is the desired inverse system. 

In \cite{Superspace}, the authors observed that the Hilbert series of the top anticommutative piece of $\mathcal{Z}_{L, 0}$ is the same as the Hilbert series of the central zonotopal algebra. We generalize this to all $k \le 0$. Combining Theorem~\ref{thm:supersections} and Proposition~\ref{prop:LM}, we have that
$$\mathcal{C}_{L, k}^{\bullet, r} = H^0(W_L, \mathcal{O}(1, \dotsc, 
1)(k\alpha)\otimes \wedge^r \Omega_L) = H^0(W_L, \mathcal{L}_M(k\alpha)) 
= C_{L, k-1}$$
for any $k \le 0$,
and so $\mathcal{Z}_{L, k}$ ``interpolates'' between $Z_{L,k}$ and $Z_{L, k-1}$. In particular, $\mathcal{Z}_{L, -1}$ interpolates between the central zonotopal algebra and the internal zonotopal algebra, confirming a conjecture in \cite{Superspace}.

Similar to the case of zonotopal algebras, we can use our cohomological interpretation to establish an analogue of \eqref{eq:deletion}. The form of the exact sequence is slightly different, because the restriction of $\wedge^j \Omega_L$ to $W_{L/i}$ is isomorphic to an extension of $\wedge^j \Omega_{L/i}$ by $\wedge^{j-1} \Omega_{L/i}$. The choice of a splitting of the inclusion $L/i \hookrightarrow L$ induces an isomorphism of the restriction with $\wedge^j \Omega_{L/i} \oplus \wedge^{j-1} \Omega_{L/i}$. See Section~\ref{ssec:logcotangent}. 

\begin{theorem}\label{thm:superspaceexact}
For any $k \le 0$ and $i$ which is not a loop or coloop of $L$, there is an exact sequence
\begin{equation}\label{eq:superdeletion}
0 \to \mathcal{C}_{L \setminus i, k}^{\bullet - 1, \bullet} \to \mathcal{C}_{L, k}^{\bullet, \bullet} \to \mathcal{C}_{L/i, k}^{\bullet, \bullet} \oplus \mathcal{C}_{L/i, k}^{\bullet, \bullet - 1}.
\end{equation}
If $k \in \{-1, 0\}$, then this sequence is right exact. 
\end{theorem}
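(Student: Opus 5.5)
The plan mirrors the derivation of \eqref{eq:deletion}, now carried out for the twisted log forms. Fix $k \le 0$ and $i$ neither a loop nor a coloop, and set $\mathcal{E}_j = \mathcal{O}(1, \dotsc, 1)(k\alpha) \otimes \wedge^j \Omega_L$. Since $i$ is not a loop, $W_{L/i}$ is a Cartier divisor on $W_L$ with $\mathcal{O}(W_{L/i}) = \mathcal{O}(0, \dotsc, 1, \dotsc, 0)$. We therefore have a short exact sequence
$$0 \to \mathcal{E}_j(-W_{L/i}) \to \mathcal{E}_j \to \mathcal{E}_j|_{W_{L/i}} \to 0.$$
Taking global sections gives a left exact sequence. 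By Theorem~\ref{thm:supersections}, the middle term, summed over $j$, is $\mathcal{C}_{L,k}$.

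For the restriction term we use Section~\ref{ssec:logcotangent}. Choosing a splitting of $L/i \hookrightarrow L$ identifies $\wedge^j \Omega_L|_{W_{L/i}}$ with $\wedge^j \Omega_{L/i} \oplus \wedge^{j-1}\Omega_{L/i}$. The restriction of $\mathcal{O}(1, \dotsc, 1)(k\alpha)$ to $W_{L/i}$ is the corresponding bundle on $W_{L/i}$; here the $i$th factor restricts trivially, which is the same bookkeeping used for \eqref{eq:deletion}. Theorem~\ref{thm:supersections} applied to $L/i$ then identifies $H^0$ of the restriction with $\mathcal{C}_{L/i,k}^{\bullet,\bullet} \oplus \mathcal{C}_{L/i,k}^{\bullet,\bullet-1}$. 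The map from $\mathcal{C}_{L,k}$ is pullback of forms along $L/i \hookrightarrow L$ together with the component given by contraction against the chosen complementary direction, so it is compatible with the polynomial-form description.

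For the kernel term we need a superspace analogue of Lemma~\ref{O-deletion}. Since $i$ is not a coloop, there is a birational morphism $W_L \to W_{L\setminus i}$, which is the one used to prove \eqref{eq:deletionidentification}. We want to show
$$H^0(W_L, \mathcal{E}_j(-W_{L/i})) \cong H^0(W_{L\setminus i}, \mathcal{O}(1,\dotsc,1)(k\alpha)\otimes \wedge^j\Omega_{L\setminus i}),$$
with the map given by multiplication by the linear form $x_i$ cutting out $H_i$. Both sides sit inside $\Psi_{L^*}$ as polynomial forms on the same vector space $L$, so this is a comparison of vanishing and pole conditions along boundary divisors. The arrangement $L$ has one extra hyperplane, so $\Omega_L$ permits log poles along additional divisors. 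Dividing by $x_i$ should exactly cancel these extra allowances, as in the line bundle case.

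The cleanest route is to show that the pullback of $\Omega_{L\setminus i}$ is a subsheaf of $\Omega_L$, with the two agreeing away from the exceptional divisors. Then we check, divisor by divisor, that the twist by $-W_{L/i}$ absorbs the difference. I expect this identification to be the main obstacle, specifically the check along divisors $D_F$ with $F \ni i$ where the log-form orders could a priori differ. If the local check is too messy, a fallback is to argue through Theorem~\ref{thm:supersections} and the explicit form of the differentially closed ideals, showing that multiplication by $x_i$ sends $\mathcal{C}_{L\setminus i,k}$ into $\mathcal{C}_{L,k}$ with image exactly the forms vanishing on restriction. Since we only need the relevant spaces as subspaces of $\Psi_{L^*}$, comparing the ideals directly should suffice. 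This gives the left exact sequence \eqref{eq:superdeletion}.

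For right exactness when $k \in \{-1,0\}$, it suffices that $H^1(W_L, \mathcal{E}_j(-W_{L/i})) = H^1(W_{L\setminus i}, \mathcal{O}(1,\dotsc,1)(k\alpha)\otimes\wedge^j\Omega_{L\setminus i})$ vanish. This requires the pushforward along $W_L \to W_{L\setminus i}$ to have no higher direct images, which holds because the map is a composition of blow-ups with rational fibers. The needed vanishing $H^{>0}(W, \mathcal{O}(1,\dotsc,1)(k\alpha)\otimes \wedge^j \Omega) = 0$ for $k \ge -1$ would be proved by the same method as Corollary~\ref{cor:line}. One option is the toric embedding into the stellahedral variety plus a Bott-type vanishing for log forms. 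The other is induction via the exact sequences above, using that $\mathcal{O}(1,\dotsc,1)(-\alpha)$ is a limit of nef bundles compatible with $\mathcal{L}_M$ (Corollary~\ref{cor:LMvanish}) for the top degree $j=r$.
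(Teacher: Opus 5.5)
Your skeleton is the paper's. Theorem~\ref{thm:superspaceexact} is deduced there from Theorem~\ref{thm:supersections} together with Proposition~\ref{prop:exactsuperspace}, whose proof is exactly your twisted sequence plus Lemmas~\ref{lem:restriction} and~\ref{lem:split}. However, the two inputs that carry the weight are not supplied.

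\textbf{The kernel term.} The twist does not ``absorb the difference.'' By Lemma~\ref{lem:pullback}, $\mathcal{O}(1,\dotsc,1)(k\alpha)(-W_{L/i})$ is already equal to $\pi^*\mathcal{O}(1,\dotsc,1)(k\alpha)$. The extra divisors of $W_L$ are the exceptional $D_F$ of $\pi\colon W_L\to W_{L\setminus i}$, where $i$ is a coloop of $F$. These have $i\in F$, so $x_i$ has neither zero nor pole along them, and dividing by $x_i$ changes nothing there. What you actually need is $\pi_*\wedge^j\Omega_L=\wedge^j\Omega_{L\setminus i}$. For $H^0$ this follows from $\pi^*\Omega_{L\setminus i}\subseteq\Omega_L$ and Hartogs, since the exceptional divisors map to codimension $\ge 2$.

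For right exactness you also need $R^q\pi_*\wedge^j\Omega_L=0$ for $q>0$. ``Blow-ups with rational fibers'' does not give this. It yields $R\pi_*\mathcal{O}=\mathcal{O}$, but already for the blow-up of a point on a surface $R^1\pi_*\Omega^1\neq 0$. The correct statement is logarithmic. Because $i$ is not a coloop, Lemma~\ref{lem:pullback}(2) gives $\pi^{-1}(L\setminus i)=L$. So $\pi$ is an isomorphism off the boundary, with the boundary as full preimage, and the Esnault--Viehweg lemma then gives Theorem~\ref{thm:pushforward}.

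\textbf{The vanishing.} You need $H^q(W_{L\setminus i},\mathcal{O}(1,\dotsc,1)(k\alpha)\otimes\wedge^j\Omega_{L\setminus i})=0$ for $q>0$, $k\ge -1$ and all $j$. This is Theorem~\ref{thm:vanishing}, the paper's main technical result, and you give no working argument for it.
\begin{itemize}
\item Proving it ``by the same method as Corollary~\ref{cor:line}'' is circular, since that corollary is the case $j=0$ of this very statement (and Corollary~\ref{cor:LMvanish} is the case $j=r$).
\item Toric Bott-type vanishing does not apply. $W_L$ is not toric, and $\Omega_L$ is not the restriction of the toric log cotangent bundle but a quotient of $\bigoplus\pi_i^*\mathcal{O}(-1)$. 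Also, $\mathcal{O}(1,\dotsc,1)(-\alpha)$ is not nef in general.
\end{itemize}

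The paper proceeds in three steps.
\begin{enumerate}
\item It proves the Boolean case: nef vanishing for $k\ge 0$, and for $k=-1$, Proposition~\ref{prop:SD} plus pullback from $(\mathbb{P}^1)^n$.
\item For a basis $S$, it reduces $\mathcal{O}(k\alpha+\sum_{t\in S}y_t)\otimes\wedge^j\Omega_L$ to the Boolean arrangement $L^S$ via $R\pi_{S*}$ and Theorem~\ref{thm:pushforward}.
\item It enlarges $S$ one element at a time, using the sequence of Lemma~\ref{lem:restriction} on $W_{L/s}$ and induction on $n$.
\end{enumerate}
Since $E\setminus i$ is spanning, this intermediate statement even gives $H^1(W_L,\mathcal{E}_j(-W_{L/i}))=0$ directly.

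With the Hartogs fix your left-exact sequence goes through. Right exactness for $k\in\{-1,0\}$, however, rests entirely on Theorems~\ref{thm:pushforward} and~\ref{thm:vanishing}, which your proposal does not establish.
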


The first map is given by multiplication by a linear form whose 
vanishing locus is $H_i$, and the map $\mathcal{C}_{L, k}^{\bullet, 
\bullet} \to \mathcal{C}_{L/i, k}^{\bullet, \bullet}$ is given by 
restriction to $H_i$. The map $\mathcal{C}_{L, k}^{\bullet, \bullet} \to 
\mathcal{C}_{L/i, k}^{\bullet, \bullet - 1}$ depends on the choice of a 
splitting $L \cong L/i \oplus \mathbb{C} \cdot v$, and is induced by the map $\Psi_{L^*} \to 
\Psi_{(L/i)^*}$ given by contracting with $v$ and then restricting to 
$L/i$. 
In the case $k=0$, Theorem~\ref{thm:superspaceexact} was proved in \cite[Section 3.4]{Superspace}. Even in this case, the sequence \eqref{eq:superdeletion} depends on some choices. 
The most difficult part is to establish the exactness on the right, 
which Rhoades, Tewari, and Wilson do by proving by induction that a 
certain subset spans $\mathcal{C}_{L, 0}^{\bullet,\bullet}$. In the case when $k=-1$, this approach is complicated by the complexity of the inverse system of the internal zonotopal algebra. We show that \eqref{eq:superdeletion} can fail to be right exact when $k=-2$. See Example~\ref{ex:MK4}. 

In our approach, the right exactness follows from the vanishing of a certain $H^1$ group. The vanishing of this group turns out to be relatively straightforward to prove by induction. The base case is related to a strengthening of the degeneration of the logarithmic Hodge to de Rham spectral sequence for $W_L$, see Theorem~\ref{thm:pushforward} and Example~\ref{ex:degen}. As a consequence of Theorem~\ref{thm:superspaceexact}, standard techniques give the following corollary, proving \cite[Conjecture 6.1]{Superspace}. 

\begin{corollary}\label{cor:hilbert}
We have
$$\operatorname{Hilb}(\mathcal{Z}_{L, -1}) \coloneqq \sum_{i, j} \dim \mathcal{Z}_{L, -1}^{i,j} q^it^j = (1 + t)^r q^{n-r} T_L( \frac{1}{1 + t}, \frac{1}{q}).$$
\end{corollary}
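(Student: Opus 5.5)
Our plan is to prove the formula by deletion--contraction induction on $n$, using the right exactness in Theorem~\ref{thm:superspaceexact} for $k=-1$. Dualizing, $\dim \mathcal{Z}_{L,-1}^{i,j} = \dim \mathcal{C}_{L,-1}^{i,j}$. So it suffices to show that $F_L(q,t) := \sum_{i,j} \dim \mathcal{C}_{L,-1}^{i,j} q^i t^j$ equals $G_L(q,t) := (1+t)^r q^{n-r} T_L(\frac{1}{1+t}, \frac1q)$.

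\emph{Recursion.} For $i$ neither a loop nor a coloop, the short exact sequence \eqref{eq:superdeletion} with its bidegree shifts gives
$$F_L = q F_{L\setminus i} + (1+t) F_{L/i}.$$
Here $L/i$ has rank $r-1$ and $n-1$ elements, and $L \setminus i$ has rank $r$ and $n-1$ elements. We check that $G$ obeys the same recursion using $T_L = T_{L\setminus i} + T_{L/i}$:
$$(1+t)^r q^{n-r} T_{L\setminus i} = q \cdot (1+t)^r q^{(n-1)-r} T_{L\setminus i} = q G_{L\setminus i},$$
$$(1+t)^r q^{n-r} T_{L/i} = (1+t)\cdot (1+t)^{r-1} q^{(n-1)-(r-1)} T_{L/i} = (1+t) G_{L/i}.$$

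\emph{Base cases.} These are arrangements in which every element is a loop or a coloop. Loops need care: the hyperplane arrangement setup allows a zero linear form. A loop contributes $y = q^{-1}$ to $T_L$ and a factor $q$ to $q^{n-r}$, so its net contribution to $G$ is $1$. Geometrically a loop does not change $W_L$ or the set of proper flats in the relevant way. We would argue directly from Theorem~\ref{thm:powerideal} and Theorem~\ref{thm:supersections} that adding a loop leaves $\mathcal{C}_{L,-1}$ unchanged. Alternatively, we would reduce to loopless arrangements using the power ideal description, since a loop lies in every flat and $\rho$ is unchanged.

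This leaves the Boolean arrangement $L = \C^r$ with coordinate hyperplanes, where $G = (1+t)^r (1+t)^{-r} = 1$. So we must show $\mathcal{Z}_{L,-1} = \C$. Here $\rho(v) \ge 1$ for $v \ne 0$, with equality on coordinate axes, so the central ideal contains $x_i^{1}$ for each coordinate vector. Thus the ideal of $Z_{L,-1}$ is the maximal ideal $(x_1,\dots,x_r)$. Its differential closure contains $d x_i = 1\otimes x_i$, so it is the full augmentation ideal of $\Psi_L$, and the quotient is $\C$ in bidegree $(0,0)$.

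\emph{Main obstacle.} The recursion uses the right exactness for $k=-1$ from Theorem~\ref{thm:superspaceexact}, which we take as given; the rest is bookkeeping. The delicate points are:
\begin{itemize}
\item getting the bigrading conventions of the third term right, namely that the $\mathcal{C}_{L/i}^{\bullet,\bullet-1}$ summand contributes the factor $t$;
\item confirming that the terminal objects of the induction are exactly direct sums of loops and coloops;
\item checking that the claim handles loops consistently with the geometric definitions, including the case $r=0$.
\end{itemize}
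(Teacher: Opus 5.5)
Your proposal is correct and follows the paper's proof. Like the paper, you use the deletion--contraction recursion coming from the right-exact sequence of Theorem~\ref{thm:superspaceexact} for $k=-1$, and you handle loops by noting that $\rho$, and hence the ideal, is unchanged. The only cosmetic difference is in the endgame: the paper deletes coloops one at a time via Lemma~\ref{lem:negkcoloop} down to $\dim L = 0$ and then cites the universal property of the Tutte polynomial, whereas you check the recursion for the right-hand side directly and compute the Boolean base case $\mathcal{Z}_{L,-1} = \C$ from the power ideal.
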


For $k \ge 1$, in general $\mathcal{C}_{L, k}$ is not equal to $\bigoplus_j H^0(W_L, \mathcal{O}(1, \dotsc, 1)(k\alpha) \otimes \wedge^j \Omega_L)$. In \cite[Section 6.2]{Superspace}, Rhoades, Tewari, and Wilson observed that $\mathcal{Z}_{L, k}$ does not seem to behave well for $k \ge 1$, and they asked if there was a different extension of $Z_{L, k}$ to a quotient of superspace. For each $k \ge 1$, the space $\bigoplus_j H^0(W_L, \mathcal{O}(1, \dotsc, 1)(k\alpha) \otimes \wedge^j \Omega_L)$ is the inverse system of a differentially closed ideal, and it satisfies an analogue of Theorem~\ref{thm:superspaceexact}, including the exactness on the right. See Section~\ref{ssec:positivek}. 
In particular, this implies that its Hilbert series satisfies a deletion-contraction recursion, and so it is a specialization of the ``umbral Tutte polynomial'' of Ardila and Postnikov \cite[Proposition 4.11]{ArdilaPostnikov}. This implies that the Hilbert series is combinatorially determined. 
We also show that these algebras are well-behaved in a different way. 

For any homogeneous differentially closed ideal, the map $d \colon \Psi_L \to \Psi_L$ descends to a map of bidegree $(-1, 1)$ on the quotient by that ideal. As $d^2 = 0$, this defines a chain complex.  

\begin{theorem}\label{thm:exact}
For $k \ge -1$, $d$ is exact on $\bigoplus_j H^0(W_L, \mathcal{O}(1, \dotsc, 1)(k\alpha) \otimes \wedge^j \Omega_L)^*$ except in bidegree $(0,0)$. 
\end{theorem}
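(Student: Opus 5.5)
The plan is to dualize. Write $\mathcal{L} = \mathcal{O}(1, \dotsc, 1)(k\alpha)$ and $S^j = H^0(W_L, \mathcal{L} \otimes \wedge^j \Omega_L)$. By Theorem~\ref{thm:diffclosed}, $\bigoplus_j S^j$ is the inverse system of a differentially closed ideal. Under the pairing of $\Psi_L$ with $\Psi_{L^*}$, the map $d$ on the quotient is adjoint to a map $\delta$ on the inverse system, of bidegree $(1,-1)$.

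\textbf{Identifying $\delta$.} I would compute that $\delta$ is contraction with the Euler vector field $E = \sum x_i \partial_{x_i}$ on $L$, up to sign and normalization. Concretely, pairing $d(v \otimes 1)$ against a form amounts to multiplying by the coordinate function and contracting with the corresponding vector, which together give $\iota_E$. So it suffices to show that $\iota_E$ is exact on $\bigoplus_j S^j$, except in bidegree $(0,0)$.

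\textbf{Realizing $\iota_E$ geometrically.} The vector field $E$ generates the $\mathbb{G}_m$ action, which preserves every boundary divisor $D_F$. Hence $E$ extends to a global section of the log tangent bundle $T_L = \Omega_L^\vee$. This is in fact a section of $T_L(-?)$ only along the zero section; more precisely, I expect $E$ to vanish only at the origin $0 \in L \subset W_L$. The reason is that on the boundary strata the $\mathbb{G}_m$ action is free in log directions. One can check this by comparing with the stellahedral toric variety, where the Euler field corresponds to a cocharacter lying in the interior of no cone except the one at the origin's fixed point.

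Contraction with $E$ then gives the Koszul complex
\[
0 \to \wedge^r \Omega_L \to \wedge^{r-1}\Omega_L \to \cdots \to \Omega_L \to \mathcal{O}_{W_L} \to \mathcal{O}_0 \to 0.
\]
Since $E$ is a regular section vanishing at the single point $0$, with nondegenerate linear part equal to the identity, this complex is exact. I would tensor it with $\mathcal{L}$. The hypercohomology spectral sequence then shows that the complex of global sections is exact, except at $H^0(\mathcal{O}_0) = \mathbb{C}$, which lies in bidegree $(0,0)$. This requires the vanishing
\[
H^i\bigl(W_L, \mathcal{L} \otimes \wedge^j \Omega_L\bigr) = 0 \quad \text{for } i > 0,\ k \ge -1.
\]

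\textbf{Main obstacle.} The key step is proving this vanishing for all $j$. For $j = 0$ it is Corollary~\ref{cor:line}, and for $j = r$ it is Corollary~\ref{cor:LMvanish}, via $\wedge^r \Omega_L \otimes \mathcal{L} = \mathcal{L}_M((k+1)\alpha)$ with $k+1 \ge 0$. For intermediate $j$, I would argue by induction on the arrangement, using the restriction sequences to $W_{L/i}$ and $W_{TL}$. Here $\wedge^j \Omega_L$ restricted to the divisor is an extension of $\wedge^j \Omega$ by $\wedge^{j-1}\Omega$, as recalled before Theorem~\ref{thm:superspaceexact}. The base case would come from the pushforward result, Theorem~\ref{thm:pushforward}.

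A less demanding alternative avoids full vanishing. One only needs the spectral sequence to degenerate, which requires that the relevant $H^1$ groups of the kernels vanish. Those are exactly the groups controlled in the proof of Theorem~\ref{thm:superspaceexact}.
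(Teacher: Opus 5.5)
This is essentially the paper's proof: the adjoint of $d$ is, up to sign, contraction with the Euler field, which is a log vector field on $W_L$ vanishing only at the origin (Lemma~\ref{lem:eulervanish}, proved there by pulling back a section of $\bigoplus \pi_i^*\mathcal{O}(1)$ from $(\mathbb{P}^1)^n$ rather than by a cocharacter heuristic), so tensoring the Koszul resolution of $\mathcal{O}_o$ with $\mathcal{O}(1,\dotsc,1)(k\alpha)$, taking global sections, and dualizing gives the result. The vanishing you single out as the main obstacle is exactly Theorem~\ref{thm:vanishing}, which the paper proves earlier for all $j$ and all $k \ge -1$ (Corollaries~\ref{cor:line} and~\ref{cor:LMvanish} are its cases $j=0$ and $j=r$), so you can simply cite it and need neither your sketched induction nor the ``less demanding alternative''; note also that $\wedge^r\Omega_L \otimes \mathcal{O}(1,\dotsc,1)(k\alpha) \cong \mathcal{L}_M(k\alpha)$ rather than $\mathcal{L}_M((k+1)\alpha)$, which is harmless here.
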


We also observe that $d$ is exact except in bidegree $(0,0)$ on the quotient by the differential closure of any homogeneous ideal, see Lemma~\ref{lem:diffclosedexact}. This gives an alternative proof of Theorem~\ref{thm:exact} when $k \in \{-1,0\}$, but not when $k \ge 1$, as there the relevant ideal of $\Psi_L$ is usually not the differential closure of an ordinary ideal. 

The exactness of $d$ implies that, for any $\ell > 0$, we have $\sum_{i 
+ j = \ell} (-1)^i \dim \mathcal{Z}^{i,j}_{L, -1} = 0$. Combining this 
with Corollary~\ref{cor:hilbert} gives some equalities satisfied by the 
coefficients of the Tutte polynomials of matroids which are realizable over $\mathbb{C}$. 
Such equalities were classified by Brylawski \cite{BrylawskiEquality}, 
and we obtain all of the nontrivial ones, so this exact sequence gives a 
``categorification'' of Brylawski's result. See 
Proposition~\ref{prop:brylawksiequalities}. 

For example, Corollary~\ref{cor:hilbert} implies that the dimension of $\mathcal{Z}^{n-r, r-1}_{L, -1}$ is equal to the beta invariant of the matroid of $L$, and the dimension of $\mathcal{Z}^{n-r-1, r}_{L, -1}$ is equal to the beta invariant of the matroid dual of the matroid of $L$. If $n >1$, then $d$ induces an isomorphism from $\mathcal{Z}^{n-r, r-1}_{L, -1}$ to $\mathcal{Z}^{n-r-1, r}_{L, -1}$, recovering the fact that, if the size of the ground set is at least two, the beta invariant of a matroid is equal to the beta invariant of its dual. 

\noindent \subsection*{Acknowledgements}
We thank Vasu Tewari for many helpful discussions and for showing us Example~\ref{ex:braid}. 
This work was conducted while the authors were at the Institute for 
Advanced Study, where 
the second author is supported by the Charles 
Simonyi Endowment and the Oswald Veblen Fund. 
The first author was partially supported by NSF grant DMS-2039316 and the Simons Foundation.

\section{Geometry of augmented wonderful varieties}

Let $E = \{1, \dotsc, n\}$, and let $r$ be the dimension of $L$. 
By choosing a defining equation for each hyperplane, we can identify $L$ with a subspace of $\mathbb{C}^E$, where the hyperplanes are the intersections of the coordinate hyperplanes with $L$. We allow $L$ to be contained in some coordinate hyperplanes, i.e., we do not assume that the matroid of $L$ is loopless.  We also allow some of the hyperplanes to coincide, i.e., we do not assume that the matroid of $L$ is simple. In this section, we describe some aspects of the geometry of augmented wonderful varieties. We also prove the cohomology vanishing theorems that underlie the main results. 

\subsection{Equivariant compactifications}
We begin by discussing equivariant compactifications of algebraic groups, and we explain why the augmented wonderful variety is an 
equivariant compactification of the additive group $L$.

Let us begin in the setting of a linear algebraic group $G$ over a field $k$.
An \emph{equivariant compactification} of $G$ is a 
proper variety $X$ which contains $G$ as a dense open set, together with an 
action $G \times X \to X$ that extends the group law $G \times G \to 
G$. For example, a toric variety is an equivariant 
compactification of an algebraic torus.

\begin{remark}\label{rmk:coalgebra-structure}
Let $D$ be a Cartier divisor on a normal equivariant compactification 
$X$ of a commutative group $G$, and assume that $D$ is supported on $X \setminus G$. Then $H^0(X, \mathcal{O}(D))$ is naturally a coalgebra. Indeed, the 
sections $H^0(X,\OO(D))$, viewed as a subspace of the coordinate ring $k[G]$ via restriction,
form a left $G$ subrepresentation of the regular representation $k[G]$.
Equivalently, $H^0(X,\OO(D)) \subseteq k[G]$ is a left $k[G]$-comodule 
which embeds into $k[G]$. Dually, we have a quotient of the algebra $k[G]^* \twoheadrightarrow H^0(X, \OO(D))^*$ by a left ideal. However, $G$ 
is commutative, so $k[G]^*$ is commutative. Therefore 
the quotient by any left ideal carries a natural ring structure.
\end{remark}

Often, $k[G]$ is naturally a graded ring, and $H^0(X, \mathcal{O}(D))$ is a graded subspace. If this happens, then the map $k[G]^* \to H^0(X, \mathcal{O}(D))^*$ factors through the graded dual of $k[G]$. 

In the case of toric varieties, \Cref{rmk:coalgebra-structure} is not 
interesting. Indeed, if $D$ is a Cartier divisor supported on the 
boundary of a normal toric 
variety $X$, then there is a basis of the sections of $\OO_X(D)$ whose 
elements correspond to lattice points in a certain polyhedron $P_D$ 
\cite[Page 66]{Ful93}. The ring $H^0(X, \OO_X(D))^*$ is 
simply the ring of $k$-valued functions on the lattice points in 
$P_D$.

We will be interested in 
equivariant compactifications of the additive group $\C^n$. For example, 
the projective closure $\C^n \subseteq \P(\C \oplus \C^n) \cong \P^n$ is 
an equivariant compactification, where the action is given by 
\[
    \C^n \times \P(\C \oplus \C^n) \to \P(\C \oplus \C^n),\quad (a_1, 
    \ldots, a_n) \times [x_0:x_1: \ldots: x_n] \mapsto [x_0:x_1 + 
    a_1x_0: \ldots :x_n + a_nx_0].
\]
If $G_i \subseteq X_i$ are equivariant compactifications, then the 
product $\prod_{i}G_i \subseteq \prod_i X_i$ is an equivariant 
compactification as well. If $G_1 \subseteq G_2$ is an algebraic 
subgroup, and $G_2 \subseteq X_2$ is an equivariant compactification, 
then the closure of $G_1$ in $X_2$ is an equivariant 
compactification of $G_1$. We can now see that the augmented wonderful variety is an 
equivariant compactification of $L$ from the following description.
\begin{proposition}
    The augmented wonderful variety $W_L$ is isomorphic to the closure of the 
    image of the map
    \[
        L \hookrightarrow \prod_{\emptyset \not= S \subseteq E} \C^S \hookrightarrow 
        \prod_{\emptyset \not= S \subseteq E} \P(\C \oplus \C^S),
    \]
    where the first inclusion is given by projection onto all nonzero 
    coordinate subspaces.
\end{proposition}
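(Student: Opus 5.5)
We will compare the closure $X$ of the image of $L$ in $\prod_{S}\P(\C\oplus\C^S)$ with $W_L$ by means of the universal property of blow-ups, using the iterated blow-up description of $W_L$ given in the introduction. Each factor map $L \to \P(\C \oplus \C^S)$ is the composite of the linear projection $L \to \C^S$ with the standard compactification. Since $L \hookrightarrow \C^E$ is a closed embedding, the factor $S = E$ is the linear map $\P(L\oplus\C) \dashrightarrow \P(\C\oplus\C^E)$. This map is actually a morphism, namely the closed embedding of $\P(L \oplus \C)$ as the projective closure of $L \subset \C^E$. So the closure of $L$ in the $S=E$ factor is $\P(L\oplus\C)$.

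For a general $S$, the map $\P(L\oplus\C) \dashrightarrow \P(\C\oplus\C^S)$, $[x_0 : x] \mapsto [x_0 : x|_S]$, is the projection from the linear subspace $\{x_0 = 0,\ x|_S = 0\}$. This subspace is $\P L_{F}$ inside the hyperplane at infinity $\P L$, where $F = \operatorname{cl}(S)$ is the flat spanned by $S$ and $L_F = \cap_{i\in S} H_i$. Thus $X$ is the closure of the graph of the rational map from $\P(L\oplus\C)$ to the product of the projections from the centers $\P L_F$, one for each proper flat $F$. The flat $F = E$ gives center $\emptyset$; the repeated $S$ with the same closure give isomorphic factors and do not change the closure.

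The standard fact we now invoke is that the closure of the graph of the projections from a building set of linear centers is the wonderful model of De Concini--Procesi for that arrangement of subspaces. The closure of the graph of a single projection from a linear center $\Lambda$ is $\mathrm{Bl}_\Lambda$. For a family of centers closed under intersection (here $\P L_F\cap\P L_G=\P L_{F\vee G}$, possibly empty), the closure of the graph of all projections is obtained by blowing up the strict transforms in increasing order of dimension. This is exactly how $W_L$ was defined. Concretely, we show two things:
\begin{enumerate}[(a)]
\item the iterated blow-up $W_L$ maps to every factor, since after blowing up the centers contained in $\P L_F$ the ideal of $\P L_F$ pulls back to an invertible sheaf, which yields a morphism $W_L\to X$ extending the identity on $L$;
\item this morphism is a closed embedding.
\end{enumerate}

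The main obstacle is (b). The plan is to check it locally in the standard affine charts of $W_L$ indexed by flags of flats, with coordinates on each chart given by ratios of linear forms. We then show that each chart coordinate appears as a ratio $x_i/x_j$ or $x_i/x_0$, which is a coordinate function on some factor $\P(\C\oplus\C^S)$ (taking $S$ to contain $i$ and $j$). This gives surjectivity on local rings. Since $W_L$ is proper and the morphism is birational onto $X$ and injective on coordinate rings of charts, it is an isomorphism onto its image $X$. Alternatively, one could cite the analogous statement for wonderful models of De Concini--Procesi (the closure of $L$ in $\prod \P(\C^S)$) and reduce to it by viewing $W_L$ as the wonderful model of the arrangement in $L\oplus\C$ obtained by adding the coordinate $x_0$.
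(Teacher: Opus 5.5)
Your main line is correct, and it differs from the paper, which gives no argument here and simply refers to \cite[Section 3]{LLPP}. You identify the $S$-factor with the linear projection of $\P(L\oplus\C)$ from $\P L_{\operatorname{cl}(S)}$. This makes factors with the same closure redundant, makes spanning $S$ contribute morphisms, and lets the $S=E$ factor recover $\P(L\oplus\C)$ itself. You then note that the graph closure of one such projection is $\operatorname{Bl}_{\P L_F}\P(L\oplus\C)$. Finally, you invoke the theorem that for an intersection-closed family of cleanly meeting smooth centers, the closure of the complement in $\prod_F \operatorname{Bl}_{\P L_F}\P(L\oplus\C)$ is the iterated blow-up of strict transforms in increasing dimension. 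That theorem is exactly \cite[Theorem 1.3]{LiLi} (De Concini--Procesi in the linear case), which the paper already uses elsewhere, so you can cite it outright. What your route buys over the bare citation is that the proposition becomes a formal consequence of the blow-up definition of $W_L$ given in the introduction.

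The self-contained sketch of (b), and your ``alternative'', should not be used as written.

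In (b), a chart coordinate $x_i/x_j$ comes from a factor only if the chart lands in $\{x_j\neq 0\}$ there, and choosing $S$ to contain $i$ and $j$ does not ensure this. For $L=\C^2$, take the chart over $[0:0:1]$ with coordinates $s=x_0/x_2$, $t=x_1/x_0$. The locus $s=0$ maps to $[0:0:1]$ in the $\{1,2\}$-factor, where $x_1/x_0$ is not regular. You must take $S$ to be the flat of the flag to which the denominator is attached (here $S=\{1\}$).

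Also, the chart computation only shows that $W_L\to X$ is locally a closed immersion. Since both sides are integral of dimension $r$, this makes it a local isomorphism onto open subsets of $X$. Global injectivity then comes from birationality together with separatedness and properness, not from any statement about coordinate rings.

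As for the alternative: the De Concini--Procesi model of the full hyperplane arrangement $x_0,x_1,\dots,x_n$ on $L\oplus\C$ also blows up the strata $\P(L_F\oplus\C)$, which meet $L$. For $L=\C^2$ it also blows up the origin of $L$, giving $\P^2$ blown up at three points instead of two. You must keep only the subspaces $L_F\oplus 0$, i.e.\ only the factors $\P(\C^{S\cup\{0\}})$, which is what your main argument does.
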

See, e.g., \cite[Section 3]{LLPP}. 
For the additive group $L$, the coordinate ring $\C[L]$ is 
the symmetric algebra $\Sym L^*$. If $\operatorname{Sym} L^*$ is given the usual grading,  then, over a general field, the algebra graded dual to the coalgebra structure on $\Sym L^*$ is a divided powers ring.
In characteristic 
zero, the graded dual is isomorphic to $\Sym L$. Therefore, by 
\Cref{rmk:coalgebra-structure}, any divisor $D$ supported on the 
boundary of an equivariant compactification $X$ of $L$ with the property that $H^0(X, \mathcal{O}(D))$ is a graded subspace of $\operatorname{Sym} L$ gives rise to a 
quotient algebra of $\Sym L$. The algebras associated to equivariant 
compactifications of $L$ were first explicitly studied in 
\cite{hassettGeometryEquivariantCompactifications1999}. In subsequent 
papers on equivariant compactifications of $L$, this association is 
called the Hassett--Tschinkel correspondence. See, for example, 
\cite{arzhantsevEquivariantCompletionsAffine2022}.

\subsection{Divisors on augmented wonderful varieties}

We first recall some facts about divisors on augmented wonderful 
varieties. The Chow ring of the augmented wonderful variety is the \emph{augmented Chow ring}, which is closely studied in \cite{BHMPW20a}. In particular, they describe the Picard group of augmented wonderful varieties.

The map $W_L \to (\mathbb{P}^1)^n$ gives rise to a family of divisor 
classes $y_1, \dotsc, y_n \in H^2(W_L)$ on $W_L$, the first Chern 
classes of the line bundles $\mathcal{O}(0, \dotsc, 1, \dotsc, 0)$. If 
$i$ is not a loop, i.e., if $L$ is not contained in $\mathbb{C}^{E 
\setminus i}$, then $W_{L/i}$ is the closure of $L \cap \mathbb{C}^{E 
\setminus i}$ in $W_L$. This identifies $W_{L/i}$ with a Cartier divisor 
on $W_L$, and this Cartier divisor represents $y_i$. 
If $i$ is a loop, the composition $W_L \to (\mathbb{P}^1)^n \to \mathbb{P}^1$, where the second map is the $i$th projection, is not surjective, and so the divisor class $y_i$ is $0$. 

The restriction map $H^2(W_L) \to H^2(W_{L/i})$ is computed in \cite[Proposition 2.28]{BHMPW20a}, where it is denoted $\varphi_i$. In particular, the authors of \cite{BHMPW20a} show that the following holds. 

\begin{lemma}\label{lem:restrictdiv}
Suppose $i$ is not a loop. Then
\begin{enumerate}
\item for any $j \not= i$, the restriction of $y_j \in H^2(W_L)$ to $H^2(W_{L /i})$ is $y_j$. 
\item the restriction of $y_i \in H^2(W_L)$ to $H^2(W_{L/i})$ is $0$. 
\item the restriction of $\alpha \in H^2(W_L)$ to $H^2(W_{L/i})$ is $\alpha$. 
\end{enumerate}
\end{lemma}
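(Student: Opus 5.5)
The plan is to prove all three statements at once, by comparing the maps that define the classes $y_j$ and $\alpha$ on $W_L$ with the corresponding maps on $W_{L/i}$. The tool is that two morphisms from a reduced variety to a separated variety that agree on a dense open subset are equal. I take as given, from the setup above, that the closure of $L \cap \mathbb{C}^{E \setminus i}$ in $W_L$ is identified with $W_{L/i}$. Write $\iota \colon W_{L/i} \hookrightarrow W_L$ for the inclusion; its restriction to the dense open subset $L/i = L \cap \mathbb{C}^{E\setminus i}$ is the linear inclusion into $L$.

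\textbf{Statements (1) and (2).} Let $\pi_L \colon W_L \to (\mathbb{P}^1)^E$ be the extension of the coordinate map $L \to \mathbb{C}^E$, and $\pi_{L/i} \colon W_{L/i} \to (\mathbb{P}^1)^{E\setminus i}$ the analogous map for $L/i$. On the open set $L/i$, the composite $\pi_L \circ \iota$ has $i$th coordinate identically $0$, and its remaining coordinates are the coordinates of $L/i \subset \mathbb{C}^{E \setminus i}$. By density we get
$$\pi_L \circ \iota = (\pi_{L/i}, 0) \colon W_{L/i} \to (\mathbb{P}^1)^{E \setminus i} \times \{0\} \subset (\mathbb{P}^1)^E.$$
Pulling back $\mathcal{O}(0,\dotsc,1,\dotsc,0)$ along this map gives the following.
\begin{itemize}
\item For $j \ne i$, we obtain the $j$th line bundle on $W_{L/i}$, whose class is $y_j$. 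This holds even when $j$ becomes a loop of $L/i$: then both sides are $0$, since the $j$th coordinate map is constant.
\item For $j = i$, we obtain the pullback of $\mathcal{O}_{\mathbb{P}^1}(1)$ along a constant map, which is trivial. This proves (2).
\end{itemize}

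\textbf{Statement (3).} Let $p_L \colon W_L \to \mathbb{P}(L \oplus \mathbb{C})$ be the blow-down map, and similarly $p_{L/i}$. On $L/i$, the composite $p_L \circ \iota$ is the inclusion $L/i \subset L \subset \mathbb{P}(L\oplus\mathbb{C})$. This agrees with $j \circ p_{L/i}$, where $j \colon \mathbb{P}(L/i \oplus \mathbb{C}) \hookrightarrow \mathbb{P}(L \oplus \mathbb{C})$ is the linear embedding. By density, $p_L \circ \iota = j \circ p_{L/i}$. Since $j^*\mathcal{O}(1) = \mathcal{O}(1)$, we get $\iota^*\alpha = \alpha$.

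\textbf{Main obstacle.} The only nontrivial input is the identification of $\iota$: that the closure of $L/i$ in $W_L$ really is $W_{L/i}$, together with its structure maps. If this needs justification, I would use the closure-in-$\prod_S \mathbb{P}(\mathbb{C} \oplus \mathbb{C}^S)$ description from the Proposition above. The projection onto the factors with $S \subseteq E \setminus i$ maps the closure of $L/i$ isomorphically onto $W_{L/i}$. This holds because, on that closure, the factors with $i \in S$ are determined by those with $S \setminus i$, as the $i$th coordinate vanishes. In this description, $\alpha$ is the pullback of $\mathcal{O}(1)$ from the factor $S = E$ (through $\mathbb{P}(\mathbb{C}\oplus L)\subset\mathbb{P}(\mathbb{C}\oplus\mathbb{C}^E)$), and the $y_j$ come from the factors $S = \{j\}$. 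With that, the comparisons above become literal equalities of coordinate projections.
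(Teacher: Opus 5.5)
Your proof is correct, but it takes a different route from the paper. The paper gives no geometric argument. It cites \cite[Proposition 2.28]{BHMPW20a}, where the restriction map $\varphi_i \colon H^2(W_L) \to H^2(W_{L/i})$ is computed on the presentation of the augmented Chow ring by the generators $y_j$ and the boundary classes $[D_F]$, and it reads off the three formulas.

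You instead show that the structure morphisms themselves are compatible, $\pi_L \circ \iota = (\pi_{L/i}, 0)$ and $p_L \circ \iota = j \circ p_{L/i}$, using the reduced-to-separated density principle. You justify the identification of $\overline{L/i}$ with $W_{L/i}$ by the graph argument in $\prod_S \mathbb{P}(\mathbb{C}\oplus\mathbb{C}^S)$. This route is self-contained and characteristic-free. It also gives slightly more than the statement: isomorphisms of line bundles rather than equalities of classes, for example $\mathcal{O}(y_i)|_{W_{L/i}} \cong \mathcal{O}$. That isomorphism is what is really used for the conormal bundle in Lemma~\ref{lem:restriction}.

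What the citation buys is the full restriction map, including the images of the individual boundary classes $[D_F]$. Your method does not produce these directly, and the paper uses them elsewhere, for example in the alternative proof of Lemma~\ref{lem:restrictLM} via Proposition~\ref{prop:canonical}.
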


For a subset $S$ of $\{1, \dotsc, n\}$, let $L^S$ be the image of $L$ under the coordinate projection $\mathbb{C}^E \to \mathbb{C}^S$. Note that $L^{\{1, \dotsc, n\} \setminus i}$ is identified with $L \setminus i$.  The coordinate projection extends to a map $\pi_S \colon W_L \to W_{L^S}$, see \cite[Remark 3.2]{BHMPW20a}. Then the following lemma follows from the description of the pullback map in \cite[Section 3]{BHMPW20a}. 

\begin{lemma}\label{lem:pullback}
Let $S$ be a subset of $\{1, \dotsc, n\}$. Then
\begin{enumerate}
\item for any $i \in S$, $y_i \in H^2(W_L)$ is the pullback of $y_i \in H^2(W_{L^S})$ along $\pi_S$. 
\item if $\dim L^S = r$, then $\alpha \in H^2(W_L)$ is the pullback of $\alpha \in H^2(W_{L^S})$. 
\end{enumerate}
\end{lemma}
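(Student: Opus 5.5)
We need to prove Lemma (pullback): for $i\in S$, $y_i$ on $W_L$ is the pullback of $y_i$ on $W_{L^S}$ along $\pi_S$; and if $\dim L^S=r$, then $\alpha$ pulls back to $\alpha$. The idea is to factor the relevant maps through $\pi_S$ and pull back line bundles along commuting diagrams, checking commutativity on the dense open set $L$.

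For (1), consider the composite $W_L \to (\mathbb{P}^1)^E \to \mathbb{P}^1$, the $i$th projection; $y_i$ is the first Chern class of the pullback of $\mathcal{O}_{\mathbb{P}^1}(1)$ along it. Similarly, on $W_{L^S}$ the class $y_i$ comes from $W_{L^S} \to (\mathbb{P}^1)^S \to \mathbb{P}^1$. I claim the triangle
$$W_L \xrightarrow{\pi_S} W_{L^S} \to \mathbb{P}^1$$
commutes with the map $W_L \to \mathbb{P}^1$. Both maps are morphisms from the integral variety $W_L$ to the separated scheme $\mathbb{P}^1$. On the dense open $L$ they agree: each sends $x \in L$ to its $i$th coordinate $x_i \in \mathbb{C}\subset \mathbb{P}^1$, since $\pi_S$ restricted to $L$ is the coordinate projection $L \to L^S$, which preserves the $i$th coordinate for $i\in S$. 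Hence they agree everywhere. Pulling back $\mathcal{O}(1)$ then gives $y_i = \pi_S^* y_i$ as line bundles, so also in $H^2$. Using the embedding description in the Proposition above (closure in $\prod_T \mathbb{P}(\mathbb{C}\oplus\mathbb{C}^T)$), the map to $\mathbb{P}^1$ is the projection to the factor $T=\{i\}$, and $\pi_S$ is induced by the projection onto factors with $T\subseteq S$. This makes the commutativity transparent, including the case where $i$ is a loop.

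For (2), $\alpha$ is pulled back from $\mathbb{P}(\mathbb{C}\oplus L)$. When $\dim L^S = r$, the projection $L \to L^S$ is a linear isomorphism, so it extends to an isomorphism $\mathbb{P}(\mathbb{C}\oplus L) \cong \mathbb{P}(\mathbb{C}\oplus L^S)$ preserving $\mathcal{O}(1)$. The two composites $W_L \to \mathbb{P}(\mathbb{C}\oplus L^S)$ (via $\pi_S$, or via this isomorphism) agree on $L$, and hence everywhere by separatedness. Pulling back the hyperplane class gives $\pi_S^*\alpha = \alpha$.

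The main obstacle is making sure that the structure maps to $(\mathbb{P}^1)^n$ and to $\mathbb{P}(\mathbb{C}\oplus L)$ are honest morphisms extending the linear maps on $L$, and that $\pi_S$ extends the coordinate projection. Both are given by the setup and \cite[Remark 3.2]{BHMPW20a}, and the rest is the density argument.
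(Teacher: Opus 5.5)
Your proof is correct, but it takes a different route from the paper. The paper gives no argument of its own: it reads the lemma off from the explicit combinatorial description of $\pi_S^*$ on the augmented Chow ring in \cite[Section 3]{BHMPW20a}. There $y_i\mapsto y_i$ essentially by definition, and each boundary class pulls back to a sum of boundary classes; for a single deletion, $\pi_{E\setminus i}^*[D_F]=[D_F]+[D_{F\cup i}]$. So (1) is immediate, and (2) is a short bookkeeping computation with $\alpha=\sum_{F\neq E}[D_F]$. In that computation the spanning hypothesis guarantees that no proper flat of $L$ contains $S$, so every proper flat of $L$ appears in the pullback exactly once.

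You instead observe that the two composite morphisms agree on the dense open set $L$, hence everywhere, because $W_L$ is reduced and the targets are separated. For (1) the maps go to $\mathbb{P}^1$; for (2) they go to $\mathbb{P}(L^S\oplus\mathbb{C})$, identified with $\mathbb{P}(L\oplus\mathbb{C})$ via the isomorphism $L\to L^S$. Your argument is self-contained and works directly with line bundles. It needs only that $\pi_S$ is a morphism extending the coordinate projection. It also makes transparent why $\dim L^S=r$ is needed: that is exactly what makes $L\to L^S$ extend to an isomorphism of projective completions.

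The combinatorial route, in exchange, gives the pullback of every boundary divisor $[D_F]$. The paper needs this later anyway, for instance in Lemma~\ref{lem:hierarchical-pullback-and-restriction}.
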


In particular, if $i$ is not a coloop, then $\alpha$ is pulled back from $W_{L \setminus i}$.

We now explain how to identify $\mathcal{L}_M$ with a subsheaf of $\mathcal{O}(1, \dotsc, 1)(-\alpha)$, as claimed in the introduction. For this, we will need to discuss the relationship between the boundary divisors $D_F$ and the $y_i$. Using the description of the class group of $W_L$ in \cite[Section 1.2]{BHMPW20a}, we have that
$$y_i = \sum_{F \not \ni i} [D_F] \text{ as divisor classes on }W_L.$$
As $\alpha$ is the pullback of the class of a hyperplane from $\mathbb{P}(L \oplus \mathbb{C})$, and the reduced union of the $D_F$ is the preimage of the hyperplane at infinity, we have
$$\alpha = \sum_{F\not= E} [D_F] \text{ as divisor classes on }W_L.$$
See also \cite[Section 1.4]{BHMPW20a}. Combining these, we see that 
\begin{equation}\label{eq:centraldiv}
c_1(\mathcal{O}(1, \dotsc, 1)(-\alpha)) = y_1 + \dotsb + y_n - \alpha = \sum_{F \not= E} (n - |F| - 1) [D_F].
\end{equation}
For a flat $F$, let $\operatorname{rk}(F)$ be the codimension of the corresponding subspace $L_F$. 
\begin{proposition}\label{prop:canonical}
We have 
$$\omega_{W_L} = \mathcal{O}(\sum_{F\not= E} (\operatorname{rk}(F) 
    - r - 1) D_F).$$
In particular, we have
$$c_1(\mathcal{L}_M) = c_1(\omega_{W_L}) + y_1 + \dotsb + y_n + \alpha = \sum_{F \not= E} (n - |F| - r + \operatorname{rk}(F))[D_F].$$
\end{proposition}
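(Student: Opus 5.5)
The plan is to compute the canonical class by the blow-up formula. $W_L$ is obtained from $\mathbb{P}(L \oplus \mathbb{C}) \cong \mathbb{P}^r$ by blowing up smooth centers in increasing order of dimension. At each step the center is the strict transform of the stratum $\mathbb{P}(L_F) \subset \mathbb{P}L$ for a proper flat $F$, and these strict transforms are smooth. Since $L_F$ has dimension $r - \operatorname{rk}(F)$, the stratum $\mathbb{P}(L_F)$ has dimension $r - \operatorname{rk}(F) - 1$, so its codimension in the $r$-dimensional ambient variety is $\operatorname{rk}(F)+1$. The blow-up formula says that blowing up a smooth center of codimension $c$ adds $(c-1)$ times the exceptional divisor to the canonical class. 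Here that coefficient is $\operatorname{rk}(F)$.

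Next I track the starting divisor. On $\mathbb{P}^r$ we have $\omega = \mathcal{O}(-(r+1)H_\infty)$, where $H_\infty = \mathbb{P}L$. Its total transform is $\sum_{F \neq E} D_F$. This holds because every center lies inside the hyperplane at infinity, and the center's strict transform is contained in the strict transform of $H_\infty$ with multiplicity one. Alternatively, it is exactly the identity $\alpha = \sum_F [D_F]$ stated above. Note that $D_\emptyset$, the flat of rank $0$, is the strict transform of $H_\infty$ itself.

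The main bookkeeping obstacle is that later blow-ups pull back earlier exceptional divisors. The exceptional divisor of $F$ becomes $D_F$ plus the exceptional divisors over later centers contained in it. Because the blow-ups go in increasing dimension, any later center $G$ meeting $D_F$ contains, rather than is contained in, the earlier stratum. I will argue, as in the standard De Concini--Procesi wonderful-model computation, that the centers are disjoint from or transverse to the previous exceptional divisors. Concretely, for $G \supsetneq$ the earlier stratum, the strict transform of $\mathbb{P}(L_G)$ meets the exceptional divisor $E_F$ transversally, so $E_F$ pulls back to its strict transform. Hence no correction terms arise and each $D_F$ for $F \neq \emptyset$ appears with coefficient exactly $\operatorname{rk}(F)$ from its own blow-up. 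Together with the term $-(r+1)$ from the total transform of $H_\infty$, this gives $\omega_{W_L} = \mathcal{O}(\sum_{F \neq E}(\operatorname{rk}(F) - r - 1)D_F)$. The case $F = \emptyset$ is consistent, since $\operatorname{rk}(\emptyset) = 0$.

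If the transversality bookkeeping is awkward, a cleaner alternative is to use the log canonical bundle. The boundary $\partial = \sum D_F$ is simple normal crossings, and $L$ acts on $W_L$ preserving the boundary. The translation vector fields then trivialize $\Omega_L^\vee$ on $L$. The standard volume form $dx_1 \wedge \dotsb \wedge dx_r$ is a rational section of $\omega(\partial)$, so its divisor computes $K + \partial$. I would compute its order along each $D_F$ in local blow-up coordinates: near the generic point of $D_F$, with $t$ the equation of $D_F$, the form has order $-(r+1-\operatorname{rk}(F)) + 1$ relative to $dt/t$-type frames. This yields the same answer.

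For the second formula, I substitute $y_i = \sum_{F \not\ni i}[D_F]$, which gives $\sum_i y_i = \sum_F (n-|F|)[D_F]$, and $\alpha = \sum_F [D_F]$. Adding these to $c_1(\omega_{W_L})$ gives $c_1(\mathcal{L}_M) = \sum_F (n - |F| - r + \operatorname{rk}(F))[D_F]$.
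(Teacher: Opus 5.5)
Your proof is correct and follows the paper's route. Pull back $\omega_{\mathbb{P}(L\oplus\mathbb{C})} = \mathcal{O}(-(r+1)\alpha)$ with $\alpha = \sum_{F\neq E}[D_F]$, then add $\operatorname{rk}(F)\,D_F$ for each blown-up center of codimension $\operatorname{rk}(F)+1$; the paper cites the blow-up formula and leaves implicit the check you supply, that no later center lies in an earlier exceptional divisor, so each exceptional divisor pulls back to $D_F$. Your substitution of $\sum_i y_i$ and $\alpha$ for the second formula is correct, and your alternative computation of the pole order $\operatorname{rk}(F)-r-1$ of $dx_1\wedge\dotsb\wedge dx_r$ along $D_F$ also checks out.
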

Combining the second part of Proposition~\ref{prop:canonical} with \eqref{eq:centraldiv}, we see that
$$c_1(\mathcal{O}(1, \dotsc,1)(-\alpha)) - c_1(\mathcal{L}_M) = \sum_{F \not= E} (r - \operatorname{rk}(F) - 1)[D_F],$$
which is effective. This means that $\mathcal{L}_M$ can be identified with a subsheaf of $\mathcal{O}(1, \dotsc, 1)(-\alpha)$. This justifies the claim in the introduction that the sections of $\mathcal{L}_M((k+1)\alpha)$ are a subset of the sections of $\mathcal{O}(1, \dotsc, 1)(k\alpha)$. 

\begin{proof}[Proof of Proposition~\ref{prop:canonical}]
We use the description of $W_L$ as an iterated blow-up. The pullback of the canonical bundle from $\mathbb{P}(L \oplus \mathbb{C})$ is $\mathcal{O}(-(r + 1)\alpha)$. At each step of the construction of $W_L$, we blow up the strict transform of $\mathbb{P}L_F$, which has codimension $\operatorname{rk}(F) + 1$. The result then follows from \cite[Exercise 22.4.S]{VakilNotes}. 
\end{proof}

\begin{lemma}\label{lem:restrictLM}
Suppose $i$ is not a loop, and let $\mathcal{L}_{M/i}$ be the line bundle  $\omega_{W_{L/i}}(1,\dotsc, 1)(\alpha)$ on $W_{L/i}$. 
The restriction of $\mathcal{L}_M$ to $W_{L/i}$ is isomorphic to $\mathcal{L}_{M/i}$. 
\end{lemma}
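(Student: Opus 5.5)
We will use adjunction together with the restriction formulas of Lemma~\ref{lem:restrictdiv}. Since $i$ is not a loop, $W_{L/i}$ is a smooth Cartier divisor on $W_L$ and its class is $y_i$, the first Chern class of $\mathcal{O}(0,\dotsc,1,\dotsc,0)$ with $1$ in the $i$th spot. Adjunction gives
$$\omega_{W_{L/i}} \cong \bigl(\omega_{W_L} \otimes \mathcal{O}(W_{L/i})\bigr)|_{W_{L/i}} = \bigl(\omega_{W_L}(0,\dotsc,1,\dotsc,0)\bigr)|_{W_{L/i}}.$$
We then restrict $\mathcal{L}_M = \omega_{W_L}(1,\dotsc,1)(\alpha)$ and write it as
$$\mathcal{L}_M = \omega_{W_L}(0,\dotsc,1,\dotsc,0) \otimes \mathcal{O}(1,\dotsc,1,0,1,\dotsc,1)(\alpha),$$
where in the second factor the $0$ is in the $i$th spot. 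By adjunction, the first factor restricts to $\omega_{W_{L/i}}$.

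For the second factor we apply Lemma~\ref{lem:restrictdiv}:
\begin{itemize}
\item by part (1), each $y_j$ with $j \neq i$ restricts to $y_j$;
\item by part (3), $\alpha$ restricts to $\alpha$.
\end{itemize}
So the second factor restricts to $\mathcal{O}(1,\dotsc,1)(\alpha)$ on $W_{L/i}$, where the $y_j$ on $W_{L/i}$ are indexed by $E \setminus i$. Here "$\mathcal{O}(1,\dotsc,1)$" on $W_{L/i}$ means the pullback along the map $W_{L/i} \to (\mathbb{P}^1)^{E\setminus i}$. Combining the two factors gives $\mathcal{L}_M|_{W_{L/i}} \cong \omega_{W_{L/i}}(1,\dotsc,1)(\alpha) = \mathcal{L}_{M/i}$.

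\textbf{Main obstacle: passing from cohomology classes to line bundles.} Lemma~\ref{lem:restrictdiv} is stated in $H^2$, but the claim is an isomorphism of line bundles. The plan is to check that $\operatorname{Pic}(W_{L/i}) \to H^2(W_{L/i})$ is injective. This holds because $W_{L/i}$ is smooth projective with $H^1(\mathcal{O}) = 0$: it is rational, being an iterated blow-up of projective space. Its Picard group is therefore free on the boundary classes and embeds in $H^2$.

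As an alternative that avoids $H^2$ altogether, we could argue at the level of line bundles. The map $W_{L/i} \to (\mathbb{P}^1)^{E \setminus i}$ is the composite of the inclusion with the projection $W_L \to (\mathbb{P}^1)^E \to (\mathbb{P}^1)^{E\setminus i}$. Similarly, $W_{L/i} \to \mathbb{P}(L/i \oplus \mathbb{C})$ is the restriction of the map to $\mathbb{P}(L\oplus\mathbb{C})$. Both of these compatibilities follow from the closure description of the augmented wonderful variety. They give the identifications for $\mathcal{O}(y_j)$ and $\mathcal{O}(\alpha)$ directly as line bundles.
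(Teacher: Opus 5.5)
Your proof is correct and is essentially the paper's argument: adjunction plus parts (1) and (3) of Lemma~\ref{lem:restrictdiv}. The one difference is that you absorb $\mathcal{O}(y_i)$ into the canonical-bundle factor, whereas the paper uses part (2) to get $\omega_{W_L}|_{W_{L/i}} \cong \omega_{W_{L/i}}$ (and implicitly again to drop $y_i$ from $\mathcal{O}(1,\dotsc,1)$), and your observation that $\operatorname{Pic} \to H^2$ is injective, since $H^1(\mathcal{O})=0$ on these rational varieties, correctly fills in a point the paper takes for granted.
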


\begin{proof}
It follows from Lemma~\ref{lem:restrictdiv}(2) and adjunction that the 
restriction of $\omega_{W_L}$ to $W_{L/i}$ is isomorphic to 
$\omega_{W_{L/i}}$. The result then follows from 
Lemma~\ref{lem:restrictdiv}(1) and (3). Alternatively, it can be easily 
checked using Proposition~\ref{prop:canonical} and \cite[Proposition 
2.28]{BHMPW20a}. 
\end{proof}

\medskip

For applications to hierarchical zonotopal algebras, we will need to consider a family of divisors associated to \emph{order filters} in the lattice of flats of the hyperplane arrangement, i.e., sets $\mathcal{J}$ such that if $F \in \mathcal{J}$ and $G \supset F$, then $G \in \mathcal{J}$. Given an order filter $\mathcal{J}$, consider the divisor class
\begin{equation}\label{eq:divisororder}
D_\mathcal{J} = \sum_{F \in \mathcal{J}, \, F \not= E} [D_F].  
\end{equation}
Note that if $\mathcal{J} = \{E\}$, then $D_{\mathcal{J}} = 0$, and if $\mathcal{J}$ contains all flats, then $D_{\mathcal{J}} = \alpha$. 
In Proposition~\ref{prop:hierarchicalvanishing} below, we prove vanishing results for line bundles related to this divisor. Our main tool will be removing a minimal element from the order filter $\mathcal{J}$; this yields a smaller order filter. 

If $L$ is loopless, then the \emph{wonderful variety} $\underline{W}_L$ is the variety obtained by blowing up $\mathbb{P}L$ along the strict transforms of strata corresponding to proper nonempty flats, in increasing order of dimension. The wonderful variety is equipped with a divisor $\underline{D}_F$ for each proper nonempty flat $F$, corresponding to the strict transform of $\mathbb{P}L_F$. There is also a divisor $\underline{\alpha}$, corresponding to the pullback of the hyperplane class on $\mathbb{P}L$.

The divisor $D_F$ on $W_L$ can be identified with $W_{L^F} \times \underline{W}_{L_F}$, see \cite[Proposition 2.7]{BHMPW20a}. In particular, each line bundle on $W_{L}$ restricts to a line bundle on $W_{L^F} \times \underline{W}_{L_F}$. We describe this restriction map. 

\begin{lemma}\label{lem:restrictToBoundary}\cite[Proposition 2.20]{BHMPW20a}
Let $F$ be a proper flat. Then
\begin{enumerate}
\item for any $i \in F$, the restriction of $y_i \in H^2(W_L)$ to $H^2(W_{L^F} \times \underline{W}_{L_F})$ is $y_i \otimes 1$. 
\item for any $i \not \in F$, the restriction of $y_i \in H^2(W_L)$ to $H^2(W_{L^F} \times \underline{W}_{L_F})$ is $0$. 
\item the restriction of $\alpha \in H^2(W_L)$ to $H^2(W_{L^F} \times \underline{W}_{L_F})$ is $1 \otimes \underline{\alpha}$.
\item if $G$ is incomparable with $F$, then the restriction of $[D_G] \in H^2(W_L)$ to $H^2(W_{L^F} \times \underline{W}_{L_F})$ is $0$. 
\item the restriction of $\sum_{G \ge F} [D_G] \in H^2(W_L)$ to $H^2(W_{L^F} \times \underline{W}_{L_F})$ is $-\alpha \otimes 1 + 1 \otimes \underline{\alpha}$. 
\end{enumerate}
\end{lemma}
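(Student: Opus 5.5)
The plan is to work directly with the identification $D_F \cong W_{L^F} \times \underline{W}_{L_F}$ of \cite[Proposition 2.7]{BHMPW20a}. Under it, the first projection $D_F \to W_{L^F}$ is the restriction of the coordinate projection $\pi_F \colon W_L \to W_{L^F}$. The second projection $D_F \to \underline{W}_{L_F}$ is compatible with the blow-down map $W_L \to \mathbb{P}(L \oplus \mathbb{C})$: the image of $D_F$ lies in $\mathbb{P}L_F \subset \mathbb{P}L$, and $D_F \to \mathbb{P}L_F$ factors as $D_F \to \underline{W}_{L_F} \to \mathbb{P}L_F$. Each item is then obtained by pulling back along maps that are explicit on $D_F$.

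\textbf{Items (1)--(3).} For (1), with $i \in F$, the $i$th map $W_L \to \mathbb{P}^1$ factors through $\pi_F$. Restricting to $D_F$ gives the first projection followed by the $i$th map $W_{L^F} \to \mathbb{P}^1$, so the class is $y_i \otimes 1$; this also follows from Lemma~\ref{lem:pullback}(1). For (2), with $i \notin F$, a generic point of $D_F$ is a limit of points of $L$ going to infinity in a direction of $L_F$ whose $i$th coordinate is nonzero. Hence the $i$th coordinate tends to $\infty$ on a dense subset of $D_F$, so the $i$th map is constant on $D_F$ and the pulled-back line bundle is trivial. For (3), $\alpha$ is pulled back from the hyperplane class. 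Since $D_F$ lies over $\mathbb{P}L_F$, the restriction is the pullback of $\mathcal{O}_{\mathbb{P}L_F}(1)$ through $\underline{W}_{L_F}$, which is $1 \otimes \underline{\alpha}$.

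\textbf{Item (4).} Here I would use the standard property of the wonderful-type iterated blow-up that two boundary divisors meet only if their flats are comparable, i.e.\ the boundary strata are indexed by chains. Thus $D_G \cap D_F = \emptyset$ when $G$ and $F$ are incomparable, and the restriction of $\mathcal{O}(D_G)$ to $D_F$ is trivial.

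\textbf{Item (5).} I would write $\sum_{G \ge F}[D_G] = \alpha - \sum_{G < F}[D_G] - \sum_{G \text{ incomparable}}[D_G]$, using $\alpha = \sum_{G \neq E}[D_G]$. The incomparable terms vanish by (4), and $\alpha$ restricts to $1 \otimes \underline{\alpha}$ by (3). For $G \subsetneq F$, the key claim is that $D_G$ meets $D_F$ transversally and $D_G \cap D_F = \pi_F^{-1}(D_G^{L^F}) \cap D_F$, where $D^{L^F}_G$ is the boundary divisor of $W_{L^F}$; this makes sense because the flats of $L^F$ are exactly the flats of $L$ contained in $F$. Granting the claim, $[D_G]$ restricts to $[D^{L^F}_G] \otimes 1$. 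Summing over all proper flats of $L^F$ then gives $\alpha \otimes 1$ on $W_{L^F}$, and the result is $-\alpha \otimes 1 + 1 \otimes \underline{\alpha}$.

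The main obstacle is this transversality and identification for $G < F$. I would verify it in local blow-up coordinates near a generic point of $D_G \cap D_F$, checking there that $\pi_F$ pulls back the local equation of $D^{L^F}_G$ to the local equation of $D_G$.
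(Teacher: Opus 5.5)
Your proposal is correct. It necessarily takes a different route from the paper, which gives no argument here and simply imports the statement from \cite[Proposition 2.20]{BHMPW20a}.

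You instead verify everything geometrically. You use the product structure $D_F \cong W_{L^F}\times\underline{W}_{L_F}$, with the two projections identified as $\pi_F|_{D_F}$ and the blow-down onto $\mathbb{P}L_F$. You also use that boundary divisors of incomparable flats are disjoint. Items (1)--(4) are then immediate. Your reduction of (5), via $\alpha=\sum_{G\neq E}[D_G]$, (3) and (4), to the identity $[D_G]|_{D_F}=[D^{L^F}_G]\otimes 1$ for $G\subsetneq F$ is sound, because the proper flats of $L^F$ are exactly the flats $G\subsetneq F$.

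The transversality claim you defer does hold, and you can avoid local coordinates. One has $\pi_F^*[D^{L^F}_G]=\sum_{H\cap F=G}[D_H]$ (cf.\ the pullback formula from \cite[Section 3]{BHMPW20a} used in Lemma~\ref{lem:hierarchical-pullback-and-restriction}). Restrict this to $D_F$, where $\pi_F$ becomes the first projection. Every term with $H\neq G$ then vanishes by (4), since such an $H$ is incomparable with $F$.

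Alternatively, (5) follows in one step from the normal bundle:
\begin{enumerate}
\item The map $W_L\to\mathbb{P}(L\oplus\mathbb{C})$ factors through the blow-up along $\mathbb{P}L_F$.
\item The exceptional divisor $E$ of that blow-up pulls back to $\sum_{G\ge F}D_G$, because the ideal of $\mathbb{P}L_F$ vanishes to order one exactly along the $D_G$ with $G\ge F$.
\item $D_F$ maps onto $E\cong\mathbb{P}L_F\times\mathbb{P}(L^F\oplus\mathbb{C})$ by the product of the two blow-downs.
\item $\mathcal{O}(E)|_E$ is the tautological bundle $\mathcal{O}(1)\boxtimes\mathcal{O}(-1)$ of $\mathbb{P}\bigl(\mathcal{O}(1)\otimes (L\oplus\mathbb{C})/L_F\bigr)$, which pulls back to $-\alpha\otimes 1+1\otimes\underline{\alpha}$.
\end{enumerate}

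The citation buys brevity. Your argument makes each formula geometrically transparent in the realizable setting the paper actually uses.
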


\subsection{The log cotangent bundle}\label{ssec:logcotangent}

Recall that $\Omega_L$ is the log cotangent bundle of $W_L$, viewed as a compactification of $L$. A section of $\wedge^j \Omega_L$ is a $j$-form $\omega$ on $L$ such that both $\omega$ and $d\omega$ have a pole of order at most $1$ along each divisor $D_F$. 

The embedding of $L$ into $\mathbb{C}^n$ induces an embedding of $W_L$ into the augmented wonderful variety of the Boolean arrangement, i.e., the stellahedral toric variety $X_n$. 
In \cite{EHL}, the authors construct a toric vector bundle 
$\mathcal{S}_L$, called the \emph{augmented tautological subbundle}, on 
the stellahedral toric variety associated to a hyperplane arrangement $L$. By \cite[Theorem 9.2]{EHL}, the restriction of $\mathcal{S}_L^*$ to $W_L$ is isomorphic to $\Omega_L$. However, the $\mathbb{G}_m$-linearization on $\mathcal{S}_L^*|_{W_L}$ is different from the linearization on $\Omega_L$. 

The description of $\mathcal{S}_L^{*}$ in \cite{EHL} implies that $\Omega_L$ can be realized as a quotient of $\oplus \pi_i^* \mathcal{O}(-1)$, where $\pi_i$ is the map from $W_L$ to $\mathbb{P}^1$ given by composing the map $W_L \to (\mathbb{P}^1)^n$ with the $i$th projection. If the arrangement is Boolean, then $\Omega_L = \oplus \pi_i^* \mathcal{O}(-1)$. 

\begin{lemma}\label{lem:restriction}
If $i$ is not a loop, then there is a short exact sequence of sheaves on $W_{L/i}$
$$0 \to \mathcal{O}_{W_{L/i}} \to \Omega_L|_{W_{L/i}} \to \Omega_{L/i} \to 0.$$
\end{lemma}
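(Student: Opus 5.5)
The plan is to treat $W_{L/i}$ as a smooth divisor in $W_L$ that meets the boundary $\partial W_L = \bigcup_F D_F$ transversally, and then use the standard residue/restriction sequence for log forms along a divisor that is \emph{not} a boundary component. Concretely, write $Y = W_{L/i}$ and $\partial Y = \partial W_L \cap Y$. First I would check that $\partial W_L + Y$ is a simple normal crossings divisor and that $\partial W_L \cap Y$ is exactly the boundary of $W_{L/i}$ as a compactification of $L/i$; that is, $D_F \cap Y$ is the union of the boundary divisors of $W_{L/i}$ indexed by flats of $L/i$ coming from $F$. This can be read off from the identification of $Y$ with the closure of $L \cap \mathbb{C}^{E \setminus i}$, and it is equivalent to the restriction formulas in Lemma~\ref{lem:restrictdiv} and \cite[Proposition 2.28]{BHMPW20a}. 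Alternatively, I would work inside the stellahedral toric variety $X_n$. There $Y$ is the intersection of $W_L$ with the torus-invariant divisor $\{x_i = 0\}$, namely the closure of the coordinate hyperplane. That divisor is not part of the boundary of $\mathbb{C}^n \subset X_n$, and toric geometry makes the transversality transparent.

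Given this, there is the conormal sequence of log differentials for a divisor $Y$ transverse to the log structure:
$$0 \to \mathcal{O}_Y(-Y) \to \Omega_{W_L}(\log \partial W_L)|_Y \to \Omega_Y(\log \partial Y) \to 0.$$
Locally this is clear. Choose coordinates $z_1, \dots, z_r$ with $\partial W_L = \{z_1 \cdots z_s = 0\}$ and $Y = \{z_{s+1} = 0\}$. The log cotangent bundle is then free on $dz_1/z_1, \dots, dz_s/z_s, dz_{s+1}, \dots, dz_r$, and restricting to $Y$ kills exactly the span of $dz_{s+1}$, which is the conormal line. Since $\Omega_Y(\log \partial Y) = \Omega_{L/i}$, it remains to identify the conormal bundle $\mathcal{O}_Y(-Y)$ with $\mathcal{O}_Y$. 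By the discussion before Lemma~\ref{lem:restrictdiv}, $\mathcal{O}(Y)$ is $\pi_i^*\mathcal{O}(1)$, with class $y_i$. By Lemma~\ref{lem:restrictdiv}(2), $y_i$ restricts to $0$ on $W_{L/i}$. More concretely, $\pi_i$ maps $Y$ to the point $0 \in \mathbb{P}^1$, so $\pi_i^*\mathcal{O}(1)|_Y$ is the pullback of a line bundle from a point and is therefore trivial. This gives the stated sequence $0 \to \mathcal{O}_{W_{L/i}} \to \Omega_L|_{W_{L/i}} \to \Omega_{L/i} \to 0$.

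Explicitly, the sub line bundle is generated by $dx_i$, where $x_i$ is the $i$th coordinate form. On $L$ it is nonvanishing along $L/i$. Near infinity, $x_i$ has a pole along the boundary divisors $D_F$ with $i \notin F$, but on $Y$ the function $x_i$ vanishes identically. Globally the generator is $\pi_i^* d(\text{coordinate})$ divided by the defining section, which is a nowhere-vanishing section of the conormal bundle.

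The main obstacle is the transversality claim: that $Y$ is not contained in any $D_F$, that $Y \cup \partial W_L$ is snc, and that the restricted boundary is precisely the boundary of $W_{L/i}$ (with reduced structure). I would handle this through the toric embedding $W_L \subset X_n$. In $X_n$ the corresponding statement holds for the Boolean arrangement by a direct check on affine charts, and $W_L$ meets torus orbits properly, as in \cite{BHMPW20a}, \cite{EHL}. Alternatively, I would compare with \cite[Theorem 9.2]{EHL}, where $\Omega_L \cong \mathcal{S}_L^*|_{W_L}$, and use the fact that the augmented tautological bundles of $L$ and $L/i$ fit into the analogous sequence on the toric divisor.
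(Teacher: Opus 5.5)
Your proposal is correct and is essentially the paper's argument. The paper also applies the logarithmic conormal sequence (citing Olsson), using that the boundary of $W_L$ restricts to the boundary of $W_{L/i}$, and then trivializes the conormal bundle because $[W_{L/i}] = y_i$ restricts to $0$ by Lemma~\ref{lem:restrictdiv}(2). Your local-coordinate check of the sequence and your observation that $\pi_i$ contracts $W_{L/i}$ to the point $0 \in \mathbb{P}^1$ simply spell out steps the paper takes for granted.
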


\begin{proof}
Because the restriction of the boundary of $W_L$ to $W_{L/i}$ is the boundary of $W_{L/i}$, the logarithmic version of the conormal exact sequence is
$$0 \to N^{*} \to \Omega_{L}|_{W_{L/i}} \to \Omega_{L/i} \to 0,$$
where $N^{*}$ is the conormal bundle of $W_{L/i}$ in $W_L$, see 
\cite[1.1]{Olsson}. Because $[W_{L/i}] = y_i$ restricts to $0$ on $W_{L/i}$ by Lemma~\ref{lem:restrictdiv}(2), $N^{*}$ is isomorphic to $\mathcal{O}_{W_{L/i}}$. 
\end{proof}

Suppose that $i$ is not a loop. We will now show that the choice of a splitting of the inclusion $L/i \hookrightarrow L$ induces a splitting of the short exact sequence in Lemma~\ref{lem:restriction}. The fiber of $\Omega_L$ at the origin is identified with $L^*$, and the fiber of $\Omega_{L/i}$ at the origin is identified with $(L/i)^*$, so any splitting of the above sequence induces a splitting of $L/i \hookrightarrow L$. Because $\Omega_{L}|_{W_{L/i}}$ and $\Omega_{L/i}$ are both $L/i$-equivariant sheaves, a morphism $\Omega_{L/i} \to \Omega_L|_{W_{L/i}}$ is determined by the map on the fibers over the origin. 

\begin{lemma}\label{lem:split}
Given a splitting of the inclusion $L/i \hookrightarrow L$, there is a splitting of the sequence in Lemma~\ref{lem:restriction} such that the map on the fibers over the origin is the dual to the chosen splitting. 
\end{lemma}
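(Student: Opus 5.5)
Fix a splitting $s\colon L \to L/i$ of the inclusion $\iota\colon L/i \hookrightarrow L$, so that $L = L/i \oplus \mathbb{C}\cdot v$ with $\ker s = \mathbb{C} v$. The plan is to write down the splitting map $\Omega_{L/i} \to \Omega_L|_{W_{L/i}}$ concretely on the open set $L/i$, using the dual splitting $s^*\colon (L/i)^* \to L^*$. Then we show it extends over the boundary of $W_{L/i}$ and that composing with the quotient map gives the identity. By the remark preceding the lemma, an $L/i$-equivariant morphism is determined by its effect on the fiber over the origin. Equivariance therefore pins the map down, and the real content is the extension.

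First, on $L/i$ the bundle $\Omega_{L/i}$ is trivialized by the translation-invariant forms $d\lambda$ for $\lambda \in (L/i)^*$. Likewise $\Omega_L|_{L/i}$ is trivialized by $d\mu$ for $\mu \in L^*$, which is the constant bundle $L^*$. Define $\sigma(d\lambda) = d(s^*\lambda)$, i.e.\ $d(\lambda \circ s)$ restricted to $L/i$. Since $\iota^* s^* = \mathrm{id}$, the composition with the restriction map $\Omega_L|_{L/i} \to \Omega_{L/i}$ is the identity. The map $\sigma$ is $L/i$-equivariant because invariant forms go to invariant forms. On the fiber at the origin it is $s^*$, as required.

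The main step is to show that $\sigma$ extends to a morphism of sheaves on all of $W_{L/i}$. I would use the description of $\Omega_L$ as a quotient of $\bigoplus_j \pi_j^*\mathcal{O}(-1)$, with generators the log forms $dx_j$ (coordinate forms restricted to $L$). These have at most log poles along each $D_F$. Now write $s^*\lambda = \sum_j c_j x_j|_L$ as a linear form on $L$, which is possible since the $x_j$ span $L^*$. Then $d(s^*\lambda) = \sum_j c_j\, dx_j$ is a global section of $\Omega_L$ over $W_L$, and restricting it gives a section of $\Omega_L|_{W_{L/i}}$.

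It remains to check that $\lambda \mapsto d(s^*\lambda)$ lands correctly relative to the generators $dx_j|_{L/i}$ of $\Omega_{L/i}$, which are images of the $dx_j$ for $j \ne i$. I would do this as follows. Choose the lift $\tilde\sigma\colon \bigoplus_{j\neq i}\pi_j^*\mathcal{O}(-1)|_{W_{L/i}} \to \Omega_L|_{W_{L/i}}$ sending $dx_j|_{L/i}$ to $d(s^*(x_j|_{L/i}))$. This is a combination of the $dx_k$, which extend globally, so $\tilde\sigma$ is defined everywhere; the twist by $\mathcal{O}(-1)$ must be matched with the weights, using the $\mathbb{G}_m$-equivariance. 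Relations among the $dx_j|_{L/i}$ in $\Omega_{L/i}$ are killed by $\tilde\sigma$ on the dense open $L/i$. Since $\Omega_L|_{W_{L/i}}$ is locally free, hence torsion-free, they are killed everywhere. So $\tilde\sigma$ descends to $\Omega_{L/i}$.

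I expect this step to be the main obstacle, specifically verifying that the $\mathcal{O}(-1)$ twists match. If that bookkeeping proves awkward, I would use an alternative. Any rational section of $\Omega_L|_{W_{L/i}}$ that is regular on $L/i$ and has log poles at worst along the boundary is regular, since the forms $d(s^*\lambda)$ are global log forms. This gives the required morphism directly from regularity in codimension one together with local freeness (Hartogs).

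Finally, the composite $\Omega_{L/i} \to \Omega_L|_{W_{L/i}} \to \Omega_{L/i}$ is the identity on the dense open set $L/i$, hence everywhere. So $\sigma$ splits the sequence of Lemma~\ref{lem:restriction}, and its fiber at the origin is $s^*$, the dual of the chosen splitting.
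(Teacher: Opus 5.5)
\noindent There is a genuine gap in the step you yourself flag as the main one: extending $\sigma$ over the boundary of $W_{L/i}$. Your argument rests on the claim that the translation-invariant forms $dx_k$, and hence $d(s^*\lambda)=\sum_k c_k\,dx_k$, are global sections of $\Omega_L$ on $W_L$. This claim is false.

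If $t$ is a local equation of $D_F$ with $k\notin F$, then $x_k=u/t$ with $u$ a unit. So $dx_k=t^{-1}\bigl(du-u\,\tfrac{dt}{t}\bigr)$ has a genuine pole as a section of $\Omega_L$; as an ordinary form the pole is double. Already for $L=\mathbb{C}$ one has $W_L=\mathbb{P}^1$ and $\Omega_L\cong\mathcal{O}(-1)$, and $dx$ is not a section. In general $H^0(W_L,\Omega_L)=0$ by Example~\ref{ex:degen}. This is exactly why $\Omega_L$ is a quotient of $\bigoplus_k\pi_k^*\mathcal{O}(-1)$ rather than of a trivial bundle: $dx_k$ is only a section of $\Omega_L\otimes\pi_k^*\mathcal{O}(1)$.

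Consequently, ``a combination of the $dx_k$, which extend globally, so $\tilde\sigma$ is defined everywhere'' does not follow. Appealing to $\mathbb{G}_m$-weights does not control pole orders. Take a local generator $f$ of $\pi_j^*\mathcal{O}(-1)$ and a flat $G\ni i$ with $j\in G$ and $k\notin G$. Then $f\,dx_k$ is not regular along $D_G\cap W_{L/i}$. So for an arbitrary linear map $(L/i)^*\to L^*$, your recipe would not in general define a morphism of sheaves. The Hartogs fallback is circular: regularity in codimension one along the boundary is exactly what has to be checked, and the fallback again invokes the false claim.

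What is missing is an argument that uses the specific shape of $s^*$. The difference $s^*(x_j|_{L/i})-x_j|_L$ vanishes on $L/i=\ker(x_i|_L)$, so $s^*(x_j|_{L/i})=x_j|_L-\tfrac{v_j}{v_i}\,x_i|_L$, where $v_i\neq 0$. The correction involves only $x_i$. This is a regular function on $\pi_i^{-1}(\mathbb{A}^1)$, an open neighbourhood of $W_{L/i}=\pi_i^{-1}(0)$ that is disjoint from $\bigcup_{F\not\ni i}D_F=\pi_i^{-1}(\infty)$. Hence $dx_i|_{W_{L/i}}$ is a global section of $\Omega_L|_{W_{L/i}}$; it is the image of $1$ under $N^*\cong\mathcal{O}\to\Omega_L|_{W_{L/i}}$. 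It follows that $f\,dx_j\mapsto f\,dx_j-\tfrac{v_j}{v_i}f\,dx_i$ is regular. With this input, your torsion-freeness descent and the rest of your argument go through.

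A cleaner form of the same idea uses the translation field $\partial_v$. It is a global log vector field, since the $L$-action preserves the boundary. The map $\omega\mapsto v_i^{-1}\langle\omega,\partial_v\rangle$ is a retraction of $\mathcal{O}\xrightarrow{dx_i}\Omega_L|_{W_{L/i}}$, and its kernel has fiber $s^*\bigl((L/i)^*\bigr)$ at the origin.

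Either version is a different route from the paper's. The paper realizes $\Omega_L^*|_{W_{L/i}}$ and $\Omega_{L/i}^*$ as restrictions of the toric bundles $\mathcal{S}_L|_{X_{n-1}}$ and $\mathcal{S}_{L/i}$ on the stellahedral variety. It then checks that $L\to L/i$ respects the Klyachko filtrations, since every nontrivial step $L_{S\cup i}$ lies in $L/i$.
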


To prove Lemma~\ref{lem:split}, we will construct the desired splitting 
on the stellahedral toric variety which contains $W_{L/i}$. There, we 
have a description, due to Klyachko, of the category of toric vector 
bundles on a toric variety in terms of certain $\mathbb{Z}$-indexed 
decreasing filtrations $\{F^{\rho}(k)\}_{k \in \mathbb{Z}}$ on the fiber 
over the identity, one for each ray $\rho$ of the fan \cite{KlyachkoVB}. 

\begin{proof}[Proof of Lemma~\ref{lem:split}]
It will be more convenient to split the dual of the sequence in Lemma~\ref{lem:restriction} by producing a map $\Omega_{L}^*|_{W_{L/i}} \to \Omega_{L/i}^*$. 
The rays of $X_{n-1}$, the stellahedral toric variety which contains $W_{L/i}$, are labeled by either an element $j \in E \setminus \{i\}$ or a subset $S$ contained in $E \setminus \{i\}$. The vector bundle $\mathcal{S}_{L/i}$, whose restriction to $W_{L/i}$ is isomorphic to $\Omega_{L/i}^*$, is a subbundle of $\oplus_{j \not= i} \pi_j^* \mathcal{O}(1)$. We first describe the equivariant vector bundle $\oplus_{j \not= i} \pi_j^* \mathcal{O}(1)$ (with the linearization described in \cite[Section 3.4]{EHL}) in the language of Klyachko. The fiber over the identity is identified with $\mathbb{C}^{n-1}$. The filtrations are given by
$$F^j(k) = \begin{cases} \mathbb{C}^{n-1} & k \le 0 \\ 0 & k > 0, \end{cases} \text{ and } F^S(k) = \begin{cases} \mathbb{C}^{n-1} & k \le 0 \\ \mathbb{C}^{E \setminus (S \cup i)} & k = 1 \\ 0 & k > 1. \end{cases}$$
The fiber of $\mathcal{S}_{L/i}$ over the identity of the torus in $X_{n-1}$ is the subspace $L/i$ of $\mathbb{C}^{n-1}$, so the Klyachko description of $\mathcal{S}_{L/i}$ consists of the following filtrations on $L/i$:
$$F^j(k) = \begin{cases} L/i & k \le 0 \\ 0 & k > 0, \end{cases} \text{ and } F^S(k) = \begin{cases} L/i & k \le 0 \\ L_{S \cup i} & k = 1 \\ 0 & k > 1. \end{cases}$$
Similarly, $\mathcal{S}_{L}|_{X_{n-1}}$ is a vector bundle which restricts to $\Omega_{L}^*|_{W_{L/i}}$. It is a subbundle of $\mathcal{O} \oplus \bigoplus_{j \not= i} \pi_j^* \mathcal{O}(1)$, and its Klyachko description consists of the following filtrations on $L$:
$$F^j(k) = \begin{cases} L & k \le 0 \\ 0 & k > 0, \end{cases} \text{ and } F^S(k) = \begin{cases} L & k \le 0 \\ L_{S \cup i} & k = 1 \\ 0 & k > 1. \end{cases}$$
A splitting of the inclusion $L/i \hookrightarrow L$ respects the filtrations associated to $\mathcal{S}_L|_{X_{n-1}}$ and $\mathcal{S}_{L/i}$, and so it induces a map $\mathcal{S}_{L}|_{X_{n-1}} \to \mathcal{S}_{L/i}$. The restriction of this map to $W_{L/i}$ splits the dual of the sequence in Lemma~\ref{lem:restriction}. 
\end{proof}

For future use, we note the following identity. Recall that $r = \dim L$. 

\begin{proposition}\label{prop:SD}
For any divisor $D$ on $W_L$, we have an isomorphism
$$H^q(W_L, \mathcal{O}(D) \otimes \wedge^j \Omega_L) \cong H^{r - q}(W_L, \mathcal{O}(-\alpha - D) \otimes \wedge^{r-j} \Omega_L)^*.$$
\end{proposition}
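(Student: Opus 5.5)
This will follow from Serre duality on the smooth projective $r$-dimensional variety $W_L$, combined with the perfect wedge pairing on the rank-$r$ bundle $\Omega_L$ and an identification of the determinant of $\Omega_L$ with $\omega_{W_L}$ twisted by the boundary.

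Serre duality gives
$$H^q(W_L, \mathcal{O}(D) \otimes \wedge^j \Omega_L) \cong H^{r-q}(W_L, \mathcal{O}(-D) \otimes (\wedge^j \Omega_L)^* \otimes \omega_{W_L})^*.$$
Since $\Omega_L$ is locally free of rank $r$, the wedge pairing $\wedge^j \Omega_L \otimes \wedge^{r-j}\Omega_L \to \wedge^r \Omega_L$ is perfect. It therefore gives
$$(\wedge^j \Omega_L)^* \cong \wedge^{r-j}\Omega_L \otimes (\wedge^r \Omega_L)^{-1}.$$
Substituting, it suffices to show
$$\omega_{W_L} \otimes (\wedge^r\Omega_L)^{-1} \cong \mathcal{O}(-\alpha).$$

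For a simple normal crossings pair, the top exterior power of the log cotangent bundle is $\omega_{W_L}(B)$, where $B = \sum_{F \neq E} D_F$ is the reduced boundary. Indeed, locally it is generated by $dz_1/z_1 \wedge \dotsb \wedge dz_s/z_s \wedge dz_{s+1}\wedge\dotsb$ with $z_1\cdots z_s$ the local boundary equation. Hence
$$\omega_{W_L}\otimes(\wedge^r\Omega_L)^{-1} \cong \mathcal{O}(-B).$$
As recorded above, $\alpha = \sum_{F\neq E}[D_F]$, so $\mathcal{O}(-B) \cong \mathcal{O}(-\alpha)$, which gives the claim. As a consistency check, Proposition~\ref{prop:canonical} gives
$$c_1(\wedge^r\Omega_L) = \sum_{F\neq E}(\operatorname{rk}(F) - r)[D_F],$$
which agrees with the introduction's assertion that $\wedge^r\Omega_L \cong \mathcal{L}_M \otimes \mathcal{O}(-1,\dotsc,1)$.

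The only real point to verify is the local description of $\wedge^r\Omega_L$. This requires $\Omega_L$ to be the standard log cotangent bundle of the snc pair $(W_L, W_L\setminus L)$, which matches the definition in Section~\ref{ssec:logcotangent}. Everything else is formal. No equivariance is needed, since the statement is an abstract isomorphism of vector spaces.
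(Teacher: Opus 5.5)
Your proof is correct and is essentially the paper's argument: Serre duality, the perfect pairing $\wedge^j\Omega_L\otimes\wedge^{r-j}\Omega_L\to\det\Omega_L$, and $\det\Omega_L\cong\omega_{W_L}(\alpha)$ (the log-canonical bundle, with $\alpha$ linearly equivalent to the reduced boundary); the only difference is that you apply Serre duality first rather than last. Minor typo: in your consistency check the twist should be $\mathcal{L}_M\otimes\mathcal{O}(-1,\dotsc,-1)$.
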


\begin{proof}
Recall that, because $\Omega_{L}$ is a vector bundle of rank $r$, there is an isomorphism
$$\wedge^j \Omega_L  \cong \wedge^{r-j} \Omega_{L}^* \otimes \det \Omega_{L}.$$
The determinant of $\Omega_L$ is the log-canonical bundle, i.e., the canonical bundle twisted by the sum of the boundary divisors. As the divisor class $\alpha$ is linearly equivalent to the sum of the boundary divisors, we have $\det \Omega_L \cong \omega_{W_L}(\alpha)$. Using Serre duality, we then have isomorphisms
\begin{equation*}\begin{split}
H^q(W_L, \mathcal{O}(D) \otimes \wedge^j \Omega_L) & \cong H^q(W_L, \mathcal{O}(\alpha + D) \otimes \wedge^{r - j} \Omega_L^* \otimes \omega_{W_L} ) \\ 
& \cong H^{r - q}(W_L, \mathcal{O}(-\alpha - D) \otimes \wedge^{r-j} \Omega_L)^*. \qedhere
\end{split}\end{equation*}
\end{proof}

\subsection{Vanishing theorems}\label{sec:vanishing}

  We now prove our main vanishing theorem for the cohomology of twists 
  of log cotangent bundles on augmented wonderful varieties. This will 
  allow us to show that various sequences of inverse systems of 
  zonotopal algebras are exact on the right.

\begin{theorem}\label{thm:vanishing}
For any $k \ge -1$, any $j \ge 0$, and any $i > 0$, we have 
$$H^i(W_L, \mathcal{O}(1, \dotsc, 1)(k\alpha) \otimes \wedge^j \Omega_L) = 0.$$
\end{theorem}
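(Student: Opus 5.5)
We prove the vanishing by induction on $n$, the number of hyperplanes, using deletion and contraction. We may first reduce to the case where $L$ is loopless. If $i$ is a loop, then $y_i = 0$, and $W_L$ agrees with $W_{L \setminus i}$ together with its bundles $\alpha$ and $\Omega_L$. So the statement for $L$ is the statement for $L \setminus i$, and we may discard loops.

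\textbf{Base case.} The base case is the Boolean arrangement $L = \mathbb{C}^r$. Here $W_L$ is the stellahedral toric variety, and $\Omega_L = \bigoplus_i \pi_i^*\mathcal{O}(-1)$. The bundle $\wedge^j \Omega_L$ is therefore a direct sum of line bundles $\mathcal{O}(-y_{S})$ with $|S| = j$, where $y_S = \sum_{i \in S} y_i$. Each summand of the twisted bundle is then $\mathcal{O}(y_{E \setminus S} + k\alpha)$, a toric line bundle. For $k \ge 0$ it is nef, since $y_i$ and $\alpha$ are pulled back from $\mathbb{P}^1$ and $\mathbb{P}^r$. Nef line bundles on a smooth projective toric variety have no higher cohomology.

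For $k = -1$ we need $H^i(\mathcal{O}(y_{T} - \alpha)) = 0$ for every subset $T$. We plan to handle this by a second induction, using the sequence
$$0 \to \mathcal{O}(D - \alpha) \to \mathcal{O}(D) \to \mathcal{O}_{W_{TL}}(D) \to 0,$$
where $W_{TL}$ is a general hyperplane section. Applied to $D = y_T$, this reduces the claim to the surjectivity of $H^0(W_L, \mathcal{O}(y_T)) \to H^0(W_{TL}, \mathcal{O}(y_T))$. An alternative is a direct toric computation, via Demazure vanishing applied to polytopes together with a check that $\mathcal{O}(y_T - \alpha)$ has a "near-nef" cohomology computation. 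Either way, this $k = -1$ case of the base is where a genuine argument is needed. In fact, I would expect the $k=-1$ base case to be the source of the "strengthening of log Hodge degeneration" mentioned in the introduction. Concretely, the needed vanishing should follow from a statement that the pushforward of $\wedge^j \Omega_L(-\alpha)$ twisted by $y$'s to $(\mathbb{P}^1)^n$ has no higher cohomology, and I would try to prove that first by projecting to $(\mathbb{P}^1)^n$.

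\textbf{Inductive step.} If every element is a coloop, $L$ is Boolean and the base case applies. Otherwise pick $i$ that is neither a loop nor a coloop. Write $\mathcal{E} = \mathcal{O}(1,\dots,1)(k\alpha) \otimes \wedge^j \Omega_L$. Tensoring the ideal sequence of the Cartier divisor $W_{L/i}$ (class $y_i$) with $\mathcal{E}$ gives
$$0 \to \mathcal{E}(-y_i) \to \mathcal{E} \to \mathcal{E}|_{W_{L/i}} \to 0.$$
The right term is handled as follows.
\begin{itemize}
\item By Lemma~\ref{lem:restrictdiv}, the twist $\mathcal{O}(1,\dots,1)(k\alpha)$ restricts to the same bundle on $W_{L/i}$ (the $y_i$ term restricts to $0$).
\item By Lemmas~\ref{lem:restriction} and \ref{lem:split}, $\wedge^j\Omega_L|_{W_{L/i}} \cong \wedge^j \Omega_{L/i} \oplus \wedge^{j-1}\Omega_{L/i}$.
\end{itemize}
So the higher cohomology of the right term vanishes by induction applied to $L/i$.

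For the left term, we want to identify $\mathcal{E}(-y_i)$ with the pullback along $\pi = \pi_{E \setminus i} \colon W_L \to W_{L \setminus i}$ of the analogous bundle on $W_{L\setminus i}$. Lemma~\ref{lem:pullback} gives this for the line bundle $\mathcal{O}(1,\dots,1)(k\alpha)(-y_i)$, which is exactly what equation \eqref{eq:deletionidentification} is designed for. We then need two further facts:
\begin{enumerate}
\item $R\pi_*\mathcal{O}_{W_L} = \mathcal{O}_{W_{L\setminus i}}$, which holds because $\pi$ is a birational morphism of smooth varieties.
\item A comparison between $\wedge^j \Omega_L$ and $\pi^*\wedge^j\Omega_{L\setminus i}$.
\end{enumerate}
For the second, there is a natural map $\pi^*\Omega_{L \setminus i} \to \Omega_L$, and it is an isomorphism over $L$. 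I would try to show it is an isomorphism outright, since both are quotients of the sums $\bigoplus \pi_j^*\mathcal{O}(-1)$ and the Klyachko filtrations on $L = L\setminus i$ should agree after twisting. Failing that, I would control the cokernel, which is supported on $W_{L/i}$, and fold it into the right-hand term.

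Granting this comparison, the projection formula gives $H^i(W_L, \mathcal{E}(-y_i)) = H^i(W_{L\setminus i}, \cdot)$, which vanishes by induction. The long exact sequence then yields $H^i(\mathcal{E}) = 0$ for $i > 0$.

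The main obstacles are thus the comparison of log cotangent bundles under deletion and the $k=-1$ Boolean base case; I expect the latter to be the hardest.
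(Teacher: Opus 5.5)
Your deletion--contraction skeleton is sound. The restriction term is handled correctly by Lemmas~\ref{lem:restrictdiv} and~\ref{lem:restriction}, and $\mathcal{O}(1,\dots,0,\dots,1)(k\alpha)=\pi^*\mathcal{O}(1,\dots,1)(k\alpha)$ by Lemma~\ref{lem:pullback}. The genuine gap is your step (2), and both ways you propose to close it fail.

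First, the map $\pi^*\Omega_{L\setminus i}\to\Omega_L$ is not an isomorphism in general. Take $L=\{(x,y,x+y)\}$ and $i=3$. Then $\pi\colon W_L\to W_{L\setminus 3}$ is the blow-up of the point $\mathbb{P}L_{\{3\}}$, which lies on $D_\emptyset$ and on no other boundary component, so it is not a log blow-up. Using $\det\Omega_L\cong\omega_{W_L}(\alpha)$, Proposition~\ref{prop:canonical}, and $\pi^*[D_\emptyset]=[D_\emptyset]+[D_{\{3\}}]$, one gets $\det\Omega_L\cong\pi^*\det\Omega_{L\setminus 3}\otimes\mathcal{O}(D_{\{3\}})$.

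Second, the cokernel is not supported on $W_{L/i}$. Since $\pi$ is an isomorphism over $L$, the cokernel lives on $\pi$-exceptional \emph{boundary} divisors. Here that is $D_{\{3\}}$, which meets $W_{L/3}$ in a single point, so the cokernel cannot be folded into the restriction term.

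What you actually need is the derived statement $R\pi_*\wedge^j\Omega_L\cong\wedge^j\Omega_{L\setminus i}$, i.e., that this cokernel has vanishing derived pushforward. That is Theorem~\ref{thm:pushforward}. It, rather than the Boolean $k=-1$ computation, is the ``strengthening of log Hodge degeneration'' the introduction alludes to. The paper proves it as follows. Because $i$ is not a coloop, $\pi^*\alpha=\alpha$, so $\pi^{-1}(L\setminus i)=L$. Hence $\pi$ is a log resolution of $(W_{L\setminus i}, W_{L\setminus i}\setminus L)$ that is an isomorphism off the boundary. The Esnault--Viehweg result (see \cite[Theorem 31.1]{HodgeIdeals}) then says log $j$-forms push forward to log $j$-forms with no higher direct images. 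With that input your induction goes through and is essentially the paper's argument. The paper pushes forward along $\pi_S$ for a basis $S$ and then adds the $y_s$, $s\notin S$, back one at a time, but it uses the same two ingredients.

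You also leave the $k=-1$ Boolean case open. It is easy, but neither of your suggested routes is the natural one. The hyperplane-section sequence needs vanishing and surjectivity statements on the non-toric $W_{TL}$. And $\mathcal{O}(y_T-\alpha)$ cannot be pushed directly to $(\mathbb{P}^1)^n$, since $\alpha$ is not pulled back from there.

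Instead, use Serre duality (this is Proposition~\ref{prop:SD}). In the Boolean case $\omega_{W_L}=\mathcal{O}(-\alpha-\sum_{t\in E} y_t)$ by Proposition~\ref{prop:canonical}, so $H^i(\mathcal{O}(y_T-\alpha))\cong H^{n-i}(\mathcal{O}(-y_T-\sum_{t\in E}y_t))^*$. The latter line bundle is pulled back along the toric birational morphism $W_L\to(\mathbb{P}^1)^n$. There it is the exterior tensor product of $\mathcal{O}(-2)$ in the coordinates of $T$ and $\mathcal{O}(-1)$ in the others. By the projection formula and K\"unneth, its cohomology vanishes unless $T=E$, in which case it is concentrated in degree $n$. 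Hence $H^i(\mathcal{O}(y_T-\alpha))=0$ for $i>0$.
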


In particular, taking $j = 0$, we have the following corollary. 

\begin{corollary}\label{cor:line}
For any $k \ge -1$ and $i > 0$, we have 
$$H^i(W_L, \mathcal{O}(1, \dotsc, 1)(k\alpha)) = 0.$$
\end{corollary}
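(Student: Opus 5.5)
The statement is the special case $j = 0$ of Theorem~\ref{thm:vanishing}, since $\wedge^0 \Omega_L = \mathcal{O}_{W_L}$. So the plan is simply to invoke Theorem~\ref{thm:vanishing} with $j=0$.

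If instead one wanted a direct argument for line bundles only, I would proceed by induction on $n$ via deletion. The base case is when every element is a coloop, i.e. $L = \mathbb{C}^n$ is Boolean, possibly with loops contributing trivial factors. Then $W_L$ is the stellahedral toric variety. The bundle $\mathcal{O}(1,\dotsc,1)(k\alpha)$ with $k \ge -1$ corresponds to the divisor $\sum_F (n-|F|+k)D_F$, whose coefficients are all $\ge 0$ when $k \ge -1$. This divisor is nef: it is a Minkowski-type combination of the nef classes $y_i$ and $\alpha$, plus possibly $-\alpha$ in the case $k=-1$. For $k=-1$, I would instead check nefness directly via the polytope of $y_1+\dotsb+y_n-\alpha$, which is the independence polytope of the dual matroid, shifted. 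Toric vanishing for nef line bundles then gives the result.

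For the inductive step, pick $i$ that is neither a loop nor a coloop, and use the exact sequence
$$0 \to \mathcal{O}(1,\dotsc,1)(k\alpha)(-y_i) \to \mathcal{O}(1,\dotsc,1)(k\alpha) \to \mathcal{O}(1,\dotsc,1)(k\alpha)|_{W_{L/i}} \to 0.$$
By Lemma~\ref{lem:restrictdiv}, the restricted bundle is the corresponding bundle on $W_{L/i}$. By Lemma~\ref{lem:pullback}, the left term is the pullback along $\pi\colon W_L \to W_{L\setminus i}$ of the corresponding bundle on $W_{L\setminus i}$. To finish, I need $R\pi_*\mathcal{O} = \mathcal{O}$, which holds because $\pi$ is a birational morphism of smooth varieties. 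The long exact sequence together with induction then gives the vanishing.

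The main obstacle is the base case at $k=-1$, namely confirming that the divisor is nef (or at least has vanishing higher cohomology) on the stellahedral variety. Of course, all of this is already subsumed by Theorem~\ref{thm:vanishing}, so the corollary itself is immediate.
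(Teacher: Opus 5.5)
Your main argument is exactly the paper's: the corollary is stated as the case $j=0$ of Theorem~\ref{thm:vanishing}, using $\wedge^0\Omega_L = \mathcal{O}_{W_L}$. One caution about your optional direct sketch: in the Boolean base case with $k=-1$ and $n \ge 2$, the bundle $\mathcal{O}(1,\dotsc,1)(-\alpha)$ is not nef (for $n=2$ it is $\mathcal{O}(D_\emptyset)$, where $D_\emptyset$ is a $(-1)$-curve, and the dual-matroid independence polytope you mention is the polytope of $\mathcal{L}_M$, not of this bundle), so toric nef vanishing does not apply there, and that case needs something like the Serre-duality argument of Lemma~\ref{lem:boolean}.
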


Taking $j = r$ and using that $\wedge^r \Omega_L \cong \mathcal{L}_M(-1, \dotsc, -1)$, we have the following corollary. 

\begin{corollary}\label{cor:LMvanish}
For any $k \ge -1$ and $i > 0$, we have 
$$H^i(W_L, \mathcal{L}_M(k\alpha)) = 0.$$
\end{corollary}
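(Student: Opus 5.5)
Corollary~\ref{cor:LMvanish} should follow directly from Theorem~\ref{thm:vanishing} with $j = r$, once the identification $\wedge^r \Omega_L \cong \mathcal{L}_M(-1, \dotsc, -1)$ is justified. The plan has two steps.

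First, I will check the identification of the top exterior power. As in the proof of Proposition~\ref{prop:SD}, $\det \Omega_L$ is the log canonical bundle $\omega_{W_L}(\sum_{F \neq E} D_F)$. By the formula $\alpha = \sum_{F \neq E}[D_F]$, this is isomorphic to $\omega_{W_L}(\alpha)$. By definition $\mathcal{L}_M = \omega_{W_L} \otimes \mathcal{O}(1, \dotsc, 1)(\alpha)$, so
$$\wedge^r \Omega_L \cong \omega_{W_L}(\alpha) \cong \mathcal{L}_M \otimes \mathcal{O}(-1, \dotsc, -1).$$
The vanishing statements concern only cohomology groups of line bundles up to isomorphism, so no choice of linearization is needed here.

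Second, I tensor with $\mathcal{O}(1, \dotsc, 1)(k\alpha)$:
$$\mathcal{O}(1, \dotsc, 1)(k\alpha) \otimes \wedge^r \Omega_L \cong \mathcal{L}_M(k\alpha).$$
Theorem~\ref{thm:vanishing}, applied with $j = r$ and any $k \ge -1$, says that $H^i$ of the left-hand side vanishes for all $i > 0$. Hence $H^i(W_L, \mathcal{L}_M(k\alpha)) = 0$ for $i > 0$ and $k \ge -1$.

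I expect no real obstacle. The only point needing care is the equality of the log canonical bundle with $\omega_{W_L}(\alpha)$, which uses the fact that the boundary $W_L \setminus L$ is exactly the reduced divisor $\sum_{F \neq E} D_F$. All of the substantive content lies in Theorem~\ref{thm:vanishing}, which I take as given.
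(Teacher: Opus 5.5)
Your proposal is correct and matches the paper's argument: the corollary is obtained by taking $j = r$ in Theorem~\ref{thm:vanishing} and using $\wedge^r \Omega_L \cong \omega_{W_L}(\alpha) \cong \mathcal{L}_M(-1, \dotsc, -1)$. Your justification of that identification via the log canonical bundle is the same reasoning the paper uses in the proof of Proposition~\ref{prop:SD}.
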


\begin{remark}
Recently, Berget and Fink have used Kempf collapsing of vector bundles 
\cite{Kempf, Weyman} to prove strong cohomology vanishing results for 
vector bundles related to hyperplane arrangements 
\cite{BergetFink,BF24}. See \cite{LiuVanishing} for an extension of 
these techniques and other results. At least in the case when $k=-1$, 
this technique does not appear to be sufficient to prove 
Theorem~\ref{thm:vanishing}. For example, one difficulty arises because $\Omega_L$ is usually not a subbundle of a trivial bundle. Additionally, the proof of Theorem~\ref{thm:vanishing} works over a field of arbitrary characteristic, while the techniques of Berget and Fink only work over a field of characteristic $0$. In \cite{EFL}, Eur, Fink, and the second author have proven strong vanishing theorems for certain nef line bundles on augmented wonderful varieties. These results give an alternative proof of Corollary~\ref{cor:line} and Corollary~\ref{cor:LMvanish} when $k \ge 0$, but they do not cover the case $k=-1$ or prove any other cases of Theorem~\ref{thm:vanishing}. 
\end{remark}

The main technical result in the proof of Theorem~\ref{thm:vanishing} is the following theorem. 

\begin{theorem}\label{thm:pushforward}
For any subset $S$ of $\{1, \dotsc, n\}$ and any $j \ge 0$, we have $R \pi_{S*} \wedge^j \Omega_{L} = \wedge^j \Omega_{L^S}$. 
\end{theorem}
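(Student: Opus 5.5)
Since $\pi_S$ factors as a composition of single-element deletions $W_L \to W_{L\setminus i} \to \cdots$, and $R\pi_*$ of a composition is the composition of derived pushforwards, it suffices to treat $S = E\setminus i$ for a single $i$. If $i$ is a loop, the deletion changes nothing geometrically, so $\pi_S$ is an isomorphism identifying the log cotangent bundles. If $i$ is a coloop, then $L = L\setminus i \oplus \mathbb{C}$ and $\dim L^S = r-1$; this case needs separate handling. If $i$ is neither, then $\pi = \pi_{E\setminus i}\colon W_L\to W_{L\setminus i}$ is birational. I would first show that it is a composition of blow-ups along smooth centers that are strata of the boundary, or at least that $\pi$ is a log modification: a proper birational morphism between simple normal crossings compactifications of the same $L$, with boundary mapping into boundary.

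For the birational case, the key step is a log analogue of the statement $R\pi_*\Omega^j = \Omega^j$ for blow-ups. If $\pi$ is a blow-up of a smooth variety $Y$ along a smooth center $Z$ that is a stratum (an intersection of boundary divisors) and is transverse to the remaining boundary, then $\pi^*\Omega_Y(\log B) \cong \Omega_X(\log B')$, where $B'$ is the total transform of the boundary. This is the standard fact that log blow-ups at strata are log étale. Then $R\pi_*\wedge^j\Omega_X(\log) = \wedge^j\Omega_Y(\log)\otimes R\pi_*\mathcal{O}_X$ by the projection formula, and $R\pi_*\mathcal{O}_X = \mathcal{O}_Y$ because the blow-up has a smooth center. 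So I would identify $W_L$ as an iterated blow-up of $W_{L\setminus i}$ along strict transforms of boundary strata $D_{F}\cap D_{F\cup i}$-type loci. Equivalently, the fan of $X_n$ refines the pullback of the fan of $X_{n-1}$ by star subdivisions, and $W_L$ is the restriction of this picture. I would also verify that the centers meet the boundary in normal crossings. Here the Klyachko/EHL description helps: $\Omega_L$ is the restriction of $\mathcal{S}_L^*$, and $\mathcal{S}_L$ is pulled back from $\mathcal{S}_{L\setminus i}$ on the toric side when $i$ is not a coloop. That gives $\pi^*\Omega_{L\setminus i}\cong\Omega_L$ directly, and $R\pi_*\mathcal{O}=\mathcal{O}$ follows from the birationality of a toric-restricted morphism between smooth varieties via the Leray argument. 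Even without a clean blow-up decomposition, $R\pi_*\mathcal{O}_{W_L}=\mathcal{O}$ holds for any proper birational morphism of smooth varieties, because smooth varieties have rational singularities. The projection formula then finishes this case.

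The coloop case is the main obstacle. Here $W_L \to W_{L\setminus i}$ has generic fiber $\mathbb{P}^1$, the compactification of the $\mathbb{C}$ factor, and $\Omega_L$ is no longer pulled back. Restricted to a generic fiber, $\Omega_L$ looks like $\pi^*\Omega_{L\setminus i}\oplus\Omega_{\mathbb{P}^1}(\log\infty)$, and $\Omega_{\mathbb{P}^1}(\log \infty)=\mathcal{O}(-1)$, which has no cohomology. So I would aim for an exact sequence $0\to\pi^*\Omega_{L\setminus i}\to\Omega_L\to Q\to 0$ with $Q$ a line bundle that has $R\pi_*Q=0$. Taking wedge powers gives filtrations with graded pieces $\pi^*\wedge^j\Omega_{L\setminus i}$ and $\pi^*\wedge^{j-1}\Omega_{L\setminus i}\otimes Q$. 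Applying $R\pi_*$ and the projection formula then yields the result, provided $R\pi_*\mathcal{O}=\mathcal{O}$ (connected rational fibers) and $R\pi_*Q=0$. Identifying $Q$, presumably as $\pi_i^*\mathcal{O}(-1)$ from the EHL quotient description, and checking fiberwise that it has degree $-1$ on every fiber component, including the special fibers over the boundary, is where I expect the real work to lie.
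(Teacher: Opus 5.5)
The gap is in the case you treated as routine, $i$ neither a loop nor a coloop. There you rely on $\pi^*\Omega_{L\setminus i}\cong\Omega_L$, justified either by ``$\pi$ is a sequence of blow-ups along boundary strata'' or by ``$\mathcal{S}_L$ is pulled back from $\mathcal{S}_{L\setminus i}$''. Both claims are false.

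Take $L$ a generic plane in $\mathbb{C}^3$ and $i=3$. Then $W_{L\setminus 3}$ is $\mathbb{P}^2$ blown up at two points of the line at infinity. The map $\pi$ blows up the third point at infinity, which lies on $D_\emptyset$ and on no other boundary divisor. A blow-up with a codimension-$2$ center inside a single boundary component is not log \'etale, and indeed $\det\Omega_L\cong\pi^*\det\Omega_{L\setminus 3}\otimes\mathcal{O}(D_{\{3\}})$.

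In general, combine $c_1(\det\Omega_L)=c_1(\omega_{W_L})+\alpha=\sum_F(\operatorname{rk}F-r)[D_F]$ with $\pi^*[D_G]=[D_G]+[D_{G\cup i}]$. This gives $\det\Omega_L\cong\pi^*\det\Omega_{L\setminus i}\otimes\mathcal{O}(\sum D_F)$, with the sum over proper flats $F\ni i$ of which $i$ is a coloop. These are exactly the $\pi$-exceptional divisors, so the discrepancy is nonzero unless $\pi$ is an isomorphism. In Klyachko terms: at the ray of such an $F$, the filtration of $\mathcal{S}_L$ has the step $L_F$, while the pullback of $\mathcal{S}_{L\setminus i}$ has the strictly larger $L_{F\setminus i}$. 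So you only have an injection $\pi^*\wedge^j\Omega_{L\setminus i}\hookrightarrow\wedge^j\Omega_L$ with cokernel supported on the exceptional locus. The projection formula plus $R\pi_*\mathcal{O}=\mathcal{O}$ then says nothing about $R\pi_*\wedge^j\Omega_L$.

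What this case needs, and what the paper uses, is the Esnault--Viehweg lemma. Let $f\colon X\to Y$ be a proper birational morphism of smooth varieties, with $B\subset Y$ and $f^{-1}(B)$ snc divisors, and $f$ an isomorphism over $Y\setminus B$. Then $Rf_*\Omega^j_X(\log f^{-1}B)=\Omega^j_Y(\log B)$, even though the pullback of $\Omega^j_Y(\log B)$ is a proper subsheaf; equivalently, the cokernel above has vanishing derived pushforward. This is a real input, not a formal consequence of the projection formula; for a blow-up along a smooth center with normal crossings with $B$, as here, it is a local computation. The hypothesis to verify is $\pi^{-1}(L\setminus i)=L$, which the paper deduces from $\pi^*\alpha=\alpha$ for non-coloops. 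Your ``boundary maps into boundary'' is equivalent, but it needs this argument.

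In the coloop case your plan is fine and close to the paper's, which streamlines it. The map $\pi$ factors through an iterated blow-up $p\colon W_L\to W_{L\setminus i}\times\mathbb{P}^1$ with $\Omega_L\cong p^*(\Omega_{L\setminus i}\boxplus\mathcal{O}(-1))$. So your extension actually splits with $Q=\pi_i^*\mathcal{O}(-1)$. The needed vanishing then follows from $Rp_*\mathcal{O}=\mathcal{O}$ and $H^*(\mathbb{P}^1,\mathcal{O}(-1))=0$, with no study of special fibers.

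Your proposed fiberwise test is also off. On fiber components contracted by $\pi_i$ (e.g.\ the exceptional curve when $L=\mathbb{C}^2$), $Q$ has degree $0$, not $-1$. What actually holds is that $Q$ has no cohomology on each whole fiber.
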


\begin{example}\label{ex:degen}
When $S = \emptyset$, $W_{L^{\emptyset}}$ is a point, and applying $R \pi_{\emptyset *}$ to $\wedge^j \Omega_L$ is the same thing as taking the cohomology of $\wedge^j \Omega_L$. In this case, Theorem~\ref{thm:pushforward} asserts that $H^i(W_L, \wedge^j \Omega_L) = 0$ unless $i = j = 0$. This is a consequence of the degeneration of the logarithmic Hodge to de Rham spectral sequence, as the de Rham cohomology of $L$ is trivial except in degree $0$. However, the proof of Theorem~\ref{thm:pushforward} works in any characteristic, proving the degeneration of the logarithmic Hodge to de Rham spectral sequence for augmented wonderful varieties. Note that augmented wonderful varieties of hyperplane arrangements in characteristic $p$ often do not lift to characteristic $0$ or even over $W_2$. In \cite{LiuVanishing}, the author proves the degeneration of the logarithmic Hodge to de Rham spectral sequence for (non-augmented) wonderful varieties. 
\end{example}

\begin{proof}[Proof of Theorem~\ref{thm:pushforward}]
It suffices to prove the statement when $S = \{1, \dotsc, n\} \setminus 
i$. We first consider the case when $i$ is a coloop. Then the map $\pi_S 
\colon W_L \to W_{L \setminus i}$ factors through $W_{L \setminus i} 
\times \mathbb{P}^1$, and the map $W_L \to W_{L \setminus i} \times 
\mathbb{P}^1$ is an iterated blow-up at smooth centers \cite[Theorem 
1.3(2)]{LiLi}. The log cotangent bundle $\Omega_L$ is the pullback from 
$W_{L \setminus i} \times \mathbb{P}^1$ of $\Omega_{L \setminus i} 
\boxplus \mathcal{O}(-1)$. Therefore, for any $j \ge 0$, we have
$$\wedge^j \Omega_L \cong \pi_S^* \wedge^j \Omega_{L \setminus i} \oplus ( \pi_S^*\wedge^{j-1} \Omega_{L \setminus i} \otimes \pi_i^* \mathcal{O}(-1) ).$$
By the projection formula, the derived pushforward of $\wedge^j \Omega_L$ to $W_{L \setminus i} \times \mathbb{P}^1$ is $ \wedge^j \Omega_{L \setminus i} \oplus (\wedge^{j-1} \Omega_{L \setminus i} \boxtimes  \mathcal{O}(-1) )$. 
The projection formula implies that $R\pi_{S*}\pi_S^* \wedge^j \Omega_{L \setminus i} = \wedge^j \Omega_{L \setminus i}$. On the other hand,  $R\pi_{S*} \pi_S^*(\wedge^{j-1} \Omega_{L \setminus i} \boxtimes  \mathcal{O}(-1)) = 0$. Indeed, the restriction of $\pi_S^*\wedge^{j-1} \Omega_{L \setminus i} \otimes  \pi_i^*\mathcal{O}(-1)$ to any fiber of $\pi_S$ has no cohomology, so this follows from the theorem on cohomology and base change \cite[Theorem 25.1.6]{VakilNotes}. 

In the case when $i$ is not a coloop, we claim that $\pi_S$ is an isomorphism over $L$, in the sense that $\pi_S^{-1}(L \setminus i) = L$. Indeed, this is equivalent to checking that the pullback of the divisor $W_{L \setminus i} \setminus (L \setminus i)$ is the divisor $W_L \setminus L$, which holds by Lemma~\ref{lem:pullback}(2). 
The result then follows from \cite[Lemma 1.3 and Lemma 1.5]{EV} (see also \cite[Theorem 31.1]{HodgeIdeals}). 
\end{proof}

\begin{remark}
The proof of Theorem~\ref{thm:pushforward} works for augmented wonderful varieties over any field. Indeed, the only place where the assumption that the characteristic is $0$ is invoked is in the case when $i$ is not a coloop. In that case, the map $W_L \to W_{L \setminus i}$ is an iterated blow-up at strata of codimension $2$ which have simple normal crossings with the boundary, and so the hypothesis of \cite[Proposition 31.3]{HodgeIdeals} applies. Note that the proof of \cite[Proposition 31.3]{HodgeIdeals} works over a field of any characteristic. 
\end{remark}

We now use Theorem~\ref{thm:pushforward} to prove Theorem~\ref{thm:vanishing}. We begin by proving a special case of Theorem~\ref{thm:vanishing}. 

\begin{lemma}\label{lem:boolean}
Suppose that $L = \mathbb{C}^n$, i.e., the arrangement is Boolean. Then, for any $k \ge -1$, any $j \ge 0$, and any $i > 0$, we have 
$$H^i(W_L, \mathcal{O}(1, \dotsc, 1)(k\alpha) \otimes \wedge^j \Omega_L) = 0.$$
\end{lemma}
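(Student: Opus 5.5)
The plan is to reduce everything to vanishing for line bundles on a smooth projective toric variety and then use the standard toric vanishing theorems. When $L = \mathbb{C}^n$, the augmented wonderful variety $W_L$ is the stellahedral toric variety $X_n$. By the discussion in Section~\ref{ssec:logcotangent}, $\Omega_L = \bigoplus_{i=1}^n \pi_i^*\mathcal{O}(-1)$. Hence
$$\wedge^j \Omega_L \cong \bigoplus_{T \subseteq E,\ |T| = j} \mathcal{O}\Big(-\sum_{i \in T} y_i\Big).$$
Since $\mathcal{O}(1,\dotsc,1)$ has class $y_1 + \dotsb + y_n$, it suffices to prove, for every $T \subseteq E$, every $k \ge -1$ and every $i>0$,
$$H^i\Big(X_n, \mathcal{O}\Big(\sum_{i \notin T} y_i + k\alpha\Big)\Big) = 0.$$

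\textbf{Case $k \ge 0$.} Each $y_i$ is the pullback of $\mathcal{O}(1)$ along $\pi_i \colon X_n \to \mathbb{P}^1$. The class $\alpha$ is the pullback of the hyperplane class along $X_n \to \mathbb{P}^n$. Both are therefore basepoint free, i.e.\ nef, on the complete toric variety $X_n$. So $\sum_{i \notin T} y_i + k\alpha$ is nef for $k \ge 0$, and Demazure vanishing gives the claim.

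\textbf{Case $k=-1$.} This is the main point, since $\sum_{i \notin T} y_i - \alpha$ is not nef. I would apply Serre duality in the form of Proposition~\ref{prop:SD} with $j=0$ and $D = \sum_{i\notin T} y_i - \alpha$. Using $\wedge^n\Omega_L = \mathcal{O}(-y_1-\dotsb-y_n)$, this gives
$$H^q(X_n, \mathcal{O}(D)) \cong H^{n-q}\Big(X_n, \mathcal{O}\Big(-\sum_{i\notin T} y_i - \sum_{i \in E} y_i\Big)\Big)^*.$$
The divisor $N = \sum_{i \notin T} y_i + \sum_{i\in E} y_i$ is nef. For a nef toric divisor $N$ with polytope $P_N$, one has the vanishing
$$H^p(X, \mathcal{O}(-N)) = 0 \quad \text{unless } p = \dim P_N$$
(Batyrev--Borisov; equivalently, Kawamata--Viehweg on the toric morphism defined by $N$).

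It remains to see that $\dim P_N = n$. The polytope of $y_i$ is the segment $[0,e_i]$, since $y_i$ is pulled back from the $i$th factor $\mathbb{P}^1$. Polytopes of nef toric divisors add under Minkowski sum, so $P_N$ is a box $\prod_i [0, c_i e_i]$ with every $c_i \ge 1$. This box is full-dimensional. Hence $H^{n-q}(\mathcal{O}(-N)) = 0$ unless $n - q = n$, i.e.\ $q = 0$, which is exactly the required vanishing for $q > 0$.

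The only step needing care is identifying the correct nef divisor after duality and checking full-dimensionality of its polytope. This is where the hypothesis $k \ge -1$ is used: for $k \le -2$, the dual divisor would include a positive multiple of $\alpha$ with the wrong sign, and the argument would break.
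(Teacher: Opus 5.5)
Your proof is correct and follows the paper's route. Both arguments use toric nef vanishing for $k \ge 0$, and for $k=-1$ both apply Serre duality via Proposition~\ref{prop:SD} and then need vanishing for the dual of a nef line bundle pulled back from $(\mathbb{P}^1)^n$. The only difference is that you cite Batyrev--Borisov vanishing for $\mathcal{O}(-N)$, whereas the paper proves it directly by pushing forward along the toric birational map $W_L \to (\mathbb{P}^1)^n$ and applying K\"unneth; since $P_N$ is a box, this is exactly what Batyrev--Borisov reduces to here.
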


\begin{proof}
First suppose that $k \ge 0$. Note that $\wedge^j \Omega_L \cong \bigoplus_{|S| = j} \pi_S^* \mathcal{O}(-1, \dotsc, -1)$. In particular, $\mathcal{O}(1, \dotsc, 1)(k\alpha) \otimes \wedge^j \Omega_L$ is a direct sum of nef line bundles on the toric variety $W_L$. The result then follows from the fact that the higher cohomology of a nef line bundle on a toric variety vanishes \cite[pg. 74]{Ful93}.

Now suppose that $k = -1$. By Proposition~\ref{prop:SD}, we have
$$H^i(W_L, \mathcal{O}(1, \dotsc, 1)(-\alpha) \otimes \wedge^j \Omega_L) \cong H^{n-i}(W_L, \mathcal{O}(-1, \dotsc, -1) \otimes \wedge^{n-j} \Omega_L)^*.$$
The vector bundle $\mathcal{O}(-1, \dotsc, -1) \otimes \wedge^{n-j} 
\Omega_L$ is pulled back along the toric birational map from $W_L$ to 
$(\mathbb{P}^1)^n$. By the projection formula and \cite[pg. 76]{Ful93}, 
it suffices to show that
$$\bigoplus_{|S| = n-j}H^{n-i}((\mathbb{P}^1)^n, \mathcal{O}(-1, \dotsc, -1) \otimes \mathcal{O}(-\sum_{t \in S} y_t)) = 0$$
for $i > 0$. It follows from the K\"{u}nneth formula and the fact that $\mathcal{O}(-1)$ on $\mathbb{P}^1$ has no cohomology that this group vanishes except when $i = j = 0$.  
\end{proof}

\begin{proof}[Proof of Theorem~\ref{thm:vanishing}]
We say that $S \subset \{1, \dotsc, n\}$ is \emph{spanning} if $\dim L^S = r$; by Lemma~\ref{lem:pullback}(2), this implies that $\pi_S^* \alpha = \alpha$. We show by induction that if $S$ is spanning, then for any $k \ge -1, j \ge 0$, and $i > 0$, we have
\begin{equation}\label{eq:vanish}
H^i(W_L, \mathcal{O}(k \alpha + \sum_{t \in S} y_t) \otimes \wedge^j \Omega_L) = 0.  
\end{equation}
First consider the case when $S$ is a basis, i.e., $|S| = r$. By Lemma~\ref{lem:pullback}, $\mathcal{O}(k \alpha + \sum_{t \in S} y_t) = \pi_S^* \mathcal{O}(1, \dotsc, 1)(k \alpha)$. By the projection formula and Theorem~\ref{thm:pushforward}, we have
$$R \pi_{S*} (\mathcal{O}(k \alpha + \sum_{t \in S} y_t) \otimes \wedge^j 
\Omega_L) = \mathcal{O}(1, \dotsc, 1)(k \alpha) \otimes R\pi_{S*} 
\wedge^j \Omega_L = \mathcal{O}(1, \dotsc, 1)(k\alpha) \otimes \wedge^j 
\Omega_{L^S}.$$
Because $S$ is a basis, $L^S$ is Boolean, so the result follows from the Leray spectral sequence and Lemma~\ref{lem:boolean}. 

We now do the inductive step. Let $S$ be a spanning set, and choose some $s \not \in S$. 
If $s$ is a loop, then $\mathcal{O}(y_s)$ is trivial, and so the result is immediate. 
If $s$ is not a loop, then we have a short exact sequence
$$0 \to \mathcal{O}_{W_L} \to \mathcal{O}_{W_L}(y_s) \to \mathcal{O}_{W_{L/s}} \to 0.$$
Tensoring this with $\mathcal{O}(k \alpha + \sum_{t \in S} y_t) \otimes \wedge^j \Omega_L$ and using Lemma~\ref{lem:restrictdiv}, we obtain a short exact sequence
\begin{equation}\label{eq:SESinduct}
0 \to \mathcal{O}(k \alpha + \sum_{t \in S} y_t) \otimes 
\wedge^j \Omega_L \to  \mathcal{O}(k \alpha + \sum_{t \in S \cup s} 
    y_t) \otimes \wedge^j \Omega_L \to \mathcal{O}(k \alpha + 
        \sum_{t \in S} y_t) \otimes \wedge^j \Omega_{L}|_{W_{L/s}} \to 0.
\end{equation}
Taking the $j$th exterior power of the short exact sequence in Lemma~\ref{lem:restriction}, we have a short exact sequence
$$0 \to \wedge^{j-1} \Omega_{L/s} \to \wedge^j \Omega_{L}|_{W_{L/s}} \to \wedge^j \Omega_{L/s} \to 0.$$
After we tensor this with $\mathcal{O}(k\alpha + \sum_{t \in S} y_t)$ for some $k \ge -1$, the cohomology groups of the left and right terms are concentrated in degree $0$ by induction, so the same is true for $\mathcal{O}(k \alpha + \sum_{t \in S} y_t) \otimes \wedge^j \Omega_L|_{W_{L/s}}$. 
This implies that the cohomology groups of the left and right terms in \eqref{eq:SESinduct} are concentrated in degree $0$ by induction on $n$, so the same is true for the middle term, completing the inductive step. 
\end{proof}

Finally, we prove a vanishing theorem for line bundles on augmented wonderful varieties which will be useful in our discussion of hierarchical zonotopal algebras. Recall that in \eqref{eq:divisororder}, we have defined a divisor class $D_{\mathcal{J}}$ on $W_L$ for each order filter $\mathcal{J}$ on the lattice of flats of our hyperplane arrangement. 

\begin{proposition}\label{prop:hierarchicalvanishing}
Let $\mathcal{J}$ be an order filter on the lattice of flats. Then for any $k \ge -1$ and any $i > 0$, we have $H^i(W_L, \mathcal{O}(1, \dotsc, 1)(k \alpha)(D_{\mathcal{J}})) = 0$. 
\end{proposition}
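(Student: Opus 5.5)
We argue by induction on the size of the order filter $\mathcal{J}$, removing a minimal element at each step as suggested after \eqref{eq:divisororder}. The base case is $\mathcal{J} = \{E\}$. There $D_{\mathcal{J}} = 0$, and the claim is Corollary~\ref{cor:line}. (If we prefer, the case where $\mathcal{J}$ contains all flats, $D_{\mathcal{J}} = \alpha$, is also Corollary~\ref{cor:line} with $k$ replaced by $k+1$.)

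For the inductive step, let $\mathcal{J}$ be an order filter properly containing $\{E\}$. Choose a minimal element $F$ of $\mathcal{J}$ with $F \neq E$, and set $\mathcal{J}' = \mathcal{J} \setminus \{F\}$, which is again an order filter. Then $D_{\mathcal{J}} = D_{\mathcal{J}'} + [D_F]$. Tensoring the sequence $0 \to \mathcal{O} \to \mathcal{O}(D_F) \to \mathcal{O}_{D_F}(D_F) \to 0$ with $\mathcal{O}(1,\dotsc,1)(k\alpha)(D_{\mathcal{J}'})$ gives a short exact sequence. By induction, the left term has vanishing higher cohomology, so it suffices to show that
$$H^i\bigl(D_F, \mathcal{O}(1,\dotsc,1)(k\alpha)(D_{\mathcal{J}})|_{D_F}\bigr) = 0 \quad \text{for } i > 0.$$

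We compute this restriction using $D_F \cong W_{L^F} \times \underline{W}_{L_F}$ and Lemma~\ref{lem:restrictToBoundary}. By parts (1) and (2), $\sum_t y_t$ restricts to $\sum_{t \in F} y_t \otimes 1$, which is $\mathcal{O}(1,\dotsc,1)$ on $W_{L^F}$. By part (3), $k\alpha$ restricts to $1 \otimes k\underline{\alpha}$. Since $F$ is minimal in $\mathcal{J}$, every element of $\mathcal{J}$ is either $\ge F$ or incomparable to $F$. By part (4), the incomparable ones restrict to $0$. By part (5), $\sum_{G \ge F, G \ne E}[D_G] = \sum_{G \in \mathcal{J}, G \ge F, G\neq E}[D_G]$ restricts to $-\alpha \otimes 1 + 1 \otimes \underline{\alpha}$. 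So the restriction is the exterior product
$$\mathcal{O}(1,\dotsc,1)(-\alpha) \boxtimes \mathcal{O}((k+1)\underline{\alpha}).$$
By the K\"unneth formula, the first factor has no higher cohomology by Corollary~\ref{cor:line} with $k = -1$ applied to $L^F$. For the second factor, $k+1 \ge 0$, and $\underline{\alpha}$ is the pullback of the hyperplane class along the birational morphism $\underline{W}_{L_F} \to \mathbb{P}L_F$, which is an iterated blow-up at smooth centers. Hence $R\pi_*\mathcal{O} = \mathcal{O}$, and the projection formula reduces this to $H^i(\mathbb{P}L_F, \mathcal{O}(k+1)) = 0$ for $i>0$. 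Both factors therefore have cohomology concentrated in degree $0$, and the inductive step follows from the long exact sequence.

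The main obstacle is the bookkeeping in the restriction step. The minimality of $F$ is exactly what makes the part of $D_{\mathcal{J}}$ that sees $D_F$ equal to the full sum $\sum_{G \ge F}[D_G]$ appearing in Lemma~\ref{lem:restrictToBoundary}(5). One also has to check the conventions there: whether $D_E$ is excluded, the degenerate cases where $L_F$ or $L^F$ is zero-dimensional or has loops, and the possibility that the empty flat is not loopless-compatible with $\underline{W}_{L_F}$. In those cases the wonderful factor is a point or a projective space, and the same argument still applies.
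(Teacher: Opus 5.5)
Your proposal is correct and follows the paper's proof: induction on the order filter by adjoining a minimal flat $F$, the exact sequence for $D_F \cong W_{L^F} \times \underline{W}_{L_F}$, and Lemma~\ref{lem:restrictToBoundary} to identify the restriction as $\mathcal{O}(1,\dotsc,1)(-\alpha) \boxtimes \mathcal{O}((k+1)\underline{\alpha})$. Its higher cohomology then vanishes by the K\"unneth formula, Corollary~\ref{cor:line} with $k=-1$ on $W_{L^F}$, and the vanishing of $H^{>0}(\mathbb{P}L_F, \mathcal{O}(k+1))$ for $k \ge -1$.
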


\begin{proof}
We induct on the size of $\mathcal{J}$; the case when $\mathcal{J} = 
\{E\}$ is Corollary~\ref{cor:line}. Suppose that the desired vanishing 
holds for $\mathcal{J}$, and consider a flat $F$ such that $\mathcal{J}' 
= \mathcal{J} \cup \{F\}$ is an order filter. Twisting the short exact 
sequence corresponding to the Cartier divisor $D_F$ by $\mathcal{O}(1, 
\dotsc, 1)(k \alpha)(D_{\mathcal{J}'})$, we have an exact sequence
$$0 \to \mathcal{O}(1, \dotsc, 1)(k \alpha)(D_{\mathcal{J}}) \to \mathcal{O}(1, \dotsc, 1)(k \alpha)(D_{\mathcal{J}'}) \to \mathcal{O}_{D_F}(1, \dotsc, 1)(k \alpha)(D_{\mathcal{J}'}) \to 0.$$
By Lemma~\ref{lem:restrictToBoundary}, using the identification of $D_F$ with $W_{L^F} \times \underline{W}_{L_F}$, the divisor class associated to the line bundle $\mathcal{O}_{D_F}(1, \dotsc, 1)(k \alpha)(D_{\mathcal{J}'})$ on $D_F$ is $\sum_{i \in F} y_i \otimes 1 - \alpha \otimes 1 + 1 \otimes (k+1) \underline{\alpha}$. In particular, for each $i \ge 0$
\begin{equation}\label{eq:restrictioncohomology}
H^i(D_F, \mathcal{O}_{D_F}(1, \dotsc, 1)(k \alpha)(D_{\mathcal{J}'})) = \bigoplus_{a + b = i} H^{a}(W_{L^F}, \mathcal{O}(1, \dotsc, 1)(-\alpha)) \otimes H^b(\underline{W}_{L_F}, \mathcal{O}((k+1)\alpha)).
\end{equation}
If $i > 0$ and $k \ge -1$, then the right-hand side of \eqref{eq:restrictioncohomology} vanishes: by Corollary~\ref{cor:line}, $H^{a}(W_{L^F}, \mathcal{O}(1, \dotsc, 1)(-\alpha)) = 0$ unless $a = 0$, and $H^b(\underline{W}_{L_F}, \mathcal{O}((k+1)\alpha)) = H^b(\mathbb{P}L_F, \mathcal{O}(k+1))$, which vanishes unless $b = 0$. Therefore the induction hypothesis implies that the higher cohomology of $\mathcal{O}(1, \dotsc, 1)(k \alpha)(D_{\mathcal{J}'})$ vanishes. 
\end{proof}

If $k = -2$, then the higher cohomology of $\mathcal{O}(1, \dotsc, 1)(k 
\alpha)(D_{\mathcal{J}})$ may not vanish even when $\mathcal{J} = 
\{E\}$. However, the following lemma shows that the cohomology only depends on which flats of rank $r-1$ are contained in $\mathcal{J}$. 
\begin{lemma}\label{lem:minus-two-case}
If $\mathcal{J} \subset \mathcal{J}'$ is a containment of order filters 
in the lattice of flats and $\mathcal{J}' \setminus \mathcal{J}$ does not contain any flats of 
rank $r-1$, then for each $i$, the natural map
\[
    H^i(W_L, \mathcal{O}(1, \dotsc, 1)(-2 \alpha)(D_{\mathcal{J}})) \to 
    H^i(W_L, \mathcal{O}(1, \dotsc, 1)(-2\alpha)(D_{\mathcal{J}'}))
\]
is an isomorphism. 
\end{lemma}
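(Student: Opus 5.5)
Following the proof of Proposition~\ref{prop:hierarchicalvanishing}, we add flats to $\mathcal{J}$ one at a time. Since $\mathcal{J} \subset \mathcal{J}'$ are order filters, we can choose a chain of order filters $\mathcal{J} = \mathcal{J}_0 \subset \mathcal{J}_1 \subset \dotsb \subset \mathcal{J}_m = \mathcal{J}'$ with $\mathcal{J}_{t} = \mathcal{J}_{t-1} \cup \{F_t\}$. To do this, at each step we add a maximal element of $\mathcal{J}' \setminus \mathcal{J}_{t-1}$; all flats strictly above it already lie in $\mathcal{J}_{t-1}$. By hypothesis, no $F_t$ has rank $r-1$, and each $F_t \neq E$ because $E$ lies in every nonempty order filter. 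It therefore suffices to show that, for a single proper flat $F$ of rank different from $r-1$ with $\mathcal{J} \cup \{F\}$ an order filter, the map on $H^i$ induced by $\mathcal{O}(D_{\mathcal{J}}) \to \mathcal{O}(D_{\mathcal{J}}+D_F)$ is an isomorphism for all $i$.

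For this single step we use the short exact sequence
$$0 \to \mathcal{O}(1, \dotsc, 1)(-2 \alpha)(D_{\mathcal{J}}) \to \mathcal{O}(1, \dotsc, 1)(-2 \alpha)(D_{\mathcal{J}\cup F}) \to \mathcal{O}_{D_F}(1, \dotsc, 1)(-2 \alpha)(D_{\mathcal{J} \cup F}) \to 0.$$
From the long exact sequence, the claim follows once the cohomology of the third term vanishes in every degree. We compute the restriction to $D_F \cong W_{L^F} \times \underline{W}_{L_F}$ exactly as in the proof of Proposition~\ref{prop:hierarchicalvanishing}. Lemma~\ref{lem:restrictToBoundary} gives the class $\sum_{i\in F} y_i\otimes 1 - \alpha \otimes 1 + 1 \otimes (k+1)\underline{\alpha}$, and here $k=-2$. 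The $D_G$ with $G \supseteq F$ in $\mathcal{J}\cup F$ are all flats above $F$, so their sum contributes $-\alpha\otimes 1 + 1\otimes\underline\alpha$; the $D_G$ with $G$ incomparable to $F$ restrict to $0$. One should double-check that no flat strictly below $F$ lies in $\mathcal{J}$, which holds because $\mathcal{J}$ is an upward closed set not containing $F$. So the line bundle is $\mathcal{O}(1,\dotsc,1)(-\alpha) \boxtimes \mathcal{O}(-\underline{\alpha})$, and by the Künneth formula its cohomology is
$$\bigoplus_{a+b=i} H^a(W_{L^F}, \mathcal{O}(1,\dotsc,1)(-\alpha)) \otimes H^b(\underline{W}_{L_F}, \mathcal{O}(-\underline{\alpha})).$$

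The key point is that $\mathcal{O}(-\underline{\alpha})$ is pulled back from $\mathbb{P}L_F$ along the birational map $\underline{W}_{L_F} \to \mathbb{P}L_F$, which is an iterated blow-up along smooth centers. Hence $R\pi_*\mathcal{O} = \mathcal{O}$, and its cohomology equals $H^b(\mathbb{P}L_F, \mathcal{O}(-1))$. This cohomology vanishes in all degrees provided $\dim \mathbb{P}L_F \ge 1$, i.e.\ $\dim L_F \ge 2$, i.e.\ $\operatorname{rk} F \le r-2$. Since $F$ is proper and not of rank $r-1$, this holds, so the third term is acyclic and the map is an isomorphism in every degree. When $\operatorname{rk}F = r-1$, $\mathbb{P}L_F$ is a point and $\mathcal{O}(-1)$ is trivial there, which is exactly why those flats are excluded.

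The main point needing care is the restriction computation: we must confirm that the coefficient of $\underline{\alpha}$ is exactly $k+1=-1$, and that the identification of the restriction of $\underline{\alpha}$ with the pullback of the hyperplane class is compatible with the conventions of Lemma~\ref{lem:restrictToBoundary}. This reuses the computation in Proposition~\ref{prop:hierarchicalvanishing} verbatim. The remaining ingredients, the vanishing of all cohomology of $\mathcal{O}(-1)$ on $\mathbb{P}^m$ for $m\ge1$ and the invariance of cohomology under pullback along blow-ups of smooth centers, are standard.
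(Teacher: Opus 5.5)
Your proposal is correct and follows the paper's proof essentially verbatim. You reduce to adding a single flat $F$ of rank $\neq r-1$, take the long exact sequence for the divisor $D_F$, compute the restricted bundle as in \eqref{eq:restrictioncohomology} with $k=-2$, and use that $H^\bullet(\mathbb{P}L_F,\mathcal{O}(-1))=0$ once $\dim L_F\ge 2$. (One harmless slip: if $\mathcal{J}=\emptyset$, the first flat you add may be $E$, but that step is trivially an isomorphism because $D_{\mathcal{J}}$ only involves proper flats.)
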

\begin{proof}
It suffices to consider the case when $\mathcal{J}' = \mathcal{J} \cup 
\{F\}$ for some flat $F$ whose rank is not $r-1$. By 
\eqref{eq:restrictioncohomology},
$$H^i(D_F, \mathcal{O}_{D_F}(1, \dotsc, 1)(-2 \alpha)(D_{\mathcal{J}'})) =  \bigoplus_{a + b = i}H^{a}(W_{L^F}, \mathcal{O}(1, \dotsc, 1)(-\alpha)) \otimes H^b(\underline{W}_{L_F}, \mathcal{O}(-\alpha)).$$
We have $H^b(\underline{W}_{L_F}, \mathcal{O}(-\alpha)) = H^b(\mathbb{P}L_F, \mathcal{O}(-1))$, which vanishes unless $L_F$ is $1$-dimensional, i.e., $F$ has rank $r-1$. The claim then follows from the long exact sequence of cohomology groups.
\end{proof}

\begin{remark}\label{rmk:minus-two-case}
    In our language, the main theorem in \cite{LenzHierarchical} deals 
    with $H^0(W_L, \OO(1, \ldots, 1)(k\alpha)(D_{\mathcal J}))$ in the 
    case where $k \geq -1$ and $\mathcal{J}$ is any order filter, as well as 
    the case where $k = -2$ and $\mathcal{J}$ 
    contains all flats of rank $r-1$. By 
    \cref{lem:minus-two-case}, the case where $k=-2$ and $\mathcal{J}$ 
    contains all flats of rank $r-1$ is essentially the same as the case $k=-1$. 
    This is not explicitly stated in \cite{LenzHierarchical}, however, it 
    is consistent with \cite[Remark 3.3, Remark 3.19, and Example 
    7.3]{LenzHierarchical}.
\end{remark}

\begin{remark}
Through a modification of the proof of Proposition~\ref{prop:hierarchicalvanishing}, using techniques of \cite{BergetFink} and Theorem~\ref{thm:vanishing}, one can show that for any order filter $\mathcal{J}$,  any $k \ge -1$, any $j \ge 0$, and any $i>0$, we have
$$H^i(W_L, \mathcal{O}(1, \dotsc, 1)(k\alpha)(D_{\mathcal{J}}) \otimes \wedge^j \Omega_L) = 0.$$
\end{remark}

\section{Zonotopal algebras}
We continue to assume that $L \subseteq \F^n$ is a linear subspace of dimension $r$ with 
augmented wonderful variety $W_L$. We now study the sections of line bundles on $W_L$, and in particular we prove Theorem~\ref{thm:powerideal}. 

\begin{definition}
If the restriction of a polynomial $f$ in $\Sym L^*$ to a generic affine line in the direction of $v \in L$ has degree $d$, we say that $f $ has degree $d$ in direction $v$ and write $\deg_v(f) = d$. 
\end{definition}

\begin{definition}
Given a subvariety $Z \subseteq X$ and a rational 
function $f$ on $X$, the order of vanishing of $f$ along $Z$ is
\[
    \ord_Z(f) \coloneqq \min \{ a-b : f=p/q,\ p \in I_Z^a,\ q 
    \in I_Z^b\},
\]
where $I_Z$ is the ideal defining $Z$ in an affine neighborhood of the 
generic point of $Z$. If $\ord_Z(f) \geq 0$, we say $f$ has a zero of order 
$\ord_Z(f)$ at $Z$. If $\ord_Z(f) \leq 0$, we say that $f$ 
has a pole of order $-\ord_Z(f)$ at $Z$.
\end{definition}

We consider $\P L \subseteq \P(L \oplus \C)$ embedded as the subvariety 
where the last homogeneous coordinate is zero. Given a linear subspace 
$U \subseteq L$, we therefore have $\P U \subseteq \P L \subseteq \P(L \oplus \C)$.

\begin{lemma}\label{pole-degree}
    Let $U \subseteq L$ be a linear subspace, and let $f \in 
    \Sym L^*$ be a homogeneous polynomial of degree $k$, which we view as a rational function on $\P(L \oplus \C)$. Then $\ord_{\P 
    U }(f) = -\deg_v(f)$ for a generic $v \in U$.
\end{lemma}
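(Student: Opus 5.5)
We will compute the order of vanishing of $f$ along $\mathbb{P}U$ in explicit local coordinates near a generic point of $\mathbb{P}U$, and compare it with the degree of $f$ restricted to affine lines in direction $v$.

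\emph{Coordinates.} Choose coordinates $x_1, \dotsc, x_r$ on $L$ such that $U = \{x_{m+1} = \dotsb = x_r = 0\}$, where $m = \dim U$. Then $\mathbb{P}(L \oplus \mathbb{C})$ has homogeneous coordinates $[x_1 : \dotsb : x_r : x_0]$, and $\mathbb{P}U = \{x_0 = x_{m+1} = \dotsb = x_r = 0\}$. A generic point of $\mathbb{P}U$ lies in the chart $x_1 \neq 0$. On that chart we use the affine coordinates $u_i = x_i/x_1$ for $2 \le i \le r$ and $t = x_0/x_1$. In these coordinates $\mathbb{P}U$ is the smooth subvariety cut out by $t, u_{m+1}, \dotsc, u_r$. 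For a smooth subvariety, $\ord_Z$ is the usual order: the largest $a$ with $p \in I_Z^a$, extended multiplicatively to quotients.

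\emph{Rewriting $f$.} As a rational function, $f = F(x_1,\dots,x_r)/x_0^k$, where $F$ is the degree-$k$ form. On the chart this reads $f = F(1, u_2, \dotsc, u_r)/t^k$. Since $t \in I_Z \setminus I_Z^2$ and $\operatorname{ord}$ is a valuation, we get
\[
\ord_{\mathbb{P}U}(f) = \ord(F(1,u)) - k.
\]
Write $F = \sum_{d} F_d$, where $F_d$ is homogeneous of degree $d$ in the normal variables $x_{m+1}, \dotsc, x_r$ (with coefficients polynomials in $x_1, \dotsc, x_m$). Then $\ord(F(1,u))$ is the smallest $d$ with $F_d \neq 0$; call it $d_0$. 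Setting $x_1 = 1$ does not kill $F_d$, by homogeneity of $F$. So $\ord_{\mathbb{P}U}(f) = d_0 - k$.

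\emph{Comparing with $\deg_v$.} Take $v \in U$ and a point $p \in L$. Restricting $f$ to the line $s \mapsto p + sv$, the degree in $s$ is the largest $e$ such that the part of $F$ of degree $e$ in the $U$-variables $x_1, \dotsc, x_m$ is nonzero after evaluating at $v$ in those variables and generic $p$ in the normal variables. Since $F$ is homogeneous of degree $k$, the part of degree $e$ in the $U$-variables is exactly $F_{k-e}$. Hence for generic $v$ and $p$ we get $\deg_v(f) = k - d_0$. Genericity ensures that the nonzero polynomial $F_{d_0}$ does not vanish at $(v, p)$. This gives $\ord_{\mathbb{P}U}(f) = -\deg_v(f)$.

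\emph{Main obstacle.} The only delicate point is justifying that $\ord$ in the paper's definition (with the min over presentations $p/q$) agrees with the valuation computed above. This holds because the local ring of $\mathbb{P}(L\oplus\mathbb{C})$ at the generic point of the smooth subvariety $\mathbb{P}U$ is regular, so its associated graded ring is a polynomial ring and the $I_Z$-adic order is a valuation. The boundary cases $U = 0$ and $U = L$ are covered by the same computation.
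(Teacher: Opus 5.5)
Your proof is correct and follows essentially the same route as the paper: choose coordinates adapted to $U$, write $f = F/x_0^k$, compute the order along $\mathbb{P}U$ as the order of $F$ in the normal variables minus $k$, and use homogeneity of $F$ to identify this with $-\deg_v(f)$, where $\deg_v(f)$ is the top degree of $F$ in the $U$-variables. The differences are cosmetic. You dehomogenize on the chart $x_1 \neq 0$ and group $F$ by its degree in the normal variables, whereas the paper works monomial-by-monomial in the homogeneous ideal $\langle x_{m+1}, \ldots, x_r, x_0\rangle$; you also explicitly justify that the $I_Z$-adic order is a valuation, which the paper leaves implicit.
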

\begin{proof}
    It suffices to consider the case where $L = \C^d$ and $U \subseteq 
    L$ is the $m$-dimensional subspace defined by the vanishing of the 
    last $d-m$ coordinates of $L$. We write
    \[
        f(x_1, \ldots, x_d) = \sum_{\alpha \in A} c_\alpha x^\alpha,\quad 
        A \subseteq \Z_{\geq 0}^d,\ x^{\alpha} := x_1^{\alpha_1} \cdots 
        x_d^{\alpha_d}. 
    \]
    Then $\deg_v(f)$ is the maximum value of $\alpha_1 + \dotsb + \alpha_m$ for some $\alpha \in A$. Let $\beta$ be an element of $A$ which achieves the maximum. We can view $f$ as a function on $\mathbb{C}^d$, so it can be viewed as a rational function on $\mathbb{P}^d$.  We write this rational function 
        as a degree zero rational function in $x_1, \ldots, x_{d+1}$,
    where $x_{d+1}$ is the homogenizing variable:
    \[
        f(\frac{x_1}{x_{d+1}}, \ldots, \frac{x_d}{x_{d+1}}) = 
        \frac{1}{x_{d+1}^k} f(x_1, \ldots, x_d).
    \]
This shows that $\ord_{\P U}(f) = e - k$, where $e$ is the largest 
    integer such that $f \in \langle x_{m+1}, \ldots, x_d, x_{d+1} \rangle^e$. Since there is a monomial in 
    $f$ indexed by $\beta$ such that $\beta_1 + \ldots + \beta_m = 
    \deg_v(f)$, we have 
    that $e \leq k - \deg_v(f)$. Since $\alpha_1 + \ldots + \alpha_m 
    \leq \deg_v(f)$ for all terms in $f$, we have that $e \geq k - 
    \deg_v(f)$.
\end{proof}

\begin{theorem}\label{thm:direction}
    Given integers $a_F$ for each proper flat $F \subseteq E$ of $L$,
    with corresponding subspace $L_F \subseteq L$, 
    \[
        H^0(W_L, \mathcal{O}(\sum a_F D_F)) = \{f \in \Sym L^* 
            : \deg_v(f) \leq a_F,\ F \text{ proper 
        flat},\ v \in L_F\ \text{generic}\}.
    \]
\end{theorem}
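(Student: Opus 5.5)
Since $W_L$ is normal (indeed smooth) and $L$ is a dense open subset whose complement is the union of the prime divisors $D_F$, a rational function $f$ on $W_L$ lies in $H^0(W_L, \mathcal{O}(\sum a_F D_F))$ if and only if it is regular on $L$ and satisfies $\ord_{D_F}(f) \ge -a_F$ for every proper flat $F$. Regularity on $L$ forces $f \in \Sym L^*$. So the statement reduces to showing that, for $f \in \Sym L^*$ and every proper flat $F$,
\[
\ord_{D_F}(f) = -\deg_v(f) \quad \text{for generic } v \in L_F .
\]

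The first step is a reduction to homogeneous $f$. The $\mathbb{G}_m$-action preserves each $D_F$, so $\ord_{D_F}$ of a sum is the minimum of $\ord_{D_F}$ over its homogeneous components. This holds because the valuation $\ord_{D_F}$ is $\mathbb{G}_m$-invariant, and a function together with its torus translates spans the span of its homogeneous components. Likewise, $\deg_v(f)$ is the maximum of $\deg_v$ over the homogeneous components. The second claim needs a small check: for generic $v$ the top-degree parts in $t$ of $f_i(x+tv)$ coming from different components could a priori cancel. They do not, because the leading coefficient of $f_i(x+tv)$ in $t$ is a polynomial in $x$ whose degree is determined by $\deg f_i$, so the leading terms of different components have different degrees in $x$ and cannot cancel. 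Hence it suffices to treat homogeneous $f$.

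For homogeneous $f$, use the description of $W_L$ as an iterated blow-up $p \colon W_L \to \P(L\oplus\C)$ and compare $\ord_{D_F}(f)$ with $\ord_{\P L_F}(f)$. Lemma~\ref{pole-degree} already gives $\ord_{\P L_F}(f) = -\deg_v(f)$. When $F$ is the empty flat, $D_F$ is the strict transform of the hyperplane $\P L$ and the two orders agree trivially. In general $D_F$ is the exceptional divisor over the strict transform of $\P L_F$. In the blow-up of a smooth center $Z$, the order of a function along the exceptional divisor equals the order of the function along $Z$, that is, the largest power of $I_Z$ in which the numerator lies, minus the corresponding power for the denominator. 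I will apply this at the stage where the strict transform of $\P L_F$ is blown up, working near its generic point. All earlier centers are strict transforms of $\P L_G$ for flats $G$ with $\P L_G$ of smaller dimension not containing $\P L_F$, so near the generic point of the strict transform of $\P L_F$ the earlier blow-ups are isomorphisms. Therefore $\ord_{D_F}(f) = \ord_{\P L_F}(f)$, computed on $\P(L\oplus \C)$, and Lemma~\ref{pole-degree} finishes the argument.

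The main obstacle is justifying the blow-up valuation step rigorously. One must check that the divisorial valuation of the exceptional divisor is exactly the $I_Z$-adic order at the generic point of a smooth center, i.e. that $\ord_E(g) = \max\{e : g \in I_Z^e\}$. This holds because the associated graded ring of a regular local ring along a regular ideal is a polynomial ring and hence a domain. One must also verify that the earlier blow-ups do not alter a neighborhood of the generic point of $\P L_F$. If this local analysis becomes delicate, an alternative is to compute $\ord_{D_F}$ through the torus-like coordinates from the embedding of $W_L$ into the stellahedral toric variety, where $D_F$ corresponds to a ray and the order is read off from a monomial weight. That route would still rely on the same leading-term computation as in Lemma~\ref{pole-degree}.
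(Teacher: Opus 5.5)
Your proof is correct and follows essentially the same route as the paper: identify sections with polynomials satisfying $\ord_{D_F}(f) \ge -a_F$, use that near the generic point of the strict transform of $\P L_F$ the partially blown-up variety is isomorphic to $(\P(L\oplus\C), \P L_F)$, so that $\ord_{D_F}(f) = \ord_{\P L_F}(f)$, and conclude with Lemma~\ref{pole-degree}. Your explicit reduction to homogeneous $f$, which the paper leaves implicit, is a welcome bit of extra care. You should also state, as the paper does, that the later blow-ups are isomorphisms near the generic point of the exceptional divisor, so they do not change the valuation defining $D_F$.
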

\begin{proof}
    Construct the augmented wonderful variety by blowing up $\P L_F 
    \subseteq \P(L \oplus \F)$ for corank one flats $F 
    \subseteq E$, then blowing up the proper transforms of $\P L_F$ for 
    corank two flats, and so on. 
    Suppose that at a given stage of the construction, $\pi \colon X \to \P(L \oplus 
    \F)$ is the composition of all previous blowups, and $Z \subseteq X$ 
    is the center of the next blowup, obtained 
    as the strict transform of $\P L_F \subseteq \P(L \oplus \F)$. By 
    the Proj construction of the blowup \cite[Theorem 22.3.2]{VakilNotes}, for 
    any rational function $f$, $\ord_{Z}(f)$ is equal to the order 
    $\ord_E(f)$ of 
    vanishing of $f$ along the exceptional divisor $E$ of the blowup 
    $\operatorname{Bl}_Z X$. 
    Because $Z$ is the proper transform of $\P L_F$, there is an open 
    neighborhood in $X$ containing the generic point of $Z$ over which 
    the pair $(X,Z)$ is isomorphic to the pair $(\P(L \oplus \F), 
    \P L_F)$. Therefore $\ord_E(f) = \ord_{\P L_F} f$, so we can apply 
    Lemma~\ref{pole-degree} to get $\ord_E(f) = - \deg_v(f)$ for generic 
    $v \in L_F$. Later blowups do not change $\ord_E(f)$ because they 
    occur outside a neighborhood containing the generic point of $E$. 
    The global sections of $\mathcal{O}(\sum a_F 
    D_F)$ consist of rational functions $f$ such that $\ord_{D_F}(f)$ is at least $-a_F$.
\end{proof}

\begin{proof}[Proof of Theorem~\ref{thm:powerideal}]
This follows from Theorem~\ref{thm:direction} and \cite[Proposition 2.6]{ArdilaPostnikov}, which describe the inverse system of a power ideal in terms of restrictions of polynomials to affine lines.  
\end{proof}

\subsection{Deletion-contraction short exact sequences}

\begin{lemma} \label{O-deletion} If $i \in E$ is not a coloop, then the projection $\pi_{E\setminus i} \colon W_{L} 
        \twoheadrightarrow W_{L\setminus i}$ induces an isomorphism 
        \[
            H^0(W_L, \mathcal{O}(1, \ldots, 
            1)(k\alpha)(-W_{L/i}))
                \cong H^0(W_{L\setminus i}, \mathcal{O}(1, \ldots, 1)(k\alpha)).
        \]
\end{lemma}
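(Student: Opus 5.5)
Write $S = E \setminus i$, so $\pi_S \colon W_L \to W_{L\setminus i}$. The plan is to identify the line bundle on the left with a pullback and then apply the projection formula.

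By Lemma~\ref{lem:pullback}(1), each $y_t$ with $t \ne i$ is the pullback of $y_t$ along $\pi_S$. Since $i$ is not a coloop, $S$ is spanning, so Lemma~\ref{lem:pullback}(2) shows that $\alpha$ is also pulled back from $W_{L\setminus i}$. Removing the class $y_i = [W_{L/i}]$ therefore gives
\[
\mathcal{O}(1,\dotsc,1)(k\alpha)(-W_{L/i}) \cong \pi_S^*\bigl(\mathcal{O}(1,\dotsc,1)(k\alpha)\bigr).
\]
If $i$ is a loop, then $y_i = 0$, $W_{L/i}$ should be interpreted as the empty divisor, and the same identification holds.

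Next I would show that $\pi_{S*}\mathcal{O}_{W_L} = \mathcal{O}_{W_{L\setminus i}}$. This is the case $j=0$ of Theorem~\ref{thm:pushforward}. More elementarily, $\pi_S$ is a proper birational morphism between normal varieties: in the proof of Theorem~\ref{thm:pushforward} we saw that $\pi_S^{-1}(L\setminus i) = L$, and $L \to L\setminus i$ is an isomorphism when $i$ is not a coloop. The projection formula then gives
\[
H^0(W_L, \pi_S^*\mathcal{M}) = H^0(W_{L\setminus i}, \mathcal{M})
\]
for $\mathcal{M} = \mathcal{O}(1,\dotsc,1)(k\alpha)$.

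The remaining point, which I expect to be the only subtle one, is that this isomorphism is the one induced by $\pi_S$ and is compatible with the identifications of both sides as subspaces of $\Sym L^*$ and $\Sym (L\setminus i)^*$. Concretely, the map should be $g \mapsto g\circ\pi_S$, and composing with $L \cong L\setminus i$ realizes $H^0(W_L,\mathcal{O}(1,\dotsc,1)(k\alpha)(-W_{L/i}))$ as the polynomials in $C_{L,k}$ divisible by the linear form $x_i$ (after division by $x_i$). To check this, I would track the rational sections. On $W_{L\setminus i}$, a section of $\mathcal{M}$ is a polynomial $g$ with pole orders bounded by the boundary divisor representing $\mathcal{M}$, which is supported away from $L\setminus i$. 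Its pullback is a rational function on $W_L$ with poles only along the pullback of that divisor, and that pullback is again supported on $W_L\setminus L$. That pullback divisor represents the class $\sum_{t\ne i}y_t + k\alpha$. The boundary representative of $\mathcal{O}(1,\dotsc,1)(k\alpha)$, minus the divisor $W_{L/i} = \{x_i = 0\}$, differs from it by $\operatorname{div}(x_i)$. Since $\mathcal{O}(y_i)$ has the $L$-invariant boundary representative $\sum_{F\not\ni i} D_F$, the rational function $x_i$ realizes the linear equivalence $W_{L/i} \sim \sum_{F\not\ni i} D_F$. Hence multiplication by $x_i$ identifies the two descriptions, which gives the explicit form used in \eqref{eq:deletionidentification}.
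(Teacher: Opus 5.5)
Your proposal is correct and follows the paper's proof. Lemma~\ref{lem:pullback} identifies $\mathcal{O}(1,\dotsc,1)(k\alpha)(-W_{L/i})$ with $\pi_{E\setminus i}^*\mathcal{O}(1,\dotsc,1)(k\alpha)$, and the projection formula together with $\pi_{E\setminus i*}\mathcal{O}_{W_L}=\mathcal{O}_{W_{L\setminus i}}$ (the case $j=0$ of Theorem~\ref{thm:pushforward}) gives the isomorphism. Your alternative route to $\pi_{E\setminus i*}\mathcal{O}_{W_L}=\mathcal{O}_{W_{L\setminus i}}$ via Zariski's main theorem, and your check that the identification is division by $x_i$ as in \eqref{eq:deletionidentification}, are correct additions that the paper leaves implicit.
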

\begin{proof}
As the divisor class of $W_{L/i}$ is equal to $y_i$ and $i$ is not a coloop, Lemma~\ref{lem:pullback} implies that the line bundle $\mathcal{O}(1, \ldots, 1)(k\alpha)(-W_{L/i})$ is pulled back from $W_{L \setminus i}$. The result follows from the projection formula, as $R\pi_{E \setminus i*} \mathcal{O}_{W_L} = \mathcal{O}_{W_{L \setminus i}}$ by Theorem~\ref{thm:pushforward}. 
\end{proof}

\begin{lemma} \label{L-deletion} If $i \in E$ is not a coloop, then the projection $\pi_{E \setminus i} \colon W_{L} 
        \twoheadrightarrow W_{L\setminus i}$ induces an isomorphism 
        \[
            H^0(W_L, \mathcal{L}_M((k+1)\alpha)(-W_{L/i}))
            \cong H^0(W_{L\setminus i}, \mathcal{L}_{M\setminus i}((k+1)\alpha)).
        \]
\end{lemma}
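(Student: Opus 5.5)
The plan is to show that $\mathcal{L}_M(-W_{L/i})$ differs from $\pi_{E\setminus i}^*\mathcal{L}_{M\setminus i}$ only by an effective divisor that is exceptional for $\pi := \pi_{E \setminus i}$. The isomorphism on $H^0$ then follows from the projection formula, as in Lemma~\ref{O-deletion}.

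\textbf{Step 1: write both line bundles in terms of boundary divisors.} Since $i$ is not a coloop, $\dim L^{E\setminus i} = r$. By Lemma~\ref{lem:pullback}, $\alpha = \pi^*\alpha$ and $y_j = \pi^* y_j$ for $j \neq i$. Hence
$$\mathcal{L}_M((k+1)\alpha)(-W_{L/i}) = \omega_{W_L} \otimes \mathcal{O}(\textstyle\sum_{j\ne i} y_j)((k+2)\alpha),$$
and $\mathcal{O}(\sum_{j \neq i} y_j)((k+2)\alpha)$ is pulled back from $W_{L\setminus i}$. So everything reduces to comparing $\omega_{W_L}$ with $\pi^*\omega_{W_{L\setminus i}}$.

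\textbf{Step 2: compare the canonical bundles.} Following the remark after Theorem~\ref{thm:pushforward}, when $i$ is not a coloop the map $\pi$ is an iterated blow-up along smooth codimension-$2$ centers meeting the boundary with simple normal crossings. The usual blow-up formula for the canonical bundle (the one from \cite[Exercise 22.4.S]{VakilNotes} used in Proposition~\ref{prop:canonical}) then gives
$$\omega_{W_L} \cong \pi^*\omega_{W_{L\setminus i}} \otimes \mathcal{O}(\textstyle\sum_{F} D_F),$$
where $F$ runs over the flats whose divisors $D_F$ are contracted by $\pi$.

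I expect these to be exactly the flats $F = G \cup i$ with both $G$ and $G \cup i$ flats of $L$ and $\operatorname{rk}(G\cup i) = \operatorname{rk}(G) + 1$. Indeed, for such $F$ the $D_F$-coefficient of $c_1(\mathcal{L}_M) - y_i$ in Proposition~\ref{prop:canonical} is $1$ more than the coefficient that the pullback of $c_1(\mathcal{L}_{M\setminus i})$ would produce. For every other flat, $\operatorname{rk}_L(F) = \operatorname{rk}_{L\setminus i}(F\setminus i)$ and $|F\setminus i| = |F| - [i\in F]$, so the coefficients agree exactly.

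As a cross-check, I would verify this bookkeeping directly from the divisor formula in Proposition~\ref{prop:canonical}. For that I need the description of $\pi^*D_G$ as a sum of the $D_F$ with $F\setminus i = G$, excluding the exceptional ones, which comes from \cite[Section 3]{BHMPW20a}.

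\textbf{Step 3: conclude.} From Steps 1 and 2,
$$\mathcal{L}_M((k+1)\alpha)(-W_{L/i}) \cong \pi^*\bigl(\mathcal{L}_{M\setminus i}((k+1)\alpha)\bigr) \otimes \mathcal{O}(E'),$$
with $E'$ effective and $\pi$-exceptional. Since $W_{L\setminus i}$ is smooth, hence normal, and $\pi$ is birational with connected fibers, we have $\pi_*\mathcal{O}(E') = \mathcal{O}_{W_{L\setminus i}}$. The projection formula then gives the isomorphism on $H^0$. Alternatively, both spaces sit inside $\operatorname{Sym} L^*$, and a section is a polynomial whose pole orders are bounded along the non-exceptional divisors, which are exactly the conditions imposed on $W_{L\setminus i}$ through $\pi$, which is an isomorphism over $L$.

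\textbf{Main obstacle.} The hardest part is Step 2: correctly identifying which $D_F$ are $\pi$-exceptional and checking that the discrepancy divisor $E'$ is effective rather than merely zero or negative. I would first test this on the smallest example: $L$ a generic plane in $\mathbb{C}^3$ with $i = 3$, where $\pi$ blows up a single point.
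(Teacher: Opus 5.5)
Your proof is correct, but it takes a different route from the paper's.

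The paper uses that $\mathcal{L}_M(-1, \dotsc, -1) \cong \wedge^r \Omega_L \cong \omega_{W_L}(\alpha)$ is the log canonical bundle. Theorem~\ref{thm:pushforward} with $j = r$ then gives $R\pi_{E \setminus i*}\mathcal{L}_M(-1,\dotsc,-1) = \mathcal{L}_{M\setminus i}(-1,\dotsc,-1)$. The remaining factor $\mathcal{O}(\sum_{j\neq i} y_j)((k+2)\alpha)$ is pulled back by Lemma~\ref{lem:pullback}, which is your Step 1, and the projection formula finishes.

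You replace the appeal to Theorem~\ref{thm:pushforward} with an explicit discrepancy computation, and your bookkeeping is right. Combining Proposition~\ref{prop:canonical} with $\pi^*[D_G] = [D_G] + [D_{G\cup i}]$, the coefficients of $c_1(\mathcal{L}_M) - y_i$ and $\pi^*c_1(\mathcal{L}_{M\setminus i})$ agree except on the $D_F$ with $i$ a coloop of $F$. There they differ by exactly $1$, and those $D_F$ are precisely the $\pi$-exceptional divisors.

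The ``main obstacle'' you flag actually disappears. The difference of the two line bundles is just $K_{W_L} - \pi^*K_{W_{L\setminus i}}$, which is automatically effective and $\pi$-exceptional for any birational morphism of smooth varieties. So you need neither the blow-up description nor the identification of the exceptional flats, and $\pi_*\mathcal{O}(E') = \mathcal{O}_{W_{L\setminus i}}$ follows from normality.

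Your route is more elementary: it needs only $\pi_*\omega_{W_L} = \omega_{W_{L\setminus i}}$, not the Esnault--Viehweg/Hodge-ideals input behind Theorem~\ref{thm:pushforward}. It also makes visible why sections are unchanged: the extra poles allowed along exceptional divisors impose nothing. The paper's route is a one-liner given machinery it already has. It also gives vanishing of the higher direct images, which the $\mathcal{L}_M$ case of Proposition~\ref{prop:euleradd} uses, although only $H^0$ is needed for this lemma.
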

\begin{proof}
By Theorem~\ref{thm:pushforward}, we have $R\pi_{E \setminus i *} \mathcal{L}_M(-1, \dotsc, -1) = \mathcal{L}_{M \setminus i}(-1, \dotsc, -1)$. The result then follows from the projection formula and Lemma~\ref{lem:pullback}. 
\end{proof}

Now suppose that $i \in E$ is neither a loop nor a coloop, and consider the short exact sequence corresponding to the Cartier divisor $W_{L/i}$: 
\begin{equation} \label{closed-subscheme}
    0 \to \mathcal{O}(-W_{L/i}) \to \mathcal{O}_{W_L} \to 
    \mathcal{O}_{W_{L/i}} \to 0
\end{equation}
We obtain two short exact sequences by tensoring (\ref{closed-subscheme}) with $\mathcal{O}(1, \ldots, 
1)(k\alpha)$ and with the invertible 
subsheaf $\mathcal{L}_M((k+1)\alpha) \subseteq \mathcal{O}(1, 
\ldots, 1)(k\alpha)$. Using Lemma~\ref{O-deletion}, 
Lemma~\ref{L-deletion}, Lemma~\ref{lem:restrictdiv}, Lemma~\ref{lem:restrictLM}, and 
Corollary~\ref{cor:seczonotope}, we get the commutative 
diagram in (\ref{section-diagram}), where the rows are exact.

\begin{equation}\label{section-diagram}
\begin{tikzcd}[column sep = tiny]
0 \arrow[r] & H^0(W_{L\setminus i}, \mathcal{L}_{M\setminus i}((k+1)\alpha)) \arrow[d, hook] \arrow[r]                                          & H^0(W_L, \mathcal{L}_{M}((k+1)\alpha)) \arrow[d, hook] \arrow[r]                                             & H^0(W_{L/i}, \mathcal{L}_{M/i}((k+1)\alpha)) \arrow[d, hook]  \\
0 \arrow[r] & {H^0(W_{L \setminus i}, \mathcal{O}(1, \ldots,1)(k\alpha))} \arrow[r] \arrow[d, "\cong"] & {H^0(W_L, \mathcal{O}(1, \ldots, 
                1)(k\alpha))} \arrow[r] \arrow[d, "\cong"] & {H^0(W_{L/i}, \mathcal{O}(1, \ldots, 
            1)(k\alpha))} \arrow[d, "\cong"]  \\
0 \arrow[r] & {C_{L\setminus i,k}} \arrow[r]                                                                                 & {C_{L,k}} \arrow[r]                                                                                    & {C_{L/i,k}}
\end{tikzcd}
\end{equation}

\begin{proof}[Proof of Proposition~\ref{prop:LM}]
    We show that for all $k \leq -1$, the maps from the top row to the middle row of  
    (\ref{section-diagram}) are isomorphisms.
    First suppose that $k = -1$. Then the middle and top rows are right exact by 
    Corollary~\ref{cor:line} and Corollary~\ref{cor:LMvanish}. If all 
    elements of $E\setminus i$ are loops or coloops, then the left and right vertical arrows 
    are isomorphisms by Example~\ref{LO-boolean}, and so the middle 
    vertical arrow is an isomorphism as well. The case of $k=-1$ and 
    arbitrary $L$ now follows by induction. In 
    general, if $D$ is a Weil divisor on a normal variety $X$, 
    $\alpha$ is an effective Weil divisor not contained in the support 
    of $D$, and $k \leq 0$, then $H^0(X, \mathcal{O}_{X}(D + 
    k\alpha))$ can be viewed as the subspace of $\F(X)$ consisting of those $f \in 
    H^0(X, \mathcal{O}_X(D))$ which vanish to order $-k$ 
    on $\alpha$. So if $D$ and $D'$ do not contain $\alpha$ in 
    their support and $H^0(X, \mathcal{O}_X(D)) 
    = H^0(X, \mathcal{O}_X(D'))$ as subspaces of $\F(X)$, then $H^0(X, \mathcal{O}_X(D 
    + k\alpha)) = H^0(X, \mathcal{O}_X(D' + k\alpha))$. By moving $\alpha$ 
    to be the closure of a generic hyperplane in $L$, we can apply this 
    observation, where $X = W_L$, $\mathcal{O}_X(D) = 
    \mathcal{O}_{W_L}(1, \ldots, 1)(-\alpha)$, and $\mathcal{O}_X(D') = 
    \mathcal{L}_M$. 
\end{proof}
\begin{corollary}\label{middle-exact}
    For all $k \geq -2$, the middle and bottom rows of 
    (\ref{section-diagram}) are right exact.
\end{corollary}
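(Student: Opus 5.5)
We split into the cases $k \ge -1$ and $k = -2$.

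For $k \ge -1$, take the long exact sequence in cohomology of \eqref{closed-subscheme} twisted by $\mathcal{O}(1,\dotsc,1)(k\alpha)$. Its $H^0$ terms form the middle row of \eqref{section-diagram}, using Lemma~\ref{O-deletion} for the left term. The next term is $H^1(W_L, \mathcal{O}(1,\dotsc,1)(k\alpha)(-W_{L/i}))$. Since $i$ is not a coloop, Lemma~\ref{lem:pullback} shows this line bundle is $\pi_{E\setminus i}^*\mathcal{O}(1,\dotsc,1)(k\alpha)$. By the projection formula and Theorem~\ref{thm:pushforward} (with $j=0$), its cohomology is $H^\bullet(W_{L\setminus i}, \mathcal{O}(1,\dotsc,1)(k\alpha))$. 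The $H^1$ of that vanishes by Corollary~\ref{cor:line}, so the middle row is right exact. The bottom row is identified with the middle row by the vertical isomorphisms from Corollary~\ref{cor:seczonotope}, so it is right exact as well.

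For $k=-2$ the middle row can have nonzero $H^1$, so we pass through the top row instead. Take $k+1=-1$ in the top row, so its terms are $H^0$ of $\mathcal{L}_M(-\alpha)$ on $W_{L\setminus i}$, $W_L$, $W_{L/i}$. This row comes from \eqref{closed-subscheme} tensored with $\mathcal{L}_M(-\alpha)$, using Lemma~\ref{L-deletion} and Lemma~\ref{lem:restrictLM}. As in the first case, its obstruction is $H^1(W_L, \mathcal{L}_M(-\alpha)(-W_{L/i}))$. Lemma~\ref{L-deletion}'s argument (Theorem~\ref{thm:pushforward} with $j=r$, together with the projection formula) identifies this group with $H^1(W_{L\setminus i}, \mathcal{L}_{M\setminus i}(-\alpha))$. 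That group vanishes by Corollary~\ref{cor:LMvanish} with $k=-1$, applied to $L\setminus i$. Hence the top row is right exact for $k=-2$.

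It remains to transfer right exactness down the diagram. By Proposition~\ref{prop:LM} with $k=-2 \le -1$, the three vertical maps from the top row to the middle row are isomorphisms. This applies equally to $L$, $L\setminus i$ and $L/i$, since the proposition holds for every arrangement. The diagram commutes, so the middle row is right exact, and the bottom row follows via Corollary~\ref{cor:seczonotope}.

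The main point to check is the bookkeeping in the $k=-2$ case. We must confirm that the kernel sheaf $\mathcal{L}_M(-\alpha)(-W_{L/i})$ is exactly a pullback whose derived pushforward is $\mathcal{L}_{M\setminus i}(-\alpha)$. This holds because $\mathcal{L}_M(-1,\dotsc,-1)\cong\wedge^r\Omega_L$ and $\wedge^r\Omega_{L\setminus i}$ is the corresponding bundle on $W_{L\setminus i}$, which has the same rank since $i$ is not a coloop. We must also confirm that Lemma~\ref{lem:pullback} lets us pull back $\alpha$ and the $y_t$ with $t\neq i$. With this, the $H^1$ vanishes, and the argument is complete.
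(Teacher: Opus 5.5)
Your proposal is correct and follows the paper's proof: for $k \ge -1$ you use Corollary~\ref{cor:line} (applied on $W_{L\setminus i}$), and for $k=-2$ you show the top row is right exact via Corollary~\ref{cor:LMvanish} and then transfer this to the middle row using the isomorphisms of Proposition~\ref{prop:LM}. You also make explicit a step the paper leaves implicit, namely that the $H^1$ obstruction on $W_L$ equals the corresponding $H^1$ on $W_{L\setminus i}$, by the projection formula and Theorem~\ref{thm:pushforward}.
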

\begin{proof}
    If $k \geq -1$ then exactness follows from Corollary~\ref{cor:line}. 
    If $k = -2$, then the top row is exact by 
    Corollary~\ref{cor:LMvanish}, and the top and middle rows are 
    isomorphic by Proposition~\ref{prop:LM}.
\end{proof}
\begin{example}[The Boolean case]\label{LO-boolean} Suppose that $L = \F^S \subseteq \F^E$ for $S 
    \subseteq E$. Then $W_L$ is the toric variety of the stellahedron on 
    $S$. Loops do not meaningfully change anything we have considered, 
    so we may assume that $S = E$. The inner normal 
    fan of the stellahedron has the standard basis 
    vectors as rays, and the sum of their corresponding divisors is 
    $\mathcal{O}(1, \ldots, 1)$. All other rays are in the negative 
    orthant, and the sum of their corresponding divisors is 
    $\alpha$. It follows from Proposition~\ref{prop:canonical} that $\mathcal{L}_M \cong \mathcal{O}_{W_L}$. 

    On the other hand, consider the 
    sections of $\mathcal{O}(1, \ldots, 
    1)(k\alpha)$. Let $\square = [0,1]^E \subseteq \R^E$ denote the unit 
    cube and let $\Delta = \text{conv}(0, e_i : i \in E) \subseteq 
    \R^E$ denote the standard simplex. If $k \geq 0$ then $\mathcal{O}(1, 
    \ldots, 1)(k\alpha)$ has a section for every lattice point of 
    the Minkowski sum $\square + k \Delta$. Notice that this is never 
    equal to the number of lattice points of $(k+1)\Delta$, which 
    correspond to the sections of $\mathcal{L}_M((k+1)\alpha)$.
    However, if $k \leq 0$, then the sections of 
    $\mathcal{O}(1, \ldots, 1)(k\alpha)$ correspond to lattice 
    points $u \in \square$ such that 
    \[
        \langle u, v_F \rangle \geq -|E\setminus F| - k
    \]
    for all primitive vectors $v_F = -\sum_{i \in E \setminus F} e_i$ 
    along negative rays of the stellahedral fan. If $k = -1$, 
    then the lattice point $0 \in \square$ is the unique 
    such point, so $\mathcal{O}(1, \ldots, 1)(-\alpha)$ has a $1$-dimensional space of sections, and therefore $H^0(W_L, \mathcal{L}_M) \cong 
    H^0(W_L, \mathcal{O}(1, \ldots, 1)(-\alpha))$. If $k < -1$, then 
    both $\mathcal{L}_M((k+1)\alpha)$ and $\mathcal{O}(1, \ldots, 
    1)(k\alpha)$ have no nonzero sections.
\end{example}

\begin{remark}
Most previous proofs of cases of the right exactness of \eqref{eq:deletion} \cite{DM,PSS,ArdilaPostnikov,HoltzRon,BergetTutte, Superspace} proceed by induction on the number of hyperplanes. In order to perform the induction step, one needs to know a spanning set for $C_{L, k}$, which one proves by induction at the same time. This works nicely for $k \ge -1$ (see \cite[Proposition 4.4 and 4.5]{ArdilaPostnikov}), but the method breaks down for $k = -2$. When $k=-2$, Holtz and Ron were able to prove the right exactness of \eqref{eq:deletion} in a different way, but they did not produce an explicit spanning set for $C_{L, -2}$. Holtz and Ron conjectured a spanning set \cite[Conjecture 6.1]{HoltzRon}, but their conjecture was disproved in \cite[Proposition 2]{APCorrection}. A spanning set for $C_{L, -2}$ was eventually found in \cite[Section 4.5]{GillespieThesis}, but it is more complicated, and it still seems challenging to prove the right exactness of \eqref{eq:deletion} using this. 

Our proof of the right exactness of \eqref{eq:deletion} for $k \ge -2$ does not produce an explicit spanning set for $C_{L, k}$. Rather, it can be interpreted as strengthening the induction hypothesis to include the vanishing of certain derived functors. With this stronger induction hypothesis, the right exactness can be easily reduced to the base case, the case of Boolean arrangements. 
\end{remark}

\begin{remark}\label{rem:CP25} Let us compare our point of view to the 
    geometric framework for zonotopal algebras introduced in 
    \cite{GeoZonotopeII}. While we are interested in the augmented 
    wonderful variety $W_L$, \cite{GeoZonotopeII} deals with the 
    \emph{arrangement Schubert variety} $Y_L$, which is defined as the closure of $L$ in $(\mathbb{P}^1)^E$. There is a map $p \colon W_L \to Y_L$ which 
    resolves the singularities of $Y_L$, and satisfies $p^* 
    \mathcal{O}_{Y_L}(1, \ldots, 1) = \mathcal{O}_{W_L}(1, \ldots, 1)$. 
    In addition, the Schubert variety has rational singularities, so 
    $p_* \omega_{W_L} \cong \omega_{Y_L}$ is the canonical sheaf of $Y_L$. Let $D$ be the Weil divisor $Y_L \setminus L$. 
    The authors of 
    \cite{GeoZonotopeII} prove that 
    \begin{equation}\label{CP-interpretation}
        C_{L, k} = H^0(Y_L, \mathcal{O}_{Y_L}(1, \ldots, 1)(kD)) \text{ for }k \le 0,
    \end{equation}
    and that $\mathcal{O}_{Y_L}(-2D) \cong \omega_{Y_L}$ 
    \cite[Theorem 1.14, Proposition 4.4]{GeoZonotopeII}. The equality \eqref{CP-interpretation} does not hold for $k > 0$. Note that $D$ 
    is often not Cartier, so $\mathcal{O}_{Y_L}(1, \ldots, 1)(kD)$ is not an invertible sheaf in general. The right-hand side of 
    \eqref{CP-interpretation} can be easily identified 
    with $H^0(W_L, \mathcal{O}_{W_L}(1, \ldots, 1))$ in the case $k=0$ and with $H^0(W_L, \mathcal{L}_M(-\alpha))$ in the case $k=-2$, as 
    we now explain. If $k=0$, then $H^0(W_L, \mathcal{O}_{W_L}(1, \ldots, 1)) 
    \cong H^0(Y_L, \mathcal{O}_{Y_L}(1, \ldots, 1))$ by the projection 
    formula and the fact that $p_*\mathcal{O}_{W_L} \cong 
    \mathcal{O}_{Y_L}$. If $k = -2$, then $\mathcal{L}_M(-\alpha) \cong \omega_{W_L} \otimes \mathcal{O}_{W_L}(1, \dotsc, 1)$, and the fact that $p_* (\omega_{W_L} \otimes \mathcal{O}_{W_L}(1, \dotsc, 1)) \cong \omega_{Y_L} \otimes \mathcal{O}_{Y_L}(1, \dotsc, 1)$ identifies $H^0(W_L, \mathcal{L}_M(-\alpha))$ with $H^0(Y_L, \mathcal{O}(1, \dotsc, 1)(-2D))$. We do not know a 
    similar connection for $k \not\in \{0,-2\}$. 
\end{remark}

\subsection{Behavior of coloops}

Both for applications to hierarchical zonotopal algebras in Section~\ref{ss:hierarchical} and to prove Theorem~\ref{thm:supersections}, we will need some technical results about the behavior of zonotopal algebras for hyperplane arrangements with coloops. 

We first recall a result of Ardila and Postnikov. A \emph{cocircuit vector} of $L$ is a nonzero element of $L_F$, where $F$ is a flat of rank $r-1$. That is, a cocircuit vector is a generator of a $1$-dimensional flat of the hyperplane arrangement. For $k \le 0$, the ideal of the $k$th zonotopal algebra is generated by powers of cocircuit vectors. 

\begin{proposition}\label{prop:APcocircuit}\cite[Lemma 1]{APCorrection}
For $k \le 0$, the power ideal $(v^{\rho(v) + k + 1} : v \in L \setminus \{0\})$ is generated by $\{v^{\rho(v) + k + 1} : v \text{ is a cocircuit vector}\}$. 
\end{proposition}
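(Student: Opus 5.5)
The plan is to pass to inverse systems and compare them using Theorem~\ref{thm:direction}. Let $I$ be the full power ideal $(v^{\rho(v)+k+1} : v \in L\setminus\{0\})$, and let $I'$ be the ideal generated by the powers of cocircuit vectors. Clearly $I' \subseteq I$, so $(I')^\perp \supseteq I^\perp$. It therefore suffices to show $(I')^\perp \subseteq I^\perp$. By \cite[Proposition 2.6]{ArdilaPostnikov} (the fact already used in the proof of Theorem~\ref{thm:powerideal}), $I^\perp$ is the set of $f \in \Sym L^*$ with $\deg_v(f) \le \rho(v)+k$ for every nonzero $v$. Likewise, $(I')^\perp$ is the set of $f$ satisfying this inequality only for cocircuit vectors $v$. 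Since $\rho$ is constant on the open stratum of each flat, and $\deg_v$ is at most its generic value on $L_F$, the condition for $I^\perp$ reduces to: $\deg_v(f)\le n-|F|+k$ for generic $v\in L_F$, for every proper flat $F$.

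So the key claim is the following. Suppose $k\le 0$ and $f$ satisfies $\deg_v f \le n-|G|+k$ for every flat $G$ of rank $r-1$ and $v \in L_G$. Then for every proper flat $F$ with $\dim L_F = m\ge 2$, we have $\deg_v f\le n-|F|+k$ for generic $v\in L_F$. I would prove this by induction on $m$, reducing to a statement inside $L_F$. Restrict $f$ to translates $x+L_F$. The translated restrictions $f(x+\cdot)|_{L_F}$ are polynomials on $L_F$. The hyperplanes of $L$ not containing $L_F$, namely the indices in $E\setminus F$, restrict to an arrangement on $L_F$ whose rank-one flats are exactly the $L_G\subseteq L_F$. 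Translation does not change $\deg_v$, so each such $g = f(x+\cdot)|_{L_F}$ satisfies $\deg_v g\le \rho_F(v)+k$ for cocircuit vectors $v$ of the restricted arrangement, where $\rho_F(v)=n-|G|$ counts the hyperplanes of $L_F$ not containing $v$. Moreover, $\deg_v f$ for $v\in L_F$ equals the maximum over $x$ of $\deg_v$ of these restrictions. Hence it suffices to prove: for an essential arrangement of $N$ hyperplanes (possibly with repeats and loops) on a space $U$ of dimension $m$, any polynomial $g$ with $\deg_v g\le \rho(v)+k$ on all cocircuit vectors satisfies $\deg g \le N + k$, i.e.\ $\deg_v g \le N+k$ for generic $v$ (since $\rho(v)=N$ for generic $v$).

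For this total-degree bound I would use a counting argument. Take a homogeneous top-degree component $g_d$; it satisfies the same bounds, since $\deg_v$ of the top component is at most $\deg_v g$. Choose a rank-one flat $G$ with cocircuit vector $v$. Expand $g_d$ along lines in direction $v$ and project to $U/\langle v\rangle$ with the contracted arrangement, which has $N-|G|$ non-loop hyperplanes. The coefficient of the top power of $v$ lives on the quotient. Using induction on $m$ (the base case $m=1$ is exactly the cocircuit hypothesis), the remaining degree is bounded by $(N-|G|)+k'$ for an appropriate shifted $k'$. The total is then at most $\deg_v + (N-|G|) + \dots$, which must be arranged to give $\le N+k$.

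I expect this step to be the main obstacle: a naive degree split loses the needed inequality, so the induction must be chosen carefully. If it resists, I would instead use the geometry directly. By Theorem~\ref{thm:direction}, $(I')^\perp=H^0(W_L,\mathcal{O}(\sum_F a_F D_F))$, where $a_F = n-|F|+k$ for $F$ of rank $r-1$ and $a_F$ is large otherwise. The claim is then that sections automatically have the smaller pole order $n-|F|+k$ along the other $D_F$. This can be checked by restricting to $D_F\cong W_{L^F}\times\underline{W}_{L_F}$ via Lemma~\ref{lem:restrictToBoundary}: excess pole order along $D_F$ would give a nonzero section on the $\underline{W}_{L_F}$ factor of a line bundle that is negative (degree $k-1<0$ on lines in $\mathbb{P}L_F$) whenever $k\le 0$, a contradiction. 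Peeling off flats from top corank downward then finishes the proof.
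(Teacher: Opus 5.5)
The paper gives no argument for this statement; it imports it from \cite[Lemma 1]{APCorrection}. So your proposal has to stand on its own.

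\textbf{The first route.} Your reduction to inverse systems, and then to a total-degree bound on $L_F$, is fine. The counting argument that follows is not a proof: you leave the key inequality open. Its setup is also off. The hyperplanes that descend to $U/\langle v\rangle$ are those \emph{containing} $v$, indexed by $G$, so there are $|G|$ of them, not $N-|G|$.

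\textbf{The geometric fallback.} This does work, and it is a genuinely different, self-contained route. To make it a proof you need the following.
\begin{enumerate}
\item For a fixed $f\in (I')^\perp$, take $a_F\ge \deg f$ for every flat not of rank $r-1$. This sidesteps whether a single line bundle captures all of $(I')^\perp$.
\item Induct on $\dim L_F$. Suppose $f$ has exact pole order $b\ge n-|F|+k+1$ along $D_F$, and let $D'$ be $D$ with the coefficient of $D_F$ replaced by $b$. Then $f|_{D_F}$ is a nonzero section of $\mathcal{O}(D')|_{D_F}$.
\item You need $[D_G]|_{D_F}=1\otimes[\underline{D}_G]$ for $G>F$. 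This is in \cite{BHMPW20a}, but not in Lemma~\ref{lem:restrictToBoundary} as stated. Combined with part (5) of that lemma, it shows the $\underline{W}_{L_F}$-factor of $\mathcal{O}(D')|_{D_F}$ is $b\underline{\alpha}+\sum_{G>F}(a_G-b)\underline{D}_G$.
\item Let $\mathcal{G}_F$ be the set of flats covering $F$, and $p=|\mathcal{G}_F|$. A general line in $\mathbb{P}L_F$ meets only the $\underline{D}_G$ with $G\in\mathcal{G}_F$, once each. So the degree on it is $b-\sum_{G\in\mathcal{G}_F}(b-a_G)$.
\item By induction $a_G=n-|G|+k$ for these $G$. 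Since $E\setminus F$ is the disjoint union of the sets $G\setminus F$ for $G\in\mathcal{G}_F$, the degree is at most $k+1-p$. Here $p\ge 2$ because the arrangement on $L_F$ is essential of dimension at least $2$.
\item So your ``$k-1$'' is only the extreme case $p=2$, $b=n-|F|+k+1$. In general the degree is at most $k-1<0$.
\item These lines sweep out a dense subset of $\underline{W}_{L_F}$, so the factor has no nonzero sections. K\"unneth then forces $H^0(\mathcal{O}(D')|_{D_F})=0$, a contradiction.
\end{enumerate}

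\textbf{Comparison with citing \cite{APCorrection}.} Your argument gives a proof internal to the paper's framework. The pole bounds along the $D_G$ with $\dim L_G=1$ propagate to every boundary divisor by negativity on the $\underline{W}_{L_F}$ factors. Only the covering flats matter at each step.

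It also shows exactly where $k\le 0$ enters. For $k=1$ and $p=2$ the degree is $0$, and the statement fails: for the Boolean arrangement in $\mathbb{C}^2$, the ideal $(e_1^3,e_2^3)$ does not contain $(se_1+te_2)^4$ when $st\neq 0$.

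The same computation can be done without geometry. Fix $w$ and consider $f(w+\cdot)|_{L_F}$. Its top homogeneous part is a form of degree $b$ on $L_F$. For each $G\in\mathcal{G}_F$ it is divisible by $\lambda_G^{\,b-a_G}$, where $\lambda_G$ cuts out $L_G$ in $L_F$. These exponents sum to more than $b$, so the form vanishes.
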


\begin{lemma}\label{lem:negkcoloop}
If $k < -1$ and $i$ is a coloop of $L$, then $\mathcal{Z}_{L, k} = 0$. The subspace $\mathcal{C}_{L, -1}$ of $\Psi_{L^*}$ maps isomorphically onto $\mathcal{C}_{L \setminus i, -1}$ under the map $\Psi_{L^*} \to \Psi_{(L \setminus i)^*}$. 
\end{lemma}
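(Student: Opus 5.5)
The plan is to work directly with the ideals. Both claims then follow from how a coloop $i$ affects the cocircuit vectors. Since $i$ is a coloop, $E \setminus i$ is a flat of rank $r-1$, so $L_{E \setminus i}$ is a line spanned by some $v$, and $v \notin H_i$. Among the hyperplanes $H_1, \dotsc, H_n$, the vector $v$ lies outside exactly $H_i$, so $\rho(v) = 1$. Moreover, any other flat $F$ of rank $r-1$ must contain $i$, because $E\setminus i$ is the only hyperplane of the matroid avoiding the coloop. Hence every cocircuit vector $w \neq v$ lies in $L' := L \cap H_i$.

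For the first claim, the generator $v^{\rho(v)+k+1} = v^{k+2}$ has nonpositive exponent when $k \le -2$. So the ideal of $Z_{L,k}$ is the unit ideal, and its differential closure is all of $\Psi_L$. Therefore $\mathcal{Z}_{L,k} = 0$.

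For $k=-1$, I will first identify $L'$ with $L \setminus i$ via the coordinate projection, which is an isomorphism because $i$ is a coloop. I will then use the decomposition $L = L' \oplus \mathbb{C} v$, so that $\Sym L = \Sym L' \otimes \mathbb{C}[v]$ and $\Psi_L = \Psi_{L'} \otimes \Psi_{\mathbb{C}v}$. For a cocircuit vector $w \in L'$ of $L$, $\rho(w)$ computed in $L$ equals $\rho(w)$ computed in $L\setminus i$, since $w \in H_i$. Also, the rank $r-1$ flats of $L$ containing $i$ correspond to the rank $r-2$ flats of $L \setminus i$, whose lines give exactly the cocircuit vectors of $L\setminus i$. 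Combining this with Proposition~\ref{prop:APcocircuit}, the ideal of $Z_{L,-1}$ is $(v) + I_{L'}\Sym L$, where $I_{L'}$ is the ideal of $Z_{L\setminus i,-1}$ transported to $L'$.

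Since $d$ is a derivation, the differential closure of this ideal is generated by $v$, $d v = 1\otimes v$, and the differential closure of $I_{L'}$ in $\Psi_{L'}$. This gives
$$\mathcal{Z}_{L,-1} \cong \Psi_{L'}/\overline{I_{L'}} \cong \mathcal{Z}_{L\setminus i,-1}.$$

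Dually, I will show that the inverse system $\mathcal{C}_{L,-1}$ consists of those forms annihilated by $\partial_v$ and by contraction with $v$, and that these are exactly the pullbacks of forms on $L/\mathbb{C}v \cong L'$ lying in the inverse system of $\overline{I_{L'}}$. The map $\Psi_{L^*} \to \Psi_{(L\setminus i)^*}$ is restriction to $L' \cong L\setminus i$. Composed with this pullback it is the identity, so it maps $\mathcal{C}_{L,-1}$ isomorphically onto $\mathcal{C}_{L\setminus i,-1}$.

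The main obstacle is this last compatibility. I need the splitting $L = L' \oplus \mathbb{C}v$ used to describe the closure to match the map $\Psi_{L^*}\to\Psi_{(L\setminus i)^*}$ in the statement, and I need the inverse system of a tensor product of ideals to be the tensor product of the inverse systems. I will check both carefully in coordinates adapted to the decomposition $L = L' \oplus \mathbb{C}v$.
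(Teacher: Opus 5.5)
Your proposal is correct and follows the paper's argument. The cocircuit vector $v$ spanning $L_{E\setminus i}$ has $\rho(v)=1$, which forces the unit ideal when $k<-1$. For $k=-1$, the generators $v$ and $dv$ reduce $\Psi_L$ to $\Psi_{L\setminus i}$, and the remaining cocircuit vectors lie in $H_i$ and match those of $L\setminus i$; you simply spell out the inverse-system side in more detail than the paper does.

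The compatibility you flag as the main obstacle is automatic. Since $v$ is a multiple of the standard basis vector $e_i\in\mathbb{C}^E$, your splitting $L=(L\cap\{x_i=0\})\oplus\mathbb{C}e_i$ is exactly the canonical identification $\Psi_{L^*}=\Psi_{(L\setminus i)^*}\otimes\Psi_{\mathbb{C}}$ that the paper uses.
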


\begin{proof}
If $k < -1$, then the cocircuit vector $v$ corresponding to $i$ has $\rho(v) = 1$, so $Z_{L, k} = 0$. As $\mathcal{C}_{L, k}$ is the inverse system of the differential closure of the ideal of $Z_{L, k}$, it is also $0$. For $k=-1$, the elements $v$ and $dv$ belong to the ideal of $\mathcal{Z}_{L, -1}$, and the quotient of $\Psi_L$ by them is identified with $\Psi_{L \setminus i}$. All other cocircuit vectors are contained in the $i$th hyperplane, and so are identified with cocircuit vectors of $L \setminus i$. 
\end{proof}

In our treatment of hierarchical zonotopal algebras, we will also need 
the following vanishing statement.
\begin{lemma}\label{lem:coloop-vanishing}
    If $i \in E$ is a coloop of $L$ and $k$ is any integer, then
    \[
        H^k(W_L, \mathcal{O}(-\alpha + \sum_{j \not= i} y_j)) = 0.
    \]
\end{lemma}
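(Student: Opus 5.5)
We prove the vanishing by exploiting the product structure that a coloop forces on the geometry. Since $i$ is a coloop, $S = E \setminus i$ is not spanning. However, the proof of Theorem~\ref{thm:pushforward} shows that $\pi_S \colon W_L \to W_{L\setminus i}$ factors as
\[
W_L \xrightarrow{\;q\;} W_{L\setminus i} \times \mathbb{P}^1 \to W_{L\setminus i},
\]
where $q$ is an iterated blow-up at smooth centers. By Lemma~\ref{lem:pullback}(1), the class $\sum_{j \ne i} y_j$ is pulled back along $\pi_S$ from $\sum_{j\ne i} y_j$ on $W_{L\setminus i}$. The plan is to express $\alpha$ as well as a pullback along $q$, and then compute cohomology on the product by Künneth.

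\textbf{Step 1.} We identify $\alpha$ on $W_{L\setminus i}\times\mathbb{P}^1$. Since $L = (L\setminus i)\oplus \mathbb{C}$ and the hyperplane at infinity pulls back compatibly, we expect $q^*(\alpha\otimes 1 + 1\otimes y_i) = \alpha$. More precisely, $\alpha$ differs from this by exceptional divisors, and we must check whether it is exactly the pullback. We test this in the Boolean case $n=1$: there $W_L = \mathbb{P}^1$, $\alpha = y_1$, and $W_{L\setminus i}$ is a point, which is consistent. We test it again with $L=\mathbb{C}^2$: the stellahedral surface is $\mathbb{P}^1\times\mathbb{P}^1$ blown up at the point $(\infty,\infty)$. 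Here $\alpha$ is the sum of the three boundary divisors $D_\emptyset, D_{\{1\}}, D_{\{2\}}$, while $q^*(y_1+y_2) = D_\emptyset + D_{\{1\}} + D_{\{2\}} + 2D_\emptyset$ if the exceptional divisor is $D_\emptyset$. So in general
\[
\alpha = q^*(\alpha\otimes 1 + 1\otimes y_i) - \sum_{\text{exc}} c_E E
\]
with positive coefficients $c_E$. This is where I expect the main obstacle.

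\textbf{Step 2.} To avoid this obstacle, we use the structure of the blow-up itself. The centers of $q$ are codimension-$2$ strata. For a blow-up $b$ along a smooth codimension-$2$ center with exceptional divisor $E$, we have $Rb_*\mathcal{O}(-E) = \mathcal{I}_Z$, and in general $Rb_*\mathcal{O}(-mE)$ is well behaved for $m = 1$. If each coefficient is $1$, we would obtain
\[
Rq_*\mathcal{O}(-\alpha + \textstyle\sum_{j\ne i} y_j) = \mathcal{O}(-\alpha+\textstyle\sum y_j)\boxtimes \mathcal{O}(-1)\otimes \mathcal{I},
\]
which is not obviously acyclic.

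\textbf{Step 3 (preferred route).} A cleaner path is to push forward along $\pi_i \colon W_L \to \mathbb{P}^1$, or better along $\pi_S$ directly, as follows:
\[
R\pi_{S*}\mathcal{O}(-\alpha + \textstyle\sum_{j\ne i} y_j) = \mathcal{O}(\textstyle\sum_{j\ne i} y_j)\otimes R\pi_{S*}\mathcal{O}(-\alpha).
\]
It therefore suffices to show $R\pi_{S*}\mathcal{O}_{W_L}(-\alpha) = 0$. By cohomology and base change, it is enough that $\mathcal{O}(-\alpha)$ restricted to each fiber of $\pi_S$ has no cohomology. The generic fiber is $\mathbb{P}^1$, and $\alpha$ has degree $1$ on it, since $\alpha\cdot(\text{fiber}) = 1$ in the $n=1$ case; so $\mathcal{O}(-1)$ there is acyclic.

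Special fibers are trees of $\mathbb{P}^1$'s. I would instead argue by flat base change and the Boolean computation: $W_L$ is the closure of $L$ in $W_{\mathbb{C}^n}$. Alternatively, we can use the Leray/$K$-theoretic argument that $R\pi_{S*}\mathcal{O}(-\alpha)$ is a complex whose rank and degree vanish fiberwise. The main work is controlling the special fibers; I would verify that on every fiber component, $\alpha$ has degree $\ge 0$ with total degree $1$ on a connected tree of rational curves, which gives $H^0 = H^1 = 0$ for $\mathcal{O}(-\alpha)$ restricted to the fiber. Concluding then via the Leray spectral sequence, the pushforward vanishes, so all $H^k$ vanish.
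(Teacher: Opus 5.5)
Your reduction in Step 3 is sound. With $S=E\setminus i$, Lemma~\ref{lem:pullback}(1), the projection formula and the Leray spectral sequence reduce everything to showing $R\pi_{S*}\mathcal{O}(-\alpha)=0$, and that statement is true. The gap is that you never prove it.

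Cohomology and base change turns fibrewise acyclicity into vanishing of $R\pi_{S*}$ only if $\mathcal{O}(-\alpha)$ is flat over $W_{L\setminus i}$, i.e.\ only if $\pi_S$ is flat. You do not establish flatness. Nor do you show that every fibre is a reduced connected curve of arithmetic genus $0$, which your degree count (nonnegative on components, total degree $1$) needs in order to give $H^0=H^1=0$. You assert these facts and explicitly defer them as ``the main work.'' The appeal to flat base change from the Boolean case is not carried out, and the alternative (vanishing rank and degree fibrewise) cannot show that a complex is zero.

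Steps 1--2 also contain slips. In your $\mathbb{C}^2$ example, $q^*$ of the boundary is $D_{\{1\}}+D_{\{2\}}+2D_\emptyset$, so the exceptional coefficient is $1$. And since $-\alpha=-q^*(\cdots)+\sum c_E E$, the exceptional twist is positive; for a single codimension-$2$ blow-up $Rb_*\mathcal{O}(E)=\mathcal{O}$, not $\mathcal{I}_Z$.

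The fibre analysis can be bypassed. Since $i$ is a coloop, the flats of $L$ are $F'$ and $F'\cup i$ for flats $F'$ of $L\setminus i$. The rule $\pi_S^*[D_{F'}]=[D_{F'}]+[D_{F'\cup i}]$ (as in the proof of Lemma~\ref{lem:hierarchical-pullback-and-restriction}; for a coloop it can be checked on the stellahedral fan) then gives $\alpha=\pi_S^*\alpha_{L\setminus i}+[D_{E\setminus i}]$. Moreover $L_{E\setminus i}=L\cap\mathbb{C}e_i$ is a line, so $D_{E\setminus i}\cong W_{L\setminus i}$ is a section of $\pi_S$. Hence $R\pi_{S*}\mathcal{O}(-\alpha)=\mathcal{O}(-\alpha_{L\setminus i})\otimes R\pi_{S*}\mathcal{O}(-D_{E\setminus i})$. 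Pushing forward $0\to\mathcal{O}(-D_{E\setminus i})\to\mathcal{O}_{W_L}\to\mathcal{O}_{D_{E\setminus i}}\to 0$ shows the last factor vanishes: $R\pi_{S*}\mathcal{O}_{W_L}=\mathcal{O}_{W_{L\setminus i}}$ by Theorem~\ref{thm:pushforward} with $j=0$, and the induced map to $\pi_{S*}\mathcal{O}_{D_{E\setminus i}}=\mathcal{O}_{W_{L\setminus i}}$ is the identity.

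The paper argues differently, using the divisor $W_{L/i}$ of class $y_i$. In $0\to\mathcal{O}(-\alpha+\sum_{j\neq i}y_j)\to\mathcal{O}(1,\dotsc,1)(-\alpha)\to\mathcal{O}_{W_{L/i}}(1,\dotsc,1)(-\alpha)\to 0$, the last two terms have no higher cohomology by Corollary~\ref{cor:line}. The restriction $C_{L,-1}\to C_{L/i,-1}$ on sections is an isomorphism by Theorem~\ref{thm:powerideal} and Lemma~\ref{lem:negkcoloop}. That route rests on the $k=-1$ vanishing theorem and inverse-system combinatorics; the repaired version of yours needs only $R\pi_{S*}\mathcal{O}=\mathcal{O}$ and the pullback rule.
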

\begin{proof}
    Consider the following short exact sequence of sheaves on $W_L$.
    \[
        0 \to \OO_{W_L}(-\alpha + \sum_{j \not= i} y_j) \to \OO_{W_L}(1, \ldots, 
        1)(-\alpha) \to \OO_{W_{L/i}}(1, \ldots, 1)(-\alpha) \to 0
    \]
    The middle and right sheaves have vanishing higher cohomology 
    by \Cref{cor:line}, and the map on sections between the middle and 
    right line bundles is an isomorphism by 
    \Cref{thm:powerideal,lem:negkcoloop}, so the lemma follows from the 
    long exact sequence in cohomology.
\end{proof}

\subsection{Hierarchical zonotopal algebras}\label{ss:hierarchical}

\begin{definition}[{\cite[Definition 
    3.1]{LenzHierarchical}}]\label{hierarchical-definition}
    Suppose that $\mathcal{J}$ is an order filter in the lattice of flats of $L$ and $k$ is an integer. The hierarchical zonotopal ideal associated to this data is\footnote{Our 
        $I_{L,\mathcal{J},k}$ 
    is equal to $I_{L,\mathcal{J},k+1}$ in 
\cite{LenzHierarchical}.}
\[
    I_{L,\mathcal{J},k} := ( v^{\rho(v) + k + \chi_{\mathcal{J}}(v)+1} : v \in L 
    \setminus \{0\} ) \subseteq \Sym L,\quad \text{where } \chi_{\mathcal{J}}(v) := 
    \begin{cases}
        1, \quad v \in L_F \text{ for some $F \in \mathcal{J}$}\\
        0, \quad \text{otherwise.}
    \end{cases}
\]
\end{definition}

Write $C_{L,\mathcal{J},k} \subseteq \Sym L^*$ for the associated inverse system, 
which is called a hierarchical zonotopal space. As before, given an order filter $\mathcal{J}$ of flats of $L$, let $D_{\mathcal{J}}$ be the divisor 
$\sum_{F \in \mathcal{J} \setminus \{E\}}D_F$. 

\begin{proposition}\label{prop:hierarchical}
    Let $\mathcal{J}$ and $k$ be as in \Cref{hierarchical-definition}. Then the following subspaces of $\Sym L^*$ coincide:
    \[
        C_{L,\mathcal{J},k} = H^0(W_L, \mathcal{O}_{W_L}(1, \ldots, 
        1)(k\alpha)(D_{\mathcal{J}})).
    \]
\end{proposition}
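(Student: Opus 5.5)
Both sides are power-ideal data, so the plan is to apply \Cref{thm:powerideal} directly. First I write the divisor $\mathcal{O}(1, \ldots, 1)(k\alpha)(D_{\mathcal{J}})$ in the basis $\{D_F\}$. From $y_i = \sum_{F \not\ni i} [D_F]$ and $\alpha = \sum_{F \neq E}[D_F]$ we get
\[
c_1\bigl(\mathcal{O}(1, \ldots, 1)(k\alpha)(D_{\mathcal{J}})\bigr) = \sum_{F \neq E} \bigl(n - |F| + k + \chi_{\mathcal{J}}(F)\bigr)[D_F],
\]
where $\chi_{\mathcal{J}}(F) = 1$ if $F \in \mathcal{J}$ and $0$ otherwise. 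Set $a_F = n - |F| + k + \chi_{\mathcal{J}}(F)$. By \Cref{thm:powerideal}, $H^0(W_L, \mathcal{O}(\sum_F a_F D_F))$ is the inverse system of
\[
I' = (v^{a_F+1} : v \in L_F \setminus \{0\},\ F \text{ proper flat}),
\]
with the convention that $I'$ is the unit ideal if some $a_F < 0$. It therefore suffices to show that $I' = I_{L,\mathcal{J},k}$.

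For nonzero $v \in L$, let $F(v)$ be the smallest flat with $v \in L_{F(v)}$. This is the set of hyperplanes containing $v$, which is a proper flat since $v \neq 0$. Then $\rho(v) = n - |F(v)|$.

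Because $\mathcal{J}$ is an order filter and $L_G \subseteq L_F$ exactly when $G \supseteq F$, we have $v \in L_G$ for some $G \in \mathcal{J}$ if and only if $F(v) \in \mathcal{J}$. Hence $\chi_{\mathcal{J}}(v) = \chi_{\mathcal{J}}(F(v))$, and the generator of $I_{L,\mathcal{J},k}$ attached to $v$ is exactly $v^{a_{F(v)}+1}$.

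\emph{Inclusion $I_{L,\mathcal{J},k} \subseteq I'$.} The generator $v^{a_{F(v)}+1}$ is one of the generators of $I'$, since $v \in L_{F(v)}$.

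\emph{Inclusion $I' \subseteq I_{L,\mathcal{J},k}$.} Take $v \in L_F \setminus \{0\}$. Then $F(v) \supseteq F$, so $|F(v)| \ge |F|$, and $\chi_{\mathcal{J}}(F(v)) \ge \chi_{\mathcal{J}}(F)$ because $\mathcal{J}$ is upward closed. These pull $a_{F(v)}$ in opposite directions, so monotonicity of $F \mapsto a_F$ fails in general. The containment $v^{a_F+1} \in I_{L,\mathcal{J},k}$ is therefore not a direct term-by-term comparison, and this is the step I expect to be the main obstacle.

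There is one problematic configuration: $F \notin \mathcal{J}$, $F(v) \in \mathcal{J}$, and $|F(v)| = |F|$, which happens when $F(v) \setminus F$ consists of loops or parallel elements. In that case $a_{F(v)} = a_F + 1$, so the generator of $I_{L,\mathcal{J},k}$ at $v$ is one power too high to give $v^{a_F+1}$ directly.

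To handle this, I would not match generators vector by vector. Instead I would use the intrinsic description of inverse systems of power ideals, \cite[Proposition 2.6]{ArdilaPostnikov}. A polynomial $f$ lies in the inverse system of $(v^{m(v)})$ if and only if $\deg_v f \le m(v) - 1$ for every $v$. Combined with \Cref{thm:direction}, this reduces the question to comparing degree conditions along lines. For $v \in L_F$ generic, we have $F(v) = F$ because a generic point of $L_F$ lies in no smaller flat, so the conditions at generic points of each $L_F$ are identical on both sides. Since $\deg_v$ is upper semicontinuous along $L_F$ and attains its maximum at generic points, the condition at special $v \in L_F$ is implied by the generic one.

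This yields equality of the two inverse systems, and hence $C_{L,\mathcal{J},k} = H^0(W_L, \mathcal{O}_{W_L}(1, \ldots, 1)(k\alpha)(D_{\mathcal{J}}))$. The remaining checks are:
\begin{itemize}
\item the degenerate case where some $a_F < 0$: both sides vanish, since such an $F$ contains a vector $v$ with $F(v) = F$, whose generator $v^{a_F+1}$ is $v^{\le 0}$ and thus a unit;
\item loops: they contribute to $|F|$ for every flat and so only shift all $a_F$ uniformly, which is harmless.
\end{itemize}
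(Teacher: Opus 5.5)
Your argument is correct and follows the paper's route. You compute $c_1=\sum_{F\neq E}(n-|F|+k+\chi_{\mathcal{J}}(F))[D_F]$, observe that $\chi_{\mathcal{J}}(v)=\chi_{\mathcal{J}}(F(v))$, and reduce to Theorem~\ref{thm:powerideal}.

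However, the obstacle you identify is not real. The paper's one-line proof rests on exactly the direct comparison you believed fails.

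\begin{itemize}
\item Take $v\in L_F\setminus\{0\}$ with $F(v)\neq F$. Then $F\subsetneq F(v)$ as subsets of $E$, so $|F(v)|\ge |F|+1$, while $\chi_{\mathcal{J}}$ can increase by at most $1$.
\item Hence $a_{F(v)}\le a_F$ in all cases, i.e.\ $F\mapsto a_F$ is weakly decreasing under inclusion.
\item So $v^{a_F+1}$ is a multiple of $v^{a_{F(v)}+1}$, or else $I_{L,\mathcal{J},k}$ is already the unit ideal, and $I'\subseteq I_{L,\mathcal{J},k}$ is immediate.
\end{itemize}

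Your ``problematic configuration,'' with $F\subsetneq F(v)$ and $|F(v)|=|F|$, cannot occur. Moreover, because $F$ is a flat, no element of $F(v)\setminus F$ is a loop or parallel to an element of $F$. Relatedly, loops change neither $n-|F|$ nor $\rho(v)$, so they do not shift the $a_F$ at all.

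Your workaround through \cite[Proposition 2.6]{ArdilaPostnikov} and Theorem~\ref{thm:direction} is still valid, and it does not need monotonicity. A generic $v\in L_F$ has $F(v)=F$. The set $\{v:\deg_v f\le a\}=\{v: D_v^{a+1}f=0\}$ is Zariski closed, so the generic value of $\deg_v f$ on $L_{F(v)}$ bounds $\deg_v f$ at every point. This is lower, not upper, semicontinuity in the Zariski topology, but you draw the right conclusion from it.

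The two routes differ only in the last step. The paper (tacitly) matches generators using the monotonicity of $a_F$. You match inverse systems via directional degrees. Your version would also work for an arbitrary, non-monotone assignment of exponents, but that generality is unnecessary here.
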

\begin{proof}
The ideal $I_{L,\mathcal{J},k}$ is of the form $(v^{a_F 
+ 1} : v \in L_F \setminus \{0\},\, F \text{ proper flat} )$, where 
$a_F = \rho(v) + k + \chi_{\mathcal{J}}(v)$, so the result follows from 
\Cref{thm:powerideal}.
\end{proof}

We now state the deletion-contraction short exact sequence for 
hierarchical zonotopal spaces that appears in \cite[Proposition 
3.17]{LenzHierarchical}, and we prove that it is exact using our cohomology vanishing results. Recall that a coloop of a flat $F \subseteq E$ is an 
element $i \in F$ such that $F \setminus i$ is also a flat.

\begin{definition}[{\cite[Definition 3.13]{LenzHierarchical}}]
    Let $\mathcal{J}$ be any collection of flats of $L$, and let 
    $i \in E$. Set
    \begin{align*}
        \mathcal{J}\setminus i &:= \{F\setminus i : F \in 
        \mathcal{J},\ \text{$i$ is not a coloop of $F$}\},   \\
            \mathcal{J}/i &:= \{F\setminus i 
            :  F \in \mathcal{J}, \, i \in F\}.
    \end{align*}
    We view $\mathcal{J}\setminus i$ as a collection of flats of 
    $L\setminus i$, and we view $\mathcal{J}/i$ as a collection of flats of 
    $L/i$.
\end{definition}

\begin{lemma}\label{coloop-flats}
Let $F$ be a flat which contains a coloop $i \in F$, and let $\mathcal{J}$ be a collection of flats which contains $\{G : G \ge F\}$ and does not contain any flat $H$ with $H < F$. Then the 
    following inclusion of sheaves induces isomorphisms on cohomology groups.
    \[
        \mathcal{O}_{W_L}(\sum_{j \in E\setminus i} y_j)(k\alpha)(D_{\mathcal{J}}) 
        \subseteq \mathcal{O}_{W_L}(\sum_{j \in E\setminus i} 
        y_j)(k\alpha)(D_{\mathcal{J}} + D_F).
    \]
\end{lemma}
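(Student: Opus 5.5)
The plan is to use the short exact sequence of the Cartier divisor $D_F$, twisted by $\mathcal{O}_{W_L}(\sum_{j \in E\setminus i} y_j)(k\alpha)(D_{\mathcal{J}} + D_F)$:
$$0 \to \mathcal{O}_{W_L}(\textstyle\sum_{j \ne i} y_j)(k\alpha)(D_{\mathcal{J}}) \to \mathcal{O}_{W_L}(\textstyle\sum_{j \ne i} y_j)(k\alpha)(D_{\mathcal{J}}+D_F) \to \mathcal{O}_{D_F}(\textstyle\sum_{j \ne i} y_j)(k\alpha)(D_{\mathcal{J}}+D_F) \to 0.$$
By the long exact sequence in cohomology, it suffices to show that the right-hand sheaf has \emph{no} cohomology in any degree. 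To see this, I will identify $D_F$ with $W_{L^F} \times \underline{W}_{L_F}$ and compute the restricted line bundle with Lemma~\ref{lem:restrictToBoundary}, expecting it to be an exterior product $A \boxtimes B$. The Künneth formula then lets me prove the vanishing on the factor $W_{L^F}$ alone, so the second factor $B$ never needs to be controlled.

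First I compute the first factor $A$. By parts (1) and (2) of Lemma~\ref{lem:restrictToBoundary}, $\sum_{j \ne i} y_j$ restricts to $\sum_{j \in F\setminus i} y_j \otimes 1$, and by part (3), $k\alpha$ restricts to $1 \otimes k\underline{\alpha}$. For $D_{\mathcal{J}}$, the hypotheses on $\mathcal{J}$ are exactly what is needed. Every flat of $\mathcal{J}$ is either $\ge F$ or incomparable with $F$, since no $H < F$ lies in $\mathcal{J}$. The incomparable ones restrict to $0$ by part (4), and the sum over $G \ge F$ restricts to $-\alpha \otimes 1 + 1 \otimes \underline{\alpha}$ by part (5). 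The extra copy of $D_F$ is handled by writing $D_F = \sum_{G \ge F} D_G - \sum_{G > F} D_G$. The divisors $D_G$ with $G > F$ meet $D_F$ in $W_{L^F} \times (\text{boundary of } \underline{W}_{L_F})$, so they contribute only to the second factor.

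Second, I determine the resulting line bundle on $W_{L^F}$. It has the form $\mathcal{O}(\sum_{j \in F \setminus i} y_j - m\alpha)$ with $m \ge 1$. Because $i$ is a coloop of the flat $F$, the set $F\setminus i$ is a flat, so $i$ is a coloop of the arrangement $L^F$ (the restriction to $F$). When $m=1$, Lemma~\ref{lem:coloop-vanishing} applied to $L^F$ gives that all cohomology of $A$ vanishes. Künneth then kills every cohomology group of $A \boxtimes B$, and the long exact sequence yields the claimed isomorphisms.

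The main obstacle is the coefficient of $\alpha$ in the first factor. Summing $D_{\mathcal{J}}$ and the extra $D_F$ naively gives $-2\alpha \otimes 1$, whereas Lemma~\ref{lem:coloop-vanishing} only covers $-\alpha$. I will first recheck carefully, via \cite[Proposition 2.20]{BHMPW20a}, exactly how $D_F$ and the $D_G$ for $G>F$ restrict, to confirm that the first factor is really $-\alpha$. If it turns out to be $-m\alpha$ with $m \ge 2$, my fallback is to prove the needed generalization of Lemma~\ref{lem:coloop-vanishing}: all cohomology of $\mathcal{O}(\sum_{j\ne i} y_j - m\alpha)$ vanishes on an arrangement with coloop $i$, for all $m \ge 1$. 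I would prove this by the same restriction sequence to $W_{L/i}$, inducting on $m$, or by pushing forward along $W_{L} \to W_{L\setminus i}\times\mathbb{P}^1$ and using that the bundle has degree $-m<0$ on the $\mathbb{P}^1$-fibers.
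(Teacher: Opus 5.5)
\textbf{There is a genuine gap: neither branch of your plan closes, because under the stated hypothesis the coefficient really is $-2\alpha$ and the needed vanishing then fails.} Your outline is the paper's proof: the sequence for $D_F$, Lemma~\ref{lem:restrictToBoundary}, K\"unneth, and Lemma~\ref{lem:coloop-vanishing} on $W_{L^F}$. Your worry is well founded. If $F\in\mathcal{J}$, as ``$\mathcal{J}\supseteq\{G : G\ge F\}$'' literally says, then $D_F$ occurs twice in $D_{\mathcal{J}}+D_F$. Each occurrence contributes $-\alpha\otimes 1$, since the $D_G$ with $G>F$ restrict to classes of the form $1\otimes(\cdot)$. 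So $m=2$.

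Rechecking will only confirm $m=2$, and your fallback is false. Take the Boolean $L=\mathbb{C}^2$ with $i=2$. Then $W_L=\operatorname{Bl}_{(\infty,\infty)}(\mathbb{P}^1\times\mathbb{P}^1)$ with exceptional curve $C$ and blow-down $p$, and $\alpha=y_1+y_2-C$, so $y_1-2\alpha=-y_1-2y_2+2C$. Consider $0\to\mathcal{O}(-y_1-2y_2+C)\to\mathcal{O}(y_1-2\alpha)\to\mathcal{O}_C(-2)\to 0$. Its left term has no cohomology, because $Rp_*\mathcal{O}(C)=\mathcal{O}$ and $\mathcal{O}(-1,-2)$ on $\mathbb{P}^1\times\mathbb{P}^1$ is acyclic. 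Hence $H^1(W_L,\mathcal{O}(y_1-2\alpha))\cong\mathbb{C}$.

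Your fibre-degree argument fails because $\alpha$ is not pulled back from $W_{L\setminus i}\times\mathbb{P}^1$. It differs from a pullback by exceptional divisors, and $R^1p_*\mathcal{O}(2C)\neq 0$.

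Consequently the statement as literally written is false. Take $L=\mathbb{C}^3$, $F=\{1,2\}$, $i=2$, and $\mathcal{J}=\{F,\{1,2,3\}\}$. Then $D_F\cong W_{\mathbb{C}^2}\times\mathrm{pt}$. Under restriction, $y_3\mapsto 0$, $\alpha\mapsto 0$, and $D_F\mapsto-\alpha$. So the cokernel of the inclusion is $\mathcal{O}_{W_{\mathbb{C}^2}}(y_1-2\alpha)$, and the long exact sequence shows the maps on cohomology cannot all be isomorphisms.

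The missing step is to correct the hypothesis. $\mathcal{J}$ should contain every $G>F$ but neither $F$ nor any $H<F$, so that $D_{\mathcal{J}}+D_F=D_{\mathcal{J}\cup\{F\}}$ contains $D_F$ exactly once. This is how the lemma is actually used in Lemma~\ref{lem:hierarchical-pullback-and-restriction}. There the removed flats are added back in decreasing order of rank. Any $H<F$ in the original filter must contain $i$ as a coloop with $H\setminus i\notin\mathcal{J}$, so $H$ is itself a removed flat not yet restored.

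Under this hypothesis, part (5) of Lemma~\ref{lem:restrictToBoundary} contributes $-\alpha\otimes 1$ once and $m=1$. Lemma~\ref{lem:coloop-vanishing} then applies to $L^F$, where $i$ is a coloop as you note. Your argument is then complete and coincides with the paper's, whose formula $c_1(\mathcal{L}|_{D_F})=(-\alpha+\sum_{j\in F\setminus i}y_j)\otimes 1+1\otimes z$ is valid only under this corrected reading.
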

\begin{proof}
    Set $\mathcal{L} = \mathcal{O}_{W_L}(\sum_{j \in E \setminus i} 
y_j)(k\alpha)(D_{\mathcal{J}}+D_F)$ and consider the short exact sequence
    \[
        0 \to \mathcal{L}(-D_F) \to \mathcal{L} \to \mathcal{L} 
        \vert_{D_F} \to 0. 
    \]
    It suffices to show that $\mathcal{L} \vert_{D_F}$ has all 
    cohomology groups equal to zero. 
    By \Cref{lem:restrictToBoundary} and \Cref{lem:restrictdiv}, the first Chern class $c_1(\mathcal{L} \vert_{D_F}) \in 
    H^2(W_{L^F} \times \underline{W}_{L_F})$ is of the form 
    \[
        c_1(\mathcal{L} \vert_{D_F}) = (-\alpha + \sum_{j \in F 
                \setminus i} y_j) \otimes 1 + 1 \otimes z,\quad z \in 
        H^2(\underline{W}_{L_F}).
    \]
    By the K\"unneth formula, the sections of $\mathcal{L} \vert_{D_F}$ have 
    $H^k(W_{L^F}, \mathcal{O}(-\alpha + \sum_{j \in F \setminus i} 
    y_j))$ as a tensor 
factor for some $k$, which is zero by \Cref{lem:coloop-vanishing} since $i$ is a 
coloop of $F$.
\end{proof}

\begin{lemma}\label{lem:hierarchical-pullback-and-restriction}
    Suppose that $\mathcal{J}$ is an order filter of flats of $L$, 
    and that $i \in E$ is neither a loop nor a coloop of $L$. Then the following hold:
\begin{itemize}
\item The pullback of $\mathcal{O}_{W_{L\setminus i}}(1, \ldots, 1)(k\alpha)(D_{\mathcal{J}\setminus i})$ along $\pi_{E\setminus i}$ is a subsheaf of $\mathcal{O}_{W_L}(1, \ldots, 0, \ldots, 1)(k\alpha)(D_\mathcal{J})$, where the zero is in the $i$th spot. 
    \item Moreover, the above inclusion induces isomorphisms on all 
        cohomology groups.
    \item The restriction of $\mathcal{O}_{W_L}(1, \ldots, 
        1)(k\alpha)(D_{\mathcal{J}})$ to $W_{L/i}$ is isomorphic to 
        $\mathcal{O}_{W_{L/i}}(1, \ldots, 
        1)(k\alpha)(D_{\mathcal{J}/i}).$
    \end{itemize}
\end{lemma}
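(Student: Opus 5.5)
All three parts will be handled at the level of divisors, writing line bundles as combinations of the boundary divisors $D_F$ via $y_t = \sum_{F \not\ni t} [D_F]$ and $\alpha = \sum_{F \neq E} [D_F]$.

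\emph{First bullet.} I first compute the pullback $\pi_{E\setminus i}^* D_G$ of a boundary divisor of $W_{L\setminus i}$. Since $i$ is not a coloop, $\pi_{E\setminus i}$ is an isomorphism over $L \setminus i$ (the proof of Theorem~\ref{thm:pushforward}). Hence $\pi^* D_G$ is supported on the boundary, and by a valuation computation it should equal $D_{G'}$ plus possibly $D_{G' \cup i}$. Here the flats $F$ of $L$ with $F \setminus i = G$ are either $G$ alone, or the pair $G$ and $G\cup i$ (when $i$ is a coloop of $G \cup i$), or $G \cup i$ alone. Concretely, I expect $\pi^*D_G = \sum_{F : F\setminus i = G} D_F$; this should also be consistent with Lemma~\ref{lem:pullback}. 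The $y_t$ for $t \neq i$ and $\alpha$ pull back to themselves by Lemma~\ref{lem:pullback}. So $\pi^*(D_{\mathcal{J}\setminus i})$ is the sum of $D_F$ over the flats $F$ with $F\setminus i \in \mathcal{J}\setminus i$. I then check that every such $F$ lies in $\mathcal{J}$, which gives the subsheaf claim. If $F\setminus i = H\setminus i$ with $H\in\mathcal{J}$ and $i$ not a coloop of $H$, then $H = \operatorname{cl}(H\setminus i)\subseteq F$, so $F\in\mathcal{J}$ because $\mathcal{J}$ is an order filter.

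\emph{Second bullet.} The difference between the two divisors is a sum of $D_F$ with $F \in \mathcal{J}$ whose reduction $F\setminus i$ is not in $\mathcal{J}\setminus i$. These are exactly the flats $F\in\mathcal{J}$ such that $i$ is a coloop of $F$ and $F\setminus i \notin \mathcal{J}$. Indeed, if $i\notin F$ or $i$ is not a coloop of $F$, then $F$ itself witnesses $F\setminus i\in \mathcal{J}\setminus i$.

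I add these $D_F$ one at a time, ordered so that smaller flats come first. At each step I apply Lemma~\ref{coloop-flats} with the current collection. The hypotheses to verify are that the current collection contains every flat $G \ge F$, and contains no flat $H < F$. Lemma~\ref{coloop-flats} then says each inclusion induces isomorphisms on all cohomology. The line bundle in that lemma is $\mathcal{O}(\sum_{j\ne i}y_j)(k\alpha)(\cdot)$, which matches $\mathcal{O}(1,\dots,0,\dots,1)(k\alpha)$.

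The main obstacle is choosing the intermediate collections so that the hypotheses of Lemma~\ref{coloop-flats} hold at every step. I would start from the collection $\mathcal{J}_0 = \{F : F\setminus i \in \mathcal{J}\setminus i\}$, which is an order filter, and add the remaining coloop flats in increasing order. I then check that no $H<F$ is present: if such an $H$ were in the current collection, then either $H \in \mathcal{J}_0$, so $H\setminus i \in \mathcal{J}\setminus i$ and hence $F\setminus i$ would be too, or $H$ was added earlier, and then $H \subset F\setminus i$ forces $F \setminus i \in \mathcal{J}$, a contradiction.

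\emph{Third bullet.} This follows from Lemma~\ref{lem:restrictdiv} for the $y_j$ and $\alpha$ (with $y_i$ restricting to $0$), together with the restriction of $D_F$ to $W_{L/i}$. That restriction is $D_{F\setminus i}$ when $i\in F$ and is empty when $i \notin F$, since $W_{L/i}$ meets only the boundary divisors $D_F$ with $i \in F$ (see \cite[Proposition 2.28]{BHMPW20a}). Summing over $F \in \mathcal{J}$ gives $D_{\mathcal{J}/i}$.
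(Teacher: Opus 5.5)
Your first and third bullets follow the paper's argument: the pullback formula $\pi_{E\setminus i}^*[D_G]=[D_G]+[D_{G\cup i}]$ from \cite[Section 3]{BHMPW20a}, and restriction of $y_j$, $\alpha$ and $D_F$ to $W_{L/i}$. You also correctly identify the extra flats as those $F\in\mathcal{J}$ having $i$ as a coloop with $F\setminus i\notin\mathcal{J}$.

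\textbf{The gap: your ordering in the second bullet is backwards.} You add the extra flats in \emph{increasing} order, but the hypotheses of Lemma~\ref{coloop-flats} force the opposite order. When $F$ is added, the current collection must already contain every flat strictly above $F$ and no flat strictly below $F$. Comparable pairs $H<F$ do occur among the flats being added. In increasing order, $H$ is added first, so:
\begin{enumerate}
\item when $H$ is added, the collection misses $F>H$;
\item when $F$ is added, the collection contains $H<F$.
\end{enumerate}
Your check of the second condition fails exactly here. Every added $H$ has $i$ as a coloop, so $i\in H$ and $H\not\subseteq F\setminus i$; the claimed contradiction never arises. You also never check the first condition, which fails in increasing order.

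\emph{A concrete example.} Take four generic hyperplanes in $\mathbb{C}^3$, so $E=\{1,2,3,4\}$ and every pair is a flat. Take $i=1$ and let $\mathcal{J}$ be the flats containing $1$. Then $\mathcal{J}_0=\{E\}$, and the flats to add are $\{1\},\{1,2\},\{1,3\},\{1,4\}$. Adding $\{1\}$ first violates the first hypothesis, and then adding $\{1,2\}$ violates the second. The second violation is not cosmetic: $D_{\{1\}}$ restricts to a nonzero class on the $W_{L^F}$ factor of $D_{\{1,2\}}$. So the first-factor bundle is no longer $\mathcal{O}(-\alpha+\sum_{j\in F\setminus i}y_j)$, and Lemma~\ref{lem:coloop-vanishing} no longer applies directly.

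\textbf{The fix.} Reverse the order, as the paper does, adding the flats in decreasing order of rank. Then both hypotheses hold:
\begin{enumerate}
\item If $G>F$, then $G\in\mathcal{J}$ because $\mathcal{J}$ is an order filter. So either $G\in\mathcal{J}_0$, or $G$ is an extra flat of larger rank and has already been added.
\item If some $H<F$ were present, it cannot be an already-added flat, since those have rank at least $\operatorname{rk}(F)$. Your argument correctly excludes $H\in\mathcal{J}_0$: $\mathcal{J}\setminus i$ is upward closed, and $F\setminus i\in\mathcal{J}\setminus i$ would force $F\setminus i\in\mathcal{J}$ because $F\setminus i$ is a flat.
\end{enumerate}
With this change your proof goes through.
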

\begin{proof}
    We begin by proving the first statement. 
    By \cite[Section 3]{BHMPW20a}, for any flat $F$ of $L \setminus i$, we have $\pi_{E \setminus i}^* [D_F] = [D_F] + [D_{F \cup i}]$, where we interpret $[D_G]$ as $0$ if $G$ is not a flat of $L$. We see that 
    \[
        \pi_{E\setminus i}^*([D_{\mathcal{J}\setminus i}]) = [D_{\mathcal{J}}] - 
        \sum [D_F],
    \]
    where the sum is over flats $F \in \mathcal{J}$ which have $i$ as a 
    coloop and where $F\setminus i \not\in \mathcal{J}$. Adding 
    $y_1 + \ldots + y_n + k\alpha - y_i$ to both sides of the displayed 
    equation, we see that the pullback of $\mathcal{O}_{W_{L/i}}(1, 
    \ldots, 1)(D_{\mathcal{J}\setminus i})$ plus an effective divisor 
    class is equal to $\mathcal{O}_{W_L}(1, \ldots, 
    0,\ldots,1)(k\alpha)(D_{\mathcal{J}})$, proving the first point. The 
    second point now follows from repeated application of 
    \Cref{coloop-flats}, adding the remaining flats in decreasing order of rank. The third point follows directly from 
    \Cref{lem:restrictdiv} and \Cref{lem:restrictToBoundary}.
\end{proof}

We can now give a cohomological proof of the key proposition in 
\cite{LenzHierarchical}. 

\begin{proposition}[{\cite[Proposition 3.17]{LenzHierarchical}}]
    Suppose that $i \in E$ is neither a loop nor a coloop for $L$, 
    $\mathcal{J}$ is an order filter of flats, and $k \geq 
    -2$. If $k = -2$, assume additionally that $\mathcal{J}$ contains all 
    corank one flats. Then there is a short exact sequence of graded vector spaces
    \[
    0 \to C_{L\setminus i,\mathcal{J}\setminus i,k} \to 
        C_{L,\mathcal{J},k} \to C_{L/i,\mathcal{J}/i,k} \to 0,
    \]
    where the first map is multiplication by a defining equation for 
    $H_i$, and the second map is restriction to $H_i$.
\end{proposition}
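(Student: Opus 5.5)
The plan is to mimic the proof of the right exactness of \eqref{eq:deletion}, replacing $\mathcal{O}(1, \dotsc, 1)(k\alpha)$ by $\mathcal{L} := \mathcal{O}_{W_L}(1, \dotsc, 1)(k\alpha)(D_{\mathcal{J}})$. Since $i$ is not a loop, $W_{L/i}$ is a Cartier divisor with class $y_i$. Tensoring \eqref{closed-subscheme} with $\mathcal{L}$ gives
$$0 \to \mathcal{O}_{W_L}(1, \dotsc, 0, \dotsc, 1)(k\alpha)(D_{\mathcal{J}}) \to \mathcal{L} \to \mathcal{L}|_{W_{L/i}} \to 0,$$
with the zero in the $i$th spot. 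By the third point of Lemma~\ref{lem:hierarchical-pullback-and-restriction}, the right term is $\mathcal{O}_{W_{L/i}}(1, \dotsc, 1)(k\alpha)(D_{\mathcal{J}/i})$; note that $\mathcal{J}/i$ is again an order filter. By the first two points, the left term has the same cohomology as $\pi_{E\setminus i}^*\mathcal{O}_{W_{L\setminus i}}(1, \dotsc, 1)(k\alpha)(D_{\mathcal{J}\setminus i})$. By the projection formula and $R\pi_{E\setminus i*}\mathcal{O}_{W_L} = \mathcal{O}_{W_{L\setminus i}}$ (Theorem~\ref{thm:pushforward}), this is the cohomology of $\mathcal{O}_{W_{L\setminus i}}(1, \dotsc, 1)(k\alpha)(D_{\mathcal{J}\setminus i})$. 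Here I expect that $\mathcal{J}\setminus i$ is an order filter on the flats of $L \setminus i$, and I would check this quickly.

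Taking global sections and applying Proposition~\ref{prop:hierarchical} to $L$, $L\setminus i$ and $L/i$ gives the left exact sequence
$$0 \to C_{L\setminus i,\mathcal{J}\setminus i,k} \to C_{L,\mathcal{J},k} \to C_{L/i,\mathcal{J}/i,k} \to H^1(W_{L\setminus i}, \mathcal{O}(1,\dotsc,1)(k\alpha)(D_{\mathcal{J}\setminus i})).$$
I also need the maps to be the stated ones. Under the identification of sections with polynomials, the first map is multiplication by the defining section of $W_{L/i}$, which is the linear form cutting out $H_i$. The second map is restriction of polynomials to $H_i$.

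For right exactness, when $k \ge -1$ the final $H^1$ vanishes by Proposition~\ref{prop:hierarchicalvanishing} applied to $L\setminus i$ and the order filter $\mathcal{J}\setminus i$.

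The main obstacle is the case $k=-2$, where this $H^1$ need not vanish. I would use the hypothesis that $\mathcal{J}$ contains all corank one flats, which lets me apply Lemma~\ref{lem:minus-two-case}. Let $\mathcal{J}_0$ be the order filter generated by the corank one flats. Then $\mathcal{J}_0 \subseteq \mathcal{J}$, and $\mathcal{J} \setminus \mathcal{J}_0$ contains no corank one flats, so Lemma~\ref{lem:minus-two-case} reduces the problem to $\mathcal{J}_0$ on each of $L$, $L\setminus i$ and $L/i$. This requires checking that $\mathcal{J}\setminus i$ and $\mathcal{J}/i$ still contain all corank one flats of $L\setminus i$ and $L/i$ respectively:
\begin{itemize}
\item for $\mathcal{J}/i$, a corank one flat $G$ of $L/i$ gives the corank one flat $G \cup i$ of $L$, which lies in $\mathcal{J}$;
\item for $\mathcal{J}\setminus i$, a corank one flat of $L\setminus i$ is $F \setminus i$ for a corank one flat $F$ of $L$ in which $i$ is not a coloop.
\end{itemize}

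Next I would compare $D_{\mathcal{J}_0}$ with $\alpha$ by adding the remaining flats, which all have corank at least two. Since $D_{\{\text{all flats}\}} = \alpha$, Lemma~\ref{lem:minus-two-case} shows that the cohomology of $\mathcal{O}(1,\dotsc,1)(-2\alpha)(D_{\mathcal{J}_0})$ equals that of $\mathcal{O}(1,\dotsc,1)(-\alpha)$. The same holds on $L \setminus i$, so the final $H^1$ is isomorphic to $H^1(W_{L\setminus i}, \mathcal{O}(1,\dotsc,1)(-\alpha))$, which vanishes by Corollary~\ref{cor:line}. This gives right exactness for $k=-2$ as well.
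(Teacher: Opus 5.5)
Your proposal is correct and is essentially the paper's proof. You use the same sequence for $\mathcal{L}=\mathcal{O}(1,\dotsc,1)(k\alpha)(D_{\mathcal{J}})$ cut by $W_{L/i}$, identify its terms via Lemma~\ref{lem:hierarchical-pullback-and-restriction} and the projection formula, and kill the obstruction for $k\ge -1$ with Proposition~\ref{prop:hierarchicalvanishing}. For $k=-2$ the paper transports the whole statement to $k=-1$ via Lemma~\ref{lem:minus-two-case}, whereas you apply that lemma only to the obstruction group on $W_{L\setminus i}$, after correctly checking that $\mathcal{J}\setminus i$ contains all corank one flats of $L\setminus i$. This is the same idea and spares you from matching up the maps; your check for $\mathcal{J}/i$ is harmless but not needed.
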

\begin{proof}
    By \cref{lem:minus-two-case,rmk:minus-two-case}, the case where 
    $k=-2$ follows from the case where $k=-1$, so assume that $k \geq 
    -1$.
    Set $\mathcal{L} = \mathcal{O}_{W_L}(1, \ldots, 
    1)(k\alpha)(D_{\mathcal{J}})$, and consider the short exact 
    sequence
    \[
        0 \to \mathcal{L}(-W_{L/i}) \to \mathcal{L} \to \mathcal{L} 
        \vert_{W_{L/i}} \to 0.
    \]
    By \Cref{lem:hierarchical-pullback-and-restriction} and \Cref{prop:hierarchical}, the induced long exact sequence in cohomology reads
    \[
    0 \to C_{L\setminus i,\mathcal{J}\setminus i,k} \to 
    C_{L,\mathcal{J},k} \to C_{L/i,\mathcal{J}/i,k} \to 
    H^1(\mathcal{O}_{W_{L \setminus i}}(1, \ldots, 
    1)(k\alpha)(D_{\mathcal{J}\setminus i})) \to \ldots,
    \]
    and the cohomology group on the right vanishes by 
    \Cref{prop:hierarchicalvanishing}.
\end{proof}

\subsection{The truncation short exact sequence}

Fix a generic linear form $f \in L^*$ with zero locus $TL \subseteq L$. The 
matroid associated to the subspace $TL \subseteq \F^E$ is the truncation 
of the matroid associated to $L \subseteq \F^E$. By viewing $W_L$ as the 
closure of $L$ inside the toric variety of the stellahedron, we see 
that the closure of $TL$ in $W_L$ is identified with $W_{TL}$. 
Consider the short exact sequence associated to the Cartier divisor $W_{TL} \subseteq 
W_L$, tensored with $\mathcal{O}_{W_L}(1, \ldots, 1)(k\alpha)$. 
The restriction of $\mathcal{O}_{W_L}(1, \ldots, 1)(k\alpha)$ to $W_{TL}$ is 
$\mathcal{O}_{W_{TL}}(1, \ldots, 1)(k\alpha)$, so we have the short exact sequence

\begin{equation}\label{truncation-SES-sheaves}
\begin{tikzcd}[column sep = small]
0 \arrow[r] & {\mathcal{O}_{W_{L}}(1, \ldots,             
        1)(k\alpha)} \arrow[r] & 
        \mathcal{O}_{W_L}(1, \ldots, 1)((k+1)\alpha)
        \arrow[r]  & 
        \mathcal{O}_{W_{TL}}(1, \ldots, 1)((k+1)\alpha) \arrow[r] & 0.
\end{tikzcd}
\end{equation}

\begin{proposition}\label{negative-k-vanishing}
    We have $H^i(W_L, \mathcal{O}(1, \ldots, 1)(k\alpha)) = 0$ for all $i > 
    \max\{0,-k-1\}$.
\end{proposition}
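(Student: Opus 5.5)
We argue by descending induction on $k$, using the truncation sequence \eqref{truncation-SES-sheaves} together with induction on $r = \dim L$. For $k \ge -1$ the claim is Corollary~\ref{cor:line}, since then $\max\{0,-k-1\} = 0$. So fix $k \le -2$ and assume the statement for $k+1$, for all arrangements (in particular for $TL$, which has dimension $r-1$).

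Take the long exact sequence in cohomology of \eqref{truncation-SES-sheaves}. For each $i$ it contains the segment
\[
H^{i-1}(W_{TL}, \mathcal{O}(1, \dotsc, 1)((k+1)\alpha)) \to H^i(W_L, \mathcal{O}(1, \dotsc, 1)(k\alpha)) \to H^i(W_L, \mathcal{O}(1, \dotsc, 1)((k+1)\alpha)).
\]
Now let $i > -k-1$. Then $i > -(k+1)-1 = -k-2$, and $i > 0$ because $k \le -2$. The induction hypothesis for $k+1$ on $L$ therefore gives vanishing of the right-hand term.

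For the left-hand term, we have $i - 1 > -k-2 = \max\{0, -(k+1)-1\}$ as $k \le -2$. So the induction hypothesis for $k+1$ applied to $TL$ gives vanishing of the left-hand term. This uses the identification of the restriction of $\mathcal{O}(1, \dotsc, 1)((k+1)\alpha)$ to $W_{TL}$ with the corresponding line bundle on $W_{TL}$, already noted before \eqref{truncation-SES-sheaves}. Hence the middle term vanishes, completing the induction.

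The main point to check is that the induction is well-founded and that \eqref{truncation-SES-sheaves} applies to all arrangements arising in the induction. The induction is on $k$ with the statement quantified over all $L$; truncation lowers the dimension but the step only ever invokes the case $k+1$, so no separate base case in $r$ is needed. When $r = 1$, $TL = 0$ and $W_{TL}$ is a point, or the divisor is empty; there the statement is trivial anyway, since $W_L$ is a curve and only $H^1$ could appear, with $1 > -k-1$ failing only for $k \le -2$, which does not affect the argument. The short exact sequence itself requires only that $TL$ is cut out by a generic form, which is legitimate for any $L$ with $r \ge 1$, including arrangements with loops.
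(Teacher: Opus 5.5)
Your proof is correct and is the same as the paper's. Both use Corollary~\ref{cor:line} for the base case, then descending induction on $k$ via the long exact sequence of \eqref{truncation-SES-sheaves}, applying the $k+1$ case to both $W_L$ and $W_{TL}$, and your index bookkeeping ($i>-k-2$ for the $W_L$ term, $i-1>-k-2$ for the $W_{TL}$ term) is right; including $k=-1$ in the base case rather than deducing it from the sequence is the correct choice, since the sequence alone does not force $H^1(W_L,\mathcal{O}(1,\dotsc,1)(-\alpha))=0$.
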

\begin{proof}
    For $k \geq 0$, this is Corollary~\ref{cor:line}. The case where $k 
    < 0$ follows by induction on $k$ and the dimension of $L$, using the long exact 
    sequence in cohomology associated to (\ref{truncation-SES-sheaves}).
\end{proof}
Of particular interest is the case $k=-2$, where 
Proposition~\ref{negative-k-vanishing} says that 
$H^i(W_L, \mathcal{O}_{W_L}(1, \ldots, 1)(-2\alpha)) = 0$ for $i > 1$. As we 
shall see, the first cohomology need not vanish in this case. Let us 
now describe the sections of the terms in (\ref{truncation-SES-sheaves}). By 
Corollary~\ref{cor:seczonotope} we have the following diagram of 
left exact sequences, where all terms are considered as vector subspaces 
of $\Sym L^*$, the left arrow is multiplication by our generic linear form $f$, and the right 
arrow is restriction to $TL$:
\begin{equation}\label{truncation-diagram}
\begin{tikzcd}[column sep = tiny]
0 \arrow[r] & {H^0(W_L, \mathcal{O}(1, \ldots,             
        1)((k-1)\alpha))} \arrow[r] \arrow[d, "\cong"] & 
        {H^0(W_L, \mathcal{O}(1, \ldots, 1)(k\alpha))} 
        \arrow[r] \arrow[d, "\cong"] & 
        {H^0(W_{TL}, \mathcal{O}(1, \ldots, 1)(k\alpha))} 
        \arrow[d, "\cong"]\\
0 \arrow[r] & C_{L, k-1} \arrow[r] & C_{L, k} \arrow[r]  & C_{TL, k}.
\end{tikzcd}
\end{equation}

\begin{proposition}\label{truncation-corollary}
    For all $k \geq 0$, both of the horizontal arrows on the right of 
    (\ref{truncation-diagram}) are surjective.
\end{proposition}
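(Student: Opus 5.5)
The plan is to read off the surjectivity from the long exact sequence in cohomology attached to \eqref{truncation-SES-sheaves}, with $k$ replaced by $k-1$. Tensoring the sequence of the Cartier divisor $W_{TL}\subseteq W_L$ (whose class is $\alpha$) with $\mathcal{O}_{W_L}(1,\dotsc,1)((k-1)\alpha)$ gives
$$0 \to \mathcal{O}_{W_L}(1,\dotsc,1)((k-1)\alpha) \to \mathcal{O}_{W_L}(1,\dotsc,1)(k\alpha) \to \mathcal{O}_{W_{TL}}(1,\dotsc,1)(k\alpha) \to 0.$$
Taking global sections, we obtain exactly the top row of \eqref{truncation-diagram}, continued by the connecting map
$$H^0(W_{TL}, \mathcal{O}(1,\dotsc,1)(k\alpha)) \to H^1(W_L, \mathcal{O}(1,\dotsc,1)((k-1)\alpha)).$$

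The key step is that this $H^1$ vanishes when $k\ge 0$. Indeed, $k-1\ge -1$, so Corollary~\ref{cor:line} applied to $L$ with the integer $k-1$ gives $H^i(W_L,\mathcal{O}(1,\dotsc,1)((k-1)\alpha))=0$ for all $i>0$. Equivalently, Proposition~\ref{negative-k-vanishing} with $k-1$ in place of $k$ gives vanishing for $i>\max\{0,-k\}=0$. Hence the top right horizontal arrow of \eqref{truncation-diagram} is surjective.

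For the bottom row, note that the vertical arrows are the identifications of Corollary~\ref{cor:seczonotope}, applied to $L$ and to $TL$. These are equalities of subspaces of $\Sym L^*$ and $\Sym (TL)^*$. Under these identifications, restriction of sections to $W_{TL}$ is restriction of polynomials to $TL$, and the inclusion of sheaves is multiplication by $f$. So the diagram commutes, and surjectivity of the top right arrow transfers to $C_{L,k}\to C_{TL,k}$.

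The only point needing care, which I expect to be the mild obstacle, is checking that $TL$ really is a valid arrangement for the corollaries. Here $TL\subseteq\C^E$ is a subspace, possibly with new loops, and $W_{TL}$ must be a smooth Cartier divisor of class $\alpha$ to which $\mathcal{O}(1,\dotsc,1)$ and $\alpha$ restrict as claimed. The paper already asserts this just before \eqref{truncation-SES-sheaves}. Loops are allowed throughout Section 2, so Corollary~\ref{cor:seczonotope} applies to $TL$, and no further input is needed.
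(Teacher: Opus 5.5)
Your proposal is correct and follows the paper's argument. The paper's one-line proof (``This follows from Corollary~\ref{cor:line}'') is exactly your observation: the cokernel of the top right arrow injects into $H^1(W_L,\mathcal{O}(1,\dotsc,1)((k-1)\alpha))$, which vanishes because $k-1\ge -1$, and the vertical identifications of Corollary~\ref{cor:seczonotope} carry the surjectivity to the bottom row.
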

\begin{proof}
This follows from Corollary~\ref{cor:line}.
\end{proof}

\begin{remark}
    Proposition~\ref{truncation-corollary} can also be proved by induction 
    in the style of \cite{ArdilaPostnikov}, using the generating set that 
    \cite[Proposition 4.5]{ArdilaPostnikov} provides for $C_{L, k}$ when 
    $k \geq -1$.
\end{remark}

\begin{proposition}\label{truncation-sequence-internal}
    There is an exact sequence of graded vector spaces
\begin{center}
\begin{tikzcd}
0 \arrow[r] & {C_{L,-2}^{\bullet - 1}} \arrow[r]                                                                                 
            & {C_{L,-1}^{\bullet}} \arrow[r]                                                                                    
            & {C_{TL,-1}^{\bullet}} \arrow[r] & {H^1(W_L, \mathcal{O}(1, 
\ldots, 1)(-2\alpha))^{\bullet - 1}} \arrow[r] & 0
\end{tikzcd}
\end{center}
\end{proposition}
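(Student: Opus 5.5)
The plan is to take the long exact sequence in cohomology of the short exact sequence \eqref{truncation-SES-sheaves} with $k=-2$:
\[
0 \to \mathcal{O}_{W_L}(1, \ldots, 1)(-2\alpha) \to \mathcal{O}_{W_L}(1, \ldots, 1)(-\alpha) \to \mathcal{O}_{W_{TL}}(1, \ldots, 1)(-\alpha) \to 0,
\]
and then identify its terms. The $H^0$ terms are identified with $C_{L,-2}$, $C_{L,-1}$ and $C_{TL,-1}$ by the diagram \eqref{truncation-diagram}, that is, by Corollary~\ref{cor:seczonotope}. The first map is multiplication by the generic linear form $f$ and the second is restriction to $TL$. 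The next term of the long exact sequence is $H^1(W_L, \mathcal{O}(1, \ldots, 1)(-2\alpha))$, and it is followed by $H^1(W_L, \mathcal{O}(1, \ldots, 1)(-\alpha))$. The latter vanishes by Corollary~\ref{cor:line}, since $k=-1 \ge -1$. So the connecting map onto $H^1(W_L, \mathcal{O}(1, \ldots, 1)(-2\alpha))$ is surjective, and this gives the exact four-term sequence.

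Next I need the grading. All sheaves here are $\mathbb{G}_m$-equivariant, because $\mathbb{G}_m$ acts on $W_L$ preserving each boundary divisor and each $W_{TL}$ (it scales $L$ and fixes the linear subspace $TL$). I linearize each line bundle as the pullback of a divisor supported on the boundary, so that its sections are graded subspaces of $\operatorname{Sym} L^*$. The inclusion $\mathcal{O}(1,\dots,1)(-2\alpha) \to \mathcal{O}(1,\dots,1)(-\alpha)$ is multiplication by the section cutting out $W_{TL}$. On $L$ this section is $f$, which has weight $1$, so the map is equivariant only after a degree shift by one. Concretely, I twist the linearization of the first sheaf by a character of weight one, so that the sequence \eqref{truncation-SES-sheaves} becomes a sequence of $\mathbb{G}_m$-equivariant sheaves. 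The long exact sequence is then one of graded vector spaces, with $H^0$ and $H^1$ of the first term both shifted by one. This produces the $\bullet-1$ on $C_{L,-2}$ and on $H^1$, as stated.

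The main obstacle, though a modest one, is checking that the grading on $H^1$ is the natural one induced from the $\mathbb{G}_m$-linearization, and that the degree shift is applied consistently to both cohomology groups of the first sheaf. Everything else follows directly from Corollary~\ref{cor:line} and \eqref{truncation-diagram}.
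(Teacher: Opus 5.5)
Your proposal is correct and matches the paper's proof: take the long exact sequence of \eqref{truncation-SES-sheaves} at $k=-2$, identify the $H^0$ terms via Corollary~\ref{cor:seczonotope}, use the vanishing of $H^1(W_L,\mathcal{O}(1,\dotsc,1)(-\alpha))$, and obtain the $\bullet-1$ shifts by re-linearizing $\mathcal{O}(1,\dotsc,1)(-2\alpha)$. The only difference is that the paper cites Proposition~\ref{negative-k-vanishing} rather than Corollary~\ref{cor:line}; at $k=-1$ these give the same vanishing statement.
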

\begin{proof}
    Take the long exact sequence in cohomology of 
    (\ref{truncation-SES-sheaves}), and apply 
    Proposition~\ref{negative-k-vanishing} and 
    Corollary~\ref{cor:seczonotope}. The grading shift occurs because we must change the linearization of $\mathcal{O}(1, \dotsc, 1)(-2\alpha)$ to make this an exact sequence of $\mathbb{G}_m$-equivariant sheaves. 
\end{proof}

We will use this to compute the Hilbert series of $H^1(W_L, \mathcal{O}(1, \ldots, 1)(-2\alpha))$. 

\begin{proposition}\label{prop:H1hilb}
We have 
$$\operatorname{Hilb}(H^1(W_L, \mathcal{O}(1, \dotsc, 1)(-2\alpha))) = \sum_{A \subseteq E, \, \operatorname{rk}(A) \le r - 2} (-1)^{r - \operatorname{rk}(A)} q^{n - |A| - r + \operatorname{rk}(A)} (1 - q)^{|A| - \operatorname{rk}(A)}.$$
\end{proposition}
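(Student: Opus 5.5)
Read off the Hilbert series of $H^1$ from the exact sequence of Proposition~\ref{truncation-sequence-internal}, since every other term there has a known Hilbert series. Write $T_L$ for the Tutte polynomial and use the known formulas $\operatorname{Hilb}(Z_{L,-1}) = q^{n-r}T_L(1,q^{-1})$ and $\operatorname{Hilb}(Z_{L,-2}) = q^{n-r}T_L(0,q^{-1})$; the latter is justified in the paper via Corollary~\ref{middle-exact} and deletion-contraction. Each $C_{L,k}$ has the same Hilbert series as $Z_{L,k}$. The truncation $TL$ has rank $r-1$, and its rank function is $\min(\operatorname{rk}_L(A), r-1)$. Then exactness gives
\[
q\operatorname{Hilb}(H^1) = q\operatorname{Hilb}(C_{L,-2}) - \operatorname{Hilb}(C_{L,-1}) + \operatorname{Hilb}(C_{TL,-1}),
\]
with the left side carrying the shift by $\bullet-1$. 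Since all three terms on the right are polynomials given by Tutte data, the remaining work is a subset-expansion computation.

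For the computation, expand each Tutte polynomial using the corank-nullity formula
\[
T_L(x,y)=\sum_{A\subseteq E}(x-1)^{r-\operatorname{rk}A}(y-1)^{|A|-\operatorname{rk}A}.
\]
\begin{itemize}
\item \textbf{Central term.} For $x=1$ only spanning $A$ survive. This gives $q^{n-r}\sum_{\operatorname{rk}A=r}(q^{-1}-1)^{|A|-r}$, which equals $\sum_{\operatorname{rk}A=r} q^{n-|A|}(1-q)^{|A|-r}$.
\item \textbf{Internal term.} Here $x=0$, giving $\sum_A(-1)^{r-\operatorname{rk}A}q^{n-r-|A|+\operatorname{rk}A}(1-q)^{|A|-\operatorname{rk}A}$.
\item \textbf{Truncation term.} The subsets $A$ with $\operatorname{rk}_{TL}A=r-1$ are exactly those with $\operatorname{rk}_L A\ge r-1$. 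Its nullity is $|A|-r+1$ and $n-(r-1)$ replaces $n-r$, giving $\sum_{\operatorname{rk}_LA\ge r-1} q^{n-|A|}(1-q)^{|A|-r+1}$.
\end{itemize}

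Now group the terms by $\operatorname{rk}A$:
\begin{itemize}
\item \textbf{$\operatorname{rk}A=r$.} The $q\cdot$internal term contributes $q^{n-|A|+1}(1-q)^{|A|-r}$. The central term contributes $-q^{n-|A|}(1-q)^{|A|-r}$, and the truncation term contributes $q^{n-|A|}(1-q)^{|A|-r+1}$. These three sum to $q^{n-|A|}(1-q)^{|A|-r}\bigl(q-1+(1-q)\bigr)=0$.
\item \textbf{$\operatorname{rk}A=r-1$.} The $q\cdot$internal term contributes $-q^{n-|A|}(1-q)^{|A|-r+1}$, which cancels the truncation contribution exactly.
\item \textbf{$\operatorname{rk}A\le r-2$.} Only the $q\cdot$internal term survives.
\end{itemize}
Dividing by $q$ then yields exactly the claimed sum.

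The main obstacle is getting the grading shift right in Proposition~\ref{truncation-sequence-internal}, that is, whether $H^1$ enters with $q$ or $q^{-1}$. I would check this against the cancellations above: the two cancellations only work with this normalization. I would also sanity-check on a small example, such as $U_{2,3}$ or a Boolean arrangement, where the sum is empty and $H^1=0$ is consistent with Lemma~\ref{lem:boolean}.

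A secondary point is to confirm that the Tutte formula for $Z_{TL,-1}$ applies. It does, because $TL$ is again a realizable arrangement and the central-case formula holds for any arrangement. The needed facts are also available inside the paper itself: central deletion-contraction follows from Corollary~\ref{cor:line}, and internal deletion-contraction from Corollary~\ref{middle-exact}, each combined with its Boolean base case.
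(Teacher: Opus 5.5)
Your argument is correct and is essentially the paper's proof: the same identity $q\operatorname{Hilb}(H^1)=q^{n-r+1}T_L(0,q^{-1})-q^{n-r}T_L(1,q^{-1})+q^{n-r+1}T_{TL}(1,q^{-1})$ read off from Proposition~\ref{truncation-sequence-internal}, followed by the corank--nullity expansion and grouping by $\operatorname{rk}A$. One correction to your proposed sanity check: the sum is not empty for $U_{2,3}$ (it equals $q$) or for Boolean $L=\mathbb{C}^n$ with $n\ge 2$ (it equals $n-1$), and Lemma~\ref{lem:boolean} only covers $k\ge -1$, so there is no contradiction with $H^1\neq 0$ there --- for instance, when $n=2$ the bundle is $\mathcal{O}(-D_{\{1\}}-D_{\{2\}})$, which has $h^1=1$.
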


\begin{proof}
By Proposition~\ref{truncation-sequence-internal} and known formulas for the Hilbert series of the central and internal zonotopal algebras, we have
\begin{equation*}\begin{split}
q\operatorname{Hilb}(H^1(W_L, \mathcal{O}(1, \dotsc, 1)(-2\alpha))) &= q^{n-r+1} T_L(0, q^{-1}) - q^{n-r} T_L(1, q^{-1}) + q^{n-r+1} T_{TL}(1, q^{-1}).
\end{split}\end{equation*}
By a well-known formula for the Tutte polynomial \cite[(2.13)]{MR1716762}, this is equal to
$$q^{n - r + 1} \sum_{A} (-1)^{r - \operatorname{rk}(A)} (q^{-1} - 1)^{|A| - \operatorname{rk}(A)} - q^{n-r} \sum_{\operatorname{rk}(A) = r} (q^{-1} - 1)^{|A| - r} + q^{n-r + 1} \sum_{\operatorname{rk}(A) \in \{r-1, r\}} (q^{-1} - 1)^{|A| - r + 1}.$$
Grouping the terms associated to a subset $A$ of $E$ together and adding implies the result. 
\end{proof}

In particular, if the coefficient of $q^k$ in the quantity in Proposition~\ref{prop:H1hilb} is $0$ for a hyperplane arrangement $L$, then the sequence in Proposition~\ref{truncation-sequence-internal} is exact in degree $k+1$.

\subsection{Zonotopal algebras for $k < -2$}

We now illustrate how our techniques can be used to analyze zonotopal algebras when $k < -2$. Classically, zonotopal algebras $Z_{L, k}$ for $k < -2$ are difficult to study because the deletion-contraction sequence can fail to be exact (see Example~\ref{ex:MK4}) and, at least for $k \le -6$, the Hilbert series of $Z_{L, k}$ can fail to be combinatorially determined, see \cite[Proposition 4]{APCorrection}. Our results can be applied to these zonotopal algebras in two different ways. 

Firstly, our results provide invariants which always satisfy deletion-contraction identities, and can sometimes be proved to agree with the Hilbert series of a zonotopal algebra. For a (linearized) line bundle $\mathcal{L}$ on $W_L$, the $\mathbb{G}_m$-equivariant Euler characteristic is defined by
$$\chi_{\mathbb{G}_m}(W_L, \mathcal{L}) \coloneqq \operatorname{Hilb}(H^0(W_L, \mathcal{L})) - \operatorname{Hilb}(H^1(W_L, \mathcal{L})) + \dotsb + (-1)^r \operatorname{Hilb}(H^r(W_L, \mathcal{L})).$$
If the higher cohomology of $\mathcal{L}$ vanishes, then $\chi_{\mathbb{G}_m}(W_L, \mathcal{L})$ agrees with the Hilbert series of the inverse system of the corresponding power ideal. More generally, if the Hilbert series of $H^i(W_L, \mathcal{L})$ vanishes in a particular degree for each $i \ge 1$, then $\chi_{\mathbb{G}_m}(W_L, \mathcal{L})$ computes the dimension of the corresponding graded piece of the inverse system. In cases of interest, the $\mathbb{G}_m$-equivariant Euler characteristic satisfies a deletion-contraction recurrence, meaning that it can be computed from the case of Boolean arrangements. 

\begin{proposition}\label{prop:euleradd}
If $i$ is not a loop or a coloop, then for any $k$,
$$\chi_{\mathbb{G}_m}(W_L, \mathcal{O}(1, \dotsc, 1)(k\alpha)) = \chi_{\mathbb{G}_m}(W_{L/i}, \mathcal{O}(1, \dotsc, 1)(k \alpha)) + q \chi_{\mathbb{G}_m}(W_{L \setminus i}, \mathcal{O}(1, \dotsc, 1)(k\alpha)) \text{, and}$$
$$\chi_{\mathbb{G}_m}(W_L, \mathcal{L}_M((k+1)\alpha)) = \chi_{\mathbb{G}_m}(W_{L/i}, \mathcal{L}_{M/i}((k+1)\alpha)) + q \chi_{\mathbb{G}_m}(W_{L \setminus i}, \mathcal{L}_{M \setminus i}((k+1)\alpha)).$$
\end{proposition}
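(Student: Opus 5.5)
The plan is to use additivity of the equivariant Euler characteristic on the short exact sequence \eqref{closed-subscheme} for the Cartier divisor $W_{L/i}$, twisted by the relevant line bundle, and then to identify each of the three terms.

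\textbf{First identity.} Tensor \eqref{closed-subscheme} with $\mathcal{O}(1, \dotsc, 1)(k\alpha)$, with its $\mathbb{G}_m$-linearization. This gives a short exact sequence of $\mathbb{G}_m$-equivariant sheaves
$$0 \to \mathcal{O}(1,\dotsc,1)(k\alpha)(-W_{L/i}) \to \mathcal{O}(1,\dotsc,1)(k\alpha) \to \mathcal{O}(1,\dotsc,1)(k\alpha)|_{W_{L/i}} \to 0.$$
The long exact sequence in cohomology is $\mathbb{G}_m$-equivariant, so $\chi_{\mathbb{G}_m}$ is additive on it. We then identify the outer terms.
\begin{enumerate}
\item By Lemma~\ref{lem:restrictdiv}, the restriction to $W_{L/i}$ is $\mathcal{O}(1,\dotsc,1)(k\alpha)$ on $W_{L/i}$. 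The linearization is compatible because $W_{L/i}$ is $\mathbb{G}_m$-stable and the restriction map on sections is restriction of polynomials, which preserves degree.
\item For the left term, $\mathcal{O}(1,\dotsc,1)(k\alpha)(-W_{L/i}) = \pi_{E\setminus i}^*\mathcal{O}(1,\dotsc,1)(k\alpha)$ by Lemma~\ref{lem:pullback}, since $i$ is not a coloop. Theorem~\ref{thm:pushforward} gives $R\pi_{E\setminus i*}\mathcal{O}_{W_L} = \mathcal{O}_{W_{L\setminus i}}$, so by the projection formula and Leray, all of its cohomology groups agree with those of $\mathcal{O}(1,\dotsc,1)(k\alpha)$ on $W_{L\setminus i}$, not just $H^0$.
\end{enumerate}

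\textbf{Main obstacle: the grading shift.} The inclusion $\mathcal{O}(-W_{L/i}) \to \mathcal{O}$ is multiplication by the linear form $x_i$, which has degree one. So the identification of the left term with the pullback linearization shifts the $\mathbb{G}_m$-weight by one. This is exactly the shift seen on $H^0$ in Lemma~\ref{O-deletion} and \eqref{eq:deletion}, and the same pattern as the shift in Proposition~\ref{truncation-sequence-internal}. To pin it down, I would compare the two linearizations on the open set $L$, where both sheaves are trivialized and the map is multiplication by $x_i$. Since an equivariant isomorphism of line bundles is determined up to a character, this check on $L$ fixes the twist on all higher cohomology as well. The result is $\chi_{\mathbb{G}_m}(W_L, \mathcal{O}(1,\dotsc,1)(k\alpha)(-W_{L/i})) = q\,\chi_{\mathbb{G}_m}(W_{L\setminus i}, \mathcal{O}(1,\dotsc,1)(k\alpha))$, and additivity then yields the first identity.

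\textbf{Second identity.} Repeat the argument with $\mathcal{L}_M((k+1)\alpha)$ in place of $\mathcal{O}(1,\dotsc,1)(k\alpha)$.
\begin{enumerate}
\item The restriction to $W_{L/i}$ is $\mathcal{L}_{M/i}((k+1)\alpha)$, by Lemma~\ref{lem:restrictLM} and Lemma~\ref{lem:restrictdiv}(3).
\item For the left term, write $\mathcal{L}_M((k+1)\alpha)(-W_{L/i}) = \mathcal{L}_M(-1,\dotsc,-1) \otimes \pi_{E\setminus i}^*\mathcal{O}(1,\dotsc,1)((k+1)\alpha)$, using Lemma~\ref{lem:pullback}. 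Theorem~\ref{thm:pushforward} gives $R\pi_{E\setminus i*}\mathcal{L}_M(-1,\dotsc,-1) = \mathcal{L}_{M\setminus i}(-1,\dotsc,-1)$, as in the proof of Lemma~\ref{L-deletion}. The projection formula then identifies all cohomology of the left term with that of $\mathcal{L}_{M\setminus i}((k+1)\alpha)$ on $W_{L\setminus i}$.
\end{enumerate}
The degree shift by $q$ arises in the same way as before. Additivity of $\chi_{\mathbb{G}_m}$ gives the second identity.
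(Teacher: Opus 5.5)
Your proposal is correct and follows the paper's proof essentially step for step. You use additivity of $\chi_{\mathbb{G}_m}$ on the twisted sequence \eqref{closed-subscheme}, identify the restricted term via Lemma~\ref{lem:restrictdiv} (and Lemma~\ref{lem:restrictLM}), and identify the kernel term as a pullback from $W_{L\setminus i}$, whose full cohomology is computed by Theorem~\ref{thm:pushforward} and the projection formula. Your treatment of the grading shift, comparing linearizations on $L$ where the inclusion is multiplication by the degree-one form $x_i$, is exactly the paper's device of twisting by the weight-one linearization of $\mathcal{O}$. Your explicit handling of the $\mathcal{L}_M$ case via $R\pi_{E\setminus i*}\wedge^r\Omega_L = \wedge^r\Omega_{L\setminus i}$ fills in what the paper leaves as ``similar.''
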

Together with the cohomology vanishing results (Corollary~\ref{cor:line} and Corollary~\ref{cor:LMvanish}), Proposition~\ref{prop:euleradd} directly implies the known formulas for the Hilbert series of $Z_{L, k}$ when $k \ge -2$ \cite{ArdilaPostnikov,HoltzRon}. 

\begin{proof}[Proof of Proposition~\ref{prop:euleradd}]
We do the case of $\mathcal{O}(1, \dotsc, 1)(k\alpha)$; the case of $\mathcal{L}_M((k+1)\alpha)$ is similar. Because $i$ is not a loop, there is a short exact sequence of sheaves
$$0 \to \mathcal{O}(1, \dotsc, 1)(k\alpha)(-W_{L/i}) \to \mathcal{O}(1, \dotsc, 1)(k\alpha) \to \mathcal{O}_{W_{L/i}}(1, \dotsc, 1)(k \alpha) \to 0.$$
This is a short exact sequence of $\mathbb{G}_m$-equivariant sheaves, as long as $\mathcal{O}(1, \dotsc, 1)(k\alpha)(-W_{L/i})$ is given the appropriate linearization (obtained by tensoring the usual linearization with the linearization of $\mathcal{O}$ where $\operatorname{Hilb}(H^0(W_L, \mathcal{O})) = q$). 
As the $\mathbb{G}_m$-equivariant Euler characteristic is additive in short exact sequences,
$$\chi_{\mathbb{G}_m}(W_L, \mathcal{O}(1, \dotsc, 1)(k\alpha)) = \chi_{\mathbb{G}_m}(W_{L/i}, \mathcal{O}(1, \dotsc, 1)(k\alpha)) + \chi_{\mathbb{G}_m}(W_{L}, \mathcal{O}(1, \dotsc, 1)(k\alpha)(-W_{L/i})).$$
Because $i$ is not a coloop, $\mathcal{O}(1, \dotsc, 1)(k\alpha)(-W_{L/i})$ is the pullback of $\mathcal{O}_{W_{L \setminus i}}(1, \dotsc, 1)(k\alpha)$ along the deletion map $\pi_{E \setminus i} \colon W_L \to W_{L \setminus i}$. By the projection formula and Theorem~\ref{thm:pushforward}, $\chi_{\mathbb{G}_m}(W_L, \mathcal{O}(1, \dotsc, 1)(k\alpha)(-W_{L/i})) = q \chi_{\mathbb{G}_m}(W_{L \setminus i}, \mathcal{O}(1, \dotsc, 1)(k\alpha))$, giving the result. 
\end{proof}

When $k \le -2$, $\mathcal{O}(1, \dotsc, 1)(k\alpha)$ typically has a lot of higher cohomology.  Proposition~\ref{prop:H1hilb} shows that $H^1(W_L, \mathcal{O}(1, \dotsc, 1)(-2\alpha))$ is usually large. Even for Boolean arrangements, the cohomology of $\mathcal{O}(1, \dotsc, 1)(k\alpha)$ can be somewhat complicated. For example, using Proposition~\ref{prop:H1hilb} and \eqref{truncation-SES-sheaves}, one can show that if $L = \mathbb{C}^n$ is a Boolean arrangement with $n > 0$, then
$$\operatorname{Hilb}(H^1(W_L, \mathcal{O}(1, \dotsc, 1)(-3 \alpha))) = n q^{-1}, \, \operatorname{Hilb}(H^2(W_L, \mathcal{O}(1, \dotsc, 1)(-3 \alpha))) = \binom{n-1}{2},$$
and $H^i(W_L, \mathcal{O}(1, \dotsc, 1)(-3 \alpha)) = 0$ for $i \not \in \{1, 2\}$. 

The line bundles $\mathcal{L}_M((k+1)\alpha)$ tend to have significantly less cohomology, but they can still be used to analyze $Z_{L, k}$ when $k \le -1$. When $L = \mathbb{C}^n$ is Boolean, the line bundle $\mathcal{L}_M$ is trivial, and so the cohomology of $\mathcal{L}_M((k+1)\alpha)$ on $W_L$ is the same as the cohomology of $\mathcal{O}(k+1)$ on $\mathbb{P}^{r}$. 

For example, when $k = -3$, the cases when $\mathcal{L}_M(-2\alpha)$ has nonzero cohomology on the augmented wonderful variety of a Boolean arrangement are when $n = 0$ (then $\operatorname{Hilb}(H^0(W_L, \mathcal{L}_M(-2\alpha))) = 1$, and all other cohomology groups vanish) and when $n=1$ (then $\operatorname{Hilb}(H^1(W_L, \mathcal{L}_M(-2\alpha))) = q^{-1}$, and all other cohomology groups vanish). In particular, we have
$$\chi_{\mathbb{G}_m}(W_{\mathbb{C}^n}, \mathcal{L}_M(-2\alpha)) = \begin{cases} 1 & n=0 \\ -q^{-1} & n=1 \\ 0 & n > 1.\end{cases}$$
For a general arrangement $L$, $\chi_{\mathbb{G}_m}(W_L, \mathcal{L}_M(-2\alpha))$ is an evaluation of the ``umbral Tutte polynomial'' of Ardila and Postnikov \cite[Proposition 4.11]{ArdilaPostnikov}. 

\medskip

The other way in which our results can be applied to analyze zonotopal algebras when $k < -2$ is by trying to directly compute the cohomology groups of $\mathcal{O}(1, \dotsc, 1)(k \alpha)$ or $\mathcal{L}_M((k+1)\alpha)$. As discussed above, $\mathcal{L}_M((k+1)\alpha)$ typically has less cohomology, so this seems more promising. We illustrate this in one example when $k = -3$ and the  deletion-contraction sequence is not exact. 

As a preliminary step, we compute the case of a collection of generic hyperplanes in a $1$ or $2$-dimensional vector space. We discussed the case of Boolean arrangements above. If $\dim L = 1$ and $n=2$, then the deletion-contraction exact sequence is 
$$0 \to H^0(W_L, \mathcal{L}_M(-2\alpha)) \to \mathbb{C} \to \mathbb{C} \to H^1(W_L, \mathcal{L}_M(-2\alpha)) \to 0,$$
where each copy of $\mathbb{C}$ is in degree $0$. We cannot use this sequence to compute the cohomology, but $W_L \cong \mathbb{P}^1$ and the line bundle $\mathcal{L}_M(-2\alpha)$ is isomorphic to $\mathcal{O}(-1)$ in this case, so its cohomology vanishes. For $\dim L = 1$ and $n > 2$, we have $H^1(W_{L \setminus i}, \mathcal{L}_{M \setminus i}(-2\alpha)) = 0$ by induction. This implies that the deletion-contraction sequence is right exact, and so we see that
$$\operatorname{Hilb}(H^0(W_L, \mathcal{L}_M(-2\alpha))) = 1 + q + \dotsb + q^{n-3} \text{ and }H^i(W_L, \mathcal{L}_M(-2\alpha)) = 0 \text{ for }i>0.$$
We now consider the case when we have $n > 2$ generic hyperplanes in a vector space $L$ of dimension $2$. If $n = 3$, then the cohomology of $\mathcal{L}_{M \setminus i}(-2\alpha)$ on $W_{L \setminus i}$ and $\mathcal{L}_{M/i}(-2\alpha)$ on $W_{L/i}$ both vanish for any $i$, so the cohomology of $\mathcal{L}_M(-2\alpha)$ vanishes. If $n > 3$, then the deletion-contraction sequence is exact, and we see by induction that
$$\operatorname{Hilb}(H^0(W_L, \mathcal{L}_M(-2\alpha))) = 1 + 2q + 3q^2 + \dotsb + (n-3)q^{n-4}\text{ and }H^i(W_L, \mathcal{L}_M(-2\alpha)) = 0 \text{ for }i>0.$$

\begin{example}\label{ex:MK4}
Let $L$ be the hyperplane arrangement given as 
$$L = \operatorname{row \,span }\begin{pmatrix} 1 & 1 & 0 & 0 & 1 \\ 0 & 0 & 1 & 1 & 1\end{pmatrix}.$$
We will use the deletion-contraction sequence for the $5$th hyperplane to compute the cohomology of $\mathcal{L}_M(-2\alpha)$ on $W_L$. The arrangement $L/5$ consists of $4$ nonzero hyperplanes in a $1$-dimensional vector space, so, as computed above, we have $\operatorname{Hilb}(H^0(W_{L/5}, \mathcal{L}_M(-2\alpha))) = 1 + q$ and the higher cohomology vanishes.

The arrangement $L \setminus 5$ is a direct sum of two arrangements which each consist of two hyperplanes in a $1$-dimensional space. Applying the deletion-contraction sequence twice and using that deleting a loop does not change the augmented wonderful variety, we see that $\operatorname{Hilb}(H^1(W_{L \setminus 5}, \mathcal{L}_M(-2\alpha))) = 1$ and $H^i(W_{L \setminus 5}, \mathcal{L}_M(-2\alpha)) = 0$ for all other $i$.

 The deletion-contraction sequence then looks like
$$0 \to H^0(W_{L}, \mathcal{L}_M(-2\alpha)) \to \mathbb{C} \oplus \mathbb{C} \to 0 \oplus \mathbb{C} \to H^1(W_{L}, \mathcal{L}_M(-2\alpha)) \to 0,$$
where the Hilbert function is indicated using the direct sum structure, i.e., the Hilbert function of the third term in the exact sequence is $1 + q$ and the Hilbert function of the fourth term is $q$. From this sequence alone, we cannot determine the cohomology of $\mathcal{L}_M(-2\alpha)$, i.e., we cannot see whether the boundary map vanishes. However, a direct computation shows that $\operatorname{Hilb}(Z_{L, -3}) = 1 $, so the boundary map is nonzero and $\operatorname{Hilb}(H^1(W_{L}, \mathcal{L}_{M}(-2\alpha))) = 0$. 
\end{example}
\begin{figure}
\[
\begin{array}{|c|c|c|}
\hline
 & \operatorname{Hilb}(H^0) & \operatorname{Hilb}(H^1) \\
\hline
L/5 & 1 + q & 0 \\ 
\hline 
L \setminus 5 & 0& 1\\ 
\hline
L & 1  & 0 \\ 
\hline
\end{array}
\]\caption{The Hilbert series of the cohomology groups computed in Example~\ref{ex:MK4}.}
\end{figure}

Example~\ref{ex:MK4} also shows that the sequence $0 \to \mathcal{C}_{L \setminus 5, -2}^{\bullet - 1, \bullet} \to \mathcal{C}_{L, -2}^{\bullet, \bullet} \to \mathcal{C}_{L/5, -2}^{\bullet, \bullet} \oplus \mathcal{C}_{L/5, -2}^{\bullet, \bullet - 1}$ constructed in Theorem~\ref{thm:superspaceexact} need not be right exact, because, by Proposition~\ref{prop:LM}, the top anticommutative graded piece of this sequence is identified with the sequence $0 \to C_{L \setminus 5, -3}^{\bullet - 1} \to C_{L, -3}^{\bullet} \to C_{L/5, -3}^{\bullet}$.

\subsection{Polymatroidal power ideals} 
In this section, we describe a family of hyperplane arrangement power ideals with nice properties. Let $f \colon 2^E \to \mathbb{Z}$ be a function on the set of subsets of $E$. We say that $f$ is a \emph{polymatroid} on $E$ if 
\begin{enumerate}
\item $f(\emptyset) = 0$, 
\item for any $S \subseteq T$, we have $f(S) \le f(T)$, and
\item for any $S, T$, we have $f(S \cap T) + f(S \cup T) \le f(S) + f(T)$. 
\end{enumerate}
For a polymatroid $f$, consider the divisor $D_f = \sum_{F \not= E} f(E \setminus F) D_F$ on $W_L$, where the sum is over proper flats of our hyperplane arrangement. If $L = \mathbb{C}^n$, i.e., if our arrangement is Boolean, then the $[D_f]$ are exactly the nef divisor classes on the stellahedral toric variety \cite[Proposition 3.13]{EHL}. When $f$ is the rank function of the dual matroid of the matroid of $L$, then $D_f = c_1(\mathcal{L}_M)$. 
Let $x_1, \dotsc, x_n$ be the coordinate functions on $L$.  We have the following result.

\begin{proposition}\label{prop:polymatroid}
Let $f$ be a polymatroid on $E$. Then the Hilbert series of the ideal $I_f = (v^{f(E \setminus F) + 1} : v \in L_F \setminus \{0\})$ depends only on $f$ and the matroid of $L$. Furthermore, the inverse system of $I_f$ is spanned by polynomials of the form $x_1^{a_1} \dotsb x_n^{a_n}$ where $(a_1, \dotsc, a_n)$ has the property that $a_i \ge 0$ for each $i$, and $\sum_{i \in S} a_i \le f(S)$ for each $S \subseteq E$. 
\end{proposition}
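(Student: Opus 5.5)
By Theorem~\ref{thm:powerideal}, the inverse system of $I_f$ is $H^0(W_L, \mathcal{O}(D_f))$, where $D_f = \sum_{F \neq E} f(E\setminus F) D_F$. So we need two things: the graded dimension of this space is combinatorial, and it is spanned by the stated monomials. The plan is to pull back from the stellahedral toric variety $X_n$, in which $W_L$ sits as the closure of $L \subseteq \mathbb{C}^n$.

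\textbf{Step 1: identify the line bundle as a restriction.} On $X_n$, the divisor $\widetilde{D}_f = \sum_{S \subsetneq E} f(E\setminus S)\, D^{X_n}_S$ is nef by \cite[Proposition 3.13]{EHL}. Its lattice-point polytope should be
\[
P_f = \Bigl\{a \in \mathbb{R}^n_{\ge 0} : \sum_{i\in S} a_i \le f(S) \text{ for all } S\Bigr\},
\]
and its sections are the monomials $x^a$ with $a$ a lattice point of $P_f$. I would then check that $\widetilde{D}_f$ restricts to $D_f$ on $W_L$. The restriction of $D^{X_n}_S$ is $D_{\overline{S}}$ summed appropriately: the boundary divisors of $X_n$ indexed by non-flats pull back to combinations of the $D_F$. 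It is cleanest to compare valuations. For a monomial $x^a$, the order along $D_F$ is minus the degree in a generic direction of $L_F$, and that degree is $\sum_{i\notin F} a_i$ (by the argument of Lemma~\ref{pole-degree}). So the restriction of $x^a$ lies in $H^0(W_L,\mathcal{O}(D_f))$ exactly when $\sum_{i \in E\setminus F} a_i \le f(E\setminus F)$ for every flat $F$. This gives the inclusion of the span of the stated monomials into the inverse system.

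\textbf{Step 2: surjectivity and combinatorial Hilbert series, by induction.} Restriction $H^0(X_n,\mathcal{O}(\widetilde D_f)) \to H^0(W_L,\mathcal{O}(D_f))$ should be surjective. I would prove this together with the vanishing $H^i(W_L,\mathcal{O}(D_f))=0$ for $i>0$, by deletion-contraction induction on $n$ in the style of Theorem~\ref{thm:vanishing}. The base case is Boolean $L$, where this is vanishing for nef line bundles on toric varieties. For the inductive step:
\begin{itemize}
\item For $i$ not a loop or coloop, use the sequence $0 \to \mathcal{O}(D_f)(-W_{L/i}) \to \mathcal{O}(D_f) \to \mathcal{O}(D_f)|_{W_{L/i}} \to 0$.
\item Express the outer terms as polymatroid divisors on $W_{L\setminus i}$ and $W_{L/i}$ via Lemma~\ref{lem:pullback}, Lemma~\ref{lem:restrictdiv} and the pullback formula $\pi^*[D_F]=[D_F]+[D_{F\cup i}]$.
\item The relevant polymatroids should be the contraction $f/i$ (with $f/i(S) = f(S\cup i)-f(i)$, or a suitable shift) and the deletion $f\setminus i$, up to a twist by $y_i$ accounting for the degree shift.
\end{itemize}
Vanishing for the outer terms then gives right exactness and vanishing for the middle. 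This yields a deletion-contraction recursion for the Hilbert series in terms of $f$ and the matroid, hence combinatoriality. Surjectivity from the toric sections follows by compatibility of the diagram: monomials restrict to monomials on $L/i$, and multiplication by $x_i$ handles the kernel.

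\textbf{Main obstacle.} I expect the hardest part to be the precise bookkeeping in the inductive step. The deletion and contraction of $D_f$ need not be exactly of the form $D_{f'}$. There may be extra boundary divisors, as in Lemma~\ref{coloop-flats}, for flats having $i$ as a coloop, and the twist $\mathcal{O}(D_f)(-W_{L/i})$ is only pulled back from $W_{L\setminus i}$ up to such corrections. If the direct identification fails, my fallback is to prove that the correction divisors do not change cohomology, via restriction to $D_F \cong W_{L^F}\times \underline{W}_{L_F}$ and a Künneth argument as in Lemma~\ref{coloop-flats}. An alternative route is a Kempf/Berget--Fink style argument, showing that the restriction map from the nef bundle on $X_n$ is surjective with $R\pi$-vanishing, using that $D_f$ is pulled back from the stellahedral variety.
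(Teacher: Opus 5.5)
Your Step~1 agrees with the paper. Sections of $\mathcal{O}_{X_n}(D_f)$ are the monomials $x^a$ with $a\in P_f\cap\mathbb{Z}^n$. Your computation $\deg_v x^a=\sum_{j\notin F}a_j$ for generic $v\in L_F$, combined with Theorem~\ref{thm:direction}, shows these monomials restrict into $H^0(W_L,\mathcal{O}(D_f))$. (A small correction: a boundary divisor of $X_n$ indexed by a non-flat is disjoint from $W_L$, so it restricts to $0$.)

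The gap is in Step~2, which carries all the content, and the deletion--contraction induction does not close up on polymatroid divisors. Since $W_{L/i}\sim y_i=\sum_{F\not\ni i}D_F$, the kernel term is $\mathcal{O}(D_g)$ with $g(T)=f(T)-1$ for $T\ni i$ and $g(T)=f(T)$ otherwise. This $g$ is submodular but in general not monotone. Now compare with $\pi^*D_{f/i}$, using $\pi^*[D_G]=[D_G]+[D_{G\cup i}]$:
\begin{itemize}
\item on every $D_G$ with $i\notin G$, the difference has coefficient $f(i)-1$, and these divisors are never $\pi$-exceptional;
\item on $D_F$ with $i\in F$, the difference has coefficient $f(E\setminus F)+f(i)-f((E\setminus F)\cup i)$, and $D_F$ is exceptional only when $i$ is a coloop of $F$.
\end{itemize}
For special $f$ this works. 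For $f(S)=|S|+k$ the kernel is exactly $\pi^*D_{f\setminus i}$, and for $f=\operatorname{rk}_{M^*}$ it is $\pi^*D_{f/i}$ plus exceptional divisors with coefficient $1$. It fails in general. Take $L=\{(s,s,t)\}\subset\mathbb{C}^3$, $i=2$ and $f(S)=\min(|S|,1)$, so that $\mathcal{O}(D_f)=\mathcal{O}(\alpha)$. Then $\pi\colon W_L\to W_{L\setminus 2}=X_2$ is an isomorphism, and the kernel is $\mathcal{O}(D_{\{1,2\}})=\pi^*\mathcal{O}(D_{\{1\}})$. This is a $(-1)$-curve on $X_2$, so it is not nef and not $D_{g'}$ for any polymatroid $g'$. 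Your induction hypothesis therefore says nothing about it, even though its cohomology happens to vanish.

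Your fallback does not repair this. Lemma~\ref{coloop-flats} works because the restriction to $D_F\cong W_{L^F}\times\underline{W}_{L_F}$ has a tensor factor $\mathcal{O}(-\alpha+\sum_{j\in F\setminus i}y_j)$ with no cohomology, by Lemma~\ref{lem:coloop-vanishing}; this uses that $i$ is a coloop of $F$. For general $f$ no such factor appears, and the corrections sit on non-exceptional divisors with coefficients not confined to $\{0,1\}$. You would need a genuinely larger class of line bundles, closed under both operations and satisfying vanishing, and the proposal does not supply one. The same issue breaks your surjectivity step (``multiplication by $x_i$ handles the kernel'') and the claimed combinatorial recursion.

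The paper does not induct at all. \cite[Theorem 5.5]{EFL} gives, for every polymatroid $f$, both $H^{i}(W_L,\mathcal{O}(D_f))=0$ for $i>0$ and surjectivity of $H^0(X_n,\mathcal{O}_{X_n}(D_f))\to H^0(W_L,\mathcal{O}_{W_L}(D_f))$, which yields the spanning set. Combinatorial invariance follows because, once higher cohomology vanishes, the Hilbert series equals the $\mathbb{G}_m$-equivariant Euler characteristic. By the proof of \cite[Corollary 5.4]{EHL}, that Euler characteristic depends only on $f$ and the matroid. Your Kempf/Berget--Fink alternative points toward a vanishing theorem of this strength, but it is only named, not carried out.
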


Proposition~\ref{prop:polymatroid} applies to the line bundle $\mathcal{O}(1, \dotsc, 1)(k\alpha)$ and $\mathcal{L}_M(k\alpha)$ for $k \ge 0$. However, it in general does not apply to $\mathcal{O}(1, \dotsc, 1)(k\alpha)$ or $\mathcal{L}_M(k\alpha)$ for $k < 0$. 

\begin{proof}[Proof of Proposition~\ref{prop:polymatroid}]
The Hilbert series of $I_f$ is determined by the Hilbert series of its inverse system.
By \cite[Theorem 5.5]{EFL}, we have $H^i(W_L, \mathcal{O}_{W_L}(D_f)) = 0$ for $i > 0$. This implies that the Hilbert series of the inverse system of $I_f$, which is equal to $H^0(W_L, \mathcal{O}_{W_L}(D_f))$ by Theorem~\ref{thm:powerideal}, is equal to the $\mathbb{G}_m$-equivariant Euler characteristic of the line bundle $\mathcal{O}_{W_L}(D_f)$. It follows from the proof of \cite[Corollary 5.4]{EHL} that this $\mathbb{G}_m$-equivariant Euler characteristic only depends on the matroid of $L$ and $f$. There is an inclusion of $W_L$ into the stellahedral toric variety $X_n$, and $\mathcal{O}_{W_L}(D_f)$ is the restriction of the line bundle $\mathcal{O}_{X_n}(D_f)$. As $\mathcal{O}_{X_n}(D_f)$ is a nef line bundle on a toric variety, its sections are in bijection with the lattice points in a certain polytope. These lattice points are exactly the points $(a_1, \dotsc, a_n)$ satisfying the conditions in the proposition, see \cite[Section 3.3]{EHL}. By \cite[Theorem 5.5]{EFL}, the restriction map $H^0(X_n, \mathcal{O}_{X_n}(D_f)) \to H^0(W_L, \mathcal{O}_{W_L}(D_f))$ is surjective, giving the spanning set for the inverse system. 
\end{proof}

In \cite{LLPP}, the Euler characteristics of line bundles on augmented wonderful varieties were studied. In \cite[Theorem 7.2]{LLPP}, a general formula for the Euler characteristic of a line bundle was given. Using this formula, one can obtain a formula for the total dimension of the inverse system $I_f^{\perp}$ for any polymatroid $f$. This formula is rather complicated, but in some cases it is possible to use tools developed in \cite{LLPP} to obtain simpler formulas. For example, we check that $\chi(W_L, \mathcal{L}_M) = T_L(1,1)$ using this strategy, which also follows from the fact that $\chi(W_L, \mathcal{L}_M) = \dim H^0(W_L, \mathcal{L}_M) = \dim C_{L, -1}$. The exceptional Hirzebruch--Riemann--Roch theorem of \cite[Theorem 1.6]{LLPP} allows us to compute this Euler characteristic in terms of intersection theory on $W_L$. Using \cite[Corollary 6.5]{EHL} and \cite[Proposition 5.3]{LLPP}, we have that
$$\chi(W_L, \mathcal{L}_M) = \int_{W_L} \frac{s(\mathcal{Q}_L^*)}{1 - \alpha},$$
where $s(\mathcal{Q}_L^{*})$ is the total Segre class of the conormal bundle of $W_L$ in the stellahedral toric variety $X_n$. By \cite[Theorem 1.11 and Corollary 5.11(1)]{EHL}, this intersection number is equal to the coefficient of $w^{n-r}$ in $(1 + w)^{n-r} T_L(1, 1 + (1 + w)^{-1})$, which is equal to $T_L(1,1)$.

\begin{remark}\label{rem:BV}
In \cite{BrionVerge}, Brion and Vergne identify $C_{L, -1}^{n-r}$, the $(n-r)$th graded piece of the inverse system of the central zonotopal algebra, with $H^{r}_{dR}(L \setminus \cup H_i; \F)$, the top graded piece of the de Rham cohomology of the hyperplane arrangement complement $L \setminus \cup H_i$. This can be recovered using our results, as follows. For any polymatroid $f$, it follows from \cite[Theorem A]{EFL} that the restriction map $H^0(W_L, \mathcal{O}_{W_L}(D_f)) \to H^0(\underline{W}_L, \mathcal{O}_{W_L}(D_f)|_{\underline{W}_L})$ maps the $f(E)$th graded piece isomorphically onto $H^0(\underline{W}_L, \mathcal{O}_{W_L}(D_f)|_{\underline{W}_L})$. If $f$ is the rank function of the matroid dual of the matroid of our hyperplane arrangement, then $\mathcal{O}_{W_L}(D_f) \cong \mathcal{L}_M$, and the restriction of $\mathcal{L}_M$ to $\underline{W}_L$ is the log-canonical bundle of $\underline{W}_L$ (viewed as a compactification of the projectivized arrangement complement). Because the de Rham cohomology of the projectivized arrangement complement is endowed with a mixed Hodge structure which, in degree $i$, is pure of weight $2i$, the sections of the log-canonical bundle on $\underline{W}_L$ are isomorphic to the degree $(r-1)$ part of the de Rham cohomology of the projectivized arrangement complement, which can be identified with $H^{r}_{dR}(L \setminus \cup H_i; \F)$. 
\end{remark}

\section{Superspace preliminaries}\label{sec:superspace}
Given a vector space $L$, recall that the \emph{superspace ring} 
$\Psi_{L}$ is the ring of algebraic differential forms with polynomial 
coefficients on the dual vector space $L^*$:
\[
    \Psi_L \coloneqq \Sym L \otimes \bigwedge^\bullet L.
\]
We have seen that there is an action of $\Sym L$ on $\Sym L^*$ where $v 
\in L$ acts by its directional derivative $D_v$. Likewise, there is an 
action of $\wedge^\bullet L$ on $\wedge^\bullet L^*$, where $v \in 
L$ acts by contraction: for $\omega \in \wedge^i L^*$, set $\iota_{v}(\omega)$ to be the element $\omega(v \wedge -) \in \Hom(\wedge^{i-1} L, \F)$, which is naturally identified with $\wedge^{i-1} L^*$.

There is an induced action of $\Psi_L$ on $\Psi_{L^*}$.
Choose dual bases $e_1, \ldots, e_r \in L$ and $x_1, \ldots, x_r \in 
L^*$. Given an element $v \in L$, we will write $v \coloneqq v \otimes 1 \in 
\Psi_L$ for a commutative generator and $dv \coloneqq 1 \otimes v \in 
\Psi_L$ for an anticommutative generator.
In these coordinates, an element $f(e_1, \ldots, e_r, de_1, \ldots, de_r) 
\in \Psi_L$ acts by the operator $f(\frac{\partial}{\partial x_1}, 
\ldots, \frac{\partial}{\partial x_r}, \iota_{e_1}, \ldots, 
\iota_{e_r})$ on $\Psi_{L^*}$. 

\begin{example}
    Set $L = \F^2$, $f = e_1de_2 - e_2de_1$, and $g = x_1dx_1\wedge dx_2$. Then
    \begin{align*}
        f\odot g &= (\frac{\partial}{\partial x_1} \circ 
        \iota_{e_2} - \frac{\partial}{\partial x_2}\circ 
        \iota_{e_1}) \odot (x_1dx_1\wedge dx_2)\\
                 &= (\frac{\partial}{\partial x_1} 
                 x_1)(\iota_{e_2} dx_1\wedge dx_2) - (\frac{\partial}{\partial x_2} 
                 x_1)(\iota_{e_1} dx_1 \wedge dx_2)\\
                 &= -dx_1.
        \end{align*}
\end{example}
Another example is that $\iota_{e_1} \iota_{e_2} dx_1 dx_2 = -1$. We now recall a variation on the version of inverse systems for $\Psi_L$ introduced in \cite[Section 2]{Superspace}. 

\begin{lemma}
    There is a perfect pairing $\Psi_L \times \Psi_{L^*} \to 
    \F$ which sends $(f,g)$ to the constant term of $f\odot g$.
\end{lemma}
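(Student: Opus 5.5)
The plan is to reduce to monomial bases and check that the pairing matrix is diagonal with nonzero entries in each bigraded piece. Both $\Psi_L$ and $\Psi_{L^*}$ are bigraded, and the pairing respects this. If $f$ has bidegree $(a,b)$ and $g$ has bidegree $(c,d)$, then $f \odot g$ has bidegree $(c-a, d-b)$. Its constant term is therefore zero unless $(a,b) = (c,d)$. So it suffices to show that for each $(a,b)$ the restricted pairing
$$(\Sym^a L \otimes \wedge^b L) \times (\Sym^a L^* \otimes \wedge^b L^*) \to \F$$
is perfect. Each piece is finite-dimensional, so the whole pairing is then perfect in the graded sense; in particular $\Psi_{L^*}$ is identified with the graded dual of $\Psi_L$.

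Fix the dual bases $e_1, \dotsc, e_r$ and $x_1, \dotsc, x_r$. For a multi-index $\alpha$ and an increasing sequence $I = (i_1 < \dotsb < i_b)$, set
$$e^\alpha\, de_I = e_1^{\alpha_1}\dotsb e_r^{\alpha_r}\, de_{i_1}\dotsb de_{i_b}, \qquad x^\beta\, dx_J = x_1^{\beta_1} \dotsb x_r^{\beta_r}\, dx_{j_1}\wedge \dotsb \wedge dx_{j_b}.$$
The commutative and anticommutative parts act independently: $\partial/\partial x_i$ commutes with $\iota_{e_j}$ on $\Psi_{L^*}$, up to the convention for ordering in the tensor product. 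Hence the pairing of two such monomials factors, up to sign, as the product of the constant term of $\partial^\alpha x^\beta$ and the scalar $\iota_{e_{i_1}}\dotsb\iota_{e_{i_b}}(dx_J)$.

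The first factor equals $\alpha!\,\delta_{\alpha\beta}$; this is the standard fact that the Macaulay pairing is perfect in characteristic zero. For the second factor, contracting $dx_J$ with $\iota_{e_i}$ for $i \notin J$ gives zero. If the index sets $I$ and $J$ coincide, one gets $\pm 1$; the sign $(-1)^{b(b-1)/2}$ is consistent with the computation $\iota_{e_1}\iota_{e_2}dx_1dx_2 = -1$ noted above. So in each bigraded piece the pairing matrix between these monomial bases is diagonal with entries $\pm \alpha!$, which are nonzero over $\F = \C$, and the pairing is perfect.

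The only step needing care is the sign and ordering bookkeeping in how $f\otimes \omega \in \Sym L\otimes\wedge L$ acts: one must confirm that the differential operators act via the stated substitution, so that the mixed terms cannot create off-diagonal contributions. Since derivatives and contractions with constant vectors commute up to sign, off-diagonal vanishing is immediate, and signs do not affect nondegeneracy.
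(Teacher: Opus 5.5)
Your proposal is correct and takes the same route as the paper: both show that the monomial bases $e^\alpha\, de_I$ of $\Psi_L$ and $x^\beta\, dx_J$ of $\Psi_{L^*}$ pair diagonally with nonzero entries. Your additions make explicit what the paper's one-line ``one can check'' leaves implicit. These are the reduction to finite-dimensional bigraded pieces, so that ``perfect'' is meant in the graded sense, and the explicit diagonal entries $\pm\alpha!$, whose nonvanishing uses characteristic zero.
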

\begin{proof}
    Given monomials $\alpha,\beta,\gamma,\delta \in \C[u_1, \ldots, 
    u_r]$, one can check that the constant term of 
    \[
        \alpha(\frac{\partial}{\partial x_1}, \ldots, \frac{\partial}{\partial 
        x_r}) \beta(\iota_{e_1}, \ldots, \iota_{e_r}) \odot 
        \gamma(x_1, \ldots, x_r)\delta(de_1, \ldots, de_r)
    \]
    is nonzero exactly when $\alpha=\gamma$ and $\beta=\delta$.
    The $\alpha\beta$ factor and the $\gamma\delta$ factor in the above 
    expression form bases for $\Psi_L$ and $\Psi_{L^*}$, respectively, so 
    the result follows.
\end{proof}

\begin{lemma}\label{mult-adjoint}
    For a homogeneous element $f \in \Psi_L^{a,b}$, the adjoint of the multiplication action $f \times -:\Psi_L \to \Psi_L$ is given 
    component-wise by $(-1)^{b(d-b)} f \odot -:\Psi_{L^*}^{c,d} \to 
    \Psi_{L^*}^{c-a,d-b}$.
\end{lemma}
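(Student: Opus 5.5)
The plan is to reduce the statement to two facts: the action $\odot$ is an honest left module action of the algebra $\Psi_L$ on $\Psi_{L^*}$, and $\Psi_L$ is graded-commutative with respect to the anticommutative degree. Here ``adjoint'' is taken with respect to the perfect pairing $\langle h, g\rangle = $ constant term of $h \odot g$ from the previous lemma. So the claim to prove is that for $h \in \Psi_L^{c-a,d-b}$ and $g \in \Psi_{L^*}^{c,d}$,
$$\langle fh, g\rangle = (-1)^{b(d-b)} \langle h, f \odot g\rangle .$$
Both sides are bilinear, so it suffices to treat homogeneous $f$ and $h$. For other bidegrees of $h$ both sides vanish for degree reasons, so the componentwise description is consistent.

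First, I will check that $\odot$ is a module action, i.e.\ that $(f_1 f_2) \odot g = f_1 \odot (f_2 \odot g)$. The assignment $e_i \mapsto \partial/\partial x_i$ and $de_i \mapsto \iota_{e_i}$ must extend to an algebra homomorphism $\Psi_L \to \operatorname{End}(\Psi_{L^*})$. By the universal property of $\Sym L \otimes \wedge^\bullet L$ as a graded-commutative algebra, it suffices to verify three relations among the images of the generators:
\begin{enumerate}
\item the $\partial/\partial x_i$ pairwise commute;
\item the $\iota_{e_i}$ pairwise anticommute and square to zero, which is the standard identity for contractions;
\item each $\partial/\partial x_i$ commutes with each $\iota_{e_j}$, since differentiating the polynomial coefficients commutes with contracting the constant-coefficient form part.
\end{enumerate}
This is consistent with the coordinate description of the action given above, where $f(e,de)$ acts by $f(\partial, \iota)$.

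Second, since $\Psi_L$ is graded-commutative in the fermionic degree, we have $fh = (-1)^{b(d-b)} hf$ for $f$ of fermionic degree $b$ and $h$ of fermionic degree $d-b$. Combining the two facts gives
$$(fh)\odot g = (-1)^{b(d-b)} (hf)\odot g = (-1)^{b(d-b)} h \odot (f \odot g).$$
Taking constant terms yields the displayed identity. Note that $f\odot g$ lies in $\Psi_{L^*}^{c-a,d-b}$, so the pairing with $h$ is the correct component.

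The only step with real content is the module-action check, specifically that the anticommutation of the contractions and the sign conventions for $\iota$ (defined by $\omega \mapsto \omega(v\wedge -)$) match the wedge product in $\wedge^\bullet L$. I would verify this directly on a monomial $dx_{j_1}\wedge\dotsb\wedge dx_{j_m}$, where $\iota_{e_i}\iota_{e_k} = -\iota_{e_k}\iota_{e_i}$ follows from $\omega(e_i \wedge e_k \wedge -) = -\omega(e_k\wedge e_i\wedge -)$. I expect the main place an error could creep in is the ordering convention, i.e.\ whether $f_1f_2$ acts as $f_1\odot(f_2\odot -)$ or in the reversed order. The example $\iota_{e_1}\iota_{e_2}\,dx_1 dx_2 = -1$ given above pins this down, and I will match the convention to it.
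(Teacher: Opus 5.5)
Your proposal is correct and follows the paper's argument: both rest on the computation $(fh)\odot g = (-1)^{b(d-b)}(hf)\odot g = (-1)^{b(d-b)}h\odot(f\odot g)$, followed by taking constant terms and noting that $h$ of any other bidegree pairs to zero. Your extra check that $\odot$ is a genuine module action is something the paper takes for granted. That check is also what settles the ordering convention, since the homomorphism (rather than anti-homomorphism) convention comes from the substitution rule $f(e,de)\mapsto f(\partial,\iota)$, not from the example $\iota_{e_1}\iota_{e_2}\,dx_1 dx_2=-1$.
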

\begin{proof}
    Suppose that $g \in \Psi_{L^*}^{c,d}$, and $h \in \Psi_{L}^{c-a,d-b}$. 
    We have that
    \[
        fh \odot g = (-1)^{b(d-b)}hf\odot g = (-1)^{b(d-b)}h \odot (f \odot g),
    \]
    so the left and right expressions have the same 
    constant term. If $h \in \Psi_{L}$ is homogeneous of degree other 
    than $(c-a,d-b)$, then the left and right expressions both have zero constant term.
\end{proof}

Given a bigraded ideal $I \subseteq \Psi_L$, we define the inverse system as
\[
    I^\perp \coloneqq \{g \in \Psi_{L^*} : f\odot g = 0 \text{ for all 
    $f \in I$}\}.
\]
Because $I$ is an ideal, an element $g \in \Psi_{L^*}$ lies in $I^{\perp}$ if and only if $f \odot g$ has $0$ constant term for all $f \in I$. 
This gives a natural identification of $I^\perp$ with $(\Psi_L/I)^*$ as bigraded 
vector spaces.
The exterior derivative $d$ is a linear map $\Psi_L \to \Psi_L$ of bidegree $(-1, 1)$ which, in terms of the chosen basis, satisfies
$$d f(e_1, \dotsc, e_r) de_{i_1} \dotsb de_{i_k} = \sum_{j=1}^{r} \frac{\partial f}{\partial e_j} de_{j} de_{i_1} \dotsb de_{i_k}.$$
\begin{remark}
    Our convention differs slightly from that of \cite{Superspace}, 
    where the authors write $\theta_i \coloneqq dx_i$, and define the total 
    derivative as the operator $\sum_{i = 1}^r 
    \frac{\partial}{\partial x_i} \theta_i$, which differs by $-1$ 
    from the operator $d$ above when applied to an element of odd
    anticommutative degree $k$. 
\end{remark}

For $v \in L$, set $\tau_v$ to the map from $L$ to $L$ given by $\tau_v(w) = v + w$. 
This induces an action of the additive group of $L$ on $\Psi_{L^*}$ by pullback, 
which we denote by $\tau_v^*$. In coordinates, for $v = (v_1, \dotsc, v_r)$, we have
\[
    \tau_v^*(x_{i_1}\cdots x_{i_a} dx_{j_1} \cdots  dx_{j_b}) = (x_{i_1} + 
    v_{i_1})\cdots(x_{i_a}+v_{i_a}) dx_{j_1} \cdots dx_{j_b}.
\]

\begin{lemma}
    A bigraded vector subspace $S \subseteq \Psi_{L^*}$ is the inverse system 
    of a bigraded ideal $I \subseteq \Psi_L$ if and only if $\tau_v^* S 
    \subseteq S$ and $\iota_v S \subseteq S$ for all $v \in L$.
\end{lemma}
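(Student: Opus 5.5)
The plan is to reduce everything to the perfect pairing $\Psi_L \times \Psi_{L^*} \to \F$ and the adjointness of \cref{mult-adjoint}, copying the classical argument for $\Sym L$. Since $I^\perp$ is identified with $(\Psi_L/I)^*$, a bigraded subspace $S$ equals $I^\perp$ for the bigraded subspace $I = S^\perp \coloneqq \{f : \text{the constant term of } f \odot g \text{ is } 0 \text{ for all } g \in S\}$. Bigradedness of $S$ and the pairing being perfect in each bidegree give $S = (S^\perp)^\perp$, working one bidegree at a time. So the statement reduces to showing that $S^\perp$ is an ideal if and only if $S$ is stable under all $\tau_v^*$ and all $\iota_v$.

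\textbf{Ideal implies stability.} First, $\iota_v$ is exactly the action $dv \odot -$. If $g \in I^\perp$ and $f \in I$, then $f\odot(dv \odot g) = \pm (dv\, f)\odot g = 0$, since $dv\, f \in I$. Hence $\iota_v g \in I^\perp$.

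Second, $I$ is closed under multiplication by $v$, so the same computation shows $D_v g = v\odot g \in I^\perp$. The translate is given by the finite Taylor expansion
\[
\tau_v^* g = \sum_{m \ge 0} \frac{1}{m!} D_v^m g ,
\]
which is valid because translation acts only on the polynomial coefficients and $dx_j$ is translation invariant. This sum lies in $I^\perp$, which is a subspace.

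\textbf{Stability implies ideal.} Suppose $S$ is stable under all $\tau_v^*$ and $\iota_v$. Stability under $\tau_{tv}^*$ for all scalars $t$, together with the bigrading, forces $D_v^m g \in S$ for each $m$. To see this, expand $\tau_{tv}^* g = \sum_m t^m D_v^m g/m!$. The summands have distinct polynomial degrees, so projecting onto bigraded pieces (allowed because $S$ is bigraded) isolates each term. Alternatively, one can take Vandermonde combinations over distinct values of $t$.

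Thus $S$ is stable under $v\odot -$ and $dv\odot -$ for every $v$. These generate $\Psi_L$ as an algebra, so $S$ is stable under $h \odot -$ for every $h \in \Psi_L$. Now take $f \in S^\perp$ and $h \in \Psi_L$. Then
\[
(hf)\odot g = \pm (fh)\odot g = \pm f \odot (h \odot g),
\]
where the sign is a Koszul sign on homogeneous components. Its constant term vanishes because $h\odot g \in S$. Hence $hf \in S^\perp$, and $S^\perp$ is an ideal, which is bigraded because $S$ is.

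\textbf{Main obstacle.} The only delicate points are the sign bookkeeping in the (super)commutative action, handled exactly as in \cref{mult-adjoint}, and the extraction of $D_v g$ from translation invariance, which needs the grading (or infinitely many scalars) and characteristic zero for the $1/m!$. Neither step is expected to be hard.
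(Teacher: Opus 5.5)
Your proposal is correct and follows essentially the same route as the paper. Adjointness (Lemma~\ref{mult-adjoint}, equivalently the perfect pairing) reduces the statement to closure of $S$ under the operators $D_v$ and $\iota_v$, and closure under derivatives is equivalent to closure under translations by Taylor expansion together with bigradedness. The only difference is that you spell out the double-annihilator step $S=(S^\perp)^\perp$, which the paper leaves implicit.
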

\begin{proof}
Lemma~\ref{mult-adjoint} implies that $S$ is the inverse system of a bigraded ideal if and only if $S$ is closed under $\frac{\partial}{\partial x_i}$ and $\iota_{e_i}$ for each $i$. 
It suffices to check that $S$ is closed under $\frac{\partial}{\partial x_i}$ for each $i$ if and only if it is closed under $\tau_{e_i}^*$ for each $i$. If $S$ is closed under $\tau_{e_i}^*$, then for each $f \in S$, $\tau_{e_i}^*(f) - f$ lies in $S$. We have
    \[
        \tau_{e_i}^*(f) - f = \frac{\partial f}{\partial x_i} + 
        \text{lower order terms},
    \]
    which implies that $\frac{\partial f}{\partial x_i} \in S$ because 
    $S$ is bigraded. Conversely, if $S$ is closed under partial differentiation, then it follows from Taylor expansion that $S$ is closed under the translation action. 
\end{proof}

Let us consider the special element $\mathcal{E} \in L^* \otimes L$, which 
we call the \emph{Euler vector field}:
\[
    \mathcal{E} \coloneqq x_1e_1 + \ldots + x_re_r.
\]
We can also describe $\mathcal{E}$ as the vector field given by the 
infinitesimal action of the group $\F^*$ acting on $L$ by scaling.
This description shows that $\mathcal{E}$ does not depend on our choice of basis.
The \emph{contraction} $\iota_{\mathcal{E}}\colon \Psi_{L^*} \to 
\Psi_{L^*}$ is an operator of bidegree $(1,-1)$, defined by
\[
    \iota_{\mathcal{E}} = \sum_{i = 1}^r x_i \iota_{e_i}.
\]
The operator $\iota_{\mathcal{E}}$ is also called the interior 
derivative (with respect to $\mathcal{E}$). The exterior and interior 
derivatives are related by Cartan's magic formula:
\[
    (d\iota_{\mathcal{E}} + \iota_{\mathcal{E}}d) \omega = (a+b)\omega,\quad 
    \omega \in \Psi_{L^*}^{a,b}.
\]
Cartan's magic formula has the following elementary consequence.
\begin{lemma}\label{lem:diffclosedexact}
    Suppose that $I \subseteq \Psi_L$ is an ideal generated by homogeneous polynomials
    $f_1, \ldots, f_k \in \Sym L$ together with their exterior derivatives 
    $df_1, \ldots, df_k \in \Psi_L$. Then $dI \subseteq I$ and 
    $\iota_{\mathcal{E}} I \subseteq I$. Moreover, the derivative $d \colon \Psi_L/I \to \Psi_L/I$ is exact except in degree $(0,0)$.
\end{lemma}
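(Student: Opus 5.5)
The plan has three parts: show $dI \subseteq I$ and $\iota_{\mathcal{E}} I \subseteq I$ directly from the generators, and then run the standard homotopy argument with Cartan's formula on the quotient. Throughout I use that $d$ and $\iota_{\mathcal{E}}$ act on $\Psi_L$ itself, where $\mathcal{E}$ is now the Euler field of $L^*$, i.e.\ the contraction sending $dv \mapsto v$ extended as a graded derivation of bidegree $(1,-1)$. Both $d$ and $\iota_{\mathcal{E}}$ are graded derivations of odd degree with respect to the anticommutative grading, and $d^2 = 0$.

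\textbf{Closure under $d$ and $\iota_{\mathcal{E}}$.} A general element of $I$ is a sum of terms $g f_i$ and $g\, df_i$ with $g \in \Psi_L$. By the Leibniz rule,
$$d(g f_i) = dg\, f_i \pm g\, df_i \in I, \qquad d(g\, df_i) = dg\, df_i \in I,$$
where the second uses $d^2 = 0$. For $\iota_{\mathcal{E}}$, the generators of $\Psi_L$ in $\operatorname{Sym} L$ have anticommutative degree $0$, so $\iota_{\mathcal{E}} f_i = 0$. By Cartan's formula applied to $f_i$, which is homogeneous of bidegree $(a_i,0)$,
$$\iota_{\mathcal{E}}\, df_i = (d\iota_{\mathcal{E}} + \iota_{\mathcal{E}} d) f_i = a_i f_i \in I.$$
The Leibniz rule then gives $\iota_{\mathcal{E}}(g f_i) = \iota_{\mathcal{E}}(g) f_i \in I$ and $\iota_{\mathcal{E}}(g\, df_i) = \iota_{\mathcal{E}}(g)\, df_i \pm g\, a_i f_i \in I$. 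So $I$ is stable under both operators.

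\textbf{Exactness of $d$ on the quotient.} Since $I$ is bigraded and stable under $d$ and $\iota_{\mathcal{E}}$, both operators descend to $\Psi_L/I$, and Cartan's formula $d\iota_{\mathcal{E}} + \iota_{\mathcal{E}} d = (a+b)\cdot$ continues to hold there on bidegree $(a,b)$. Suppose $\bar\omega$ has bidegree $(a,b) \neq (0,0)$ and $d\bar\omega = 0$. Then
$$\bar\omega = d\Bigl(\tfrac{1}{a+b}\,\iota_{\mathcal{E}}\bar\omega\Bigr),$$
so $\bar\omega$ is exact. This uses characteristic $0$ to divide by $a+b$.

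The only point needing care, which I would verify first, is the Cartan formula itself in this setting. I would check it on the generators $v$ and $dv$ of $\Psi_L$ and then extend multiplicatively, using that $d\iota_{\mathcal{E}} + \iota_{\mathcal{E}} d$ is the commutator of two odd derivations and hence an even derivation. An even derivation that acts by $1$ on each generator acts by total degree $a+b$ on bidegree $(a,b)$. Apart from this check, I expect no real obstacle.
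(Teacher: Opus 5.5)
Your proposal is correct and follows essentially the same route as the paper. The paper checks the four generator cases via the Leibniz rule, uses Cartan's formula to get $\iota_{\mathcal{E}}(df_i) = a f_i$, and then uses $\iota_{\mathcal{E}}/(a+b)$ as a contracting homotopy on the quotient. Beyond the paper, you make explicit that $\iota_{\mathcal{E}}$ here is the Euler contraction on $\Psi_L$ rather than on $\Psi_{L^*}$, where the paper defines it. You also check Cartan's formula in that setting by treating $d\iota_{\mathcal{E}} + \iota_{\mathcal{E}} d$ as an even derivation, whereas the paper leaves both points implicit.
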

\begin{proof}
    Both $d$ and $\iota_{\mathcal{E}}$ satisfy a Leibniz rule for homogeneous elements:
    \[
        d(fg) = (df)g \pm f(dg)\quad \text{and} \quad 
        \iota_{\mathcal{E}}(fg) = (\iota_{\mathcal{E}} f)g \pm 
        f(\iota_{\mathcal{E}} g).
    \]
    Therefore to prove that $dI \subseteq I$ and $\iota_{\mathcal{E}}I 
    \subseteq I$, it is enough to check that $d(f_i), d(df_i), 
    \iota_{\mathcal{E}}(f_i),$ and $\iota_{\mathcal{E}}(df_i)$ all lie in $I$. The only 
    nontrivial one is the last, which follows from Cartan's magic 
    formula. Indeed, suppose that $f_i$ has degree $a$. Then
    \[
        \iota_{\mathcal{E}}(df_i) = a f_i - d\iota_{\mathcal{E}}f_i 
    = a f_i.
    \]
    Therefore both $d$ and $\iota_{\mathcal{E}}$ descend to operators on 
    $\Psi_{L}/I$. To prove the exactness statement, suppose that $f \in 
    (\Psi_L/I)^{a,b}$ is a class with $df = 0$. Then by Cartan's magic formula,
    \[
        d\iota_{\mathcal{E}} f = (d\iota_{\mathcal{E}} + \iota_{\mathcal{E}}d)f = (a+b)f.
    \]
    Therefore, as long as $(a+b) \not= 0$, $f = d 
    (\frac{\iota_{\mathcal{E}}f}{a+b})$, so $f$ is in the image of $d$. 
\end{proof}

The operators $d$ and $\iota_{\mathcal{E}}$ are related in another way 
as well:
\begin{lemma}
    The adjoint of $d \colon \Psi_L \to \Psi_L$ 
    is given component-wise by $(-1)^{b-1}\iota_{\mathcal{E}} \colon 
    \Psi_{L^*}^{a,b} \to \Psi_{L^*}^{a+1,b-1}$. 
\end{lemma}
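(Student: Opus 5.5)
The plan is to verify the adjointness directly, by reducing $d$ to a sum of two simpler operators whose adjoints are easy to compute. Fix dual bases $e_1, \dotsc, e_r$ of $L$ and $x_1, \dotsc, x_r$ of $L^*$. Let $\partial_j \colon \Psi_L \to \Psi_L$ denote partial differentiation with respect to the commutative variable $e_j$, acting only on the $\Sym L$ factor. By the coordinate formula for $d$, for $h = f(e_1,\dotsc,e_r)\, de_{i_1}\dotsb de_{i_k}$ we have
\[
dh = \sum_{j=1}^r de_j \cdot \partial_j h .
\]
So it suffices to compute two adjoints separately: that of left multiplication by $de_j$, and that of $\partial_j$.

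\emph{Step 1: multiplication by $de_j$.} Take $h \in \Psi_L^{a+1,b-1}$ and $g \in \Psi_{L^*}^{a,b}$. Then $\partial_j h \in \Psi_L^{a,b-1}$. Apply Lemma~\ref{mult-adjoint} with $f = de_j$, which has bidegree $(0,1)$, to the pair $(\partial_j h, g)$. It gives
\[
\langle de_j \cdot \partial_j h,\, g\rangle = (-1)^{b-1}\langle \partial_j h,\, \iota_{e_j} g\rangle,
\]
where $\langle -,-\rangle$ is the perfect pairing (constant term of $\odot$).

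\emph{Step 2: $\partial_j$ is adjoint to multiplication by $x_j$.} I will check on the monomial bases used in the proof of the perfect pairing. Write the basis elements as $e^\alpha\, de_I$ and $x^\beta\, dx_J$. The constant term of $(e^\alpha de_I) \odot (x^\beta dx_J)$ factors as the product of two pieces. The first is the constant term of $\alpha(\partial/\partial x)$ applied to $x^\beta$, which equals $\alpha!\,\delta_{\alpha\beta}$. The second is the pairing of $\iota_{e_I}$ with $dx_J$, which is a sign times $\delta_{IJ}$. This factorization holds because the partial derivatives commute with the contractions, and both $\partial_j$ and multiplication by $x_j$ are even operators, so no extra signs arise. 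On the symmetric factor the needed identity is the classical one:
\[
\langle \partial_j e^\alpha, x^\beta\rangle = \alpha_j\,(\alpha-\epsilon_j)!\,\delta_{\alpha-\epsilon_j,\beta} = \alpha!\,\delta_{\alpha,\beta+\epsilon_j} = \langle e^\alpha, x_j x^\beta\rangle .
\]
Hence $\langle \partial_j h, \omega\rangle = \langle h, x_j\omega\rangle$ for all $h$ and $\omega$.

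\emph{Step 3: combine.} Putting the two steps together,
\[
\langle dh, g\rangle = (-1)^{b-1}\sum_j \langle h,\, x_j\,\iota_{e_j} g\rangle = (-1)^{b-1}\langle h,\, \iota_{\mathcal{E}} g\rangle .
\]
For $h$ of any bidegree other than $(a+1,b-1)$, both sides vanish, since $dh$ then has bidegree different from $(a,b)$. Therefore the adjoint of $d$ on $\Psi_{L^*}^{a,b}$ is $(-1)^{b-1}\iota_{\mathcal{E}}$.

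The only step needing care is Step 2: confirming that the pairing genuinely factors into a symmetric part and an exterior part, so that the even operator $\partial_j$ passes through the exterior part without picking up a sign. Everything else follows from Lemma~\ref{mult-adjoint} and linearity.
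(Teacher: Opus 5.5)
Your proof is correct and is essentially the monomial-basis verification the paper has in mind, organized more economically. Writing $d=\sum_j de_j\,\partial_j$ lets Lemma~\ref{mult-adjoint} (applied to $f=de_j$, which produces the sign $(-1)^{b-1}$) absorb all the fermionic sign bookkeeping, so the only identity left to check on monomials is the sign-free one $\langle \partial_j h,\omega\rangle=\langle h,x_j\omega\rangle$; the signs agree, e.g.\ $\langle de_1de_2,dx_1dx_2\rangle=-1=\langle e_1de_2,-\iota_{\mathcal{E}}(dx_1dx_2)\rangle$.
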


This can be checked on a monomial basis for $\Psi_L$ and $\Psi_{L^*}$. 
We have the following corollary.
\begin{corollary}\label{d-closed-inverse-char}
    A bigraded vector subspace $S \subseteq \Psi_{L^*}$ is the 
inverse system of a bigraded differentially closed ideal $I \subseteq 
\Psi_L$ if and only if $\tau_v^* S \subseteq S$ and $\iota_v S \subseteq 
S$ for all $v \in L$, and $\iota_{\mathcal{E}} S \subseteq S$.
\end{corollary}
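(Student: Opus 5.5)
The plan is to combine the preceding lemma, which characterizes inverse systems of bigraded ideals, with the adjointness relation between $d$ and $\iota_{\mathcal{E}}$. I would first handle the case of an ideal $I$ that is merely bigraded. By the preceding lemma, $I^\perp$ is automatically stable under $\tau_v^*$ and $\iota_v$ for every $v \in L$, and conversely any bigraded $S$ with these two stabilities equals $I^\perp$ for a unique bigraded ideal $I$, namely $I = S^\perp \subseteq \Psi_L$. Bigradedness and the perfect pairing ensure that $(I^\perp)^\perp = I$. It therefore remains only to show that, for such a pair $I$ and $S = I^\perp$, the condition $dI \subseteq I$ is equivalent to $\iota_{\mathcal{E}} S \subseteq S$.

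For this equivalence I would use the adjointness lemma. On each bigraded piece, $d \colon \Psi_L^{a-1,b+1}$ and $(-1)^{b}\iota_{\mathcal{E}}$ are adjoint with respect to the perfect pairing (constant term of $f \odot g$); the precise sign is irrelevant. Suppose $dI \subseteq I$, and take $g \in S^{a,b}$ and $f \in I^{a+1,b-1}$. Then $\langle f, \iota_{\mathcal{E}} g\rangle = \pm\langle df, g\rangle = 0$, since $df \in I$ and $g \in I^\perp$. Because pairing constant terms detects membership in $I^\perp$ (the remark after the definition, which uses that $I$ is an ideal), we get $\iota_{\mathcal{E}} g \in S$. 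Conversely, suppose $\iota_{\mathcal{E}} S \subseteq S$, and take $f \in I$ homogeneous and $g \in S$. Then $\langle df, g\rangle = \pm\langle f, \iota_{\mathcal{E}} g\rangle = 0$, so $df \in S^\perp = I$.

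The only points requiring care are the following. First, the pairing pairs only matching bidegrees, so homogeneous components can be checked separately. Second, $I = S^\perp$ holds with $S^\perp$ defined via constant terms, which is the content of the perfect pairing lemma applied to bigraded pieces. I expect no step to be a genuine obstacle. The mild subtlety is verifying the adjointness lemma itself on monomials if one wants it proved rather than assumed, but it is stated earlier, so I would simply invoke it.
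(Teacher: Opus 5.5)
Your proposal is correct and takes the same route as the paper, which states this as an immediate corollary of two lemmas: the characterization of inverse systems of bigraded ideals via stability under $\tau_v^*$ and $\iota_v$, and the fact that the adjoint of $d$ is $(-1)^{b-1}\iota_{\mathcal{E}}$. You have written out the sign-insensitive adjointness argument in both directions, using bigradedness and the finite-dimensionality of each bigraded piece to get $(I^\perp)^\perp = I$. Your sentence stating the adjointness has garbled indices (the map is $d\colon \Psi_L^{a,b}\to\Psi_L^{a-1,b+1}$, with adjoint $\Psi_{L^*}^{a-1,b+1}\to\Psi_{L^*}^{a,b}$), but this does not affect the argument.
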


From now on, we will no longer focus on differentially closed ideals in 
$\Psi_L$, and we will instead think about their inverse systems 
directly using Corollary~\ref{d-closed-inverse-char}.

\begin{lemma}\label{lem:eulertangent}
    The Euler vector field $\mathcal{E}$ on $L$ extends to a vector field on 
    the augmented wonderful variety $W_L$ which is tangent to the 
    boundary, and therefore gives a section of the log tangent bundle $\Omega_{L}^*$.
\end{lemma}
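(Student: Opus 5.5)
The Euler vector field $\mathcal{E}$ is the infinitesimal generator of the $\mathbb{G}_m$-action on $L$ by scaling. By the introduction, this action extends to all of $W_L$. My plan is to deduce the statement from this extension together with the fact that each boundary divisor $D_F$ is $\mathbb{G}_m$-stable.

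First, differentiate the action map $\mathbb{G}_m \times W_L \to W_L$ at the identity of $\mathbb{G}_m$. This gives a global section $\tilde{\mathcal{E}}$ of the tangent sheaf $T_{W_L}$. On the dense open set $L$ it is the infinitesimal generator of scaling, which is $\mathcal{E} = \sum x_i e_i$. So $\tilde{\mathcal{E}}$ extends $\mathcal{E}$, and since $W_L$ is smooth it is a genuine vector field.

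Second, show tangency to the boundary. The introduction notes that the $\mathbb{G}_m$ action preserves each boundary divisor $D_F$. I would justify this from the iterated blow-up construction: scaling acts on $\mathbb{P}(L \oplus \mathbb{C})$ by $[v : t] \mapsto [\lambda v : t]$, which fixes the hyperplane at infinity $\mathbb{P}L$ pointwise, and therefore preserves every center $\mathbb{P}L_F$ and its strict transforms. Hence the action lifts to each blow-up and preserves each exceptional divisor. Alternatively, one can use the closure description in $\prod_S \mathbb{P}(\mathbb{C} \oplus \mathbb{C}^S)$, where scaling acts diagonally and the closure is visibly invariant.

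A vector field generated by a group action that preserves a divisor $D$ is tangent to $D$: it maps the ideal sheaf $\mathcal{I}_D$ into itself, because the coaction preserves $\mathcal{I}_D$. Thus $\tilde{\mathcal{E}}(\mathcal{I}_{D_F}) \subseteq \mathcal{I}_{D_F}$ for every $F$. For a simple normal crossings divisor, the log tangent sheaf, i.e.\ the dual $\Omega_L^*$ of the log cotangent bundle, is exactly the subsheaf of $T_{W_L}$ of derivations preserving each $\mathcal{I}_{D_F}$. Concretely, in local coordinates $z_1, \dots, z_r$ with boundary $z_1 \cdots z_m = 0$, it is generated by $z_1\partial_{z_1}, \dots, z_m\partial_{z_m}, \partial_{z_{m+1}}, \dots, \partial_{z_r}$. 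Hence $\tilde{\mathcal{E}}$ is a section of $\Omega_L^*$.

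The main obstacle is carefully verifying that the action preserves each $D_F$ individually, rather than merely preserving the boundary as a whole. Since $\mathbb{G}_m$ is connected, it cannot permute the finitely many irreducible components of the invariant boundary $W_L \setminus L$, so each $D_F$ is preserved. This connectedness remark is the argument I would present; it handles the obstacle cleanly.
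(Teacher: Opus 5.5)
Your proposal is correct and follows the same route as the paper: $\mathcal{E}$ is the infinitesimal generator of the scaling $\mathbb{G}_m$-action, which extends to $W_L$ and preserves the boundary, so the induced vector field is logarithmic. The details you add, which the paper leaves implicit, are all sound: each $D_F$ is preserved (via connectedness of $\mathbb{G}_m$ or the blow-up description), and log vector fields are exactly the derivations preserving each $\mathcal{I}_{D_F}$.
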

\begin{proof}
    The vector field $\mathcal{E}$ is the infinitesimal action of the 
    Lie group $\F^*$ acting on $L$ by scaling. Since the $\F^*$ action 
    extends to $W_L$ and preserves the boundary, the result follows.
\end{proof}

\begin{proof}[Proof of Theorem~\ref{thm:diffclosed}]
Let $S$ be the subspace of $\Psi_{L^*}$ obtained by restricting $\bigoplus_j H^0(W_L, \mathcal{L} \otimes \wedge^j \Omega_L)$ to $L$. 
By Corollary~\ref{d-closed-inverse-char}, we need to check that for any $v \in L$, we have $\tau_v^* S \subseteq S$, $\iota_v S \subseteq S$, and also that $\iota_{\mathcal{E}} S \subseteq S$. 

The line bundle $\mathcal{L}$ is isomorphic to $\mathcal{O}(D)$ for a unique divisor $D$ which does not intersect $L$. 
This gives a canonical trivialization of the restriction of $\mathcal{L}$ to $L$, and $\mathcal{L}$ has a canonical $\mathbb{G}_m \ltimes L$-linearization, which is characterized by the property that it restricts to the usual $\mathbb{G}_m \ltimes L$ linearization on $\mathcal{O}_L$. The bundle $\wedge^j \Omega_L$ also has a canonical $\mathbb{G}_m \ltimes L$-linearization. This implies that $S$ is a bigraded subspace of $\Psi_{L^*}$ and $S$ is closed under the action of $L$, i.e., $\tau_v^* S \subseteq S$ for all $v \in L$. 

The action of $L$ on $W_L$ induces a map from $L$, viewed as the Lie algebra of the additive group of $L$, to the space of logarithmic vector fields on $W_L$. Contracting with the logarithmic vector field corresponding to $v \in L$ induces a map $\mathcal{L} \otimes \wedge^j \Omega_L \to \mathcal{L} \otimes \wedge^{j-1} \Omega_L$. The induced map on global sections is $\iota_v$, so $\iota_v S \subseteq S$.

By Lemma~\ref{lem:eulertangent}, the Euler vector field defines a section of the  log-tangent 
bundle $\Omega_{W_L}^*$. Therefore contracting with the Euler vector field 
takes a section of $\wedge^j \Omega_{W_L}$ to a section of 
$\wedge^{j-1} \Omega_{W_L}$. By the Leibniz rule for 
$\iota_{\mathcal{E}}$ and the fact that it annihilates the sections of $\mathcal{L}$, $S$ is closed under $\iota_{\mathcal{E}}$.
\end{proof}

\section{Superspace zonotopal algebras}

In this section, we will study inverse systems of the form $\bigoplus_j H^0(W_L, \mathcal{O}(1, \dotsc, 1)(k\alpha) \otimes \wedge^j \Omega_L)$ and prove Theorems~\ref{thm:supersections}, \ref{thm:superspaceexact}, and \ref{thm:exact}. 
We begin the proof of Theorem~\ref{thm:supersections} by verifying one inclusion. 

\begin{proposition}\label{prop:containment}
For any $k \le 0$, $\bigoplus_j H^0(W_L, \mathcal{O}(1, \dotsc, 1)(k\alpha) \otimes \wedge^j \Omega_L)$ is contained in $\mathcal{C}_{L, k}$. 
\end{proposition}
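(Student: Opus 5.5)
The plan is to reduce to the commutative statement, Corollary~\ref{cor:seczonotope}, using the closure properties of $S \coloneqq \bigoplus_j H^0(W_L, \mathcal{O}(1, \dotsc, 1)(k\alpha) \otimes \wedge^j \Omega_L)$ that were established in the proof of Theorem~\ref{thm:diffclosed}.

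First, by Theorem~\ref{thm:diffclosed}, $S$ is the inverse system $I_S^\perp$ of a bigraded differentially closed ideal $I_S \subseteq \Psi_L$. Let $J_k \subseteq \operatorname{Sym} L \subseteq \Psi_L$ be the ideal of the $k$th zonotopal algebra. Then $\mathcal{C}_{L,k}$ is the inverse system of the differential closure $\overline{J_k}$, which is the smallest differentially closed ideal containing $J_k$. Inverse systems reverse inclusions, so it suffices to prove $\overline{J_k} \subseteq I_S$. Because $I_S$ is a differentially closed ideal, this reduces to $J_k \subseteq I_S$. In other words, we must show that every $f \in J_k$ satisfies $f \odot \omega = 0$ for all $\omega \in S$.

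Second, we reduce to coefficients. Fix dual bases $e_1, \dotsc, e_r$ of $L$ and $x_1, \dotsc, x_r$ of $L^*$, and let $\omega \in S$ have anticommutative degree $j$. Write $\omega = \sum_{I} g_I \, dx_I$ with $g_I \in \operatorname{Sym} L^*$. The proof of Theorem~\ref{thm:diffclosed} shows that $S$ is closed under $\iota_v$ for every $v \in L$; concretely, contracting with the logarithmic vector field of $v$ sends sections of $\mathcal{O}(1, \dotsc, 1)(k\alpha) \otimes \wedge^{j} \Omega_L$ to sections of $\mathcal{O}(1, \dotsc, 1)(k\alpha) \otimes \wedge^{j-1} \Omega_L$. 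Iterating $j$ times, $\iota_{e_{i_j}} \cdots \iota_{e_{i_1}} \omega = \pm g_I$ for $I = \{i_1, \dotsc, i_j\}$. Hence each $g_I$ lies in $H^0(W_L, \mathcal{O}(1, \dotsc, 1)(k\alpha))$, and this space equals $C_{L,k} = J_k^\perp$ by Corollary~\ref{cor:seczonotope}.

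Third, for $f \in J_k \subseteq \operatorname{Sym} L$, the operator $f \odot -$ involves only partial derivatives, which commute with the constant forms $dx_I$. Therefore
$$f \odot \omega = \sum_I (f \odot g_I)\, dx_I = 0,$$
since each $g_I \in J_k^\perp$. This gives $J_k \subseteq I_S$, and the proposition follows from the first step.

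The only delicate point is the second step. There one has to check that the contraction $\iota_{e_i}$ on polynomial forms on $L$ really is the restriction to $L$ of the sheaf map given by contracting with the logarithmic vector field of $e_i$, so that the coefficients $g_I$ are honest global sections of the line bundle. This was asserted in the proof of Theorem~\ref{thm:diffclosed}; if more detail is needed, I would verify it from the fact that the translation action of $L$ extends to $W_L$ preserving the boundary, so its infinitesimal generators are logarithmic vector fields. I do not expect the hypothesis $k \le 0$ to be used in this inclusion. It should only matter for the reverse containment, for instance through Proposition~\ref{prop:APcocircuit}.
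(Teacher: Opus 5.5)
Your argument is correct, but it follows a different route from the paper's.

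\textbf{The paper's route.} The paper works with explicit generators. Proposition~\ref{prop:APcocircuit} (which is where $k \le 0$ enters), together with the fact that the differential closure is generated by the $f_i$ and $df_i$, reduces the claim to checking that $v^{\rho(v)+k+1}$ and $d(v^{\rho(v)+k+1})$ annihilate a section $s$ for each cocircuit vector $v$. This is verified by a local computation on $L \sqcup D_F^{\circ} \cong \mathbb{P}^1 \times \mathbb{C}^{r-1}$, where $F$ is the rank $r-1$ flat with $v \in L_F$. There the coefficients of $s$ have degree at most $\rho(v)+k$ in the direction $v$. The coefficients of the terms containing $dv$ have degree at most $\rho(v)+k-1$, which is the logarithmic pole condition.

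\textbf{Your route.} You let Theorem~\ref{thm:diffclosed} absorb the logarithmic condition, through closure under $\iota_{\mathcal{E}}$, into the differential closedness of $I_S$. This reduces everything to the commutative inclusion $J_k \subseteq I_S$, and then, by extracting coefficients, to Corollary~\ref{cor:seczonotope}.

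\textbf{What each buys.} Your approach avoids Proposition~\ref{prop:APcocircuit} and the explicit generators of the differential closure. As you suspected, it works for every $k \in \mathbb{Z}$. It is essentially the reasoning the paper itself uses in Section~\ref{ssec:positivek} to see that $\mathcal{W}_{L,k}$ is a quotient of $\mathcal{Z}_{L,k}$ for $k \ge 1$. The paper's computation gives, in exchange, concrete pole-order information about the individual components of a section along the divisors $D_F$ for flats of rank $r-1$.

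\textbf{A shortcut.} You could skip the coefficient-extraction step. The pairing is bigraded and $I_S^{a,b}$ is the annihilator of $S^{a,b}$. Hence $I_S \cap \operatorname{Sym} L$ is the annihilator of $S^{\bullet,0} = C_{L,k}$, which is exactly $J_k$.
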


\begin{proof}
Let $s$ be a section of $\mathcal{O}(1, \dotsc, 1)(k\alpha) \otimes \wedge^j \Omega_L$, and let $v$ be a cocircuit vector. Let $F$ denote the flat of rank $r-1$ which has $v \in L_F$. By Proposition~\ref{prop:APcocircuit}, it suffices to show that
\begin{equation}\label{eq:inversesystem}
v^{\rho(v) + k + 1} \odot s = 0 \text{ and }d(v^{\rho(v) + k + 1}) \odot s = 0,
\end{equation}
as the ideal of $\mathcal{Z}_{L, k}$ is generated by $\{v^{\rho(v) + k + 1}, d(v^{\rho(v) + k + 1}) : v \text{ is a cocircuit vector}\},$ see \cite[(1.2)]{Superspace}. To do this, we will analyze the behavior of $s$ along the divisor $D_F$. 

The divisor $D_F$ can be identified with $W_{L^F} \times \underline{W}_{L_F}$. Because $\dim L_F = 1$, $\underline{W}_{L_F}$ is a point, so $D_F$ is identified with $W_{L^F}$. Let $D_F^{\circ}$ denote the open subset of $W_{L^F}$ which is identified with $L^F$. We view $D_F^{\circ}$ as a subset of $W_L$. Choose an identification of $L^F$ with $\mathbb{C}^{r-1}$, and choose a splitting $L \cong L_F \oplus \mathbb{C}^{r-1}$. This induces an isomorphism of $L \sqcup D_F^{\circ}$ with $\mathbb{P}(L_F \oplus \mathbb{C}) \times \mathbb{C}^{r-1}$, where $D_F^{\circ}$ is identified with $\{\infty\} \times \mathbb{C}^{r-1}$. 

The restriction of $\mathcal{O}(1, \dotsc, 1)(k\alpha)$ to $L \sqcup D_F^{\circ}$ is the line bundle $\mathcal{O}((\rho(v) + k)D_F^{\circ})$, and the restriction of $\Omega_L$ is the log cotangent bundle of $\mathbb{P}^1 \times \mathbb{C}^{r-1}$, viewed as a partial compactification of $L$. We can write the restriction of $s$ to $L$ as 
$$s|_{L} = \sum_{i_1, \dotsc, i_{j-1}} f_{i_1, \dotsc, i_{j-1}}(v, z_1, \dotsc, z_{r-1}) dv \wedge dz_{i_1} \wedge \dotsb \wedge dz_{i_{j-1}} + \tau,$$ where $z_1, \dotsc, z_{r-1}$ is a basis for $\mathbb{C}^{r-1}$, $\tau$ is a differential form which is pulled back from $\mathbb{C}^{r-1}$, and $f_{i_1, \dotsc, i_{j-1}}$ is a polynomial. The fact that $s$ is a section of the tensor product of $\mathcal{O}((\rho(v) + k)D_F^{\circ})$ with the $j$th exterior power of the log cotangent bundle implies that each $f_{i_1, \dotsc, i_{j-1}}$ has degree at most $\rho(v) + k - 1$ in $v$, so \eqref{eq:inversesystem} holds for degree reasons. 
\end{proof}

\begin{proposition}\label{prop:exactsuperspace}
For some $i$ which is not a loop or coloop of $L$, choose a splitting of the inclusion $L/i \hookrightarrow L$.
For any $k$, there is an exact sequence
\begin{equation*}\begin{split}
0 \to & \bigoplus_j H^0(W_{L \setminus i}, \mathcal{O}(1, \dotsc, 1)(k\alpha) \otimes \wedge^j \Omega_{L \setminus i}) \to \bigoplus_j H^0(W_L, \mathcal{O}(1, \dotsc, 1)(k\alpha) \otimes \wedge^j \Omega_L) \to \\ 
 &\bigoplus_j H^0(W_{L/i}, \mathcal{O}(1, \dotsc, 1)(k\alpha) \otimes (\wedge^j \Omega_{L/i} \oplus \wedge^{j-1} \Omega_{L/i})).
\end{split}\end{equation*}
If $k  \ge -1$, then this sequence is right exact. 
\end{proposition}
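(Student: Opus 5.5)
The plan is to tensor the short exact sequence \eqref{closed-subscheme} of the Cartier divisor $W_{L/i}\subseteq W_L$ with $\mathcal{O}(1,\dotsc,1)(k\alpha)\otimes\wedge^j\Omega_L$ and take the long exact sequence in cohomology, exactly as in the scalar case. Fix $j$. Since $[W_{L/i}]=y_i$, we obtain
\[
0\to \mathcal{O}(1,\dotsc,1)(k\alpha)(-y_i)\otimes\wedge^j\Omega_L \to \mathcal{O}(1,\dotsc,1)(k\alpha)\otimes\wedge^j\Omega_L \to \mathcal{O}(1,\dotsc,1)(k\alpha)\otimes\wedge^j\Omega_L|_{W_{L/i}}\to 0,
\]
and we then identify the global sections of the outer terms.

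\textbf{Left term.} Since $i$ is not a coloop, Lemma~\ref{lem:pullback} shows that $\mathcal{O}(1,\dotsc,1)(k\alpha)(-y_i)=\pi_{E\setminus i}^*\mathcal{O}(1,\dotsc,1)(k\alpha)$. By the projection formula and Theorem~\ref{thm:pushforward} with $S=E\setminus i$, we have
\[
R\pi_{E\setminus i*}\bigl(\pi_{E\setminus i}^*\mathcal{O}(1,\dotsc,1)(k\alpha)\otimes\wedge^j\Omega_L\bigr)=\mathcal{O}(1,\dotsc,1)(k\alpha)\otimes\wedge^j\Omega_{L\setminus i}.
\]
By the Leray spectral sequence, every cohomology group of the left term therefore equals the corresponding cohomology group of $\mathcal{O}(1,\dotsc,1)(k\alpha)\otimes\wedge^j\Omega_{L\setminus i}$ on $W_{L\setminus i}$. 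In particular this holds for $H^0$, giving the first term of the sequence, and also for $H^1$, which we need below.

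\textbf{Right term.} By Lemma~\ref{lem:restrictdiv}, the restriction of $\mathcal{O}(1,\dotsc,1)(k\alpha)$ to $W_{L/i}$ is $\mathcal{O}(1,\dotsc,1)(k\alpha)$ on $W_{L/i}$, since $y_i$ restricts to $0$. Lemma~\ref{lem:restriction} gives the sequence $0\to\mathcal{O}\to\Omega_L|_{W_{L/i}}\to\Omega_{L/i}\to0$, and Lemma~\ref{lem:split} shows that the chosen splitting of $L/i\hookrightarrow L$ splits it. Taking $j$th exterior powers then gives $\wedge^j\Omega_L|_{W_{L/i}}\cong\wedge^j\Omega_{L/i}\oplus\wedge^{j-1}\Omega_{L/i}$. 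This identifies the right term with the third term of the claimed sequence.

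\textbf{Exactness.} Summing the long exact sequences over $j$ gives the left exact sequence for every $k$. For right exactness it suffices, by the left-term identification, that
\[
H^1(W_{L\setminus i},\mathcal{O}(1,\dotsc,1)(k\alpha)\otimes\wedge^j\Omega_{L\setminus i})=0.
\]
For $k\ge-1$ this is Theorem~\ref{thm:vanishing} applied to $L\setminus i$; note that $L\setminus i$ still has rank $r$ because $i$ is not a coloop.

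The step I expect to need the most care is checking that the maps are the ones intended: the first map is multiplication by the defining form of $H_i$, and the second is restriction followed by the splitting, including the contraction with $v$. This also requires that the identifications respect the subspace structure inside $\Psi_{L^*}$ and the $\mathbb{G}_m$-grading, with a shift by one in the first term. Since all sheaves are $L/i$-equivariant and the morphisms are determined on the fibers over the origin, I would verify this on $L$ directly, using the explicit description of the splitting in Lemma~\ref{lem:split}.
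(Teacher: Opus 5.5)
Your proposal is correct and follows essentially the same argument as the paper. Both tensor the ideal-sheaf sequence of $W_{L/i}$ with $\mathcal{O}(1,\dotsc,1)(k\alpha)\otimes\wedge^j\Omega_L$. Both identify the left term with its counterpart on $W_{L\setminus i}$ via Lemma~\ref{lem:pullback}, Theorem~\ref{thm:pushforward} and the projection formula. Both split the right term using Lemmas~\ref{lem:restriction} and \ref{lem:split}, and both deduce right exactness for $k\ge -1$ from the vanishing of $H^1$ on $W_{L\setminus i}$ given by Theorem~\ref{thm:vanishing}.
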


\begin{proof}
For ease of notation, we assume that $i = 1$. 
By tensoring the short exact sequence $0 \to \mathcal{O}(-W_{L/1}) \to \mathcal{O}_{W_L} \to \mathcal{O}_{W_{L/1}} \to 0$ with $\mathcal{O}(1, \dotsc, 1)(k\alpha) \otimes \wedge^j \Omega_L$, Lemmas~\ref{lem:restriction} and \ref{lem:split} give a short exact sequence of sheaves
$$0 \to \mathcal{O}(0, \dotsc, 1)(k \alpha) \otimes \wedge^j \Omega_L \to \mathcal{O}(1, \dotsc, 1)(k \alpha) \otimes \wedge^j \Omega_L \to \mathcal{O}_{W_{L/1}}(1, \dotsc, 1)(k\alpha) \otimes (\wedge^j \Omega_{L/1} \oplus \wedge^{j-1}\Omega_{L/1}) \to 0.$$
Let $\pi \colon W_L \to W_{L \setminus 1}$ be the deletion map. 
Because $i$ is not a coloop, $\mathcal{O}(0, \dotsc, 1)(k\alpha) = \pi^* \mathcal{O}(1, \dotsc, 1)(k\alpha)$ by Lemma~\ref{lem:pullback}. By Theorem~\ref{thm:pushforward}, $R \pi_* \wedge^j \Omega_L = \wedge^j \Omega_{L \setminus i}$. The projection formula then gives an isomorphism
\begin{equation}\label{eq:projdel}
H^0(W_L, \mathcal{O}(0, \dotsc, 1)(k\alpha) \otimes \wedge^j \Omega_L) \cong H^0(W_{L \setminus i}, \mathcal{O}(1, \dotsc, 1)(k\alpha) \otimes \wedge^j \Omega_{L \setminus i}).
\end{equation}
Taking global sections then produces the desired left exact sequence. To prove that this sequence is right exact for $k \ge -1$, it suffices to note that, by Theorem~\ref{thm:vanishing}, 
\begin{equation*}
H^1(W_L, \mathcal{O}(0, \dotsc, 1)(k\alpha) \otimes \wedge^j \Omega_L) \cong H^1(W_{L \setminus i}, \mathcal{O}(1, \dotsc, 1)(k\alpha) \otimes \wedge^j \Omega_{L \setminus i}) = 0. \qedhere
\end{equation*}
\end{proof}

For later use, we describe the maps in Proposition~\ref{prop:exactsuperspace}. As mentioned previously, we can view all of these spaces of sections as differential forms on $L$ or on $L/i$. 
The map $H^0(W_{L \setminus i}, \mathcal{O}(1, \dotsc, 1)(k\alpha) \otimes \wedge^j \Omega_{L \setminus i}) \to H^0(W_{L}, \mathcal{O}(1, \dotsc, 1)(k\alpha) \otimes \wedge^j \Omega_L)$ is given by multiplication by a linear form whose vanishing locus is the $i$th hyperplane. As such, it increases the commutative degree by $1$ and preserves the anticommutative degree. The map $H^0(W_L, \mathcal{O}(1, \dotsc, 1)(k\alpha) \otimes \wedge^j \Omega_L) \to H^0(W_{L/i}, \mathcal{O}(1, \dotsc, 1)(k \alpha) \otimes \wedge^j \Omega_{L/i})$ is given by restricting the differential form to $L/i$, and so it preserves the commutative degree and the anticommutative degree. The map $H^0(W_L, \mathcal{O}(1, \dotsc, 1)(k\alpha) \otimes \wedge^j \Omega_L) \to H^0(W_{L/i}, \mathcal{O}(1, \dotsc, 1)(k \alpha) \otimes \wedge^{j-1} \Omega_{L/i})$ is given by contracting the differential form with a generator of the kernel of the chosen splitting $L \to L/i$ and then restricting to $L/i$, and so it preserves the commutative degree and decreases the anticommutative degree by $1$. 

We first prove the special case $k=0$ of Theorem~\ref{thm:supersections}. This relies on the easier part of the result of Rhoades, Tewari, and Wilson: that the sequence in Theorem~\ref{thm:superspaceexact} is left exact when $k=0$. 

\begin{proposition}\label{prop:inversek=0}
As subspaces of $\Psi_{L^*}$, $\mathcal{C}_{L, 0} =  \bigoplus_j H^0(W_L, \mathcal{O}(1, \dotsc, 1) \otimes \wedge^j \Omega_L)$. 
\end{proposition}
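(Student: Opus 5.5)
Write $\mathcal{S}_L \coloneqq \bigoplus_j H^0(W_L, \mathcal{O}(1, \dotsc, 1) \otimes \wedge^j \Omega_L)$. By Proposition~\ref{prop:containment} with $k=0$ we already have $\mathcal{S}_L \subseteq \mathcal{C}_{L,0}$. Both are finite-dimensional bigraded subspaces of $\Psi_{L^*}$, so it suffices to prove the equality of bigraded dimensions $\operatorname{Hilb}(\mathcal{S}_L) = \operatorname{Hilb}(\mathcal{C}_{L,0})$. I will show this by induction on $n$, using that both sides satisfy the same deletion--contraction inequalities and agree in the base case.

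\emph{Base case.} Suppose every element of $E$ is a loop or a coloop, so we may take $L$ Boolean, $L = \mathbb{C}^n$ (deleting loops changes nothing on either side). Then $\Omega_L = \bigoplus_t \pi_t^*\mathcal{O}(-1)$. Hence $\mathcal{O}(1, \dotsc, 1) \otimes \wedge^j \Omega_L = \bigoplus_{|S|=j} \mathcal{O}(\sum_{t \notin S} y_t)$, which is pulled back from $(\mathbb{P}^1)^n$. Its sections are therefore spanned by the forms $\prod_{t \notin S} x_t^{a_t} \bigwedge_{s\in S} dx_s$ with $a_t \in \{0,1\}$. On the other side, the ideal of $\mathcal{Z}_{L,0}$ for a Boolean arrangement is $(e_t^2 : t \in E)$, and its differential closure is $(e_t^2, e_t\,de_t)$. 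The inverse system of this ideal is visibly spanned by the same forms, since $x_t\,dx_t$ is killed and $x_t$, $dx_t$ separately survive. So the two spaces agree, and the base case can be checked directly.

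\emph{Inductive step.} Pick $i$ that is neither a loop nor a coloop, and fix a splitting of $L/i \hookrightarrow L$. Proposition~\ref{prop:exactsuperspace} with $k=0$ gives a \emph{short} exact sequence, since $k \ge -1$. This yields
\[
\operatorname{Hilb}(\mathcal{S}_L) = q\operatorname{Hilb}(\mathcal{S}_{L\setminus i}) + (1+t)\operatorname{Hilb}(\mathcal{S}_{L/i}).
\]
For $\mathcal{C}_{L,0}$, I use the easier (left exact) half of the Rhoades--Tewari--Wilson sequence \cite[Section 3.4]{Superspace}. It is the sequence of Theorem~\ref{thm:superspaceexact} with $k=0$, built from the same maps (multiplication by the $i$th linear form, restriction, and contraction followed by restriction), and it gives
\[
\operatorname{Hilb}(\mathcal{C}_{L,0}) \le q\operatorname{Hilb}(\mathcal{C}_{L\setminus i,0}) + (1+t)\operatorname{Hilb}(\mathcal{C}_{L/i,0})
\]
coefficientwise. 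By induction, $\mathcal{C}_{L\setminus i,0} = \mathcal{S}_{L\setminus i}$ and $\mathcal{C}_{L/i,0} = \mathcal{S}_{L/i}$. Thus $\operatorname{Hilb}(\mathcal{C}_{L,0}) \le \operatorname{Hilb}(\mathcal{S}_L)$, and combined with the inclusion $\mathcal{S}_L \subseteq \mathcal{C}_{L,0}$ this forces equality.

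\emph{Main obstacle.} The delicate point is making sure the RTW left-exactness applies with exactly the grading shifts used above, namely $(1,0)$ on the deletion term and the two summands $\mathcal{C}^{\bullet,\bullet}_{L/i,0} \oplus \mathcal{C}^{\bullet,\bullet-1}_{L/i,0}$. Their restriction/contraction maps may use a particular splitting convention, but only the dimension inequality matters, not compatibility of the maps. If their statement is phrased differently, I would instead prove injectivity of $\mathcal{C}_{L,0}/(\ell_i\cdot\mathcal{C}_{L\setminus i,0}) \to \mathcal{C}_{L/i,0}\oplus\mathcal{C}_{L/i,0}[-1]$ directly. Here $\ell_i$ denotes a linear form whose vanishing locus is $H_i$. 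The argument would be that a form in $\mathcal{C}_{L,0}$ whose restriction to $H_i$ and whose contraction with $v$ both vanish is divisible by $\ell_i$. The quotient then lies in the inverse system for $L\setminus i$, because the cocircuit exponents $\rho(v)+1$ drop by one exactly for those cocircuits not in $H_i$.
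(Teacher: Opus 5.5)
Your proof is correct and is essentially the paper's argument: you use the containment from Proposition~\ref{prop:containment}, a direct check in the Boolean case, and induction that plays the right-exact cohomological deletion--contraction sequence (Proposition~\ref{prop:exactsuperspace} with $k=0$) against the easier left-exact half of the Rhoades--Tewari--Wilson sequence. The only difference is that you close the induction with a bigraded dimension count where the paper uses the snake lemma, which has the small advantage of not needing the two sequences to use the same maps.
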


\begin{proof}
Proposition~\ref{prop:containment} shows that $\bigoplus_j H^0(W_L, \mathcal{O}(1, \dotsc, 1) \otimes \wedge^j \Omega_L)$ is contained in $\mathcal{C}_{L, 0}$. If $L$ is Boolean, then it is easy to check that this containment is an equality, for example by comparing \eqref{eq:booleanposk} to \cite[Lemma 3.6]{Superspace}. 
For a general hyperplane arrangement, we can delete elements until we obtain the Boolean arrangement of the same rank. If $i$ is a loop, then $W_L$ is identified with $W_{L \setminus i}$, so 
$$\bigoplus_j H^0(W_L, \mathcal{O}(1, \dotsc, 1) \otimes \wedge^j \Omega_L) = \bigoplus_j H^0(W_{L \setminus i}, \mathcal{O}(1, \dotsc, 1) \otimes \wedge^j \Omega_{L \setminus i})\text{ and }\mathcal{C}_{L, 0} = \mathcal{C}_{L \setminus i, 0}.$$
If $i$ is neither a loop nor a coloop, then, using the easier part of \cite[Lemma 3.10]{Superspace}, the choice of a splitting of the inclusion $L/i \hookrightarrow L$ induces an exact sequence 
$$0 \to \mathcal{C}_{L \setminus i, 0} \to \mathcal{C}_{L, 0} \to \mathcal{C}_{L/i, 0} \oplus \mathcal{C}_{L/i, 0}.$$
The maps in this sequence are the same as the maps in Proposition~\ref{prop:exactsuperspace}. 
By induction, the containments for the left term and right term are equalities. The result then follows from the snake lemma. 
\end{proof}

\begin{proof}[Proof of Theorem~\ref{thm:supersections}]
We induct on $-k$; the case $k=0$ is Proposition~\ref{prop:inversek=0}, so we may assume that $k \le -1$. We first consider the case when $L$ has a coloop $i$. If $k < -1$, Lemma~\ref{lem:negkcoloop} gives that $\mathcal{C}_{L, k} = 0$, so Proposition~\ref{prop:containment} implies the result. If $k = -1$, then Lemma~\ref{lem:negkcoloop} identifies $\mathcal{C}_{L, -1}$ with $\mathcal{C}_{L \setminus i, -1}$, so we must identify $H^0(W_L, \mathcal{O}(1, \dotsc, 1)(-\alpha) \otimes \wedge^j \Omega_L)$ with $H^0(W_{L \setminus i}, \mathcal{O}(1, \dotsc, 1)(-\alpha) \otimes \wedge^j \Omega_{L \setminus i})$. 

By Proposition~\ref{prop:SD}, there is an isomorphism
$$ H^0(W_L, \mathcal{O}(1, \dotsc, 1)(-\alpha) \otimes \wedge^j \Omega_L)^* \cong H^r(W_L, \mathcal{O}(-1, \dotsc, -1) \otimes \wedge^{r-j} \Omega_L).$$
Because $i$ is a coloop, there is a birational map $p \colon W_L \to W_{L \setminus i} \times \mathbb{P}^1$, and $\Omega_L$ is isomorphic to $p^*(\Omega_{L \setminus i} \boxplus \mathcal{O}_{\mathbb{P}^1}(-1))$, so $ \wedge^{r-j} \Omega_L \cong p^*( \wedge^{r-j} \Omega_{L \setminus i} \boxtimes \mathcal{O}_{\mathbb{P}^1}) \oplus p^*(\wedge^{r-j-1} \Omega_{L \setminus i} \boxtimes \mathcal{O}_{\mathbb{P}^1}(-1))$. 
The projection formula and the K\"{u}nneth formula then give that 
$$H^r(W_L, \mathcal{O}(-1, \dotsc, -1) \otimes \wedge^{r-j} \Omega_L) \cong H^{r-1}(W_{L \setminus i}, \mathcal{O}(-1, \dotsc, -1) \otimes \wedge^{r-j-1} \Omega_{L \setminus i}) \otimes H^1(\mathbb{P}^1, \mathcal{O}(-2)).$$
As $\dim H^1(\mathbb{P}^1, \mathcal{O}(-2)) = 1$, applying Proposition~\ref{prop:SD} on $W_{L \setminus i}$ gives the desired identification. This isomorphism is compatible with the equality in Lemma~\ref{lem:negkcoloop}, which is induced by the identification $\Psi_{L^*} = \Psi_{(L \setminus i)^*} \otimes \Psi_{\mathbb{C}}$. 

We may therefore assume that $L$ has no coloops. Suppose we have an element $x \in \mathcal{C}_{L, k}^{\bullet, j}$ for some $k \le -1$. We view $x$ as a differential $j$-form on $L$, and we will show that $x$ defines a section of $\mathcal{O}(1, \dotsc, 1)(k\alpha) \otimes \wedge^j \Omega_L$. 
We will then be done by Proposition~\ref{prop:containment}. 

For each $i \in E$, because $i$ is not a coloop, $\pi_{E \setminus i}$ identifies the vector space $L$ with the vector space $L \setminus i$. We claim that $x$ lies in $\mathcal{C}_{L \setminus i, k+1}^{\bullet, j}$ for all $i$. By Proposition~\ref{prop:APcocircuit}, the ideal of $\mathcal{Z}_{L \setminus i, k+1}$ is generated by $\{v^{\rho_{L \setminus i}(v) + k + 2}, \, d(v^{\rho_{L \setminus i}(v) + k + 2}) : v \text{ is a cocircuit vector of }L \setminus i\}$, where $\rho_{L \setminus i}(v)$ is the number of hyperplanes in $L \setminus i$ which do not contain $v$. We need to check that
$$v^{\rho_{L \setminus i}(v) + k + 2} \odot x = 0 \text{ and }d(v^{\rho_{L \setminus i}(v) + k + 2}) \odot x = 0.$$
But because $x \in \mathcal{C}_{L, k}$, we have
$v^{\rho(v) + k + 1} \odot x = 0$ and $d(v^{\rho(v) + k + 1}) \odot x = 0.$
The claim follows because $\rho(v) \le \rho_{L \setminus i}(v) + 1$. 

From the claim above, induction, and the projection formula (as in \eqref{eq:projdel}), we deduce that $x$ lies in $H^0(W_L, \mathcal{O}(1, \dotsc, 1)((k+1)\alpha)(-y_i) \otimes \wedge^{j} \Omega_L)$ for any $i$. Viewed as differential $j$-forms on $L$, we have that
\begin{equation}\label{eq:intersectsections}
H^0(W_L, \mathcal{O}(1, \dotsc, 1)(k\alpha) \otimes \wedge^{j} \Omega_L) = \bigcap_{i} H^0(W_L, \mathcal{O}(1, \dotsc, 1)((k+1)\alpha)(-y_i) \otimes \wedge^{j} \Omega_L).
\end{equation}
Indeed, we have $y_i = \sum_{F \not \ni i} [D_F]$ and $\alpha = \sum_F [D_F]$ in $H^2(W_L)$. Realizing $\mathcal{O}(-\alpha)$ and each $\mathcal{O}(-y_i)$ as the line bundle associated to a Cartier divisor which is contained in the boundary, we have $\mathcal{O}(-\alpha) = \cap_i \mathcal{O}(-y_i)$ as subsheaves of $\mathcal{O}$. Tensoring with the locally free sheaf $\mathcal{O}(1, \dotsc, 1)((k+1)\alpha) \otimes \wedge^{j} \Omega_L$ and then taking global sections gives \eqref{eq:intersectsections}. 
\end{proof}

\begin{proof}[Proof of Theorem~\ref{thm:superspaceexact}]
This follows from combining Theorem~\ref{thm:supersections} and Proposition~\ref{prop:exactsuperspace}. 
\end{proof}

\begin{proof}[Proof of Corollary~\ref{cor:hilbert}]
Taking into account the degree shifts, Theorem~\ref{thm:superspaceexact} shows that if $i$ is  not a loop or coloop, then
$$\operatorname{Hilb}(\mathcal{Z}_{L, -1}) = q \operatorname{Hilb}(\mathcal{Z}_{L \setminus i, -1}) + (1 + t) \operatorname{Hilb}(\mathcal{Z}_{L/i, -1}).$$
If $i$ is a loop, then $\mathcal{Z}_{L, -1} = \mathcal{Z}_{L \setminus i, -1}$, and so $\operatorname{Hilb}(\mathcal{Z}_{L, -1}) = \operatorname{Hilb}(\mathcal{Z}_{L \setminus i, -1})$. If $i$ is a coloop, then $\operatorname{Hilb}(\mathcal{Z}_{L, -1}) = \operatorname{Hilb}(\mathcal{Z}_{L \setminus i, -1})$ by Lemma~\ref{lem:negkcoloop}. 
Note that if $\dim L = 0$, then $\operatorname{Hilb}(\mathcal{Z}_{L, -1}) = 1$. The conclusion then follows from a well-known property of the Tutte polynomial, see, e.g., \cite[(2.5)]{MR1716762}. 
\end{proof}

We now begin the proof of Theorem~\ref{thm:exact}. As mentioned in Lemma~\ref{lem:eulertangent}, the Euler vector field defines a section $s_{\mathcal{E}}$ of the log tangent bundle $\Omega_L^*$ of $W_L$. We give an alternative description of this section. Recall that $W_L$ is embedded in the stellahedral toric variety $X_n$, which is the augmented wonderful variety of the Boolean arrangement $\mathbb{C}^n$. On $X_n$, there is a subbundle $\mathcal{S}_L$ of $\oplus \pi_i^* \mathcal{O}(1)$ which restricts to the inclusion $\Omega_L^* \subset \oplus \pi_i^* \mathcal{O}(1)$, the dual of which was mentioned in Section~\ref{ssec:logcotangent}. There is a distinguished section $s \in H^0(X_n, \oplus \pi_i^* \mathcal{O}(1))$ whose restriction to $\mathbb{C}^n \subset X_n$ is identified with the section $(x_1, \dotsc, x_n)$ of $\mathcal{O}^{\oplus n}$ under a certain distinguished trivialization of the restriction of $\oplus \pi_i^* \mathcal{O}(1)$ to $\mathbb{C}^n$, see \cite[Section 5]{EHL}. The restriction of $s$ to $H^0(W_L, \oplus \pi_i^* \mathcal{O}(1))$ is the image of $s_{\mathcal{E}}$ under the inclusion $H^0(W_L, \Omega_L^*) \subseteq H^0(W_L, \oplus \pi_i^* \mathcal{O}(1))$. 

\begin{lemma}\label{lem:eulervanish}
The vanishing locus of $s_{\mathcal{E}}$ is the origin of $L$.
\end{lemma}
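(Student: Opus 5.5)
We show that the zero locus of $s_{\mathcal{E}}$, viewed as a section of $\Omega_L^*$ (equivalently, via the inclusion $\Omega_L^* \subseteq \bigoplus \pi_i^*\mathcal{O}(1)$, as the restriction of the distinguished section $s$), is exactly the origin. Since $\Omega_L^*$ is a subbundle of $\bigoplus_i \pi_i^*\mathcal{O}(1)$, a point of $W_L$ is a zero of $s_{\mathcal{E}}$ if and only if it is a zero of $s$. So the plan is to compute the zero locus of $s$ on the whole stellahedral toric variety $X_n$, and then intersect with $W_L$.

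\textbf{Zero locus of $s$ on $X_n$.} The bundle $\bigoplus_i \pi_i^*\mathcal{O}(1)$ is pulled back from $(\mathbb{P}^1)^n$ along the toric map $X_n \to (\mathbb{P}^1)^n$. Its $i$th component of $s$ is the pullback of the section of $\mathcal{O}_{\mathbb{P}^1}(1)$ which restricts to $x_i$ on $\mathbb{C} \subset \mathbb{P}^1$. That section vanishes exactly at $0 \in \mathbb{P}^1$; it is nonzero at $\infty$ because $x_i$ has a simple pole there.

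So $s$ vanishes precisely on the preimage of the point $(0,\dotsc,0) \in (\mathbb{P}^1)^n$. This point lies in the open chart $\mathbb{C}^n \subset (\mathbb{P}^1)^n$. The map $X_n \to (\mathbb{P}^1)^n$ is an isomorphism over $\mathbb{C}^n$: indeed $\mathbb{C}^n \subset X_n$ maps isomorphically, and the boundary divisors of $X_n$ map to the boundary at infinity, since every $D_F$ has some $y_i$ restricting trivially, by Lemma~\ref{lem:restrictToBoundary}. Hence the preimage of $(0,\dotsc,0)$ is the single point $0 \in \mathbb{C}^n$.

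The step needing the most care is exactly this claim that no boundary point of $X_n$ maps to $(0,\dotsc,0)$. I would verify it directly from the fan: the rays of the stellahedral fan are the $e_i$ and the vectors $-\sum_{i \notin F} e_i$. Each cone containing a negative ray $v_F$ with $F \ne E$ has some coordinate $i \notin F$, and for that $i$ the orbit maps to $\infty$ in the $i$th $\mathbb{P}^1$ factor. Cones generated only by rays $e_i$ are the orbits inside $\mathbb{C}^n$, where $X_n$ agrees with $(\mathbb{P}^1)^n$.

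\textbf{Intersecting with $W_L$.} Since $W_L \cap \mathbb{C}^n = L$ and $0 \in L$, the zero locus of $s|_{W_L}$, and hence of $s_{\mathcal{E}}$, is the origin of $L$.

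For a scheme-theoretic statement, note that on $L$ the section $s_{\mathcal{E}}$ is the Euler vector field $\sum x_i e_i$ in a trivialization of $\Omega_L^*|_L = T_L$. Its coefficients are the coordinate functions, so its zero scheme is the reduced origin.
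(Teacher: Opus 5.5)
Your proof is correct and follows the paper's argument: you view $s_{\mathcal{E}}$ inside the ambient bundle $\bigoplus_i \pi_i^*\mathcal{O}(1)$ on $X_n$, where $s$ is pulled back from a section on $(\mathbb{P}^1)^n$ that vanishes only at $(0,\dotsc,0)$. You also spell out two points the paper leaves implicit, namely that no boundary orbit of $X_n$ maps to $(0,\dotsc,0)$ (via the fan) and that the zero scheme is the reduced origin. One caveat: the remark via Lemma~\ref{lem:restrictToBoundary} only shows that $D_F$ is contracted to some point of the $i$th $\mathbb{P}^1$, not that this point is $\infty$, so it is your fan argument that actually carries that step.
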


\begin{proof}
The vanishing locus of $s_{\mathcal{E}}$ does not change whether we view it in $\Omega_L^*$ or in $\oplus \pi_i^* \mathcal{O}(1)$. But the vanishing locus of $s$ in $X_n$ already consists of only the origin, as $s$ is the pullback of a section of a vector bundle on $(\mathbb{P}^1)^n$ with that property. 
\end{proof}

\begin{proof}[Proof of Theorem~\ref{thm:exact}]
By Lemma~\ref{lem:eulervanish}, we have a Koszul resolution of the structure sheaf of the origin $\mathcal{O}_o$:
$$0 \to \wedge^r \Omega_L \to \dotsb \to \Omega_L \to \mathcal{O}_{W_L} \to \mathcal{O}_o \to 0.$$
Here the differential is given by contracting with the Euler vector field. If we tensor this complex with $\mathcal{O}(1, \dotsc, 1)(k\alpha)$ for any $k \ge -1$, then it remains exact and the terms have no higher cohomology by Theorem~\ref{thm:vanishing}. We therefore obtain an exact sequence
$$0 \to H^0(W_L, \mathcal{O}(1, \dotsc, 1)(k\alpha) \otimes \wedge^r \Omega_L) \to \dotsb  \to H^0(W_L, \mathcal{O}(1, \dotsc, 1)(k\alpha)) \to \mathbb{C} \to 0,$$
see \cite[Proposition B.1.1]{Lazarsfeld1}. As $d$ is the dual of contraction with the Euler vector field, we obtain the result by dualizing this exact sequence. 
\end{proof}

\subsection{Superspace zonotopal algebras for positive $k$}\label{ssec:positivek}

In \cite[Section 6.2]{Superspace}, Rhoades, Tewari, and Wilson note that the differential closures of the ideals of $Z_{L, k}$ do not seem to behave well for $k \ge 1$, even in the case of Boolean arrangements. See \cite[Example 6.3]{Superspace}. We now discuss how the results above show that, for each $k \ge 1$, there is a well-behaved differentially closed ideal in $\Psi_L$ whose intersection with $\operatorname{Sym} L$ is the ideal of $Z_{L, k}$. Unfortunately, we do not know generators for this ideal (or its inverse system). 

For each $k$, let $\mathcal{B}_{L, k}$ be $\bigoplus_j H^0(W_L, \mathcal{O}(1, \dotsc, 1)(k \alpha) \otimes \wedge^j \Omega_L)$, viewed as a subspace of $\Psi_{L^*}$, and let
$\mathcal{W}_{L, k}$ be the quotient of $\Psi_L$ whose inverse system is $\mathcal{B}_{L,k}$. 
When $k \le 0$, Theorem~\ref{thm:supersections} states that $\mathcal{W}_{L,k}$ is equal to $\mathcal{Z}_{L, k}$, the quotient of $\Psi_L$ by the differential closure of the ideal of $Z_{L, k}$. 

When $k \ge 1$, $\mathcal{W}_{L, k}$ is a quotient of $\mathcal{Z}_{L, k}$. Indeed, the intersection of the ideal of $\mathcal{W}_{L, k}$ with $\operatorname{Sym} L$ is the ideal whose inverse system is $H^0(W_L, \mathcal{O}(1, \dotsc, 1)(k\alpha))$, which is equal to $Z_{L, k}^*$ by Corollary~\ref{cor:seczonotope}. In particular, as the ideal of $\mathcal{W}_{L, k}$ contains the ideal of $Z_{L, k}$ and is differentially closed by Theorem~\ref{thm:diffclosed}, it contains the ideal of $\mathcal{Z}_{L, k}$. 
However, $\mathcal{W}_{L,k}$ is rarely equal to $\mathcal{Z}_{L, k}$ when $k \ge 1$, even in the case of the rank $2$ Boolean arrangement \cite[Example 6.3]{Superspace}. 

We now note several properties of $\mathcal{W}_{L, k}$ for $k \ge 1$ which generalize the attractive properties of $\mathcal{Z}_{L, k}$ for $k \in \{-1, 0\}$. The following result is a direct corollary of Proposition~\ref{prop:exactsuperspace}. 

\begin{corollary}
For any $k \ge -1$ and $i$ which is not a coloop of $L$, there is an exact sequence
$$0 \to \mathcal{B}_{L \setminus i, k}^{\bullet - 1, \bullet} \to \mathcal{B}_{L, k}^{\bullet, \bullet} \to \mathcal{B}_{L/i, k}^{\bullet, \bullet} \oplus \mathcal{B}_{L/i, k}^{\bullet, \bullet - 1} \to 0.$$
\end{corollary}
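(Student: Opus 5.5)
The corollary comes from Proposition~\ref{prop:exactsuperspace}, read in the bigraded language of $\mathcal{B}_{L,k}$, plus a separate treatment of loops. I would split into two cases: $i$ a loop, and $i$ neither a loop nor a coloop.

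\emph{Case 1: $i$ is neither a loop nor a coloop.} Proposition~\ref{prop:exactsuperspace} gives, for each $j$, an exact sequence
\[
0 \to H^0(W_{L\setminus i}, \mathcal{O}(1,\dotsc,1)(k\alpha)\otimes \wedge^j\Omega_{L\setminus i}) \to H^0(W_L, \mathcal{O}(1,\dotsc,1)(k\alpha)\otimes\wedge^j\Omega_L) \to H^0(W_{L/i}, \mathcal{O}(1,\dotsc,1)(k\alpha)\otimes(\wedge^j\Omega_{L/i}\oplus\wedge^{j-1}\Omega_{L/i})).
\]
It is right exact for $k\ge -1$, because the relevant $H^1$ vanishes by Theorem~\ref{thm:vanishing} (applied on $W_{L\setminus i}$, via the projection formula and Theorem~\ref{thm:pushforward}). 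Summing over $j$ identifies the three terms with $\mathcal{B}_{L\setminus i,k}$, $\mathcal{B}_{L,k}$ and $\mathcal{B}_{L/i,k}\oplus\mathcal{B}_{L/i,k}$.

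The remaining task is bookkeeping of the bigradings, using the description of the maps given after Proposition~\ref{prop:exactsuperspace}. Multiplication by the linear form defining $H_i$ raises the commutative degree by one and keeps the anticommutative degree, which gives the shift $\mathcal{B}_{L\setminus i,k}^{\bullet-1,\bullet}$. Restriction to $L/i$ preserves both degrees. Contraction with a generator of the kernel of the splitting, followed by restriction, lowers the anticommutative degree by one, which gives $\mathcal{B}_{L/i,k}^{\bullet,\bullet-1}$. One should also check that these are morphisms of $\mathbb{G}_m$-equivariant sheaves with the stated linearizations, as in the proof of Proposition~\ref{prop:euleradd}, so that the commutative grading matches on the nose.

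\emph{Case 2: $i$ is a loop.} Here $W_L = W_{L\setminus i}$, and $\mathcal{O}(y_i)$ is trivial. Since $L\setminus i = L$, we get $\mathcal{B}_{L,k} = \mathcal{B}_{L\setminus i,k}$ as subspaces of $\Psi_{L^*}$. The contraction $L/i$ is not defined in the usual sense, so the statement presumably intends that $i$ is not a loop, or uses the convention that the contraction term vanishes for a loop.

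I expect this convention issue for loops to be the main point requiring care; everything else is a direct reading of Proposition~\ref{prop:exactsuperspace}. I would therefore state the corollary for $i$ neither a loop nor a coloop, matching the hypotheses of Proposition~\ref{prop:exactsuperspace}, and note that loops change nothing on the geometric side.
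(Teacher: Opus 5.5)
Your proposal is correct and follows the paper's argument exactly: the corollary is read off from Proposition~\ref{prop:exactsuperspace}, with right exactness for $k \ge -1$ coming from Theorem~\ref{thm:vanishing}, together with the bidegree bookkeeping for the three maps described right after that proposition. You are also right that $i$ must be assumed not to be a loop, as in the proposition; however, neither convention you suggest for loops rescues the statement, because under either one a graded dimension count forces $\mathcal{B}_{L,k} = 0$, whereas $\mathcal{B}_{L,k}$ always contains the constants.
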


As $\mathcal{W}_{L, k}$ is the quotient of $\Psi_L$ by a differentially closed ideal (by Theorem~\ref{thm:diffclosed}), there is map $d \colon \mathcal{W}_{L, k} \to \mathcal{W}_{L, k}$ of bidegree $(-1, 1)$. This map satisfies $d^2 = 0$. Then Theorem~\ref{thm:exact} gives the following. 

\begin{corollary}
For any $k \ge -1$, $d$ is exact on $\mathcal{W}_{L, k}$ except in bidegree $(0,0)$. 
\end{corollary}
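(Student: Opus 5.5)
The plan is to deduce this directly from Theorem~\ref{thm:exact}: the graded dual of $\mathcal{B}_{L,k}$ is $\mathcal{W}_{L,k}$, and under this duality the differential $d$ on $\mathcal{W}_{L,k}$ corresponds, up to signs, to the contraction $\iota_{\mathcal{E}}$ on $\mathcal{B}_{L,k}$.

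First, I will make the identification of $\mathcal{W}_{L,k}$ with $\mathcal{B}_{L,k}^*$ precise. By Theorem~\ref{thm:diffclosed}, $\mathcal{B}_{L,k}$ is the inverse system of a homogeneous differentially closed ideal $I$, and $\mathcal{W}_{L,k} = \Psi_L/I$. The perfect pairing between $\Psi_L$ and $\Psi_{L^*}$ identifies $\mathcal{W}_{L,k}$ bigradedly with $\mathcal{B}_{L,k}^* = \bigoplus_j H^0(W_L, \mathcal{O}(1, \dotsc, 1)(k\alpha) \otimes \wedge^j \Omega_L)^*$. Since $I$ is closed under $d$, the operator $d$ descends to $\Psi_L/I$.

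Second, I will match the two differentials. By the lemma computing the adjoint of $d$, the transpose of $d$ is $(-1)^{b-1}\iota_{\mathcal{E}}$ on $\Psi_{L^*}^{a,b}$. Hence the map $d$ on $\mathcal{W}_{L,k}$ is the dual of $\iota_{\mathcal{E}}$ restricted to $\mathcal{B}_{L,k}$, up to a sign on each bigraded component. By Corollary~\ref{d-closed-inverse-char}, $\mathcal{B}_{L,k}$ is preserved by $\iota_{\mathcal{E}}$. Changing signs componentwise does not affect kernels or images, so exactness of $d$ on $\mathcal{W}_{L,k}$ at a given bidegree is equivalent to exactness of the complex $(\mathcal{B}_{L,k}, \iota_{\mathcal{E}})$ at the dual bidegree.

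Third, I will invoke the proof of Theorem~\ref{thm:exact}. For $k \ge -1$, the Koszul complex built from $s_{\mathcal{E}}$ and tensored with $\mathcal{O}(1,\dotsc,1)(k\alpha)$ yields an exact sequence on global sections ending in $\mathbb{C}$, which sits in bidegree $(0,0)$. So $(\mathcal{B}_{L,k}, \iota_{\mathcal{E}})$ is exact except at $(0,0)$, and dualizing gives the corollary.

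The only point needing care is that the map on global sections induced by contracting with the Euler log vector field agrees with $\iota_{\mathcal{E}}$ on forms on $L$. This holds because both are computed by restricting to the dense open subset $L$.
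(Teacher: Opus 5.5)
Your proposal is correct and follows the paper's route. The corollary is just Theorem~\ref{thm:exact} restated via $\mathcal{W}_{L,k} = \mathcal{B}_{L,k}^*$, and your unpacking is exactly the paper's proof of that theorem: the Koszul resolution of the origin by contraction with $s_{\mathcal{E}}$, acyclicity of its terms from Theorem~\ref{thm:vanishing}, and dualizing using that $d$ is adjoint to $\pm\iota_{\mathcal{E}}$.
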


When $L$ is Boolean, we can give the following description of $\mathcal{B}_{L, k}$ for $k \ge 0$. When $L$ is Boolean, $\Omega_L$ is identified with  $\oplus \pi_i^* \mathcal{O}(-1)$, and so $\mathcal{O}(1, \dotsc, 1) \otimes \wedge^j \Omega_L \stackrel{\sim}{\to} \oplus_{|S| = j} \pi_{E \setminus S} \mathcal{O}(1, \dotsc, 1)$. Then
\begin{equation}\label{eq:booleanposk}
\mathcal{B}_{L, k} = \bigoplus_{S \subseteq E} \bigoplus_{\substack{a_1 + a_2 + \dotsb + a_n \le k \\ T \subseteq S^c}} \mathbb{C} \cdot x_1^{a_1} \dotsb x_n^{a_n} x^T dx_S,
\end{equation}
where $x^T = \prod_{i \in T} x_i$, and $dx_S = dx_{i_1} \wedge \dotsb 
\wedge dx_{i_s}$, where $S = \{i_1, \dotsc, i_s\}$. This agrees with \cite[Lemma 3.6]{Superspace} when $k=0$. It would be interesting to explicitly describe the ideal of $\mathcal{W}_{L, k}$ in general.

\subsection{Applications}

In \cite[Theorem 6.6]{BrylawskiEquality}, Brylawski classified linear relations which hold for the coefficients of the Tutte polynomials of all matroids. As we will explain, all of the nontrivial equalities are consequences of Corollary~\ref{cor:hilbert} and Theorem~\ref{thm:exact}. 

\begin{proposition}\label{prop:brylawksiequalities}
Let $t_{i,j}$ be the coefficient of $x^iy^j$ in the Tutte polynomial of $L$. Then for each $1 \le k \le n-1$,
$$\sum_{i=0}^{k} (-1)^{i} \sum_{u=0}^{r-k+i} \binom{r-u}{k-i} t_{u, n-r-i} = 0.$$
\end{proposition}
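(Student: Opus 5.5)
The plan is to read the identity off as the vanishing of the Euler characteristic of a total-degree strand of the complex $(\mathcal{Z}_{L,-1}, d)$. By Theorem~\ref{thm:supersections} with $k=-1$, $\mathcal{Z}_{L,-1} = \mathcal{W}_{L,-1}$, since its inverse system is $\mathcal{B}_{L,-1} = \bigoplus_j H^0(W_L, \mathcal{O}(1,\dotsc,1)(-\alpha)\otimes \wedge^j\Omega_L)$. So Theorem~\ref{thm:exact} (alternatively Lemma~\ref{lem:diffclosedexact}, since $\mathcal{Z}_{L,-1}$ is a quotient by a differential closure) shows that $d$ is exact on $\mathcal{Z}_{L,-1}$ away from bidegree $(0,0)$. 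The map $d$ has bidegree $(-1,1)$, so it preserves the total degree $i+j$. Fix $k\ge 1$. The strand
$$0\to \mathcal{Z}^{k,0}_{L,-1}\to \mathcal{Z}^{k-1,1}_{L,-1}\to\dotsb\to \mathcal{Z}^{0,k}_{L,-1}\to 0$$
is then a bounded exact complex of finite-dimensional vector spaces. Taking its Euler characteristic gives
$$\sum_{i=0}^{k}(-1)^{i}\dim \mathcal{Z}^{i,k-i}_{L,-1}=0,$$
where the overall sign convention is irrelevant.

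Next, I extract these dimensions from Corollary~\ref{cor:hilbert}. Write $T_L(x,y)=\sum_{u,v} t_{u,v}x^uy^v$. Then
$$(1+t)^r q^{n-r}T_L\Bigl(\tfrac{1}{1+t},\tfrac1q\Bigr)=\sum_{u,v} t_{u,v}(1+t)^{r-u}q^{n-r-v}.$$
Since $t_{u,v}=0$ unless $u\le r$, every power $(1+t)^{r-u}$ is a genuine polynomial. The coefficient of $q^{i}t^{k-i}$ therefore comes only from the terms with $v=n-r-i$, and equals
$$\dim\mathcal{Z}^{i,k-i}_{L,-1}=\sum_{u}\binom{r-u}{k-i}\,t_{u,n-r-i}.$$
The binomial coefficient vanishes unless $r-u\ge k-i$, which gives the upper limit $u\le r-k+i$. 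When $n-r-i<0$, the coefficient $t_{u,n-r-i}$ is zero. Substituting these dimensions into the alternating sum yields exactly the displayed relation for every $k\ge 1$, and in particular for $1\le k\le n-1$.

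The only step I expect to need care is bookkeeping rather than content. I must confirm that the exactness statement really applies on all of $\mathcal{Z}_{L,-1}$, including the identification in Theorem~\ref{thm:supersections}, so that the strand's Euler characteristic vanishes exactly when $k\ne 0$. I must also check that the sign and indexing conventions (commutative degree $i$, anticommutative degree $k-i$, with the sign $(-1)^i$) match the statement. The restriction $k\le n-1$ just records the range in which the relations are nontrivial, and it is also where the paper compares them with Brylawski's list, so no separate argument is needed for it.
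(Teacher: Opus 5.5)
Your proposal is correct and matches the paper's proof. You read off $\dim \mathcal{Z}^{i,k-i}_{L,-1}$ from Corollary~\ref{cor:hilbert} and take the Euler characteristic of the exact total-degree-$k$ strand of $(\mathcal{Z}_{L,-1},d)$ supplied by Theorem~\ref{thm:exact}; Lemma~\ref{lem:diffclosedexact} is an equally valid source for that exactness, as the paper itself notes.
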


After an upper triangular change of variables, this is equivalent to the equalities in \cite[Theorem 6.6]{BrylawskiEquality}. 

\begin{proof}
By Corollary~\ref{cor:hilbert}, we have that
$$\operatorname{Hilb}(\mathcal{Z}_{L, -1}) = \sum_{u,v} t_{u,v}(1 + t)^{r - u} q^{n - r - v}.$$
This implies that $\dim \mathcal{Z}_{L, -1}^{i, k-i} =  \sum_{u=0}^{r-k+i} \binom{r-u}{k-i} t_{u, n-r-i}$. By Theorem~\ref{thm:exact}, there is an exact complex
$$0 \to \mathcal{Z}_{L, -1}^{k, 0} \to \mathcal{Z}_{L, -1}^{k-1, 1} \to \dotsb \to \mathcal{Z}_{L, -1}^{0, k} \to 0.$$
This implies the $\sum_{i=0}^{k} (-1)^i \dim \mathcal{Z}_{L, -1}^{k-i, i} = 0$, giving the result. 
\end{proof}

\begin{remark}
Proposition~\ref{prop:brylawksiequalities} proves the linear relations among the coefficients of the Tutte polynomial for matroids which are realizable over $\mathbb{C}$. Because the coefficients of the Tutte polynomial are valuative invariants of matroids (see \cite{DF10}), a standard argument using \cite[Theorem 5.4]{DF10} implies that these relations hold for all matroids. See \cite[Lemma 5.9]{BEST}. 
\end{remark}

We also note the following interesting example, which was shown to us by Vasu Tewari.

\begin{example}\label{ex:braid}
When $L$ is the $(n-1)$-dimensional braid arrangement, $\mathcal{Z}_{L, -1}$ is equipped with an action of $S_n$. By Corollary~\ref{cor:hilbert}, the Hilbert series of the top commutative graded piece of $\mathcal{Z}_{L, -1}$, namely $\mathcal{Z}^{\binom{n-1}{2}, \bullet}_{L, -1}$, is 
$$(t+1)^{n-1} T_{L}(\frac{1}{t+1}, 0 ) = \prod_{i=1}^{n-2} (i(t+1) + 1).$$
In particular, the total dimension of $\mathcal{Z}^{\binom{n-1}{2}, \bullet}_{L, -1}$ is $(2n-3)!!$, which is the number of rooted binary trees with $n$ labeled leaves. The set of such trees has a natural $S_n$ action.  By Remark~\ref{rem:BV}, $\mathcal{Z}_{L, -1}^{\binom{n-1}{2}, 0}$ is identified with the dual of the top graded piece of the Orlik--Solomon algebra of the braid arrangement. The top graded piece of the Orlik--Solomon algebra of the $(n-1)$-dimensional braid arrangement is identified with the top graded piece of the singular cohomology ring of $M_{0, n+1}$, so it has an action of $S_{n+1}$. It is known that the top graded piece of the Orlik--Solomon algebra of the braid arrangement is the Whitehouse representation of $S_{n+1}$, see \cite{KeelTevelev}. The restriction of this representation to $S_n$ is the Lie representation of $S_n$, see \cite{WhitehouseRep}. 
The dimension of $\mathcal{Z}^{\binom{n-1}{2}, n-2}_{L, -1}$ is $(n-2)!$, the dimension of the Whitehouse representation of $S_n$. 
By Theorem~\ref{thm:exact}, $d$ induces an isomorphism from  $\mathcal{Z}^{\binom{n-1}{2}, n-2}_{L, -1}$ to  $\mathcal{Z}^{\binom{n-1}{2} - 1, n-1}_{L, -1}$, which is identified with $Z^{\binom{n-1}{2}-1}_{L, -2}$, i.e., to the top graded piece of the internal zonotopal algebra of the braid arrangement. 
It would be interesting to compute the $S_n$ representation on $\mathcal{Z}_{L, -1}^{\binom{n-1}{2}, \bullet}$. 
In \cite{GeoZonotopeI}, the authors constructed an $S_n$-equivariant isomorphism between the internal zonotopal algebra of the \emph{dual} of the $(n-1)$-dimensional braid arrangement and the cohomology of the configuration space of $(n-1)$ points in $\mathbb{R}^3$, whose $S_n$ representation is well-understood, see \cite{Pagaria} and the references within. 
\end{example}

\bibliography{matroid}
\bibliographystyle{amsalpha}

\end{document}